\begin{document}

\title{Proof of the simplicity conjecture}
\date{\today}
\author{Daniel Cristofaro-Gardiner, Vincent Humili\`ere, Sobhan Seyfaddini}
\maketitle

\begin{abstract}
In the 1970s, Fathi, having proven that the group of compactly supported volume-preserving homeomorphisms of the $n$-ball is simple for $n \ge 3$, asked if the same statement holds in dimension $2$.   We show that the group of compactly supported area-preserving homeomorphisms of the two-disc is not simple.  This settles what is known as the ``simplicity conjecture'' in the affirmative.  In fact, we prove the a priori stronger statement that this group is not perfect.  

An important step in our proof involves verifying for certain smooth 
twist maps a conjecture of Hutchings concerning
recovering the Calabi invariant from the asymptotics of spectral
invariants defined using periodic Floer homology.   Another key step,
 which builds on recent advances in continuous symplectic
  topology, involves proving that these spectral invariants extend continuously to area-preserving homeomorphisms of the disc.   These two properties of PFH spectral invariants  are potentially of independent interest.  
 
 Our general strategy is partially inspired by suggestions of Fathi and the approach of Oh
towards the simplicity question.  In particular, we show that infinite twist maps,  studied by Oh, are not finite energy homeomorphisms, which resolves the ``infinite twist conjecture" in the affirmative; these twist maps are now the first examples of Hamiltonian homeomorphisms which can be said to have infinite energy.  Another consequence of our work is that various forms of fragmentation for volume preserving homeomorphisms which hold for higher dimensional balls fail in dimension two.
\end{abstract}

\tableofcontents

\section{Introduction}

\subsection{The main theorem} \label{sec:intro_main_theo}

Let $(S,\omega)$ be a surface equipped with an area form.  An {\bf area-preserving homeomorphism} is a homeomorphism which preserves the measure induced by $\omega$. 
Let $\Homeo_c(\D, \omega)$ denote the group of area-preserving
homeomorphisms of the two-disc which are the identity near the
boundary.  Recall that a group is {\bf simple} if it does not have a
non-trivial proper normal subgroup.  The following fundamental
question was raised\footnote{ The history of when  this question was raised seems complicated.  It is asked by Fathi in the paper \cite{fathi}.  However, it has been suggested to us by Ghys  that the question dates back to considerably earlier, possibly to Mather or Thurston. } in the 1970s:   

\begin{question}\label{que:disc}
Is the group $\Homeo_c(\D, \omega)$ simple?
 \end{question}  

Indeed, the algebraic structure of the group of volume-preserving homeomorphisms has been well-understood in dimension at least three since the work of Fathi \cite{fathi} from the 70s; but, the case of surfaces, and in particular Question~\ref{que:disc}, has long remained mysterious.

Question~\ref{que:disc} has been the subject of wide interest.  For
example, it is highlighted in the plenary ICM address of Ghys
\cite[Sec.\ 2.2]{Ghys_ICM}; it appears on McDuff and Salamon's list of
open problems \cite[Sec.\ 14.7]{mcduff-salamon}; it has been
  one of the main motivations behind the development of
  $C^0$-symplectic topology, which we will further discuss in Section
  \ref{sec:simplicity_high_dim}; for other examples, see \cite{Banyaga, fathi, Ghys_lecture, Bounemoura, LeRoux10, LeRoux-6Questions, EPP}.  It has generally been believed since the early $2000s$ that the group $\Homeo_c(\D, \omega)$ is not simple: McDuff and Salamon refer to this as the {\bf simplicity conjecture}.  Our main  theorem resolves this conjecture in the affirmative.

\begin{theo}
\label{thm:main}
The group $\Homeo_c(\D, \omega)$ is not simple.  
\end{theo}

In fact, we can obtain an a priori stronger result.  Recall that a group $G$ is called {\bf perfect} if its commutator subgroup  $[G,G]$ satisfies $[G, G] = G$.  The commutator subgroup $[G, G]$ is a normal subgroup of $G$.  Thus, every non-abelian simple group is perfect. However, in the case of certain transformation groups, such as $\Homeo_c(\D, \omega)$, a general argument due to Epstein and Higman \cite{Epstein, Higman} implies that perfectness and simplicity are equivalent;  see Proposition \ref{prop:commutators}. Hence, we obtain the following corollary.

\begin{corol}\label{corol:not_perfect}
The group $\Homeo_c(\D, \omega)$ is not perfect.
\end{corol}

We remark that in higher dimensions, the analogue of Theorem~\ref{thm:main} contrasts our main result: by \cite{fathi}, the group $\Homeo_c(\D^n, \mathrm{Vol})$ of compactly supported volume-preserving homeomorphisms of the $n$-ball is simple for $n \ge 3$.  It also seems that the structure of $\Homeo_c(\D,\omega)$ is radically different from that of the group $\Diff_c(\D,\omega)$ of compactly supported area-preserving diffeomorphisms, as we will review below.

\subsubsection{Background} %

To place our main result in its appropriate context, we begin by reviewing the long and interesting history surrounding the question of simplicity for various subgroups of homeomorphism groups of manifolds.  
Our focus will be on compactly supported homeomorphisms/diffeomorphisms of manifolds without boundary in the component of the identity\footnote{The simplicity question is interesting only for compactly supported maps in the identity component, because this is a normal subgroup of the larger group.  The group $\Homeo_c(\D, \omega)$ coincides with its identity component.}.   

In the 1930s, in the ``Scottish Book", Ulam asked if the identity component of the group of homeomorphisms of the $n$--dimensional sphere is simple.  In 1947,  Ulam and von Neumann announced in an abstract \cite{Ulam-vonNeumann}  a solution to the question in the Scottish Book in the case $n=2$.    %

In the 50s, 60s, and 70s, there was a flurry of activity on this question and related ones.   

First, the works of Anderson \cite{Anderson}, Fisher \cite{Fisher}, Chernavski, Edwards and Kirby \cite{EK} led to the proof of simplicity of the identity component in the group of compactly supported homeomorphisms of any manifold.  These developments led Smale to ask if the identity component in the group of  compactly supported diffeomorphisms of any manifold is simple \cite{Epstein}.  This question was answered affirmatively by Epstein \cite{Epstein},  Herman \cite{Herman}, Mather \cite{ MatherI, Mather74, MatherII}, and Thurston \cite{thurston}\footnote{More precisely, Epstein, Herman and Thurston settled the question in the case of smooth diffeomorphisms, while Mather resolved the case of $C^r$ diffeomorphisms for $r < \infty$ and $r \neq \mathrm{dim(M)}+1.$ The case of $r = \mathrm{dim(M)}+1$ remains open.}.

The connected component of the identity in volume-preserving, and symplectic,  diffeomorphisms admits a homomorphism, called  {\bf flux},  to a certain abelian group.  Hence, these groups are not simple when this homomorphism is non-trivial.  Thurston proved, however, that the kernel of flux is simple in the volume-preserving setting for any manifold of dimension at least three; see \cite[Chapter 5]{Banyaga_book}.  In the symplectic setting,   Banyaga \cite{Banyaga} then proved that this group is simple when the symplectic manifold is closed;  otherwise, it is not simple as it admits a non-trivial homomorphism, called {\bf Calabi}, and Banyaga showed that the kernel of Calabi 
is always simple.  We will recall the definition of Calabi in the case of the disc in Section \ref{sec:prop-norm-subgr}. 

As mentioned above, the simplicity of the identity component in compactly supported volume-preserving homeomorphisms is well-understood in dimensions greater than two,\footnote{The group is trivial in dimension $1$.} thanks to the article \cite{fathi}, in which Fathi shows that, in all dimensions, the group admits a homomorphism, called ``mass-flow";  moreover, the kernel of mass-flow is simple in dimensions greater than two.  On simply connected manifolds, the mass-flow homomorphism is trivial, and so the group is indeed simple in dimensions greater than two ---  in particular, $\Homeo_c(\D^n, \mathrm{Vol})$, the higher dimensional analogue of the group in Question \ref{que:disc}, is simple for $n>2$, as stated above.   

Thus, the following rather simple picture emerges from the above %
cases of the simplicity question.  In the non-conservative setting, the connected component of the identity is simple.  In the conservative setting, there always exists a natural homomorphism (flux, Calabi, mass-flow) which obstructs the simplicity of the group.  However, the kernel of the homomorphism is always simple.  

\subsubsection{The case of surfaces and our case of the disc}

Despite the clear picture above, established by the end of the 70s, the case of area-preserving homeomorphisms of surfaces has remained unsettled  --- the simplicity question has remained open for the disc and more generally for the kernel of the mass-flow homomorphism\footnote{We review the mass-flow homomorphism and discuss possible ideas that build on the present work for establishing simplicity of its kernel in Section \ref{sec:questions_simplicity}.}  
 --- underscoring the importance of answering Question~\ref{que:disc}.  

In fact, the case of area-preserving homeomorphisms of the disc does seem drastically different.   For example, the natural homomorphisms flux, Calabi, and mass-flow mentioned above that obstruct simplicity are all continuous with respect to a natural topology on the group; however, we will show in Corollary \ref{corol:no_continuous_homomorphism}  that there can not exist a continuous homomorphism out of $\Homeo_c(\D, \omega)$ 
with a proper non-trivial kernel, when $\Homeo_c(\D, \omega)$ is equipped with the $C^0$-topology; we will review the $C^0$-topology in \ref{sec:c0}.

Of course,  $\Homeo_c(\D, \omega)$ could still admit a discontinuous homomorphism. For example,  Ghys asks, in his %
ICM address  \cite{Ghys_ICM}, whether the Calabi invariant extends to $\Homeo_c(\D, \omega)$;  this, and related works of Fathi and Oh, are further discussed in Section \ref{sec:extending_Calabi}.   However, as far as we know, even if one does not demand that the homomorphism is continuous, there is still no natural geometrically constructed homomorphism out of $\Homeo_c(\D, \omega)$ that would in any sense be an analogue of the flux, Calabi, or mass-flow; rather, as we will see, our proof of non-simplicity goes by explicitly constructing a non-trivial normal subgroup and showing that it is proper.   It seems likely to us that $\Homeo_c(\D, \omega)$ indeed has a more complicated algebraic structure.  

We will now see in the discussion of Le Roux's work below 
another way in which the  
case of area-preserving homeomorphisms of the disc is quite different.

\subsubsection{``Lots" of normal subgroups and the failure of fragmentation} %
\label{sec:lots}

\sloppy Le Roux \cite{LeRoux10} has previously studied the simplicity question for $\Homeo_c(\D, \omega)$, and this provides useful context for our work; it is also valuable to combine his conclusions with our Theorem~\ref{thm:main}.  

As Le Roux explains, Fathi's proof of simplicity in higher-dimensions does not work in dimension $2$, because it relies on the following ``fragmentation'' result: any element of $\Homeo_c(\D^n, \mathrm{Vol})$ can be written as the product of two others, each of which are supported on topological balls of three-fourths of the total volume; this is not true on the disc. 

  Building on this, Le Roux constructs a whole family $P_\rho$, for $0 < \rho \le 1$, of quantitative fragmentation properties for $\Homeo_c(\D, \omega)$:  the property $P_\rho$ asserts that there is some $\rho' < \rho$ and a positive integer $N$, such that any group element supported on a topological disc of area $\rho$ can be written as a product of $N$ elements, each supported on discs of area at most $\rho'$; we refer the reader to \cite{LeRoux10} for more details. 
For example, if Fathi's fragmentation result held in dimension $2$, then $ P_1$ would hold.

Le Roux then establishes the following alternative: if any one of these fragmentation properties holds, then $\Homeo_c(\D, \omega)$ is simple; otherwise, there is a huge number of proper normal subgroups, constructed in terms of ``fragmentation norms''.  Thus, in view of our Theorem~\ref{thm:main}, we have not just one proper normal subgroup but ``lots" of them; for example, combining Le Roux's work  \cite[Cor.\ 7.1]{LeRoux10}  with our Theorem~\ref{thm:main} yields the following.

\begin{corol} 
\label{cor:leroux}
Every compact\footnote{As above, we are working in the $C^0$-topology.} subset of $\Homeo_c(\D,\omega)$ is contained in a proper normal subgroup.
\end{corol}

As Le Roux explains \cite[Section 7]{LeRoux10}, this is ``radically'' different from the situation for the group $\Diff_c(\mathbb{D},\omega)$ of compactly supported area-preserving diffeomorphisms of the disc with its usual topology, where the normal closure of many one-parameter subgroups is the entire group; a similar argument can be used to show that Corollary~\ref{cor:leroux} is false for the other cases of the simplicity question reviewed above.

In view of our Theorem~\ref{thm:main}, we also have the following result about the failure\footnote{One should stress that a weaker version of fragmentation does hold, even in dimension $2$, for example if we remove the requirement in the definition of $P_\rho$ that the product has $N$ elements then the corresponding property holds for all $\rho$ by \cite{fathi}.}  of fragmentation in dimension $2$, which follows by invoking \cite[Thm.\ 0.2]{LeRoux10}.  

\begin{corol}
\label{cor:leroux2}
None of the fragmentation properties $ P_\rho$ for $0 < \rho \le 1$ hold in $\Homeo_c(\D, \omega)$.
\end{corol}
This generalizes a result of Entov-Polterovich-Py \cite[Sec.\ 5.1]{EPP}, who established Corollary~\ref{cor:leroux2} for $1/2 \le \rho \le 1.$

\subsection{Idea of the proof}

Our goal here is to give an outline of the proof of Theorem~\ref{thm:main}.  Along the way, we state several other results of potentially independent interest, including a resolution of the ``Infinite Twist Conjecture".

\subsubsection{A proper normal subgroup of $\Homeo_c(\D, \omega)$}\label{sec:prop-norm-subgr}

To prove Theorem~\ref{thm:main}, we will define below a normal subgroup of $\Homeo_c(\D, \omega)$ which is a variation on the construction of Oh-M\"uller \cite{muller-oh}.  We will show that this normal subgroup is proper, and in fact, contains the commutator subgroup of $\Homeo_c(\D, \omega)$.  

Denote by $C^{\infty}_c(\S^1 \times \D)$ the set of time-dependent functions, also referred to as  {\bf Hamiltonians}, of the disc whose supports are contained in the interior of $\D$.  As will be recalled in Section \ref{sec:area-pres-diffeos}, one can associate to every $H \in C^{\infty}_c(\S^1 \times \D)$ a {\bf Hamiltonian flow} $\varphi^t_H$. Furthermore, every area-preserving diffeomorphism of the disc $\phi \in \Diff_c(\D, \omega)$ is a Hamiltonian diffeomorphism in the sense that there exists $H \in C^{\infty}_c(\S^1 \times \D) $ such that $\phi = \varphi^1_H$.  

The energy, or the {\bf Hofer norm}, of a Hamiltonian $H \in C^{\infty}_c(\S^1 \times \D)$ is defined by the quantity \[ \| H \|_{(1, \infty)} = \int_0^1 \left( \max_{x \in \D} H(t,\cdot) - \min_{x \in \D} H(t, \cdot)\right) dt.\]

\begin{definition}\label{def:finite_energy_homeo}
An element $\phi \in \Homeo_c(\D, \omega)$ is a {\bf finite-energy homeomorphism} if there exists a sequence of smooth Hamiltonians $H_i \in C^{\infty}_c(\S^1 \times \D)$ such that the sequence $\| H_i \|_{(1, \infty)}$ is bounded, {\it i.e.}\  $\exists \, C \in \R$ such that $\| H_i \|_{(1, \infty)} \leq C$, and the Hamiltonian diffeomorphisms $\varphi^1_{H_i}$ converge uniformly to $\phi$.  We will denote the set of all finite-energy homeomorphisms by $\FHomeo_c(\D, \omega)$.
\end{definition}
We clarify the topology for convergence in the definition, namely the $C^0$ topology, in Section \ref{sec:c0}.

We will show in Proposition \ref{prop:FHomeo_subgp} and Corollary \ref{corol:fhomeo_commutators} that $\FHomeo_c(\D, \omega)$ is a normal subgroup of $\Homeo_c(\D, \omega)$ that contains the commutator subgroup of   $\Homeo_c(\D, \omega)$.  Thus, Theorem~\ref{thm:main} will follow from the following result that we show:
 
 \begin{theo}\label{theo:FHomeo_properness}
 $\FHomeo_c(\D, \omega)$ is a proper normal subgroup of $\Homeo_c(\D, \omega)$.
 \end{theo}
 
\begin{remark}\label{rem:Hameo}
 In defining $\FHomeo_c(\D, \omega)$ as above, we were inspired by the article of Oh and M\"uller \cite{muller-oh}, who defined a normal subgroup of $\Homeo_c(\D, \omega),$ denoted by $\Hameo_c(\D, \omega)$, which is usually referred as the group of {\bf hameomorphisms}; see Section \ref{sec:extending_Calabi} for its definition.   It has been conjectured that $\Hameo_c(\D, \omega)$ is a proper normal subgroup of $\Homeo_c(\D, \omega)$; see for example  \cite[Question 4.3]{muller-oh}.

It can easily be verified  that $\Hameo_c(\D, \omega) \subset \FHomeo_c(\D, \omega)$.  Hence, it follows from the above theorem that   $\Hameo_c(\D, \omega)$ is a proper normal subgroup of  $\Homeo_c(\D, \omega)$.   
 \end{remark}
 
 \subsubsection{Infinite energy homeomorphisms}
 \label{sec:infinitedistance}

  Theorem \ref{theo:FHomeo_properness} gives an
   affirmative answer to  \cite[Question 1]{LeRoux-6Questions} where
   Le Roux asks if there exist area-preserving homeomorphisms of the
   disc which are ``infinitely far in Hofer's distance" from
   area-preserving diffeomorphisms.

To elaborate, for any $\phi \in \Homeo_c(\D, \omega) \setminus \FHomeo_c(\D,
\omega)$ and any sequence $H_i \in C^{\infty}_c(\S^1 \times \D)$ of
smooth Hamiltonians such that the Hamiltonian diffeomorphisms
$\varphi^1_{H_i} \to \phi$ uniformly, then, $\Vert H_i\Vert_{(1,
  \infty)} \to \infty$.   This implies in particular that for any diffeomorphisms $\varphi_i \in \Diff_c(\D,\omega)$ converging to $\phi$, the $\varphi_i$ become arbitrarily far from the identity in the Hofer metric on $\Diff_c(\D,\omega)$, hence Le Roux's formulation above; we will review the Hofer metric in \ref{sec:C0_continuity}.

We remark  that we can also regard such a $\phi$ as having ``infinite Hofer energy".  Prior to our work, it was not known whether or not such infinite energy maps existed. %
  
In the next section, we will see explicit examples of $\phi$ that we will show are  in $\Homeo_c(\D, \omega) \setminus \FHomeo_c(\D,\omega)$.

\subsubsection{The Calabi invariant and the infinite twist}\label{sec:calabi_inf_twist}
The hard part of Theorem~\ref{theo:FHomeo_properness} is to show properness.  Here we describe the key example of an area-preserving homeomorphism that we later show is not in $\FHomeo_c(\D, \omega)$. 

We first summarize some background that will motivate what follows.   As mentioned above, for smooth, area-preserving compactly supported two-disc diffeomorphisms, non-simplicity is known, via the Calabi invariant.    More precisely,  the {\bf Calabi invariant} of $\theta \in \Diff_c(\D, \omega)$ is defined as follows.  Pick any Hamiltonian $H \in C^{\infty}_c(\S^1 \times \D)$ such that $\theta = \varphi^1_H$.  Then, 

$$\Cal(\theta) := \int_{\S^1}\int_{\D} H  \, \omega \; dt.$$ 
It is well-known that the above integral does not depend on the choice of $H$ and so $\Cal(\theta)$ is well-defined; it is also known that $\Cal: \Diff_c(\D, \omega) \rightarrow \R$ is a non-trivial group homomorphism, {\it i.e.} $ \Cal(\theta_1 \theta_2) = \Cal(\theta_1) + \Cal(\theta_2)$.
For further details on the Calabi homomorphism see \cite{calabi,mcduff-salamon}.
 
We will need to know the value of the Calabi invariant for the following class of area-preserving diffeomorphisms.   Let $f :[0, 1]  \to \R$ be a smooth function vanishing near $1$ and define $\phi_f \in \Diff_c(\D, \omega)$  by $\phi_f(0) := 0$ and $
\phi_f(r, \theta) := (r, \theta + 2 \pi f(r)).$  If the function $f$ is taken to be  (positive/negative) monotone, then the map $\phi_f$ is referred to as a \textbf{ (positive/negative) monotone twist}. Since we will be working exclusively with positive monotone twists, we will assume  monotone twists are all positive, unless otherwise stated. %

Now suppose that $\omega = \frac{1}{2\pi} rdr \wedge d\theta$.   A simple computation (see our conventions in Section \ref{sec:prelim_symp}) shows that $\phi_f$ is the time--1 map of the flow of the Hamiltonian defined by 
\begin{equation}\label{eq:hamiltonian_radial_twist}
F(r, \theta) =  \int_r^1 s f(s) ds.
\end{equation}
From this we compute:
\begin{equation}\label{eq:calabi_radial_twist}
\Cal(\phi_f) =   \int_0^1 \int_r^1 sf(s) ds \;  rdr.
\end{equation}

We can now introduce the element that will not be in  $\FHomeo_c(\D, \omega)$. Let $f:(0,1] \rightarrow \R$ be a smooth function which vanishes near $1$, is decreasing, and satisfies $\displaystyle \lim_{r \to 0} f(r) = \infty$.  Define $\phi_f \in \Homeo_c(\D, \omega)$  by $\phi(0) := 0$ and
 \begin{equation}\label{eq:inf_twist}
\phi_f(r, \theta) := (r, \theta + 2 \pi f(r)).
\end{equation}
 It is not difficult to see that $\phi_f$ is indeed an element of  $\Homeo_c(\D, \omega)$  which is in fact smooth away from the origin.  We will refer to $\phi_f$ as an {\bf infinite twist.}
 
 We will show that if
 \begin{equation}  \label{eq:infinite_calabi}
\int_0^1 \int_r^1 sf(s) ds \; r dr = \infty,
 \end{equation}
 then 
 \begin{equation}\label{eqn:inftwist_notin_FHomeo}
 \phi_f \notin \FHomeo_c(\D, \omega). 
 \end{equation}

\subsubsection{The infinite twist conjecture}

The idea outlined in the above 
section is inspired by Fathi's
suggestion that the Calabi homomorphism could extend to the subgroup
$\Hameo_c(\D, \omega)$, the normal subgroup constructed by Oh and
M\"uller mentioned in Remark~\ref{rem:Hameo}; see Conjecture 6.8 in Oh's article
\cite{Oh10}; we discuss this further in Section \ref{sec:extending_Calabi}. Moreover, \cite[Thm.\ 7.2]{Oh10} shows that if the Calabi invariant extended to  $\Hameo_c(\D, \omega)$, then any infinite twist $\phi_f$, satisfying \eqref{eq:infinite_calabi}, would not be in $ \Hameo_c(\D, \omega)$.

Hence,  it seemed reasonable to conjecture %
that $\phi_f \notin \Hameo_c(\D, \omega)$ if \eqref{eq:infinite_calabi} holds.  Indeed, McDuff and Salamon refer to this\footnote{The actual formulation in \cite{mcduff-salamon} is slightly different than this, because it does not include the condition \eqref{eq:infinite_calabi}.  However, without this condition, one can produce infinite twists that lie in $\Hameo_c(\D, \omega)$, by the following argument. Pick a function $f$ with $\lim_{r\to 0}f(r)=\infty$ such that the function $F$ from (\ref{eq:hamiltonian_radial_twist}) is bounded. For such a function $f$, the map $\phi_f$ satisfies the assumptions given in \cite{mcduff-salamon}, but it can be approximated by smooth monotone twist maps in a standard way, showing that $\phi_f$ actually belongs to $\Hameo_c(\D,\omega)$.  The authors of \cite{mcduff-salamon} have confirmed in private communication with us that imposing condition \eqref{eq:infinite_calabi} is consistent with what they intended.}  as the {\bf Infinite Twist Conjecture} and it is Problem 43 on their list of open problems; see \cite[Section 14.7]{mcduff-salamon}.  Since, as stated in Remark~\ref{rem:Hameo}, $\Hameo_c(\D, \omega) \subset \FHomeo_c(\D, \omega)$, we obtain the following Corollary from \eqref{eqn:inftwist_notin_FHomeo}.

\begin{corol}[``Infinite Twist Conjecture"]
Any infinite twist $\phi_f$ satisfying \eqref{eq:infinite_calabi} %
is not in $\Hameo_c(\D, \omega)$.
\end{corol}

Infinite twists can be defined on any symplectic manifold, and we discuss them further in \ref{sec:fhomeoq} in the context of future open questions.  

\subsubsection{Spectral invariants from periodic Floer homology}\label{sec:PFH_spectralinvariants_intro} 
To show that the infinite twist is not in $\FHomeo_c(\D, \omega)$, we use the theory of {\bf periodic Floer homology} (PFH), reviewed in Section \ref{sec:pfhs2}.  PFH is a version of Floer homology for area-preserving diffeomorphisms which was introduced by Hutchings \cite{Hutchings-index, Hutchings-Sullivan-Dehntwist}.  As with ordinary Floer homology, PFH can be used to define a sequence of functions $c_d : \Diff_c(\D, \omega) \rightarrow \R$, where $d\in \N$, called {\bf spectral invariants}, which satisfy various useful properties.   We give the definition of $c_d$ in Section \ref{sec:PFH_spec_initial_properties}, see in particular Remark \ref{rem:PFHspec-disc}.

The definition of PFH spectral invariants is due to Michael Hutchings \cite{Hutchings_unpublished}, but very few properties have been established about these.  We will prove in  Theorem \ref{thm:PFHspec_initial_properties} that the PFH spectral invariants satisfy the following properties:
\begin{enumerate}
\item Normalization: $c_d(\id) = 0$, 
\item Monotonicity: Suppose that $H \leq G$ where $H, G \in C^{\infty}_c(\S^1 \times \D)$.  Then, $c_d(\varphi^1_H) \leq c_d(\varphi^1_G)$ for all $d \in \N$,

\item Hofer Continuity: $|c_d(\varphi^1_H) - c_d(\varphi^1_G) | \leq d \Vert H-G \Vert_{(1, \infty)}$,

\item Spectrality: $c_{d}(\varphi^1_H) \in \Spec_d(H)$ for any $H \in \mathcal{H}$, where $\Spec_d(H)$ is the {\bf order $d$ spectrum} of $H$ 	and is defined in Section \ref{sec:action_spectra}.
\end{enumerate}

A key property, which allows us to use the PFH spectral invariants for studying homeomorphisms (as opposed to diffeomorphisms) is the following theorem, which we prove in Section \ref{sec:C0_continuity} via the methods of
continuous symplectic topology. 

\begin{theo}\label{theo:C0_continuity}
The spectral invariant $c_d : \Diff_c(\D, \omega) \rightarrow \R$ is continuous with respect to the $C^{0}$ topology on  $\Diff_c(\D, \omega)$.  Furthermore, it extends continuously to $\Homeo_c(\D, \omega)$.
\end{theo}

Another key property is the following  which was originally conjectured in greater generality by Hutchings \cite{Hutchings_unpublished}:

\begin{theo}\label{theo:PFHspec_Calabi_property} 
The PFH spectral invariants $c_d : \Diff_c(\D, \omega) \rightarrow \R$ satisfy the {\bf Calabi property}
\begin{equation}
\label{eqn:calabi}
\lim_{d\to \infty} \frac{c_d(\varphi)}{d} = \Cal(\varphi)
\end{equation}
if $\varphi$ is a monotone twist map of the disc.
\end{theo}

The property \eqref{eqn:calabi} can be thought of as a kind of analogue of the ``Volume Property" for ECH spectral invariants proved in \cite{CGHR}.  ECH has many similarities to PFH, which is part of the motivation for conjecturing that something like \eqref{eqn:calabi} might be possible.

\begin{remark}\label{rem:hutchings_conjecture}
In fact, Hutchings \cite{Hutchings_unpublished} has conjectured that the Calabi property in Theorem \ref{theo:PFHspec_Calabi_property} holds more generally for all $\varphi \in \Diff_c(\D, \omega)$.  The point is that we verify this conjecture for monotone twists; and, this is sufficient for our purposes.  Hutchings' conjecture, if true in full generality, would have further implications for our understanding of the algebraic structure of $\Homeo_c(\D,\omega)$, which we further discuss in \ref{sec:fhomeoq}.  
 \end{remark}

\subsubsection{Proof of Theorem \ref{theo:FHomeo_properness}} \label{sec:proof_properness}

We will now give the proof of Theorem \ref{theo:FHomeo_properness}, assuming the results stated above ---  we will prove these later in the paper --- and the forthcoming Proposition~\ref{prop:FHomeo_subgp}, which states that $\text{FHomeo}_c$ is a normal subgroup.

We begin by explaining the basic idea.  As was already explained above, the challenge with our approach is to show that the infinite twist is not in $\FHomeo_c(\mathbb{D},\omega)$.  Here is how we do this.  Theorem \ref{theo:C0_continuity} allows to define the PFH spectral invariants for any $\psi \in \Homeo_c(\D, \omega)$.  We will show, by using the  Hofer Continuity property, that if $\psi$ is a finite-energy homeomorphism then the sequence of PFH spectral invariants $\{c_d(\psi)\}_{d\in \N}$ grows at most linearly.  On the other hand, in the case of  an infinite twist $\phi_f$, satisfying the condition in Equation \eqref{eq:infinite_calabi},  the sequence $\{c_d(\phi_f)\}_{d\in \N}$ has super-linear growth, as a consequence of the Calabi property \eqref{eqn:calabi}.  From this we can conclude that $\phi_f \notin  \FHomeo_c(\D, \omega)$, as desired.

The details are as follows.  We begin with the following lemma which tells us that for a finite-energy homeomorphism $\psi$ the sequence of PFH spectral invariants $\{c_d(\psi)\}_{d\in \N}$ grows at most linearly.

\begin{lemma}\label{lem:linear_growth}
Let  $\psi \in \FHomeo_c(\D, \omega)$ 
be a finite-energy homeomorphism.  Then, there exists a constant $C$, depending on $\psi$, such that $$\frac{c_d(\psi)}{d} \leq C, \, \forall d \in \N.$$
\end{lemma}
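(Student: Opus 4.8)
The plan is to use the Hofer Continuity property of the PFH spectral invariants together with the Normalization property, applied along an approximating sequence of Hamiltonian diffeomorphisms with bounded Hofer energy. Concretely, since $\psi \in \FHomeo_c(\D,\omega)$, by Definition~\ref{def:finite_energy_homeo} there is a sequence $H_i \in C^\infty_c(\S^1\times\D)$ with $\|H_i\|_{(1,\infty)} \leq C_0$ for some fixed $C_0$, such that $\varphi^1_{H_i} \to \psi$ in the $C^0$ topology. By Theorem~\ref{theo:C0_continuity}, the spectral invariant $c_d$ extends continuously to $\Homeo_c(\D,\omega)$, so for each fixed $d$ we have $c_d(\varphi^1_{H_i}) \to c_d(\psi)$ as $i \to \infty$.

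The key estimate comes from Hofer Continuity (property (3) of Theorem~\ref{thm:PFHspec_initial_properties}) applied to the pair $H_i$ and the zero Hamiltonian: since $\varphi^1_0 = \id$ and $c_d(\id) = 0$ by Normalization (property (1)), we get
\[
|c_d(\varphi^1_{H_i})| = |c_d(\varphi^1_{H_i}) - c_d(\id)| \leq d\,\|H_i - 0\|_{(1,\infty)} = d\,\|H_i\|_{(1,\infty)} \leq d\,C_0.
\]
Hence $\dfrac{c_d(\varphi^1_{H_i})}{d} \leq C_0$ for all $i$ and all $d \in \N$. Letting $i \to \infty$ and using the $C^0$-continuity of $c_d$ from Theorem~\ref{theo:C0_continuity}, we conclude $\dfrac{c_d(\psi)}{d} \leq C_0$ for all $d \in \N$. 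Taking $C = C_0$ (the bound on the Hofer norms of the approximating sequence) completes the argument.

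There is essentially no serious obstacle here; the lemma is a soft consequence of the properties already established. The only points requiring a little care are: (i) making sure the constant $C$ is allowed to depend on $\psi$ — indeed it depends on the chosen approximating sequence, which is fine since we only need existence of \emph{some} such bound; and (ii) invoking Theorem~\ref{theo:C0_continuity} in the right form, namely that $c_d$ is genuinely continuous on $\Homeo_c(\D,\omega)$ so that the inequality $c_d(\varphi^1_{H_i})/d \leq C_0$ passes to the limit. One could also phrase the conclusion with $|c_d(\psi)|/d \leq C$, giving a two-sided bound, but only the upper bound is needed for the subsequent comparison with the super-linear growth of the infinite twist.
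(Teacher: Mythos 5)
Your proof is correct and follows essentially the same route as the paper: pick a bounded-energy approximating sequence from the definition of $\FHomeo_c(\D,\omega)$, apply Hofer continuity against the zero Hamiltonian using normalization $c_d(\id)=0$, then pass to the limit via the $C^0$-continuity of $c_d$ from Theorem~\ref{theo:C0_continuity}. The only cosmetic difference is that you record the two-sided bound $|c_d(\varphi^1_{H_i})|\leq dC_0$, whereas the paper states just the upper bound needed.
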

\begin{proof}
By definition, $\psi$ being a finite-energy homeomorphism implies that there exist  smooth Hamiltonians $H_i \in C^{\infty}_c(\S^1 \times \D)$ such that the sequence $\| H_i \|_{(1, \infty)}$ is bounded, {\it i.e.}\  $\exists \, C \in \R$ such that $\| H_i \|_{(1, \infty)} \leq C$, and the Hamiltonian diffeomorphisms $\varphi^1_{H_i}$ converge uniformly to $\psi$.   

The Hofer continuity property  and the fact that $c_d(\id) = 0$ imply that 
 $$c_d(\varphi^1_{H_i}) \leq d \| H_i \|_{(1, \infty)} \leq d C,$$ for each $ d \in \N$.

  On the other hand, by Theorem \ref{theo:C0_continuity} $c_d(\psi) = \lim_{i \to \infty} c_d(\varphi^1_{H_i})$.  We conclude from the above inequality that $c_d(\psi) \leq dC$ for each $ d \in \N$.
\end{proof}

We now turn our attention to showing that the PFH spectral invariants
of an infinite twist $\phi_f$, which satisfies Equation
\eqref{eq:infinite_calabi},  violate the inequality from the above lemma.  We will need the following.

\begin{lemma}\label{lem:approx_monotone_twists}
  There exists a sequence of smooth monotone twists $\phi_{f_i}\in \Diff_c(\D, \omega)$ satisfying the following properties:   
\begin{enumerate}
\item The sequence $\phi_{f_i}$ converges in the $C^0$ topology to $\phi_f$,
\item There exist Hamiltonians  $F_i$, compactly supported in the interior of the disc $\D$, such that $\varphi^1_{F_i}= \phi_{f_i}$ and $F_i \leq F_{i+1}$,
\item $\displaystyle \lim_{i\to \infty} \Cal(\phi_{f_i}) = \infty$.
\end{enumerate}
\end{lemma}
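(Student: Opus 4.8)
The plan is to construct the approximating twists $\phi_{f_i}$ by truncating the radial profile function $f$ near the origin. Recall that $f \colon (0,1] \to \mathbb{R}$ is smooth, decreasing, vanishes near $1$, and satisfies $\lim_{r\to 0} f(r) = \infty$. The idea is to fix a sequence $\epsilon_i \downarrow 0$ and replace $f$ on the interval $(0,\epsilon_i]$ by a function that is bounded there but still agrees with $f$ on $[\epsilon_i, 1]$. Concretely, first I would choose a smooth decreasing function $f_i \colon [0,1] \to \mathbb{R}$ such that $f_i = f$ on $[\epsilon_i, 1]$, $f_i$ is constant equal to $f(\epsilon_i/2)$ near $0$ (say on $[0,\epsilon_i/3]$), and $f(r) \le f_i(r) \le f(\epsilon_i/2)$ on the transition region, with the additional monotonicity-in-$i$ requirement that $f_i \le f_{i+1}$ everywhere; this last condition can be arranged because shrinking $\epsilon_i$ enlarges the region where $f_i$ equals the (larger) value coming from deeper in, and on the complement $f_i = f = f_{i+1}$. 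Since each $f_i$ is smooth and bounded, the associated twist $\phi_{f_i}(r,\theta) = (r, \theta + 2\pi f_i(r))$ lies in $\mathrm{Diff}_c(\D,\omega)$ (using that $f_i$ is constant near $0$, so the map is smooth at the origin and equals a rotation there — in fact one should arrange $f_i$ near $0$ so that $\phi_{f_i}$ is the identity, but a rotation by a fixed angle composed with its inverse supported near the boundary is harmless; more simply, subtract the constant from $f_i$ and note the map is unchanged up to a global rotation that we can ignore, or absorb it — I will address this normalization carefully).

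Next I would verify the three properties. For (1), $C^0$-convergence $\phi_{f_i} \to \phi_f$: away from the origin, $\phi_{f_i}$ and $\phi_f$ agree on $\{r \ge \epsilon_i\}$, and on the disc $\{r \le \epsilon_i\}$ both maps preserve circles and hence move points by at most $2\epsilon_i$ in the Euclidean metric; so $d_{C^0}(\phi_{f_i}, \phi_f) \to 0$. (A symmetric estimate handles $\phi_{f_i}^{-1} \to \phi_f^{-1}$.) For (2), by the conventions recalled in Section~\ref{sec:prelim_symp} and formula \eqref{eq:hamiltonian_radial_twist}, $\phi_{f_i} = \varphi^1_{F_i}$ where $F_i(r,\theta) = \int_r^1 s f_i(s)\, ds$ (up to normalizing so $F_i$ is compactly supported in the interior, which holds since $f_i$ vanishes near $1$; near the origin $F_i$ is smooth because $f_i$ is bounded and the integrand $s f_i(s)$ is integrable, and in fact $F_i$ is locally constant near $0$ if $f_i$ is). The monotonicity $F_i \le F_{i+1}$ follows pointwise by integrating the inequality $f_i \le f_{i+1}$: $F_{i+1}(r,\theta) - F_i(r,\theta) = \int_r^1 s\big(f_{i+1}(s) - f_i(s)\big)\, ds \ge 0$. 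For (3), by \eqref{eq:calabi_radial_twist}, $\mathrm{Cal}(\phi_{f_i}) = \int_0^1 \int_r^1 s f_i(s)\, ds\; r\, dr$; since $f_i \ge f$ on the region where $f > 0$ (and, more to the point, $f_i$ increases to $f$ monotonically as $i \to \infty$ pointwise), the monotone convergence theorem gives $\mathrm{Cal}(\phi_{f_i}) \to \int_0^1\int_r^1 s f(s)\,ds\; r\,dr = \infty$ by the standing hypothesis \eqref{eq:infinite_calabi}.

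The main obstacle, and the point requiring genuine care rather than routine estimates, is the simultaneous bookkeeping near the origin: I need $f_i$ bounded (so $\phi_{f_i}$ is a genuine smooth diffeomorphism with a well-defined Hamiltonian $F_i$ that is smooth and compactly supported in $\mathrm{int}\,\D$), I need $f_i$ monotone in $i$ (for $F_i \le F_{i+1}$), and I need the family to still exhaust the Calabi integral. These are compatible, but one must be slightly careful that the "cutoff value" of $f_i$ near $0$ does not overshoot $f$ at intermediate radii in a way that destroys monotonicity in $i$; choosing the plateau value to be $f(\epsilon_i / 2)$ with $\epsilon_{i+1} < \epsilon_i$ and interpolating monotonically in $r$ between the plateau and $f$ makes $f_i \le f \le f_{i+1}$ wherever $f_{i+1} \ne f_i$ is possible, but verifying $f_i \le f_{i+1}$ in the overlap of the two transition regions needs the interpolations to be nested — this is where I would spend the most effort, most cleanly by defining all $f_i$ from a single auxiliary construction (e.g. $f_i(r) = \min\{f(r), g_i(r)\}$ for a carefully chosen increasing sequence of smooth bounded decreasing functions $g_i$ with $g_i \nearrow \infty$, then smoothing the minimum). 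Everything else is a direct computation from the formulas already recorded in the excerpt.
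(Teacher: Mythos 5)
Your construction is essentially the same as the paper's: truncate $f$ near the origin to get smooth, bounded, decreasing profiles $f_i$ that agree with $f$ outside a shrinking disc and satisfy $f_i \le f_{i+1}$, then read off the Hamiltonians from \eqref{eq:hamiltonian_radial_twist} and check that the Calabi integrals diverge. One slip worth fixing: the constraint $f(r) \le f_i(r) \le f(\epsilon_i/2)$ on the transition region is impossible for $r \in [\epsilon_i/3, \epsilon_i/2)$, where $f(r) > f(\epsilon_i/2)$ because $f$ is decreasing; the inequality should run the other way ($f_i \le f$), which is exactly what a smoothed $\min\{f, c_i\}$ with $c_i \nearrow \infty$ delivers, as you yourself suggest in the closing paragraph. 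Also note that the paper's Calabi argument is shorter than invoking monotone convergence: since $f_i \ge 0$ and $f_i = f$ on $[1/i,1]$, one has
\[
\Cal(\phi_{f_i}) = \int_0^1\!\int_r^1 s f_i(s)\,ds\;r\,dr \;\ge\; \int_{1/i}^1\!\int_r^1 s f(s)\,ds\;r\,dr \;\longrightarrow\; \infty,
\]
which avoids any pointwise comparison of $f_i$ with $f$.
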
 
\begin{proof}
Recall that $f$ is a decreasing function of $r$ which vanishes near $1$ and satisfies $\displaystyle \lim_{r \to 0} f(r)  = \infty$.   It is not difficult to see that we can pick smooth functions $f_i: [0,1] \rightarrow \R$ satisfying the following properties:
\begin{enumerate}
\item $f_i = f$ on $[\frac{1}{i}, 1]$,
\item $f_i \leq f_{i+1}$.
\end{enumerate}
Let us check that the monotone twists $\phi_{f_i}$ satisfy the requirements of the lemma.  To see that they converge to $\phi_f$, observe that $\phi_f$ and $\phi_{f_i}$ coincide outside the disc of radius $\frac{1}{i}$.  Hence, $\phi_f^{-1} \phi_{f_i}$ converges uniformly to $\id$ because it is supported in the disc of radius $\frac{1}{i}$. 
Next, note that by Formula \eqref{eq:hamiltonian_radial_twist}, $\phi_{f_i}$ is the time--1 map of the Hamiltonian flow of $ F_i(r, \theta) =  \int_r^1 s f_i(s) ds$.   Clearly, $F_i \leq F_{i+1}$ because $f_i \leq f_{i+1}$.  Finally,  by Formula \eqref{eq:calabi_radial_twist} we have 
$$ \Cal(\phi_{f_i}) = \int_0^1 \int_r^1 sf_i(s) ds \; rdr \geq   \int_{\frac{1}{i}}^1 \int_r^1 sf_i(s) ds \; rdr  =  \int_{\frac{1}{i}}^1 \int_r^1 sf(s) ds \; r dr.$$
Recall that $f$ has been picked such that $\int_0^1 \int_r^1 sf(s) ds \; rdr = \infty$; see Equation \eqref{eq:infinite_calabi}.  We conclude that $\displaystyle \lim_{i \to \infty}  \Cal(\phi_{f_i}) = \infty$. 
\end{proof}

We will now use Lemma \ref{lem:approx_monotone_twists} to complete the proof of Theorem \ref{theo:FHomeo_properness}. By  the upcoming Proposition~\ref{prop:FHomeo_subgp}, $\text{FHomeo}_c$ is a normal subgroup; it is certainly non-trivial, so it remains to show it is proper.  

By the Monotonicity property, we have  $c_d(\phi_{f_i}) \leq c_d(\phi_{f_{i+1}})$ for each $d\in \N$. Since $\phi_{f_i}$ converges in $C^0$ topology to $\phi_f$, we conclude from Theorem \ref{theo:C0_continuity} that $c_d(\phi_{f}) = \lim_{i \to \infty} c_d(\phi_{f_i})$.  Combining the previous two lines we obtain the following inequality:
$$
c_d(\phi_{f_i}) \leq c_d(\phi_{f}), \forall d,i \in \N.
$$

  Now the Calabi property of Theorem \ref{theo:PFHspec_Calabi_property} tells us that 
$\lim_{d\to \infty} \frac{c_d(\phi_{f_i})}d = \Cal(\phi_{f_i})$.  Combining this with the previous  inequality we get $ \Cal(\phi_{f_i}) \leq \lim_{ d\to \infty} \frac{c_d(\phi_f)}{d}$ for all $i$.  Hence, by the third item in Lemma~\ref{lem:approx_monotone_twists} 
$$\lim_{ d\to \infty} \frac{c_d(\phi_f)}{d} = \infty,$$ and so by Lemma~\ref{lem:linear_growth} $\phi_f$ is not in $\FHomeo_c(\D, \omega)$.  

\begin{remark}\label{rem:only_need_inequality}
The proof outlined above does not use the full force of Theorem \ref{theo:PFHspec_Calabi_property}; it only uses the fact that $ \lim_{d\to \infty} \frac{c_d(\varphi)}{d} \geq  \Cal(\varphi) $. \end{remark}

\subsubsection*{Organization of the paper}
In Section \ref{sec:prelim_symp}, we review some of the necessary background from symplectic geometry, especially the case of surfaces.  In Section \ref{sec:pfhs2} of the paper we review the construction of periodic Floer homology and the associated spectral invariants.  Some of the properties of PFH spectral invariants, such as Hofer continuity and spectrality, are proven in Section \ref{sec:PFH_spec_initial_properties}.  We prove Theorem \ref{theo:C0_continuity}, on $C^0$ continuity of PFH spectral invariants, in Section \ref{sec:C0_continuity}.  Section \ref{sec:PFH_monotone_twist} of the article begins with a more precise version of Theorem \ref{theo:PFHspec_Calabi_property}.  The rest of Section \ref{sec:PFH_monotone_twist} is dedicated to the development of a combinatorial model of the periodic Floer homology of monotone twists.  
 In Section \ref{sec:comuting_spec_invariant}, we first use the aforementioned combinatorial model of PFH to compute the PFH spectral invariants for monotone twists.  We then use this computation to prove that the PFH spectral invariants for monotone twists satisfy the Calabi property of Theorem~\ref{theo:PFHspec_Calabi_property}; this will be carried out in Section \ref{sec:proof_Calabi_property}.

\subsubsection*{Acknowledgments} 
We are grateful to Michael Hutchings for many useful conversations, in particular for explaining the theory of PFH spectral invariants to us and for explaining his conjecture on the Calabi property of PFH spectral invariants.  We warmly thank Lev Buhovsky for helpful conversations  which helped us realize that the Calabi property of PFH spectral invariants could not possibly hold  for spectral invariants satisfying the ``standard" spectrality axiom on the disc.  We are grateful to Fr\'ed\'eric Le Roux for many helpful discussions on fragmentations.  We thank Albert Fathi for sharing a copy of his unpublished manuscript \cite{Fathi-Calabi} with us and for helpful comments.   We also thank Abed Bounemoura, Igor Frenkel, \'Etienne Ghys, Helmut Hofer, Dusa McDuff, Yong-Geun Oh, Dietmar Salamon, Felix Schlenck, Takashi Tsuboi, and  Chris Wendl  for helpful communications. 

DCG is partially supported by NSF grant DMS 1711976 and the Minerva Research Foundation.  DCG is extremely grateful to the Institute for Advanced Study for providing a wonderful atmosphere in which much of this research was completed.  The research leading to this article began in June, 2018 when DCG was a ``FSMP Distinguished  Professor" at the Institut Math\'ematiques de Jussieu-Paris Rive Gauche (IMJ-PRG).  DCG is grateful to the Fondation Sciences Math\'ematiques de Paris (FSMP) and IMJ-PRG for their support and hospitality.  DCG also thanks Kei Irie, Peter Kronheimer, and Dan Pomerleano for helpful discussions.

VH thanks Fr\'ed\'eric Le Roux for hours of discussions related to the problem studied here, which started in 2006.  VH is partially supported by the ANR project ``Microlocal'' ANR-15-CE40-0007.

SS: I am grateful to Lev Buhovsky, Fr\'ed\'eric Le Roux, Yong-Geun Oh and Claude Viterbo from whom I have learned a great deal about the various aspects of the simplicity question.   This material is partially based upon work supported by the NSF under Grant No. DMS-1440140 while I was in residence at the MSRI in Berkeley during the Fall, 2018 semester; this support at MSRI also allowed for valuable in-person discussions with DCG.  This project has received funding from the European Research Council (ERC) under the European Union’s Horizon 2020 research and innovation program (grant agreement No. 851701).

\section{Preliminaries about the symplectic geometry of surfaces}\label{sec:prelim_symp} 
Here we collect some basic facts, and fix notation, concerning two-dimensional symplectic geometry and diffeomorphism groups.

\subsection{Symplectic form on the disc and sphere}
\label{sec:sphere}
Let $\S^2 := \{(x,y, z) \in \R^3 : x^2 + y^2 + z^2 = 1\} \subset \R^3$ and $\D:= \{(x,y)\in \R^2: x^2 + y^2 \leq 1\}$.  
  We equip the sphere $\S^2$ with the symplectic form $\omega:= \frac{1}{4 \pi}d\theta \wedge dz$, where $(\theta,z)$ are cylindrical coordinates on $\mathbb{R}^3$.  Note that with this form, $\S^2$ has area $1.$  Let 
\[ S^+ = \{ (x,y,z) \in \S^2 : z \geq 0 \},\]
be the northern hemisphere in $\S^2$.  In certain sections of the paper, we will need to identify the disc $\D$ with $S^+$.   To do this, we will take the embedding $\iota: \D \to \S^2$ given by the formula 
\begin{equation}\label{eq:identify_disc_north_hemisphere}
\iota(r, \theta) = (\theta,1-r^2),
\end{equation}
where $(r, \theta)$ denotes the standard polar coordinates on $\R^2$.  We will equip the disc with the area form given by the pullback of $\omega$ under $\iota$; explicitly, this is given by the formula $\frac{1}{2\pi} rdr\wedge d\theta$.  We will denote this form by $\omega$ as well.  Note that this gives the disc a total area of $\frac{1}{2}$.  

Any area form on $\S^2$ or $\D$ is equivalent to the above differential forms, up to multiplication by a constant. 

\subsection{The $C^0$ topology}     
\label{sec:c0}

Here we fix our conventions and notation concerning the $C^0$ topology.
 
Denote by $\Homeo(\S^2)$  the group of homeomorphisms of the sphere and by $\Homeo_c(\D)$ the group of homeomorphisms of the disc whose support is contained in the interior of $\D$.
 
 Let $d$ be a Riemannian distance on $\S^2$.  Given two maps $\phi, \psi :\S^2 \to \S^2,$ we denote
$$d_{C^0}(\phi,\psi)= \max_{x\in \S^2}d(\phi(x),\psi(x)).$$  We will say that a sequence of maps $\phi_i : \S^2 \rightarrow \S^2$  {\bf converges uniformly}, or $C^0$--converges, to $\phi$, if $d_{C^0}(\phi_i, \phi) \to 0$ as $ i \to \infty$. As is well known, the notion of $C^0$--convergence does not depend on the choice of the Riemannian metric.  The topology induced by $d_{C^0}$ on $\Homeo(\S^2)$ is referred to as the {\bf $C^0$  topology}.    

The $C^0$ topology on $\Homeo_c(\D)$ is defined analogously as the topology induced by the distance 
$$d_{C^0}(\phi,\psi)= \max_{x\in \D}d(\phi(x),\psi(x)).$$

  \subsection{Hamiltonian diffeomorphisms} \label{sec:area-pres-diffeos} 
  Let  
  \[ \Diff(\S^2, \omega):=\{\theta \in \Diff(\S^2):  \theta^*\omega =\omega\}\] 
  denote the group of area-preserving, in other words symplectic, diffeomorphisms of the sphere.   
  
Let  $C^{\infty}(\S^1 \times \S^2)$ 
denote the set of smooth time-dependent Hamiltonians on $\S^2$.  As alluded to in the introduction, a smooth Hamiltonian $H \in C^{\infty} (\S^1 \times \S^2)$  gives rise to a time-dependent vector field $X_H$, called the {\bf Hamiltonian vector field}, defined via the equation
\[ \omega(X_{H_t}, \cdot) = dH_t.\]  

The {\bf Hamiltonian flow} of $H$, denoted by  $\varphi^t_H$, is by definition the flow of $X_H$.  A {\bf Hamiltonian diffeomorphism} is a diffeomorphism which arises as the time-one map of a Hamiltonian flow.  It is easy to verify that every Hamiltonian diffeomorphism of $\S^2$ is area-preserving.  And, as is well-known,  every area-preserving diffeomorphism of the sphere is in fact a Hamiltonian diffeomorphism.

As for the disc, as mentioned in the introduction, every $\theta \in \Diff_c(\D, \omega)$ is Hamiltonian, in the sense that one can find $H \in C^{\infty}_c(\S^1 \times \D)$ such that $\theta = \varphi^1_H$, where the notation is as in the sphere case.  Here, $C^{\infty}_c(\S^1 \times \D)$ denotes the set of Hamiltonians of $\D$ whose support is compactly contained in $\D$. 

Note that $\Diff(\S^2, \omega) \subset \Homeo_0(\S^2, \omega)$ and $\Diff_c(\D, \omega) \subset \Homeo_c(\D, \omega)$.  It is well-known that $\Diff(\S^2, \omega)$ and $\Diff_c(\D, \omega)$  are dense, with respect to the $C^0$ topology, in $\Homeo_0(\S^2, \omega)$ and $\Homeo_c(\D, \omega)$, respectively.

\paragraph{Hamiltonians for monotone twists.} Recall the definition of monotone twists from Section \ref{sec:calabi_inf_twist}.  Every monotone twist $\varphi$, being an element of $\Diff_c(\D, \omega)$, is a Hamiltonian diffeomorphism of the disc and so can be written as the time--$1$  map $\varphi^1_H$ of the Hamiltonian flow of some $H \in C^\infty_c(\S^1 \times \D)$.  The Hamiltonian $H$ can be picked to be of the form 
\[ H = \frac{1}{2} h(z),\]
where $h: \S^2 \rightarrow \R$ is a function of $z$ satisfying \[ h' \geq 0, h'' \geq 0, h(-1) = 0, h'(-1) = 0.\]

\subsection{Rotation numbers}\label{sec:rotation_number}
Let $p$ be a fixed point of  $\varphi \in \Diff(\S^2, \omega)$.  One can find a  Hamiltonian isotopy $\{\varphi^t\}_{t \in [0,1]}$ such that $\varphi^0 = \id, \varphi^1 = \varphi$ and $\varphi^t(p) = p$ for all $t \in [0, 1]$.   In this section, we briefly review the definitions and some of the properties of  $\rot(\{\varphi^t\}, p)$, the rotation number of the isotopy  $\{\varphi^t\}_{t \in [0,1]}$ at $p$, and  $\rot(\varphi, p),$ the rotation number of $\varphi$ at $p$.  Our conventions are such that  $\rot(\varphi, p)$ will be a real number in the interval $(-\frac12, \frac12]$.  For further details on this subject, we refer the reader to \cite{katok-hasselblatt}.  

  The derivative of the isotopy $D_p\varphi^t: T_p \S^2 \rightarrow T_p\S^2$, viewed as a linear isotopy of $\R^2$, induces an isotopy of the circle $\{f_t\}_{t\in[0,1]}$ with $f_0 = \id$ and $f_1 = D_p \varphi$.  We define $\rot(\{\varphi^t \}, p)$, the {\bf rotation number of the isotopy $(\varphi^t)_{t \in [0,1]}$ at $p$},  to be the Poincar\'e rotation number of the circle isotopy $\{f_t\}$.   

The number $\rot(\{\varphi^t \}, p)$ depends only on $\{D_p \varphi^t\}_{t\in [0,1]}$ and it satisfies the following properties.  Let $\{\psi^t\}_{t\in[0,1]}$ be another Hamiltonian isotopy such that $\psi^0 = \id, \psi^1 = \varphi^1$, and $\psi^t(p) = p$ for $t\in[0,1]$.  Then, 
\begin{enumerate}
\item $\rot(\{\varphi^t \}, p) - \rot(\{\psi^t \}, p) \in \Z$,
\item $\rot(\{\varphi^t \}, p) = \rot(\{\psi^t \}, p)$ if the two isotopies are homotopic rel. endpoints among isotopies fixing the point $p$.
\end{enumerate} 
The above facts may be deduced from the standard properties of the Poincar\'e rotation number; see, for example,  \cite{katok-hasselblatt}.

Lastly, we define  $\rot(\varphi, p)$, the {\bf rotation number of $\varphi$ at $p$}, to be the unique number in $(-\frac12, \frac12]$ which coincides with  $\rot(\{\varphi^t\}, p)$ mod $\Z$.  It follows from the discussion in the previous paragraph that it depends only on $D_p\varphi$. 

\medskip

\subsection{The action functional and its spectrum}\label{sec:action_spectra}
Spectral invariants take values in the ``action spectrum".  We now explain what this spectrum is.

Denote by $\Omega:= \{z: \S^1 \rightarrow \S^2 \}$ the space of all loops in $\S^2$.  By a {\bf capping} of a loop $z: \S^1 \rightarrow \S^2,$ we mean a map 
\[ u : D^2 \rightarrow \S^2,\] 
such that  $u|_{\partial D^2} = z$.  We say two cappings $u, u'$ for a loop $z$ are {\bf equivalent} if $u, u'$ are homotopic rel $z$.  Henceforth, we will only consider cappings up to this equivalence relation.  Note that given a capping $u$ of a loop $z$, all other cappings of $z$ are of the form $u\# A$ where $A \in \pi_2(\S^2)$ and $\# $ denotes the operation of connected sum.  
 
A {\bf capped loop} is a pair $(z,u)$ where $z$ is a loop and $u$ is a capping for $z$.  We will denote by $\tilde{\Omega}$ the space of all capped loops in the sphere. 
    
  Let $H \in C^{\infty} (\S^1 \times \S^2)$ denote a smooth Hamiltonian in $\S^2$.   Recall that  $\mathcal{A}_H: \tilde{\Omega} \rightarrow \mathbb{R}$, the {\bf action functional} associated to the Hamiltonian $H$ is defined by
 \begin{equation}
 \label{eqn:actiondefn}
 \mathcal{A}_H(z,u) =  \int_{0}^{1} H(t,z(t))dt \text{ } +  \int_{D^2} u^*\omega.
 \end{equation}
Note that $\mathcal{A}_H(z,u \# A ) = \mathcal{A}_H(z,u ) + \omega(A),$  for every $A \in \pi_2(\S^2)$.

The set of critical points of $\mathcal{A}_H$, denoted by $\Crit(\mathcal{A}_H)$, consists of capped loops  $(z,u) \in \tilde{\Omega}$ such that $z$ is a $1$--periodic orbit of the Hamiltonian flow $\varphi^t_H$.  We will often refer to such  $(z,u)$ as a capped $1$--periodic orbit of $\varphi^t_H$.

The {\bf action spectrum} of $H$, denoted by $\Spec(H)$, is the set of critical values of $\mathcal{A}_H$; it has Lebesgue measure zero.   It turns out that the action spectrum $\Spec(H)$ is independent of $H$ in the following sense: If  $H'$ is another Hamiltonian such that $\varphi^1_H = \varphi^1_{H'}$, then  there exists a constant $C \in \R$ such that  
$$
\Spec(H) = \Spec(H') + C,$$
 where $\Spec(H') + C$ is the set obtained from $\Spec(H')$ by adding the value $C$ to every element of $\Spec(H')$.   \cite[Lemma 3.3]{Schwarz} proves this in the case where $\omega$ vanishes on $\pi_2(M)$ and the proof generalizes readily to general symplectic manifolds.  %
 Moreover, it follows from the proof of \cite[Lemma 3.3]{Schwarz} that if $H, H'$ are supported in the northern hemisphere $ S^+ \subset \S^2$, then the above constant $C$ is zero and hence 
\begin{equation}\label{eqn:lemma3.3_schwarz}
\Spec(H) = \Spec(H').
\end{equation}

The PFH spectral invariants will take values in a more general set,
which we call the {\bf higher order action spectrum}
To define it,
let $H, G$ be two Hamiltonians.  The {\bf composition} of $H$ and $G$
is the Hamiltonian $H\#G(t,x) : = H(t,x) + G(t, (\phi^t_H)^{-1}(x))$.
It is known that $\phi^t_{H\#G} = \phi^t_H \circ \phi^t_G$; see
\cite[Sec. 5.1, Prop. 1]{hofer-zehnder}, for example. 
Denote by $H^k$ the $k$-times composition of $H$ with itself.  For any $d>0$, we now define the {\bf order $d$ spectrum} of $H$ by $$\displaystyle \Spec_d(H) := \cup_{k_1+ \ldots + k_j = d} \;\; \Spec(H^{k_1}) + \ldots + \Spec(H^{k_j}).$$  Note that $\displaystyle \Spec_d(H)$ may equivalently be described as follows: For every value $a \in   \Spec_d(H)$ there exist  capped periodic orbits $(z_1, u_1), \ldots, (z_k, u_k)$ of $H$ the sum of whose periods is $d$ and such that
\[
a = \sum \mathcal{A}_{H^{k_i}}(u_i, z_i).
\]
 
 We can use the above to define the action spectrum for compactly supported disc maps.  Recall from Section \ref{sec:sphere} our convention to identify the northern hemisphere of $\mathbb{S}^2$ with the disc; we will use this  to define the action spectrum in the case of the disc.

More precisely, if $H, H'$ are supported in the northern hemisphere $ S^+ \subset \S^2$,  and generate the same time-1 map $\phi$, then we in fact have $\Spec_d(H) = \Spec_d(H')$ for all $d>0$.  
Indeed, as an immediate consequence of Equation \eqref{eqn:lemma3.3_schwarz} we have $\Spec(H^k) = \Spec(H'^k)$ for all $k \in \N$ and so it follows from the definition that $\Spec_d(H) = \Spec_d(H')$ for all $d>0$.   Hence, if $\phi \in \DiffS$, then we can
define the action spectra of $\phi$ without any ambiguity by setting
\begin{equation} \label{eqn:spec_time1}
\Spec_d(\phi) = \Spec_d(H),
\end{equation}
 where $H$ is any Hamiltonian in $C^{\infty}_c(\S^1 \times  S^+)$ such that $\phi = \varphi^1_H$.

\subsection{Finite energy homeomorphisms}
  Recall that we defined finite-energy homeomorphisms in Definition \ref{def:finite_energy_homeo}.  It is not hard to show that $\FHomeo_c(\D, \omega)$ is a normal subgroup of $\Homeo_c(\D, \omega)$ and $\FHomeo_c(\D, \omega)$ contains the commutator subgroup of  $\Homeo_c(\D, \omega)$.  In this section, we show that $\FHomeo_c(\D, \omega)$ is a  normal subgroup.  
  
  \begin{prop}\label{prop:FHomeo_subgp}
  $\FHomeo_c(\D, \omega)$ is a normal subgroup of $\Homeo_c(\D, \omega)$. 
  \end{prop}
  \begin{proof}
  Consider smooth Hamiltonians $H, G \in C^{\infty}_c(\S^1 \times
  \D)$.  As was partly mentioned in Section \ref{sec:action_spectra},
  it is well-known (and proved for example in \cite[Sec. 5.1, Prop. 1]{hofer-zehnder})
  that the Hamiltonians 
  \begin{equation}
  \label{eqn:somehams}
  H\#G(t, x) := H(t,x) + G(t, (\varphi^t_H)^{-1}(x)), \quad \bar{H}(t,x) := - H(t, \varphi^t_H(x)),
  \end{equation}
 generate the Hamiltonian flows $ \varphi^t_H \phi^t_G$ and 
 $(\varphi^t_H)^{-1}$ respectively.   Furthermore, given $\psi \in \Diff_c(\D, \omega)$, the Hamiltonian
 \[ H\circ \psi(t,x):= H(t, \psi(x))\]
 generates the flow $\psi^{-1} \varphi^t_H \psi$.  
  
We now show that $\FHomeo_c$ is closed under conjugation.  Take $\phi
\in \FHomeo_c(\D, \omega)$ and let $H_i$ and $C$ be as in Definition
\ref{def:finite_energy_homeo}. %
Let $\psi \in \Homeo_c(\D, \omega)$ and take a sequence $\psi_i \in \Diff_c(\D, \omega)$ which converges uniformly to $\psi$.  Consider the Hamiltonians $H_i \circ \psi_i$.  The corresponding Hamiltonian diffeomorphisms are the conjugations $\psi_i^{-1} \varphi^1_{H_i} \psi_i$ which converge uniformly to $\psi^{-1} \phi \psi$.  Furthermore, \[\| H_i \circ \psi_i \|_{(1, \infty)} = \| H_i \|_{(1, \infty)}  \leq C,\] where the inequality follows from the definition of $\FHomeo_c(\D, \omega)$.

We will next check that $\FHomeo_c$  is a group.  Take $\phi, \psi \in \FHomeo_c$ and let $H_i, G_i  \in C^{\infty}_c(\S^1 \times \D)$ be two sequences of Hamiltonians such that  $\varphi^1_{H_i}, \varphi^1_{G_i}$ converge uniformly to $\phi, \psi$, respectively, and $ \| H_i \|_{(1, \infty)},  \| G_i \|_{(1, \infty)} \leq C $ for some constant $C$.  Then, the sequence $\varphi^{-1}_{H_i} \circ \varphi^1_{G_i}$ converges uniformly to $\phi^{-1} \circ \psi$.  Moreover,  by the above formulas, we have  $\varphi^{-1}_{H_i} \circ \varphi^1_{G_i} =  \varphi^1_{\overline{H}_i \#G_i}$.   
Since $ \| \overline H_i \# G_i \|_{(1, \infty)} \leq  \| H_i \|_{(1, \infty)} +  \| G_i \|_{(1, \infty)} \leq 2C,$ this proves that $\phi^{-1} \circ \psi \in \FHomeo_c$ which completes the proof that $\FHomeo_c$ is a group. 
\end{proof}
  
\subsection{Equivalence of perfectness and simplicity}
The goal of this section is to show that in the case of $\Homeo_c(\D, \omega)$ perfectness and simplicity are equivalent. 
\begin{prop}\label{prop:commutators}
Any non-trivial normal subgroup $H$ of  $\Homeo_c(\D, \omega)$ contains the  commutator subgroup of $\Homeo_c(\D, \omega)$.   Hence, $\Homeo_c(\D, \omega)$ is perfect if and only if it is simple.
\end{prop}

Before proving  the above proposition, we state two of its corollaries.

\begin{corol}\label{corol:fhomeo_commutators}The commutator subgroup of $\Homeo_c(\D, \omega)$ is contained in $\FHomeo_c(\D, \omega)$.
\end{corol}
\begin{remark}\label{rem:inf_twist_not_commutator}   As a consequence, proving that the infinite twist $\phi_f$, introduced in Section \ref{sec:calabi_inf_twist}, is not in  $\FHomeo_c(\D, \omega)$ proves also that $\phi_f$ cannot be written as a product of commutators.
\end{remark}

As promised in the introduction, we prove in the next corollary that $\Homeo_c(\D, \omega)$ admits no non-trivial continuous homomorphisms.  This fact seems to be well-known to the experts, however, we do not know of a published reference for it.

\begin{corol}\label{corol:no_continuous_homomorphism}
 The group $\Homeo_c(\D, \omega)$ admits no non-trivial homomorphism which is continuous with respect to the $C^0$ topology.
\end{corol}
\begin{proof}
Let $H$ be a non-trivial normal subgroup of  $\Homeo_c(\D, \omega)$.   We will show that $H$ is dense with respect to the $C^0$ topology; this proves the corollary because the kernel of a continuous homomorphism is closed.

By Proposition \ref{prop:commutators}, we know that $H$ contains the  commutator subgroup of $\Diff_c(\D, \omega)$.  Consequently, $H$ contains the kernel of the Calabi homomorphism as the commutator subgroup of $\Diff_c(\D, \omega)$  coincides with the kernel of the Calabi invariant \cite{Banyaga}.  

We claim that the kernel of the Calabi invariant is dense in $\Diff_c(\D, \omega)$; hence, it is dense in $\Homeo_c(\D, \omega)$.  Indeed, take any $\psi \in \Diff_c(\D, \omega)$ and denote $a= \Cal(\psi)$.  Pick Hamiltonians $H_n$ such that 
\begin{itemize}
\item $H_n$ is supported in a disc of diameter $\frac{1}{n}$,
\item $\int_\D H_n = -a$.  Thus, $\Cal(\varphi^1_{H_n}) = -a.$
\end{itemize}  
Then, $\Cal(\varphi^1_{H_n} \circ \psi) = 0$ and $\varphi^1_{H_n} \circ \psi \xrightarrow{C^0} \psi$.
\end{proof}

The proof of  Proposition \ref{prop:commutators} relies  on  a general argument, due to Epstein \cite{Epstein} and Higman \cite{Higman}, which essentially shows that perfectness implies simplicity for transformation groups satisfying certain assumptions. We will present a version of this argument, which we learned in \cite{fathi}, in our context. 
\begin{proof}[Proof of Proposition \ref{prop:commutators}]
 Pick $h \in H$ such that $ h \neq \mathrm{Id}$.  We can find a closed  {\it topological disc}, that is a set which is homeomorphic to a standard Euclidean disc $\D' \subset \D$ such that $h(\D') \cap \D' = \emptyset$.
Denote by $\Homeo_c(\D', \omega)$ the subset of $ \Homeo_c(\D, \omega)$ consisting of area-preserving homeomorphisms whose supports are contained in the interior of $\D'$.  We will first prove the following lemma.

\begin{lemma}
\label{lem:comh}
The commutator subgroup of $\Homeo_c(\D', \omega)$ is contained in $H$. 
\end{lemma}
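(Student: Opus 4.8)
The plan is to exploit the commutator trick (often attributed to Epstein--Higman, sometimes called the ``infinite swindle'') using the fact that $h$ displaces $\D'$, that is, $h(\D') \cap \D' = \emptyset$. The key point is that any commutator in $\Homeo_c(\D', \omega)$ can be rewritten as a commutator in the ambient group $\Homeo_c(\D, \omega)$ with one of the factors being $h$ (or a conjugate of $h$), hence lies in $H$ by normality of $H$.

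First I would fix $f, g \in \Homeo_c(\D', \omega)$ and consider the commutator $[f,g] = fgf^{-1}g^{-1}$. The standard observation is the following: since $g$ is supported in $\D'$ and $h(\D') \cap \D' = \emptyset$, the homeomorphisms $g$ and $hgh^{-1}$ have disjoint supports (the latter is supported in $h(\D')$), so they commute. The trick is to write $[f,g]$ using $h$. Concretely, set $g' := h g h^{-1}$, supported in $h(\D')$. Because $f$ is supported in $\D'$, which is disjoint from $h(\D')$, $f$ commutes with $g'$. One then checks the identity
\[
[f, g] = [f, g (g')^{-1}] = \big(f \, g (g')^{-1} f^{-1}\big)\, (g' g^{-1}) = [f, g g'^{-1}],
\]
and since $g g'^{-1} = g h g^{-1} h^{-1} = [g, h]$, we get $[f,g] = [f, [g,h]] = [f, g h g^{-1} h^{-1}]$. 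Now I rewrite this as a product of conjugates of $h$ and $h^{-1}$: expanding, $[f, [g,h]]$ lies in the normal closure of $h$, hence in $H$ by normality. (The cleanest way to present it: $[f,g] = (f[g,h]f^{-1})[g,h]^{-1}$, and $f[g,h]f^{-1} = [fgf^{-1}, fhf^{-1}]$ while each of $[g,h]$ and its conjugate is a product of a conjugate of $h$ and a conjugate of $h^{-1}$, both in $H$.)

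Thus the main step is the purely algebraic manipulation above, valid once we know $g$ and $hgh^{-1}$ commute; the only geometric input is the existence of the displaced topological disc $\D'$, which is elementary (pick any small round disc near a point $x$ with $h(x) \neq x$, which exists since $h \neq \mathrm{Id}$, and shrink it using continuity of $h$). I anticipate no serious obstacle here: the argument is soft and the disjoint-supports-commute principle for homeomorphisms supported in disjoint open sets is immediate. The one point requiring a little care is bookkeeping the conjugating elements so that every factor is manifestly an element of $H$ — this is where writing $[f,g]$ as an explicit product of conjugates of $h^{\pm 1}$ pays off, since $H$ being normal then closes the argument. After Lemma~\ref{lem:comh} is established, the proof of Proposition~\ref{prop:commutators} will be completed by invoking perfectness of $\Homeo_c(\D', \omega)$ (which holds since it is isomorphic to $\Homeo_c(\D, \omega)$, and this group is known to be perfect, e.g.\ by Fathi's results, or one bootstraps via a covering/conjugation argument) together with a partition-of-the-disc argument showing the commutator subgroup of $\Homeo_c(\D,\omega)$ is generated by commutators of elements supported in small discs like $\D'$.
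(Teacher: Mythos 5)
Your proof of the lemma is correct and follows essentially the same Epstein--Higman displacement trick as the paper, with a slightly different algebraic identity. The paper verifies $[f,g]\,[g,hfh^{-1}] = f\,[g,[f^{-1},h]]\,f^{-1}$ and kills the factor $[g,hfh^{-1}]$ by disjointness of the supports of $g$ and $hfh^{-1}$; you instead use the simpler $[f,g]=[f,[g,h]]$, which follows from $f$ commuting with $g'=hgh^{-1}$. The two are dual under interchanging the roles of $f$ and $g$, and yours is arguably the cleaner route; in both cases the final step is to expand the right-hand side as a product of conjugates of $h^{\pm1}$ and invoke normality of $H$. The one small presentational issue is that your displayed chain $[f,g]=[f,g(g')^{-1}]=\cdots$ states the first equality without exhibiting where $f g'=g' f$ enters: the actual justification is $[f,g(g')^{-1}] = fg(g')^{-1}f^{-1}g'g^{-1} = fgf^{-1}(g')^{-1}g'g^{-1} = [f,g]$, commuting $(g')^{-1}$ past $f^{-1}$ in the middle.

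One warning, though it lies outside the scope of the lemma itself: the remark you append on how to finish Proposition~\ref{prop:commutators} by ``invoking perfectness of $\Homeo_c(\D',\omega)$'' is wrong and circular in this paper. The group $\Homeo_c(\D,\omega)$ is \emph{not} known to be perfect --- establishing its non-perfectness is precisely the main theorem --- and Fathi's perfectness result applies only in dimension $\ge 3$. What the paper actually does to close the proposition is invoke Fathi's \emph{fragmentation} theorem: every element of $\Homeo_c(\D,\omega)$ is a product of elements supported in small topological discs, and the lemma (plus a conjugation to move such discs into $\D'$) shows $[f,g]\in H$ for all such $f,g$; hence $\Homeo_c(\D,\omega)/H$ is abelian, hence $H$ contains the commutator subgroup. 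No perfectness of the ambient group is used.
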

\begin{proof}
We must  show that for any $f, g \in \Homeo_c(\D', \omega)$, the commutator $[ f,g]  := fgf^{-1} g^{-1}$ is an element of $H$.

First, observe that for any $f \in \Homeo_c(\D', \omega)$ we have 
\begin{equation}\label{eq:commutator1}
[f, r] \in H
\end{equation}
for any $r \in H$.   Indeed, by normality, $ frf^{-1}  \in H$ and hence   $ frf^{-1} r^{-1} \in  H$.   Next, we claim that for any $f,g \in \Homeo_c(\D', \omega)$
 \begin{equation}\label{eq:commutator2}
[ f,g] [g, hfh^{-1}] = f [g, [f^{-1}, h] \, ] f^{-1}.
\end{equation}
Postponing the  proof of this identity for the moment, we will first show that it implies the lemma.    Note that $g$ and $hfh^{-1}$  are, respectively, supported in $\D'$ and $h(\D')$ which are disjoint.  Thus, $[g, hfh^{-1}] = \id$.  Hence, Identity \eqref{eq:commutator2} yields  
$[ f,g] = f [g, [f^{-1}, h] \, ] f^{-1}$.  Now, \eqref{eq:commutator1} implies that $ [g, [f^{-1}, h] \, ]  \in H$ which, by normality of $H$, implies that $f [g, [f^{-1}, h] \, ] f^{-1} \in H$.  This gives us the conclusion of the lemma. We complete the proof by proving  Identity \eqref{eq:commutator2}:
\begin{align*}
    & [ f,g] [g, hfh^{-1}] = ( fgf^{-1}g^{-1}) \,  (ghfh^{-1}g^{-1}hf^{-1}h^{-1}) \\
 = & fg ( f^{-1}hfh^{-1}) g^{-1}hf^{-1}h^{-1} = fg [f^{-1},h] g^{-1}hf^{-1}h^{-1} \\
 =  & fg [f^{-1},h] g^{-1}hf^{-1}h^{-1} f f^{-1} =  fg [f^{-1},h] g^{-1} [h, f^{-1}]  f^{-1} \\
 =  & f[g, [f^{-1}, h] \, ] f^{-1}.
\end{align*} \end{proof}

We continue with the proof of Proposition \ref{prop:commutators}.  Fix
a small $\varepsilon >0$ and let $\mathcal{E}$ 
 be the set consisting of all  $g \in \Homeo_c(\D, \omega)$ whose supports are contained in some topological disc of area $\varepsilon$.  It is a well-known fact that the set $\mathcal{E}$ generates the group $\Homeo_c(\D , \omega)$.  This is usually referred to as the \emph{fragmentation property} and it was proven by Fathi; see  Theorems 6.6, A.6.2, and A.6.5 in \cite{fathi}.

We claim that $[f, g] \in H$  for any $f,g \in \mathcal{E}$. Indeed,  assuming $\varepsilon$ is small enough, we can find a topological disc $U$ which contains the supports of $f$ and $g$ and whose area is less than the area of $\D'$.  There exists $r \in \Homeo_c(\D , \omega)$ such that $r(U) \subset \D'$.  As a consequence, $rfr^{-1}, r g r^{-1}$ are both supported in $\D'$ and hence, by Lemma~\ref{lem:comh}, $[rfr^{-1}, r g r^{-1}] \in H$.
Since $H$ is a normal subgroup of $ \Homeo_c(\D , \omega)$, and  $[rfr^{-1}, r g r^{-1}] = r [f,g] r^{-1}$, we conclude that $[f, g] \in H$.  

Now, the set $\mathcal{E}$ generates  $\Homeo_c(\D , \omega)$ and $[f,g] \in H$  for any $f,g \in \mathcal{E}$.   Hence,  the quotient group $\Homeo_c(\D , \omega)/ H$ is abelian. Thus,  $H$ contains the commutator subgroup of $\Homeo_c(\D , \omega)$.
\end{proof}

\section{Periodic Floer Homology and basic properties of the PFH spectral invariants}
\label{sec:pfhs2}

In this section, we recall the definition of Periodic Floer Homology (PFH), due to Hutchings \cite{Hutchings-index, Hutchings-Sullivan-Dehntwist}, and the construction of the spectral invariants which arise from this theory, also due to Hutchings \cite{Hutchings_unpublished}.  We will then prove that PFH spectral invariants satisfy the Monotonicity, Hofer Continuity, and Spectrality properties which we mentioned in the introduction.  The spectral invariants appearing in Section \ref{sec:PFH_spectralinvariants_intro}  are defined by identifying  area-preserving maps of the disc, $\Diff_c(\D, \omega)$, with area-preserving maps of the sphere, which are supported in the northern hemisphere $S^+$, and using the PFH of $\S^2$.   Thus, the three aforementioned properties will follow from  related properties about PFH spectral invariants on $\S^2$; see Theorem \ref{thm:PFHspec_initial_properties} below.

\subsection{Preliminaries on $J$--holomorphic curves and stable Hamiltonian structures}\label{sec:prelim_Jcurves}

A stable Hamiltonian structure (SHS) on a closed three-manifold $Y$ is a pair $(\alpha, \Omega)$, consisting of a $1$--form $\alpha$ and  a closed  two-form $\Omega$, such that 
\begin{enumerate}
\item $\alpha \wedge \Omega $ is a volume form on $Y$, 
\item  $\ker(\Omega) \subset \ker(d\alpha)$.
\end{enumerate}

Observe that the first condition implies that $\Omega$ is non-vanishing, and as a consequence, the second  condition is equivalent to $d\alpha = g\Omega$, where $g:Y \rightarrow \R$ is a smooth function. 

  A stable Hamiltonian structure determines a plane field $\xi : = \ker(\alpha)$ and a {\bf Reeb} vector field $R$ on $Y$ given by 
  $$R \in \ker(\Omega), \; \alpha(R) =1.$$
 Closed integrals curves of $R$ are called {\bf Reeb orbits}; we regard Reeb orbits as equivalent if they are equivalent as currents.  %
  
  Stable Hamiltonian structures were introduced in \cite{BEHWZ, CiMo} as a setting in which one can obtain general Gromov-type compactness results, such as the SFT compactness theorem, for pseudo-holomorphic curves in $\R \times Y$.  Here are two examples of stable Hamiltonian structures which are relevant to our story.
  
  \begin{example}\label{ex:contact_structure}
  A {\bf contact} form on $Y$ is a $1$-form $\lambda$ such that $\lambda \wedge d\lambda$ is a volume form. The pair $(\alpha, \Omega) := (\lambda,  d \lambda)$ gives a stable Hamiltonian structure  with $g \equiv 1$.  The plane field $\xi$ is the associated contact structure and the Reeb vector field as defined above gives the usual Reeb vector field of a contact form. 
  
  The {\bf contact symplectization} of $Y$ is
\[ X := \mathbb{R} \times Y_\varphi , \]
which has a standard symplectic form, defined by
\begin{equation}
\label{eqn:symp_form_symplectization}
\Gamma = d(e^s \lambda),
\end{equation}
where $s$ denotes the coordinate on $\R$.
  \end{example}
  
   \begin{example}\label{ex:mapping_torus}
 Let $(S,\omega_S)$ be a closed surface and denote by $\varphi$ a smooth area-preserving diffeomorphism of $S$. Define the mapping torus 
 $$Y_\varphi := \frac{S \times [0,1]}{(x,1) \sim (\varphi(x), 0) }.$$
 Let $r$ be the coordinate on $[0,1]$.  Now, $Y_\varphi$ carries a stable Hamiltonian structure  $(\alpha, \Omega) := (dr, \omega_\varphi)$, where $\omega_\varphi$ is the canonical closed two form on $Y_\varphi$ induced by $\omega_S$.  Note that the plane field $\xi$ is given by the vertical tangent space of the fibration $\pi: Y_\varphi \rightarrow \S^1$ and the Reeb vector field is given by $R =\partial_r$.  Here, $g \equiv 0$.   Observe that the Reeb orbits here	 are in correspondence with the periodic orbits 
 of $\varphi$.
 
 We define the {\bf symplectization} of $Y_\varphi$ 
\[ X := \mathbb{R} \times Y_\varphi ,\]
which has a standard symplectic form, defined by
\begin{equation}
\label{eqn:standard2}
\Gamma = ds \wedge dr + \omega_{\varphi},
\end{equation}
where $s$ denotes the coordinate on $\R$.
\end{example}

We say an almost complex structure $J$ on $X = \R \times Y$ is {\bf admissible}, for a given SHS $(\alpha, \Omega)$, if the following conditions are satisfied:
\begin{enumerate}
\item $J$ is invariant under translation  in the $\R$-direction of $\R\times Y$,
\item $J \partial_s =  R$, where $s$ denotes the coordinate on the $\R$-factor of $\R \times Y$,  
\item  $ J \xi = \xi$, where $\xi := \ker(\alpha)$, and $\Omega(v, Jv) > 0$ for all nonzero $v \in \xi$.
\end{enumerate} 
We will denote by $\mathcal{J}(\alpha, \Omega)$ the set of almost complex structures which are admissible for $(\alpha, \Omega)$.   The space $\mathcal{J}(\alpha, \Omega)$ equipped with the $C^{\infty}$ topology is path connected, and even contractible. 

  Define a {\bf $J$-holomorphic map} to be a smooth map
\[ u: (\Sigma,j)  \to (X,J),\]
satisfying the equation
\begin{equation}
\label{eqn:jcurveequation}
du \circ j = J \circ du,
\end{equation}
where $(\Sigma,j)$ is a closed Riemann surface (possibly disconnected), minus a finite number of punctures.   As is common in the literature on ECH,  we will sometimes have to consider $J$-holomorphic maps up to equivalence of currents, and we call such an equivalence class a {\bf $J$-holomorphic current}; see  \cite{Hutchings-Notes} for the precise definition of this equivalence relation.   An equivalence class of $J$-holomorphic maps under the relation of biholomorphisms of the domain will be called a {\bf $J$-holomorphic curve};  this relation might be more familiar to the reader, but is not sufficient for our needs.

For future reference, we will call a $J$-holomorphic curve or current {\bf irreducible} when its domain is connected.  A $J$-holomorphic map $u:  (\Sigma,j)  \to (X,J)$ is called {\bf somewhere injective} if there exists a point $ z \in \Sigma$ such that $u^{-1}(u(z))$ = \{z\} and $du: T_z\Sigma \rightarrow T_{u(z)} X$ is injective.   We say that $u$ is a {\bf multiple cover} if there exists a branched cover $\phi : (\Sigma, j) \rightarrow  (\Sigma', j')$ of degree greater than $1$ and a $J$--holomorphic map $u': (\Sigma',j')  \to (X,J)$ such that $u = u' \circ \phi$. Every nonconstant irreducible $J$--holomorphic curve is either somewhere injective or multiply covered; and, two somewhere injective maps are equivalent as curves if and only if they are equivalent as currents.  %

In the lemma below we state a standard property of $J$-holomorphic curves which plays a key role in our arguments.  %
For a proof see the argument in \cite[Lem. 9.9]{Wendl-Notes}, for example.

\begin{lemma}
\label{lem:pointwisenonnegative} Suppose $J \in \mathcal{J}(\alpha, \Omega)$ where $(\alpha, \Omega)$ is a stable Hamiltonian structure on $Y$.  If  $C$ is a $J$-holomorphic curve in $\R \times Y$, then $\Omega$ is pointwise nonnegative on $C$.  Furthermore, $\Omega$ vanishes at a point on $C$ only if $C$ is tangent to the span of $\partial_s$ and $R$.  %
\end{lemma}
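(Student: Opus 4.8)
The statement to prove is Lemma~\ref{lem:pointwisenonnegative}: for $J \in \mathcal{J}(\alpha,\Omega)$ and $C$ a $J$-holomorphic curve in $\R\times Y$, the two-form $\Omega$ (pulled back to $\R\times Y$ from $Y$) is pointwise nonnegative on $C$, and vanishes at a point of $C$ only where $C$ is tangent to $\mathrm{span}(\partial_s, R)$.

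The plan is to work pointwise and use the fact that a $J$-holomorphic curve is, at each point, either a point where $du$ vanishes or a point where the image of $du$ is a $J$-invariant two-plane. First I would reduce to checking the claim on an arbitrary such $J$-complex line $P = \mathrm{im}(du_z) \subset T_{(s,y)}(\R\times Y)$. Using the admissibility conditions, I would set up the splitting $T_{(s,y)}(\R\times Y) = \R\partial_s \oplus \R R \oplus \xi$, note that $\Omega$ restricted to this tangent space annihilates $\partial_s$ (since $\Omega$ is pulled back from $Y$ and... more precisely $\iota_{\partial_s}\Omega = 0$) and annihilates $R$ (since $R \in \ker\Omega$), and that $\Omega|_\xi$ is an area form compatible with $J|_\xi$ by condition (3), i.e. $\Omega(v, Jv) > 0$ for $0 \neq v \in \xi$.

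The key computation: write a basis $\{u, Ju\}$ of the $J$-line $P$ for some $0 \neq u \in P$, decompose $u = a\partial_s + bR + w$ with $w \in \xi$ and $a,b\in\R$. Then $Ju = aR - b\partial_s + Jw$ using $J\partial_s = R$ (hence $JR = -\partial_s$) and $J\xi = \xi$. Now evaluate
\[
\Omega(u, Ju) = \Omega(a\partial_s + bR + w,\ -b\partial_s + aR + Jw) = \Omega(w, Jw),
\]
because every term involving $\partial_s$ or $R$ in either slot dies ($\iota_{\partial_s}\Omega = 0$, $\iota_R\Omega = 0$). By condition (3) this is $\geq 0$, and it is $>0$ unless $w = 0$, i.e. unless $u \in \mathrm{span}(\partial_s, R)$, in which case $P = \mathrm{span}(\partial_s, R)$. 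Since $\{u, Ju\}$ is an oriented basis of $P$ compatible with the complex orientation, this says exactly that $\Omega|_P \geq 0$ with respect to the orientation of $C$, with equality iff $P$ is the $(\partial_s, R)$-plane. At points where $du_z = 0$ there is nothing to check. This establishes both assertions.

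I do not expect a serious obstacle here; this is a standard linear-algebra argument once the admissibility axioms are unwound, and the only thing to be slightly careful about is the orientation convention (so that "nonnegative on $C$" is interpreted with respect to the orientation $C$ inherits as a $J$-holomorphic curve) and the fact that $\iota_{\partial_s}\Omega = 0$ because $\Omega$ is the pullback of a form on $Y$ under the projection $\R\times Y \to Y$. If one wants to avoid the orientation bookkeeping entirely, one can simply cite \cite[Lem.~9.9]{Wendl-Notes} as the excerpt already suggests, but the three-line computation above is self-contained and I would include it for completeness.
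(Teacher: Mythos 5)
Your computation is correct, and it is the standard pointwise linear-algebra argument. The paper itself does not prove this lemma but delegates it to \cite[Lem.~9.9]{Wendl-Notes}; your self-contained version is what one finds there once the admissibility axioms are unpacked. The two points on which the argument hinges---that $\iota_{\partial_s}\Omega=0$ (since $\Omega$ is pulled back from $Y$) and $\iota_R\Omega=0$ (from the definition of the Reeb vector field)---are exactly right, and they make the cross terms in $\Omega(u,Ju)$ drop out. The orientation remark is also the correct thing to observe: $du_z$ is complex-linear, so $\{u,Ju\}$ is a positively oriented basis of $\mathrm{im}\,du_z$ relative to the orientation $C$ carries, and therefore $\Omega(u,Ju)\ge 0$ is precisely the statement that $u^*\Omega$ is a nonnegative multiple of the area form on the domain. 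One could quibble about points where $du_z=0$: you dismiss them because $u^*\Omega=0$ there automatically, which handles the nonnegativity claim, but the ``tangent to $\mathrm{span}(\partial_s,R)$'' clause in the lemma is then vacuous at such points (or requires the local structure theorem for $J$-holomorphic curves to make sense of ``tangent''). Since these points are isolated and this refinement is used in the paper only at regular points, this is not a genuine gap.
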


\subsubsection{Weakly admissible almost complex structures on mapping cylinders} \label{sec:admissible_vs_weakly}
In this article, we will be almost exclusively considering $J$--holomorphic curves and currents in the symplectization $X= \R \times Y_\varphi$, introduced above in Example \ref{ex:mapping_torus}.  Now, the usual SHS on $\R \times Y_\varphi$ is $(dr, \omega_\varphi)$, defined above, and hence, we will be mostly considering almost complex structures $J$ which are admissible for this SHS, i.e.\  $J \in \mathcal{J}(dr, \omega_\varphi)$.  However, in Section \ref{sec:PFH_monotone_twist} we will need the added flexibility of working with almost complex structures on $\R \times Y_\varphi$ which are admissible for some SHS of the form $(\alpha, \Omega)$ with $\Omega=\omega_\varphi$ and $(\alpha,\Omega)$ inducing the same orientation as $(dr,\omega_{\varphi})$ 
we will refer to such almost complex structures as {\bf weakly admissible}.  Clearly, an admissible almost complex structure is weakly admissible.   %

We will now list several observations about weakly admissible almost complex structures which will be helpful in Section \ref{sec:PFH_monotone_twist}.  Let $(\alpha_0, \omega_\varphi)$ and $(\alpha_1, \omega_\varphi)$ be SHSs inducing the same orientation and denote their Reeb vector fields by $R_0, R_1$.

\begin{enumerate}
\item There exists a positive function $\eta : Y_\varphi \rightarrow \R$ such that $R_0 = \eta R_1$.  This is because  $R_0, R_1 \in \ker(\omega_\varphi)$ and $(\alpha_0, \omega_\varphi)$ and $(\alpha_1, \omega_\varphi)$ induce the same orientation.   In particular, $R_0, R_1$ have the same Reeb orbits.

\item Define $\alpha_t = (1-t)\alpha_0 + t \alpha_1$.  Then, $(\alpha_t, \omega_\varphi)$ is a SHS for all $t\in [0,1]$.  In other words, the space of all SHS of the above form is convex, hence contractible.   
\item As a consequence of the previous observation, 
we see that the space of weakly admissible almost complex structures is path connected. Indeed, it is even contractible as it forms a fibration, over the space of SHS with $\Omega =\omega_\varphi$, whose base and fibres are contractible. 
\item Lastly, the conclusions of Lemma \ref{lem:pointwisenonnegative} hold for $J$-holomorphic curves, for weakly admissible $J$. 
\end{enumerate}

\subsection{Definition of periodic Floer homology}
\label{sec:pfhdefn}
Periodic Floer homology (PFH) is a version of Floer homology, defined by Hutchings \cite{Hutchings-index,   Hutchings-Sullivan-Dehntwist}, for area-preserving maps of surfaces. The cons\-truction  of PFH is closely related to the better-known embedded contact homology (ECH) and, in fact, predates the construction of ECH.  We now review the definition of PFH; for further details on the subject we refer the reader to \cite{Hutchings-index, Hutchings-Sullivan-Dehntwist}.

Let $(S,\omega_S)$ be a closed\footnote{PFH can still be defined if $S$ is not closed, but we will not need this here.} surface with an area form, and $\varphi$ a {\bf nondegenerate} smooth area-preserving diffeomorphism.  Non-degeneracy is defined as follows: A periodic point $p$ of $\phi$, with period $k$, is said to be non-degenerate if the derivative of $\varphi^k$ at the point $p$ does not have $1$ as an eigenvalue.  We say $\varphi$ is {\bf $d$--nondegenerate} 
if all of its periodic points  of period at most $d$ are  nondegenerate; if $\varphi$ is $d$--nondegenerate for all $d$, then we say it is non-degenerate.  A $C^\infty$-generic area-preserving diffeomorphism is nondegenerate. 

Recall the definition of $Y_\varphi$ from Example \ref{ex:mapping_torus} and take $ 0 \neq h \in H_1(Y_\varphi)$.  If $\varphi$ is nondegenerate and satisfies a certain ``monotonicity" assumption,\footnote{If the monotonicity assumption does not hold, we can still define PFH, but we need a different choice of coefficients; this is beyond the scope of the present work.  } which we do not need to discuss here as it automatically holds when $S= \S^2$, the {\bf periodic Floer homology} %
$PFH(\varphi,h)$ is defined; it is the homology of a chain complex $PFC(\varphi, h)$ which we define below.  

\begin{remark}\label{rem:ECH}
If we carry out the construction outlined below, nearly verbatim, for a contact SHS  $(\lambda, d\lambda)$, rather than the SHS $(dr, \omega_\varphi)$, then we would obtain the {\bf embedded contact homology} ECH; see \cite{Hutchings-Notes} for further details. 
\end{remark}

\subsubsection{PFH generators}
The chain complex $PFC$ is freely generated over\footnote{We could also define PFH over $\mathbb{Z}$, but we do not need this here.} $\mathbb{Z}_2$, by certain finite orbit sets $\alpha = \lbrace (\alpha_i, m_i) \rbrace$ called {\bf PFH generators}.  Specifically, we require that each $\alpha_i$ is an embedded Reeb orbit,  
the $\alpha_i$ are distinct, the $m_i$ are positive integers, $m_i = 1$ whenever $\alpha_i$ is \emph{hyperbolic},\footnote{Being hyperbolic means that the eigenvalues at the corresponding periodic point of $\varphi$ are real.   Otherwise, the orbit is called {\it elliptic}. } and $\sum m_i [\alpha_i] = h$.

\subsubsection{The ECH index}
  The $\mathbb{Z}_2$ vector space $PFC(\varphi, h)$ has a relative $\mathbb{Z}$ grading which we now explain.  Let $\alpha =\{(\alpha_i, m_i) \rbrace, \beta= \lbrace (\beta_j, n_j) \rbrace$ be two $PFH$ generators in $PFC(\varphi, h)$. Define $H_2(Y_\varphi, \alpha, \beta)$  to be  the set of equivalence classes of $2$--chains $Z$ in $Y_\varphi$ satisfying $\partial Z = \sum m_i \alpha_i - \sum n_i \beta_i$.  Note that $H_2(Y_\varphi, \alpha, \beta)$ is an affine space over $H_2(Y_\varphi)$.
  
  We define the {\bf ECH index}  
\begin{equation}
\label{eqn:ECH_index}
I(\alpha,\beta, Z) = c_{\tau} (Z) + Q_{\tau}(Z) + \sum_i \sum^{m_i}_{k  = 1} CZ_{\tau}(\alpha^k_i) -  \sum_j \sum^{n_j}_{k  = 1} CZ_{\tau}(\beta^k_j),
\end{equation}
where $\tau$ is (the homotopy class of a trivialization) of the plane field $\xi$ over all Reeb orbits, $c_{\tau}(Z)$ denotes the relative first Chern class of $\xi$ over $Z$,   $Q_{\tau}(Z)$ denotes the relative intersection pairing, and $CZ_{\tau}(\gamma^k)$ denotes the Conley-Zehnder index of the $k^{th}$ iterate of $\gamma$; all of these quantities are computed using the trivialization $\tau$.  We will review the definitions of $c_{\tau}$, $CZ_\tau$,  and $Q_{\tau}$ in Section \ref{sec:index}.

  It is proven in \cite{Hutchings-index} that  although the individual terms in the above definition do depend on the choice of $\tau$,  the ECH index itself does not depend on $\tau$.  According to \cite[Prop 1.6]{Hutchings-index}, the change in index caused by changing the relative homology class $Z$ to another  $Z' \in H_2(Y_\varphi, \alpha, \beta)$  is given by the formula 
  
 \begin{equation}\label{eqn:index_ambiguity}
 I(\alpha, \beta, Z) - I(\alpha, \beta, Z') = \langle c_1(\xi) + 2 PD(h), Z-Z' \rangle.
 \end{equation}
 
 \subsubsection{The differential}
Let  $J \in \mathcal{J}(dr, \omega_\varphi)$ be an almost complex structure on $X = \R \times Y_\varphi$ which is admissible for the SHS $(dr, \omega_\varphi)$ and define 
\[ \mathcal{M}^{I = 1}_{J}(\alpha,\beta)\]
to be  the space of $J$-holomorphic currents $C$ in X, modulo translation in the $\mathbb{R}$ direction, with ECH index $I(\alpha, \beta, [C]) =1$, which are
asymptotic to $\alpha$ as $s \to + \infty$ and $\beta$ as $s \to - \infty$; we refer the reader to \cite{Hutchings-Sullivan-Dehntwist}, page 307, for the precise definition of asymptotic in this context.

Assume now and below for simplicity that $S = \S^2$.  (For other surfaces, a similar story holds, but we will not need this.)     
Then, for generic $J$, $\mathcal{M}^{I = 1}_{J}(\alpha,\beta)$ is a compact $0$-dimensional manifold and we can define the PFH differential by the rule 
\begin{equation}
\label{eqn:diff}
\langle \partial \alpha, \beta \rangle = \# \mathcal{M}^{I=1}_J(\alpha,\beta),
\end{equation}
where $\#$ denotes mod $2$ cardinality.  It is shown in \cite{Hutchings-TaubesI,Hutchings-TaubesII} that\footnote{More precisely, \cite{Hutchings-TaubesII} 
proves that the differential in embedded contact homology squares to zero.  As pointed out in \cite{Hutchings-TaubesI} and \cite{Lee-Taubes} this proof carries over, nearly verbatim, to our setting.} $\partial^2 = 0$, hence the homology $PFH$ is defined.

Lee and Taubes \cite{Lee-Taubes} proved that the homology does not depend on the choice of $J$; in fact, \cite[Corollary 1.1]{Lee-Taubes} states that for any surface, the homology depends only  on the Hamiltonian isotopy class of $\varphi$ and the choice of $h \in H_1(Y_\varphi)$.   In our case, where $S = \S^2$, all orientation preserving
area-preserving diffeomorphisms are Hamiltonian isotopic, and so we obtain a well-defined invariant which we denote by $PFH(Y_{\varphi},h).$  For future motivation, we note that the Lee-Taubes invariance results discussed here come from an isomorphism of PFH and a version of the Seiberg-Witten Floer theory from \cite{Kronheimer-Mrowka}.

Importantly, for the applications to this paper, we can relax the assumption that $\varphi$ is nondegenerate to requiring only that $\varphi$ is $d$-nondegenerate, where $d$, called the {\bf degree}, is the positive integer determined by the intersection of $h$ with the fiber class of the map $\pi: Y_\varphi \rightarrow \S^1$ ; note that any orbit set $\alpha$ with $[\alpha] = h$ must correspond to periodic points with period no more than $d$. 

\subsubsection{The structure of PFH curves}
\label{sec:structure}

We now explain part of the motivation for PFH, and for the ECH index $I$; we will use some of the results below later in the paper as well.  For additional details on the account here, we refer the reader to \cite{Hutchings-Notes}, for example.

Let $C$ be a $J$--holomorphic curve which is asymptotic to orbit sets  $\alpha, \beta$  and denote $I([C]):= I(\alpha, \beta, [C])$.  A key fact which powers the definition of PFH is the {\bf index inequality}
\begin{equation}
\label{eqn:indexinequality}
\text{ind}(C) \leq I([C]) - 2 \delta(C), 
\end{equation}
valid for any somewhere injective curve $C$, where $\delta(C) \geq 0$ is a count of singularities of $C$; see \cite{Hutchings-Notes} for the precise definition of $\delta(C)$; 
here $\text{ind}(C)$ refers to the {\bf Fredholm index}
\begin{equation}
\label{eqn:indexformula_Fredholm}
\text{ind}(C) = - \chi(C) + 2 c_{\tau}(C) + CZ^{ind}_{\tau}(C),
\end{equation}   
where $\chi(C)$ denotes the Euler characteristic of $C$, and $CZ_\tau^{ind}$ is another combination of Conley-Zehnder terms that we will review in Section \ref{sec:index}.  The Fredholm index is the formal dimension of the moduli space of curves
near $C$, and \eqref{eqn:indexinequality} therefore says that the ECH index bounds this dimension from above.  In the case relevant here, when $I([C])= 1$, we therefore get an important structure theorem for ECH index $1$ curves.  To simplify the exposition, we continue to assume here and below that $S = \mathbb{S}^2.$

\begin{prop} [Lem.\ 9.5, \cite{Hutchings-index}, Cor.\ 2.2 \cite{Hutchings-Sullivan-Dehntwist}]
\label{prop:structure_Jcurves}
Assume that $J$ is admissible and generic and that $\varphi$ is $d$-nondegenerate\footnote{In \cite{Hutchings-index}, one also wants to assume a ``local linearity" condition around periodic points; however, this condition is not necessary --- as was stated in \cite{Hutchings-index}, this condition was added to simplify the analysis, and it can be dropped, as in \cite{Lee-Taubes}, using work of Siefring \cite{Siefring-relative,Siefring-intersection}.}.  Let $C$ be a $J$-holomorphic current in $X$, asymptotic to orbit sets of degree at most $d$. 
Then $I([C]) \geq 0$, and if $I([C]) = 1$, then $C$ has exactly one embedded component $C'$ with $I(C') = 1$, and all other components, if they exist, are multiple covers of $\mathbb{R}$-invariant cylinders that do not intersect $C'$.      
\end{prop}

By an $\R$--invariant cylinder, we mean a $J$--holomorphic curve, or current, $C$ given by a map $u : \R \times \S^1 \rightarrow X$ of the form $u(s,t) =  (s,  \alpha(t))$ where $\alpha(t)$ is an embedded Reeb orbit; these are also referred to as {\bf trivial cylinders}.

\medskip

We end this section by mentioning that for any SHS $(\alpha, \Omega)$ on $Y_\varphi$, the formula \eqref{eqn:indexformula_Fredholm} still gives the formal dimension of the moduli space of curves near $C$ when
$J \in \mathcal{J}(\alpha, \Omega);$ 
see \cite[Sections 7 and 8]{Wendl-Notes}; later, we will make use of this.

\subsection{Twisted PFH and the action filtration}
\label{sec:specdefn}
One can define a twisted version of PFH, where we  keep track of the relative homology classes of $J$--holomorphic 
currents, that we will need to define spectral invariants. It has the same invariance properties of ordinary PFH, e.g. by \cite[Corollary 6.7]{Lee-Taubes}, where it is shown to agree with an appropriate version of Seiberg-Witten Floer cohomology.
For the benefit of the reader, we provide a brief explanation of how the twisted version works in relationship with Seiberg-Witten theory in Remark~\ref{rmk:twisted} below.  We refer the reader to \cite[Section 11.2.1]{Hutchings-Sullivan-T3}, \cite[Section 1.a.2]{Taubes-ECH=SW-V}.

The main reason we want to use twisted PFH is because 
while PFH does not have a natural action filtration, the twisted PFH does.   As above, we are continuing to assume $S = \S^2$.

First, note that in this case, $Y_\varphi$ is diffeomorphic to $ \S^2 \times \S^1$ and so $H_1(Y_{\varphi}) = \mathbb{Z}$.  A class $h \in H_1(Y_{\varphi})$ is then determined by its intersection with the homology class of a fiber of the map $\pi: Y_{\varphi} \to \S^1$, which we  defined above to be the {\bf degree} and denote by the integer $d$; from now on we will write the integer $d$ in place of $h$, since these two quantities determine each other.

Choose a reference cycle  $\gamma_0$ in $Y_{\varphi}$ such that $\pi|_{\gamma_0}: \gamma_0 \to \S^1$ is an orientation preserving diffeomorphism and fix a trivialization $\tau_0$ of $\xi$ over $\gamma_0$.   We can now define the $\widetilde{PFH}$ chain complex $\widetilde{PFC}(\varphi,d)$.  A generator of $\widetilde{PFC}(\varphi,d)$ is a pair $(\alpha,Z)$, where $\alpha$ is a PFH generator of degree $d$, and $Z$ is a relative homology class in $H_2(Y_{\varphi},\alpha, d \gamma_0)$.  The $\mathbb{Z}_2$ vector space $\widetilde{PFC}(\varphi,d)$ has a canonical $\mathbb{Z}$-grading $I$ given by
\begin{equation}
\label{eqn:twistedgrading}
I(\alpha,Z) = c_{\tau} (Z) + Q_{\tau}(Z) + \sum_i \sum^{m_i}_{k  = 1} CZ_{\tau}(\alpha^k_i).
\end{equation}
The terms in the above equation are defined as in the definition of the ECH index given by Equation \eqref{eqn:ECH_index}. Note that the above index depends on the choice of the reference cycle $\gamma_0$ and the trivialization $\tau_0$ of $\xi$ over $\gamma_0$.

The index defined here is closely related to the ECH index of Equation \eqref{eqn:ECH_index}:  Let $(\alpha, Z)$ and $(\beta, Z')$ be two generators of $\widetilde{PFC}(\varphi,d)$.  Note that $ Z - Z'$ is a relative homology class in $H_2(Y_\varphi, \alpha, \beta)$.  Then, it follows from\cite[Prop 1.6]{Hutchings-index} that 
$$I(\alpha, \beta, Z-Z') = I(\alpha, Z) - I(\beta, Z').$$

As a consequence, we see that the index difference  $ I(\alpha, Z) - I(\beta, Z')$ does not depend on the choices involved in the definition of the index.

We now define the differential on $\widetilde{PFC}(\varphi,d)$.  We say that $C$ is a $J$--holomorphic curve, or current,  from $(\alpha, Z)$ to $(\beta, Z')$ if it is asymptotic to $\alpha$ as $s \to + \infty$ and $\beta$ as $s \to -\infty$, and moreover satisfies 
\begin{equation}
\label{eqn:importantequation10}
Z' + [C] = Z,
\end{equation}
as elements of $H_2(Y_\varphi, \alpha, d\gamma_0).$

Suppose that $I(\alpha, Z) - I(\beta, Z') =1$ and let $J \in \mathcal{J}(dr, \omega_\varphi)$.  We define \[ \mathcal{M}_J( (\alpha,Z), (\beta,Z') )\] 
to be the moduli space of $J$-holomorphic currents in $X = \R \times Y_\varphi$, modulo  translation in the $\R$ direction,  from $(\alpha,Z)$ to $(\beta,Z')$.  As before, for generic  
$J \in \mathcal{J}(dr, \omega_\varphi)$, the above moduli space is a compact $0$--dimensional manifold and we define the differential by the rule
$$\langle \partial (\alpha, Z), (\beta, Z') \rangle = \# \mathcal{M}_J( (\alpha,Z), (\beta,Z') ),$$
where $\#$ denotes mod $2$ cardinality.  As before,  $\partial^2 = 0$ by \cite{Hutchings-TaubesI, Hutchings-TaubesII}, and so the homology $\widetilde{PFH}$ is well-defined; by \cite{Lee-Taubes} it depends only on the degree $d$; we write more about this in Remark~\ref{rmk:twisted} below.  We will write it as $\widetilde{PFH}(Y_{\varphi},d).$ 

By a direct computation in the case where $\varphi$ is an irrational rotation of the sphere, i.e. $\varphi(z, \theta) = (z, \theta + \alpha)$ with $\alpha$ being irrational, we obtain 
\begin{equation}\label{eq:PFH_sphere}
\widetilde{PFH}_*(Y_\varphi,d) = 
\begin{cases}
  \mathbb{Z}_2, & \text{if } *=d \text{ mod } 2, \\
  0 & \text{otherwise.}
\end{cases}
\end{equation}
Here is a brief outline of the computation leading to the above identity.  The Reeb vector field in $Y_\varphi$ has two simple Reeb orbits  $\gamma_+, \gamma_-$ corresponding to the north and the south poles.  Both of these orbits are elliptic and so the orbit sets of $\widetilde{PFC}(\varphi, d)$ consist entirely of elliptic Reeb orbits.  This implies that the difference in index between any two generators of $\widetilde{PFC}(\varphi, d)$ chain complex is an even integer; see \cite[Proposition 1.6.d]{Hutchings-index}.   Thus, the PFH differential vanishes.  Now, the above identity follows from the fact that for each index $k$, satisfying  $k = d $ mod $2$, there exists a unique generator of index $k$ in $\widetilde{PFC}(\varphi, d)$; this fact may be deduced from the index computations  carried out in Section \ref{sec:index}.

\medskip

The vector space $\widetilde{PFC}(\varphi,d)$ carries a filtration, called the {\bf action filtration}\footnote{The relation between the quantity $\cal A(\alpha, Z)$ and the Hamiltonian action functional discussed in Section \ref{sec:action_spectra} will be clarified in Lemma \ref{lemma:action-action}.}, defined by 
\[ \mathcal{A}(\alpha,Z) = \int_Z \omega_{\varphi} .\]
We define  $\widetilde{PFC}^L(\varphi,d)$ to be the $\Z_2$ vector space spanned  by generators $(\alpha,Z)$ with $\mathcal{A}(\alpha, Z) \leq L$.   

By Lemma \ref{lem:pointwisenonnegative}, $\omega_{\varphi}$ is pointwise nonnegative along any $J$-holomorphic curve $C$, and so $\int_C \omega_{\varphi} \geq 0$.   This implies  that the differential does not increase the action filtration, i.e. $$\partial( \widetilde{PFC}^L(\varphi,d) ) \subset \widetilde{PFC}^L(\varphi,d). $$ 
Hence,  it makes sense to define $\widetilde{PFH}^L(\varphi,d)$ to be the homology of the subcomplex $\widetilde{PFC}^L(\varphi,d)$.  

We are now in position to define the PFH spectral invariants.  There is an inclusion induced map
\begin{equation}
\label{eqn:iind}
\widetilde{PFH}^L(\varphi,d) \to \widetilde{PFH}(Y_\varphi,d).
\end{equation}
If $0 \ne \sigma \in \widetilde{PFH}(\varphi,d)$ is any nonzero class, then we define the {\bf PFH spectral invariant}
\[ c_{\sigma}(\varphi)\] 
to be the infimum, over $L$, such that $\sigma$ is in the image of the inclusion induced map \eqref{eqn:iind} above.  The number $c_{\sigma}(\varphi)$ is finite, because $\varphi$ is non-degenerate and so there are only finitely many Reeb orbit sets of degree $d$, and hence only finitely many pairs $(\alpha,Z)$ of a fixed grading.   We remark that  $c_{\sigma}(\varphi)$ is given by the action of some $(\alpha,Z)$.   
Indeed, this can be deduced from the following two observations:   
\begin{enumerate}
\item  If $L < L'$ are such that there exists no $(\alpha,Z)$ with $L \leq \mathcal{A}(\alpha, Z) \leq L'$, then the two vector spaces   $\widetilde{PFC}^L(\varphi,d)$ and  $\widetilde{PFC}^{L'}(\varphi,d)$ coincide and so  $\widetilde{PFH}^L(\varphi,d) \to \widetilde{PFH}(Y_\varphi,d)$ and $\widetilde{PFH}^{L'}(\varphi,d) \to \widetilde{PFH}(Y_\varphi,d)$ have the same image.  

\item The set of action values $\{\mathcal{A}(\alpha, Z) : (\alpha, Z) \in \widetilde{PFC}^L(\varphi,d) \}$ forms a discrete subset of $\R$.  This is a consequence of the fact that, as stated above, 
there are only finitely many Reeb orbit sets of degree $d$.  
\end{enumerate}

In Remark~\ref{rmk:nojdependence} below we show that this does not depend on the choice of the admissible almost complex structure $J$.  Note, however, that   $c_{\sigma}(\varphi)$ does depend on the choice of the reference cycle $\gamma_0$.

\subsection{Initial properties of PFH spectral invariants } \label{sec:PFH_spec_initial_properties}
   Let $p_ -= (0,0,-1) \in \S^2$.   We denote $$\mathcal{S} := \{ \varphi \in \Diff(\S^2, \omega) : \varphi(p_-) = p_-, \,  -\tfrac{1}{4} < \rot(\varphi, p_-) < \tfrac{1}{4} \},$$
 where  $\rot(\varphi, p_-)$ denotes the rotation number of $\varphi$ at $p_-$ as defined in Section \ref{sec:rotation_number}. We remark that our choice of the constant $\frac14$ is arbitrary; any other constant in $(0,\frac12)$ would be suitable for us; we just need to slightly enlarge the class of diffeomorphisms arising from $\Diff_c(\mathbb{D}^2,\omega)$, so as to facilitate computations.

Recall from the previous section that the spectral invariant $c_{\sigma}$ depends on the choice of reference cycle $\gamma_0 \in Y_{\varphi}.$  For $\varphi \in S,$ there is a unique embedded Reeb orbit through $p_-$, and we set this to be the reference cycle $\gamma_0$.  

The grading on $\widetilde{PFH}$ depends on the choice of trivialization 
$\tau_0$ over $\gamma_0$; our convention in this paper is that we always choose $\tau_0$ such that the rotation number $\theta$ of the linearized Reeb\footnote{Following \cite[Section 3.2]{Hutchings-Notes}, we define the rotation number $\theta$  as follows: Let $\{\psi_t\}_{t\in \R}$ denote the $1$--parameter group of diffeomorphisms of $Y_\varphi$ given by the flow of the Reeb vector field.  Then, $D\psi_t : T_{\gamma_0(0)}Y_\varphi \rightarrow T_{\gamma_0(t)}Y_\varphi$  induces a symplectic linear map $\phi_t :\xi_{\gamma_0(0)} \rightarrow \xi_{\gamma_0(t)}  $, which using the trivialization $\tau_0$ we regard as a symplectic linear transformation of $\R^2$.  We define $\theta$ to be the rotation number of the isotopy $\{\phi_t\}_{t\in [0,1]}$ as defined in Section \ref{sec:rotation_number}. } flow along $\gamma_0$ with respect to $\tau_0$ satisfies $- \frac14 < \theta < \frac14 $: this determines $\tau_0$ uniquely.

We will want to single out some particular spectral invariants for $\varphi \in \mathcal{S}$, and show that they have various convenient properties; we will use these to define the spectral invariants for $\varphi \in \Diff_c(\D, \omega)$.

Having set the above conventions, we do this as follows.   Suppose that $\varphi \in \mathcal{S}$ is non-degenerate.  According to Equation \eqref{eq:PFH_sphere}, for every pair $(d,k)$ with $k = d$ mod $2$, we have a distinguished nonzero class $\sigma_{d,k}$ with degree $d$ and grading $k$, and so we can define $$c_{d,k}(\varphi) := c_{\sigma_{d,k}}(\varphi).$$  Lastly, we also define\footnote{ Alternatively, one may define $c_d(\varphi) := c_{d,k}(\varphi)$ for any $-d \le k \le d$ satisfying $k=d$ mod $2$.  These alternative definitions are all suitable for our purposes in this article.}
$$c_d(\varphi) := c_{d, -d} (\varphi).$$

We will see in the proof of Theorem~\ref{thm:PFHspec_initial_properties} that the $c_{d,k}(\varphi)$ for nondegenerate $\varphi$ determine $c_{d,k}(\varphi)$ for all $\varphi$ by continuity.

 To prepare for what is coming, we identify a class of Hamiltonians $\mathcal{H}$ with the key property, among others, that $\mathcal{S} = \{\varphi^1_H: H \in \mathcal{H} \}$.    Denote  
\begin{align*}
\mathcal{H} := \{H \in C^{\infty}(\S^1 \times \S^2) : &\  \varphi^t_H(p_-) = p_- , H(t, p_-) = 0,  \forall t \in [0,1],
\\  & -\tfrac14 < \rot( \{\varphi^t_H\}, p_- ) < \tfrac14  \},
\end{align*}
where $\rot( \{\varphi^t_H\} , p_-)$ is the rotation number of the isotopy $\{\varphi^t_H\}_{t \in [0,1] }$ at $p_-$; see Section \ref{sec:rotation_number}.    
Observe that %
$\mathcal{S} = \{\varphi^1_H: H \in \mathcal{H} \}.$

\medskip

  The theorem below, which is the main result of this section, establishes some of  the key properties of the PFH spectral invariants and furthermore allows us to extend the definition of these invariants to all, possibly degenerate, $\varphi \in \mathcal{S}$. 
  In the statement below $|| \cdot ||_{(1,\infty)}$ denotes the energy, or the {\bf Hofer norm}, on $C^{\infty}(\S^1 \times \S^2)$ which is defined as follows
  \[ \| H \|_{(1, \infty)} = \int_0^1 \left( \max_{x \in \S^2} H(t,x) - \min_{x \in \S^2} H(t, x)\right) dt.\]  
  
\begin{theo}
\label{thm:PFHspec_initial_properties}
The PFH spectral invariants $c_{d,k}(\varphi)$ admit a unique extension
to all $\varphi \in \mathcal{S}$ satisfying  the following properties: 
\begin{enumerate}
\item Monotonicity: Suppose that $H \leq G$, where $H, G \in \mathcal{H}$. Then,
\[ c_{d, k}(\varphi^1_H) \leq c_{d, k} (\varphi^1_G).\]
\item Hofer Continuity:  For any $H, G \in \mathcal H$, we have  $$| c_{d,k} (\varphi^1_H) - c_{d, k} (\varphi^1_G) | \leq d || H - G ||_{(1,\infty)}.$$ 
\item Spectrality: $c_{d, k}(\varphi^1_H) \in \Spec_d(H)$ for any $H \in \mathcal{H}$.

\item %
Normalization: $c_{d,-d}(\id) =0$.
\end{enumerate}
\end{theo}

\begin{remark}\label{rem:PFHspec-disc}
To define the PFH spectral invariant $c_{d, k}$ for $\varphi \in \Diff_c(\D, \omega) $, we use Equation \eqref{eq:identify_disc_north_hemisphere} to  identify $ \Diff_c(\D, \omega)$ with area-preserving diffeomorphisms of the sphere which are supported in the interior of the northern hemisphere $S^+$.
   
We similarly define $c_d:  \Diff_c(\D, \omega) \rightarrow \R$
which was introduced in Section \ref{sec:PFH_spectralinvariants_intro}.  It follows from Theorem~\ref{thm:PFHspec_initial_properties}   that $c_d : \Diff_c(\D, \omega) \rightarrow \R$ satisfies the properties 1-4 in Section \ref{sec:PFH_spectralinvariants_intro}. 
\end{remark}

The rest of this section is dedicated to the proof of the above theorem.  The proof requires certain preliminaries. First, it will be convenient to explicitly identify $Y_{\varphi}$ with $\S^1 \times \S^2$. To do so pick $ H \in \mathcal{H}$ such that $\varphi = \varphi^1_H$.\footnote{We remark that the choice of $H\in \mathcal H$  such that $\varphi = \varphi^1_H$ is unique up to homotopy of Hamiltonian isotopies rel endpoints.  This fact, which is not used in our arguments, may be deduced from properties of the rotation number.}
  We define 
\begin{equation} \label{eq:trivialization}
\begin{split}
&   \S^1 \times \S^2 \rightarrow Y_{\varphi} \\
 (t,x) & \mapsto  \left( (\varphi^{t}_H)^{-1} (x), t \right),
\end{split}
\end{equation}
where $t$ denotes the variable on $\S^1$.   For future reference, note that this identifies the Reeb vector field on $Y_\varphi$ with the vector field 
\begin{equation}
\label{eqn:trivializedreeb}
\partial_t + X_H
\end{equation} on $\S^1 \times \S^2$.  The $2$-form $\omega_{\varphi}$ pulls back under this map to the form 
\[ \omega + dH \wedge dt \]
where $\omega$ is the area form on $\S^2$.   

The Reeb orbit $\gamma_0$ 
maps under \ref{eq:trivialization} to the preimage of $p_-$ under the map $\S^1 \times \S^2 \to \S^2$; 
we will continue to denote it by $\gamma_0$.  
Moreover, the trivialization $\tau_0$ from above agrees (up to homotopy) under this identification with the trivialization over $\gamma_0$ given by pulling back a fixed frame of $T_{p_-} \S^2$ under the map  $\S^1 \times \S^2 \to \S^2$.   

The map \ref{eq:trivialization} allows us to identify  $\mathbb{R} \times \S^1 \times \S^2$ with the symplectization $X$ via
\begin{align*}
\mathbb{R} \times \S^1 \times \S^2 &\rightarrow X \\
(s,t,x) &\mapsto (s,(\varphi_{H}^t)^{-1}(x), t).
\end{align*}
 The symplectic form $\Gamma$ on $X$ then pulls back to
\begin{equation}
\label{eqn:trivsymplectic} 
\omega_H = ds \wedge dt + \omega + dH \wedge dt.
\end{equation}

Let $H, K$ be two Hamiltonians in $\mathcal H$.  As mentioned earlier, $\widetilde{PFH}(\varphi_H^1, d)$ is isomorphic to $\widetilde{PFH}(\varphi_K^1,d)$.  The proof of this uses Seiberg-Witten theory, and is carried out in \cite[Corollary 6.1]{Lee-Taubes}; this isomorphism is canonical with a choice of reference cycle in $H_2(\S^1 \times \S^2,\gamma_0, \gamma_0)$; we say more about this in Remark~\ref{rmk:twisted} below. 
We take this reference cycle to be the constant cycle\footnote{This is the projection $\gamma_0 \times I \to \gamma_0$.} over $\gamma_0$.  In this case, we will see below that the canonical isomorphism 
\begin{equation}
\label{eqn:choice}
\widetilde{PFH}(\varphi^1_H,d) \to \widetilde{PFH}(\varphi^1_K,d),
\end{equation}
preserves the $\mathbb{Z}$-grading.

 As is generally the case with related invariants, one might expect this isomorphism to be induced by a chain map counting certain $J$-holomorphic curves. In fact, it is not currently known how to define the map \eqref{eqn:choice} this way; the construction uses Seiberg-Witten theory.   Nevertheless, the map in \eqref{eqn:choice} does satisfy a ``holomorphic curve" axiom which was proven by Chen \cite{Chen} using  variants of Taubes' ``Seiberg-Witten to Gromov" arguments in \cite{Taubes-Gr=SW}.  A similar ``holomorphic curve" axiom was proven in the context of embedded contact homology by Hutchings-Taubes; we compare the Chen proof to the Hutchings-Taubes one in Remark~\ref{rmk:chen} below.

To state this holomorphic curve axiom in our context, take Hamiltonians $H, K \in \mathcal{H}$, and define for $s\in \R$
\[ G_s= K + \beta(s) \cdot (H-K)\]
where $\beta: \R \rightarrow [0,1]$ is some non-decreasing function that is $0$ for $s$ sufficiently negative and $1$ for $s$ sufficiently positive.  Now consider the form
\[\omega_X = ds\wedge dt + \omega + dG \wedge dt,\]
where, as throughout this article, $dG$ denotes the derivatives in the $\S^2$ directions.  This is a symplectic form on $\mathbb{R} \times \S^1 \times \S^2$.  Observe that, for $s > > 0$, the form $\omega_X$ agrees with the symplectization form $\omega_H$, and for $s < <0$, it agrees with the symplectization  form $\omega_K$.  Let $J_X$ be any $\omega_X$-compatible\footnote{Recall that an almost complex structure $J$ is {\bf compatible} with a symplectic form $\omega$ if $g(u,v) := \omega(u,Jv)$ defines a Riemannian metric.} almost complex structure that agrees with a generic $(dt,\omega_H)$ admissible almost complex structure $J_+$ for $s >> 0$ and with a generic $(dt,\omega_K)$ admissible almost complex structure $J_-$ for $s << 0$.

Then, the holomorphic curve axiom says that \eqref{eqn:choice} is induced by a chain map 
\begin{equation}
\label{eqn:chainchoice}
\Psi_{H,K}: \widetilde{PFC}(\varphi^1_H,d,J_+) \to \widetilde{PFC}(\varphi^1_K,d,J_-),
\end{equation}
with the property that if $ \langle \Psi_{H,K} (\alpha,Z), (\beta,Z')
\rangle \ne 0$, then there is an ECH index $0$
$J_X$-{\bf holomorphic building}   
$C$ from $\alpha$ to $\beta$ such that
\begin{equation}
\label{eqn:importantequation}
Z' + [C] = Z,
\end{equation}
 as elements of $H_2(\S^1 \times \S^2, \alpha, d \gamma_0);$ we say more about this in Remark~\ref{rmk:twisted} below.   Here, by a $J_X$-holomorphic building from $\alpha$ to $\beta$, we mean a sequence of $J_i$-holomorphic curves 
\[ (C_0,\ldots,C_i, \ldots, C_k),\] 
such that the negative asymptotics of $C_i$ agree with the positive
asymptotics of $C_{i+1}$, the curve $C_0$ is asymptotic to $\alpha$ at
$+\infty$, and the curve $C_k$ is asymptotic to $\beta$ at $- \infty$;
we refer the reader to \cite[Section 5.3]{Hutchings-Notes} 
for more
details.  We remark for future reference that the $C_i$ are called
{\bf levels}, and each $J_i$ is either\footnote{More can be said, but
  we will not need this additional information} $J_X, J_+$ or $J_-$.
The condition that the ECH index of the building is zero means that
the sum of the ECH indices of the levels add up to zero.  In
particular, this index condition, together with
\eqref{eqn:importantequation}, implies the earlier claim that the map
\eqref{eqn:choice} preserves the $\mathbb{Z}$-grading by additivity
of $c_{\tau}$ and $Q_{\tau}$, since the trivializations over $\gamma_0$ required to define the grading on $\widetilde{PFC}(\varphi^1_H)$ and $\widetilde{PFC}(\varphi^1_K)$ are the same.   

We will want to assume that $J_X$ is {\bf compatible with the fibration} $\R \times \S^1 \times \S^2 \rightarrow \mathbb{R} \times \S^1$ in the following sense:  Let $\mathbb{V}$ be the vertical tangent bundle of this fibration and denote by $\mathbb{H}$  the $\omega_X$-orthogonal complement of $\mathbb V$;  observe that $\mathbb{H}$ is spanned by the vector fields $ \partial_s$ and $ \partial_t + X_G $.  Then, we will want $J_X$ to preserve $\mathbb{V}$ and $\mathbb{H}$.  Given any admissible $J_{\pm}$ on the ends, we can achieve this as follows.  On the horizontal tangent bundle $\mathbb{H}$, we always demand that $J_X$ sends $\partial_s$ to $\partial_t + X_G$.  On the vertical tangent bundle, we observe that $\omega_X|_{\mathbb{V}} = \omega$, and in particular $\omega_X|_{\mathbb{V}}$ is independent of $s$; we can then connect $J_{+}|_{\mathbb{V}} $ to $J_{-}|_{\mathbb{V}}$ through a path of $\omega$-tamed almost complex structures on $\mathbb{V}$.

We can now prove Theorem~\ref{thm:PFHspec_initial_properties}.  
\begin{proof}[Proof of Theorem~\ref{thm:PFHspec_initial_properties}]
  We begin by first supposing that the monotonicity, Hofer continuity, and spectrality properties hold when $\varphi^1_K, \varphi^1_H$ are nondegenerate and explain how this implies the rest of the theorem.   To that end, let $H \in \mathcal H$, not necessarily nondegenerate, and take a sequence $H_i \in \mathcal H$ which $C^2$ converges to $H$ and such that $\varphi^1_{H_i}$ is nondegenerate.  Then, we define
$$ c_{d, k} (\varphi^1_H) = \lim_{i\to \infty} c_{d,k}(\varphi^1_{H_i}).$$  This limit exists thanks to the inequality $| c_{d,k} (\varphi^1_{H_i}) - c_{d, k} (\varphi^1_{H_j}) | \leq d || H_i - H_j ||_{(1,\infty)}.$  Moreover, the same inequality implies that the limit value does not depend on the choice of the sequence $H_i$ and so  $c_{d, k} (\varphi^1_H)$ is well-defined for all $H \in \mathcal H$.  Thus, we obtain a well-defined mapping $$ c_{d, k} : \mathcal{S} \rightarrow \R.$$
It can be seen that $c_{d, k}$ continues to satisfy the monotonicity
and Hofer continuity properties for degenerate $\varphi^1_K,
\varphi^1_H$.   The spectrality property is also satisfied; this is a
consequence of the Arzela-Ascoli theorem under the assumption that
spectrality is satisfied in the nondegenerate case; we will not
provide the details here.   %
 Moreover, note that, by the Hofer continuity property, the  mapping $ c_{d, k} : \mathcal{S} \rightarrow \R$ is uniquely determined by its restriction to the set of all non-degenerate $\varphi \in \mathcal S$.

To prove that $c_{d, -d}(\id) =0$, it is sufficient to show that $c_{d, -d}(\varphi) = 0$ in the case where $\varphi$ is a positive irrational rotation of the sphere, i.e. $\varphi(z, \theta) = (z, \theta + \alpha)$ with $\alpha $ being a small and  positive irrational number.  As in the explanation for Equation \eqref{eq:PFH_sphere}, the  chain complex $\widetilde{PFC}(\varphi, d)$ has a unique generator in indices $k$ such that $k =d$ mod $2$ and it is zero for other indices.  The unique generator of index $-d$ is of the form $(\alpha, Z)$ where $\alpha = \{(\gamma_0, d)\}$ and $Z$ is the trivial class in $H_2(Y_\varphi, d\gamma_0, d\gamma_0)$; 
this fact can be deduced from the forthcoming index computations of Section \ref{sec:index}. The action $\mathcal{A}(\alpha, Z)$ is zero.  This proves that $c_{d, -d} (\varphi) = 0 = c_{d, -d}(\id)$.\footnote{
With a similar argument one can prove that $c_{d, k}(\id) = 0$ for every $-d \le k \le d$ with $k=d$ mod $2$. }

\medskip

For the rest of the proof we will suppose that $\varphi^1_{H}, \varphi^1_K$ are nondegenerate.  We will now prove the monotonicity and Hofer continuity properties.  Let $(\alpha_1, Z_1) + \ldots + (\alpha_m, Z_m)$ be a cycle in $\widetilde{PFC}(\varphi^1_H,d)$  representing $\sigma_{d,k}$, with $$c_{\sigma_{d,k}}(\varphi^1_H) = \mathcal{A}(\alpha_1, Z_1)  \geq \ldots \geq \mathcal{A}(\alpha_m, Z_m).$$  Let $(\beta, Z')$ be a generator in  $ \widetilde{PFC}(\varphi^1_K,d)$ which has maximal action among generators   which appear with a non-zero coefficient in $$\Psi_{H,K}\left( (\alpha_1, Z_1) + \\ \ldots + (\alpha_m, Z_m) \right).$$

Then, by the holomorphic curve axiom there is a $J_X$-holomorphic building  from some $(\alpha_i, Z_i)$ to $(\beta, Z')$.   For the rest of the proof we will write  $(\alpha_i, Z_i) = (\alpha,Z)$  and will denote the $J_X$-holomorphic building  by $C$. 

For the arguments below, which only involve energy and index arguments,
we can assume that  $C$ consists of a single $J_X$-holomorphic level -- in other words, is an actual $J_X$-holomorphic curve, rather than a building -- so to simplify the notation,  we assume this.  

\medskip

 For the remainder of the proof we will need the following Lemma. 

\begin{lemma} \label{lemm:important_identity_inequality} The following identity holds: 
$$\mathcal{A}(\alpha, Z) - \mathcal{A}(\beta, Z') = \int_C \omega + dG\wedge dt + G' ds\wedge dt, $$
where  $G'$ denotes $\frac{\partial G}{\partial s}$. Furthermore, we have 
$$\int_C \omega + dG\wedge dt \geq 0.$$
\end{lemma}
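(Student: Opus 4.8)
The identity should follow essentially by Stokes' theorem. Recall that $\mathcal{A}(\alpha, Z) = \int_Z \omega_\varphi$ is defined as the integral of the relevant $2$-form over the relative homology class, and that the curve $C$ satisfies the relation $Z' + [C] = Z$ in $H_2(\S^1 \times \S^2, \alpha, d\gamma_0)$; see \eqref{eqn:importantequation}. Therefore $\mathcal{A}(\alpha, Z) - \mathcal{A}(\beta, Z')$ should be computed by integrating over $[C]$ the appropriate $2$-form — but one has to be careful about \emph{which} $2$-form, since the forms $\omega_H = ds\wedge dt + \omega + dH\wedge dt$ and $\omega_K = ds\wedge dt + \omega + dK\wedge dt$ differ on the two ends. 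First I would write $\mathcal{A}(\alpha, Z)$ as an integral of $\omega + dH\wedge dt$ over a representative of $Z$ built from $[C]$ together with a representative of $Z'$, i.e. glue a representative of $Z'$ (living near the $-\infty$ end, where one uses $\omega_K$, or rather its $Y$-restriction $\omega + dK \wedge dt$) to the curve $C$. The mismatch between $dH\wedge dt$ and $dK\wedge dt$ over the class $Z'$ is precisely accounted for by the $s$-dependence of $G_s = K + \beta(s)(H-K)$: on the curve $C$ one integrates the interpolating form $\omega + dG\wedge dt$, and the ``extra'' term $G'\, ds\wedge dt$ is exactly the total-derivative correction one picks up from $\frac{\partial}{\partial s}$ of the $s$-dependent piece when passing between the two ends via Stokes. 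Concretely, $d(G\, dt) = dG\wedge dt + G'\, ds \wedge dt$ on $\R\times \S^1\times \S^2$, and integrating this exact correction over $C$ converts the ``$\omega_K$-action at $-\infty$'' bookkeeping into the ``$\omega_H$-action at $+\infty$'' bookkeeping; the boundary contributions along the Reeb orbits $\alpha$, $\beta$ vanish because $H, K \in \mathcal{H}$ vanish at $p_-$ and the orbits are vertical (sitting over points, with $dt$ restricting appropriately), and because $\alpha, \beta$ are unions of closed Reeb orbits so the $1$-form $G\,dt$ integrates to the periods times the values of $G$, which cancel in the difference or vanish. I would carry this out by choosing explicit $2$-chain representatives and applying Stokes once, tracking the boundary terms carefully.

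For the inequality $\int_C \omega + dG\wedge dt \ge 0$: the point is that $\omega + dG\wedge dt$ is, up to the harmless $ds$-term, the restriction of the symplectic form $\omega_X$ to the fibers, and more precisely $J_X$ was chosen to be \emph{compatible with the fibration} $\R\times\S^1\times\S^2 \to \R\times\S^1$, preserving the vertical bundle $\mathbb{V}$ and horizontal bundle $\mathbb{H}$, with $\omega_X|_{\mathbb V} = \omega$. I would split $TC$ pointwise into its interaction with $\mathbb V$ and $\mathbb H$. The form $\omega + dG\wedge dt$ annihilates $\partial_s$, and on the horizontal distribution spanned by $\partial_s, \partial_t + X_G$ it also behaves controllably; the genuinely positive contribution comes from the vertical part, where $J_X$-holomorphicity forces $\omega(v, J_X v) \ge 0$. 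This is morally the statement that a pseudoholomorphic curve for a fibration-compatible almost complex structure has nonnegative ``vertical area,'' and it is the exact analogue of Lemma \ref{lem:pointwisenonnegative} applied in this slightly non-symplectization setting. I expect the cleanest route is to show $\omega + dG\wedge dt$ is pointwise nonnegative on $C$ by a local computation: at a point of $C$, if the tangent space projects isomorphically to $\mathbb H$ the integrand vanishes (it's $dt$-degenerate there since $\omega + dG\wedge dt$ restricted to $\mathbb H$ is zero), and otherwise the vertical component gives a strictly positive contribution by tameness of $J_X|_{\mathbb V}$ with respect to $\omega$.

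The main obstacle, I expect, is the Stokes bookkeeping in the identity: one must be scrupulous about the relative homology classes $Z, Z'$ living in spaces with different reference data, about orientations, and about the boundary terms along the asymptotic Reeb orbits (including the constant reference cycle $d\gamma_0$ over $p_-$, where $G$ vanishes, which is what makes those terms drop out). The inequality is less subtle but still requires correctly invoking the fibration-compatibility of $J_X$ rather than plain $\omega_X$-compatibility — the form $\omega + dG\wedge dt$ is not $J_X$-tame on all of $T(\R\times\S^1\times\S^2)$, only its vertical part is, so the argument has to isolate the vertical contribution. I would handle the building-versus-curve reduction as the paper already suggests (the identity and inequality are both additive over the levels of a holomorphic building, so it suffices to treat a single level), which is why I, like the authors, would just assume $C$ is an honest curve.
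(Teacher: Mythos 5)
Your plan is essentially the same as the paper's proof. For the identity, you correctly identify that one writes $\omega + dG\wedge dt + G'\,ds\wedge dt = \omega + d(G\,dt)$, splits the computation into $\int_C\omega = \int_{Z-Z'}\omega$ (a purely homological fact) and a Stokes argument on $d(G\,dt)$ in which the asymptotic matching $G\to H$ (resp. $G\to K$) as $s\to+\infty$ (resp. $-\infty$) and the vanishing of $H,K$ along $\gamma_0$ are exactly what make the boundary bookkeeping close up; this is word-for-word the paper's strategy. For the inequality, you correctly isolate the vertical/horizontal splitting using fibration-compatibility of $J_X$, note $\Omega|_{\mathbb H}=0$, and get nonnegativity from $\omega$-tameness of $J_X|_{\mathbb V}$. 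The one detail you don't spell out is the vanishing of the cross terms $\Omega(v,J_Xh)$ and $\Omega(h,J_Xv)$ (in the paper this follows from $v,J_Xv\in\ker(ds\wedge dt)$ together with $\omega_X$-orthogonality of $\mathbb V$ and $\mathbb H$); you gesture at this when you warn that $\Omega$ is not globally $J_X$-tame and that only the vertical part contributes, but it must actually be checked to make the local computation work. Filling that in, the plan goes through.
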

\begin{proof}[Proof of Lemma \ref{lemm:important_identity_inequality}]
We will begin by proving that 
\begin{equation}
\label{eqn:actionequation}
\mathcal{A}(\alpha,Z) - \mathcal{A}(\beta,Z') =\int_C \omega + d(Gdt),
\end{equation}
which establishes the first item because $\omega+d(Gdt) = \omega + dG \wedge dt + G' ds \wedge dt.$  Note that we can write
\[ \mathcal{A}(\alpha,Z) = \int_Z \omega + d(Hdt), \quad \mathcal{A}(\beta,Z') = \int_{Z'} \omega + d(Kdt).\]
Hence,  Equation \eqref{eqn:actionequation} would follow if we show that
 \[ \int_C \omega =  \int_Z \omega -  \int_{Z'} \omega, \text{ and }  \int_C d(Gdt) = \int_Z d(Hdt) - \int_{Z'} d(Kdt) .\]
The first identity holds because all of these integrals are determined by the homology classes, and we have $[C]  = Z -  Z'.$  The second identity follows from the following chain of identities:
\begin{align*}
\int_C d(Gdt) &= \int_\alpha G dt - \int_\beta Gdt \\
                      &= \int_\alpha H dt - \int_\beta K dt \\
                      &= \int_Z d(Hdt) - \int_{Z'} d(Kdt),
\end{align*}
where the first equality holds by Stokes' theorem, the second follows from the definition of $G$, and the third is a consequence of Stokes' theorem combined with the fact that $H, K$ both belong to $\mathcal{H}$ and so vanish on $\gamma_0$. This completes the proof of the first item in the lemma.

Now, we will show that $\int_C (\omega + dG \wedge dt) \geq 0$ by showing that the form $\omega + dG \wedge dt$ is pointwise non-negative  along $C$.  Indeed, at any point $p \in X$, we can write any vector as $v + h$, where $v \in \mathbb{V}$ and $h \in \mathbb H$ are vertical and horizontal tangent vectors as described in the paragraph before the proof of Theorem \ref{thm:PFHspec_initial_properties}.  Since $C$ is $J_X$--holomorphic, it is sufficient to show that $(\omega + dG \wedge dt)(v + h, J_X v + J_X h) \geq 0.$  We will show that
\begin{equation} \label{eq:1}
(\omega + dG \wedge dt)(v + h, J_X v + J_X h) = \omega_X (v, J_Xv),
\end{equation}
which proves the inequality because $J_X$ is $\omega_X$-tame.  Now, to simplify our notation we will denote $\Omega = \omega + dG \wedge dt$ for the rest of the proof.  Expanding the left hand side of the above equation we get
 \[\Omega(v + h, J_Xv + J_Xh) =  \Omega(v, J_Xv) + \Omega(h, J_Xh) + \Omega(v, J_Xh) + \Omega(h, J_Xv). \]
We will now show that $\Omega(v, J_Xv) = \omega_X(v, J_Xv)$ and
$\Omega(h, J_Xh)= \Omega(v, J_Xh) = \Omega(h, J_Xv) =0$ which clearly
implies Equation \eqref{eq:1}.   
To see this, note that $v$ and $J_Xv$ are in the kernel of $ds \wedge dt$, hence 
\[ \Omega(v, J_Xv) = \omega_X(v, J_X v),\]
\[  \Omega(v, J_X h) = \omega_X(v,J_X h) = 0, \quad \Omega(h, J_X v) =\omega_X(h,J_X v) = 0.\]
It remains to show that $\Omega(h, J_X h) = 0$, that is $\Omega\vert_\mathbb{H} = 0$.   This follows from the fact that $\bb H$ is spanned by $\lbrace \partial_s, \partial_t + X_G \rbrace$ and $\partial_s$ is in the kernel of $\Omega$. Indeed, a $2$--form on a $2$--dimensional vector space with non-trivial kernel is identically zero. 
\end{proof}

\medskip 
Note that  $c_{d,k}(\varphi^1_H) \geq \mathcal{A}(\alpha, Z)$ and  $c_{d,k}(\varphi^1_K) \leq  \mathcal{A}(\beta,Z').$  Hence,
\begin{equation}\label{eqn:lower_bound_action}
 c_{d,k}(\varphi^1_H) - c_{d,k}(\varphi^1_K) \geq \mathcal{A}(\alpha,Z) - \mathcal{A}(\beta,Z').
\end{equation}
As a consequence of this inequality, Monotonicity would follow from proving that if $H \geq K$, then $\mathcal{A}(\alpha,Z) - \mathcal{A}(\beta,Z') \geq 0.$ By the above lemma we have \begin{equation}
\label{eqn:masterequation} 
\mathcal{A}(\alpha,Z) - \mathcal{A}(\beta,Z') \geq \int_C G'\,ds \wedge dt.
\end{equation} 
 If $H \geq K$, then $G' \geq 0$, and so $\int_C G' ds \wedge dt \geq 0$, which proves Monotonicity.
 
 As for Hofer Continuity, it is sufficient to show that 
 \begin{equation}\label{eq:estimate_hofer}
 \left|\int_C G'\,ds \wedge dt \right| \leq d \Vert H-K \Vert_{(1, \infty)}.
 \end{equation}  
 Indeed, this inequality combined with Inequalities \eqref{eqn:lower_bound_action} and \eqref{eqn:masterequation} implies that $c_{d,k}(\varphi^1_K) - c_{d,k}(\varphi^1_H) \leq d \Vert H-K \Vert_{(1, \infty)}.$  Similarly, by switching the role of $H$ and $K$,
one gets $c_{d,k}(\varphi^1_H) - c_{d,k}(\varphi^1_K) \leq d \Vert K -H \Vert_{(1, \infty)}$ which then implies Hofer Continuity.  
 
 It remains to prove Inequality  \eqref{eq:estimate_hofer}.  We know that the form $ds \wedge dt$ is pointwise nonnegative on $C$, because $ds \wedge dt|_{\bb V} = 0$, we saw in the proof of the previous lemma that $\Omega|_{\bb H} = 0$, and $J_X$ is $\omega_X$-tame.   %
Hence
\[ \left|\int_C G' ds \wedge dt \right| = \left|\int_C \beta'(s) (H-K) ds \wedge dt \right| \leq \int_C \beta'(s) |H-K| ds \wedge dt.\]  
Note that because $H, K$ both vanish at the point $p_-$, for all $t,x$ we have
\[ |H(t,x) - K(t,x) | \leq \max_{\S^2} \left (H_t -K_t\right) -\min_{\S^2} \left (H_t -K_t \right) .\]
Hence, we get 
\[ \left|\int_C G' ds \wedge dt \right| \leq \int_C \beta'(s) \left(\max_{\S^2} (H-K) - \min_{\S^2}(H - K)\right)  ds \wedge dt.\]
We can evaluate the second integral by projecting $C$ to the  $(s,t)$ plane; this projection has degree $d$, 
and since $\int_{-\infty}^{+\infty}\beta'=1$, the second integral evaluates to
\[ d ||H-K||_{(1,\infty)} .\]
This completes the proof of Hofer Continuity.

\medskip

It remains to prove Spectrality.  As stated in Section \ref{sec:specdefn}, the spectral invariant $c_{d,k}(\varphi^1_H)$ is the action of a PFH generator $(\alpha,Z)$  of degree $d$. Spectrality,  hence  Theorem \ref{thm:PFHspec_initial_properties}, is then a consequence of the following lemma.
\end{proof}

\begin{lemma}\label{lemma:action-action} Let $(\alpha, Z)$ be a PFH generator of degree $d$ for the mapping torus $Y_\varphi$ of $\varphi=\varphi_H^1$ with $H \in \mathcal{H}$. Then, $\cal A(\alpha, Z)$ belongs to  $\Spec_d(H)$, as defined in Section \ref{sec:action_spectra}.
\end{lemma}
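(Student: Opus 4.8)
The statement to prove is Lemma~\ref{lemma:action-action}: for a PFH generator $(\alpha, Z)$ of degree $d$ for $Y_{\varphi_H^1}$ with $H \in \mathcal{H}$, the PFH action $\mathcal{A}(\alpha, Z) = \int_Z \omega_\varphi$ lies in $\Spec_d(H)$. The idea is to translate the PFH-side data $(\alpha, Z)$ into a collection of capped periodic orbits of $H$ whose total period is $d$ and whose total Hamiltonian action equals $\mathcal{A}(\alpha, Z)$. I would begin by using the explicit identification \eqref{eq:trivialization} of $Y_\varphi$ with $\S^1 \times \S^2$, under which the Reeb vector field becomes $\partial_t + X_H$ and $\omega_\varphi$ pulls back to $\omega + dH \wedge dt$. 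Under this identification, an embedded Reeb orbit $\alpha_i$ of period $k_i$ corresponds to a $k_i$-periodic orbit of the flow $\varphi^t_H$; unwinding the correspondence (following the standard dictionary between the mapping torus picture and iterated periodic points), the orbit $\alpha_i$ traversed around $Y_\varphi$ corresponds precisely to a $1$-periodic orbit $z_i$ of the composed Hamiltonian $H^{k_i}$, since $\varphi^1_{H^{k_i}} = (\varphi^1_H)^{k_i}$ and more generally $\varphi^t_{H^{k}}$ tracks the $k$-fold iteration.

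\textbf{Main computation.} The heart of the proof is an integral identity: given the relative homology class $Z \in H_2(Y_\varphi, \alpha, d\gamma_0)$, I would decompose it so as to produce, for each $(\alpha_i, m_i)$ in $\alpha$, a capping $u_i$ of the corresponding loop $z_i$ (thought of as a loop in $\S^2$), together with the identity
\begin{equation*}
\int_Z (\omega + dH\wedge dt) \;=\; \sum_i m_i \left( \int_0^1 H^{k_i}(t, z_i(t))\, dt + \int_{D^2} u_i^* \omega \right).
\end{equation*}
The left side is $\mathcal{A}(\alpha, Z)$ and the right side is manifestly an element of $\Spec_{d}(H)$ by the description of $\Spec_d$ in terms of sums of actions $\mathcal{A}_{H^{k_i}}(z_i, u_i)$ over capped orbits whose periods sum to $d = \sum m_i k_i$. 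To establish the identity, I would pick an explicit model for $Z$: since $\gamma_0$ is the constant orbit over $p_-$ and $H$ vanishes along $\gamma_0$, one can build $Z$ out of the tori swept by the orbits $\alpha_i$ (contributing the $\int H\, dt$ terms via Stokes, exactly as in the computation in Lemma~\ref{lemm:important_identity_inequality}) together with discs that project to cappings in the $\S^2$-factor (contributing the $\int u_i^*\omega$ terms). The part $\int_Z ds\wedge dt$-type bookkeeping does not arise here since we are in the mapping torus $Y_\varphi$ rather than its symplectization; only the $dt$-direction needs care.

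\textbf{The main obstacle.} I expect the subtle point to be the precise combinatorial/homological bookkeeping: matching the relative class $Z$ with a genuine \emph{choice} of cappings $u_i$ in a consistent way, and checking that the iterate structure (an orbit $\alpha_i$ of period $k_i$ with multiplicity $m_i$ corresponds to a capped $1$-periodic orbit of $H^{k_i}$ taken $m_i$ times, not to $m_i k_i$ distinct period-$1$ orbits of $H$) is exactly what the definition of $\Spec_d(H)$ via $\cup_{k_1 + \cdots + k_j = d}$ demands. One must also confirm that the ambiguity in the capping (changing $u_i$ by a class $A \in \pi_2(\S^2)$) corresponds exactly to the ambiguity in $Z$ (changing by a class in $H_2(Y_\varphi)$), so that the two spectra match up set-theoretically rather than just up to a shift — here the fact that $H$ vanishes on $\gamma_0$, together with the choice of reference cycle, is what pins down the normalization, paralleling the role of \eqref{eqn:lemma3.3_schwarz}. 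Once these identifications are set up carefully, the Stokes-theorem computation is routine, following the template already used in the proof of Lemma~\ref{lemm:important_identity_inequality}.
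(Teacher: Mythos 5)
Your proposal takes essentially the same route as the paper: you construct an explicit relative cycle (a flow torus contributing $\int H\,dt$ via Stokes, plus a disc contributing the capping area), use the trivialization \eqref{eq:trivialization} to identify $\omega_\varphi$ with $\omega + dH\wedge dt$, note that $H$ vanishing along $\gamma_0$ kills any normalization ambiguity, and match the $H_2(Y_\varphi) \cong \Z[\S^2]$ ambiguity in $Z$ with the $\pi_2(\S^2)$ capping ambiguity — which is precisely what the paper does by building $Z_\alpha = Z_0 + Z_1 + Z_2$ and deriving \eqref{eqn:action_Z_alpha}. The only cosmetic difference is that the paper handles period-$k$ orbits by passing to the covering mapping torus $Y_{\varphi^k_H} \to Y_{\varphi^1_H}$, whereas you phrase it directly in terms of $H^{k_i}$; these are two descriptions of the same computation.
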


\begin{proof} For every orbit set $\alpha$ we will construct a specific relative class $Z_\alpha \in H_2(Y_\varphi, \alpha, \gamma_0)$ and will show that $\mathcal{A}(\alpha, Z_\alpha) \in \Spec_d(H)$.  Any other $Z\in H_2(Y_\varphi, \alpha, \gamma_0)$ is of the form  $Z_\alpha + k [\S^2]$ where $k \in \Z$.   Hence, $\mathcal{A}(\alpha, Z) = \mathcal{A}(\alpha, Z_\alpha)  + k$  and so we get that $\mathcal{A}(\alpha, Z) \in \Spec_d(H)$ for all $Z$.

First, suppose that $d=1$.  Let $q \in \Fix(\varphi)$ and suppose that $\alpha$ is the Reeb orbit in the mapping cylinder corresponding to $q$.  The relative cycle $Z_{\alpha}$ will be of the form 
$Z_{\alpha}= Z_0 + Z_1 + Z_2$.  We begin by choosing a path $\eta$ in $\S^2 \times\{0\} \subset Y_{\varphi^1_H}$ such that  $\partial \eta = (q,0) - (p_-, 0)$.   We parametrize this curve with a variable $x \in [0,1]$. 
We define $Z_0$ to be the chain induced by the map 
\begin{align*}
\begin{split}
&[0,1]^2 \to Y_{\varphi^1_H}, \;\; (x,t)\mapsto (\eta(x),t). 
\end{split}
\end{align*}
Its boundary is given by $\partial Z_0= \alpha - \gamma_0 + (\eta, 0) - (\varphi(\eta), 0)$.
Note that  $ \int_{Z_0} \omega_{\varphi} =0.$

We define $Z_1$ to be the chain induced by the map
\begin{align*}
\begin{split}
&[0,1]^2 \to Y_{\varphi^1_H}, \;\; (t,x) \mapsto (\varphi^t_H( \eta(x)), 0).
\end{split}
\end{align*}
Then,  $\partial Z_1 = (\varphi(\eta), 0) - (\eta, 0) - (\varphi^t_H(q), 0)$. We can compute 
$\int_{Z_1} \omega_{\varphi}$ as follows:
\begin{align*}
\int_{Z_1} \omega_{\varphi} &= \int \int_{[0,1]^2} \omega \langle \partial_t \varphi^t_H(\eta(x) ) , \partial_x \varphi^t_H (\eta(x)) \rangle
\\  
 &= \int \int_{[0,1]^2} \omega \langle X_{H_t} (\varphi^t_H(\eta(x ))) , \partial_x \varphi^t_H (\eta(x)) \rangle 
\\  &= \int \int _{[0,1]^2} dH_t (  \partial_x \varphi^t_H( \eta(x)  ) ) 
 = \int \int _{[0,1]^2}  \partial_x H_t ( \varphi^t_H (\eta(x)) ) 
\\ &= \int^{1}_{0} H_t (\varphi^t_H(q)) - H_t(\varphi^t(p_-))dt = \int^1_0 H_t(\varphi^t_H(q)) dt.
\end{align*}

Next, we define $Z_2$ to be the chain induced by a
map $(u_\alpha, 0)$ where $u_\alpha:D^2 \rightarrow \S^2$  is such that  $u_\alpha|_{\partial D^2}$ is the Hamiltonian orbit $t\mapsto\varphi^t_H(q)$.  Then, $\partial Z_2= (\varphi^t_H(q), 0)$ and  $\int_{Z_2} \omega_{\varphi} = \int_{D^2} u_\alpha^* \omega$. We will furthermore require $u_\alpha$ to satisfy the following additional properties which will be used in Section \ref{sec:comb}:
\begin{enumerate}[(i)]
\item  If $\alpha = \gamma_0$, the Reeb orbit corresponding to $p_-$, then we take $u_\alpha$ to be the constant disc with image $p_-$.
\item  If $\alpha \neq \gamma_0$, then we take  $u_\alpha$ such that its image does not contain  $p_-$.
\end{enumerate}

Finally, we set $Z_\alpha = Z_0 + Z_1 + Z_2$.  Adding up the above quantities we obtain $\partial Z_\alpha=\partial Z_0 + \partial Z_1 + \partial Z_2 = \alpha - \gamma_0$ and 
\begin{equation}\label{eqn:action_Z_alpha}
\mathcal{A}(\alpha,Z_\alpha) =   \int_{Z_0} \omega_{\varphi} + \int_{Z_1} \omega_{\varphi} + \int_{Z_2} \omega_{\varphi}  = \int_{D^2} u_\alpha^* \omega +  \int^1_0 H_t(\varphi^t_H(q)) dt. 
\end{equation} 
Clearly, $\mathcal{A}(\alpha,Z_\alpha) \in \Spec(H)$.  We remark that $Z_{\alpha}$ does not depend on the choice of $u_{\alpha}$ subject to the above conditions.  

Next, let $q$ be a periodic point of period $m \in \N$ and suppose that $\alpha$ is the Reeb orbit in the mapping cylinder corresponding to $q$.  Then, $q$ is a fixed point of $\varphi_H^m$.  Consider the mapping torus $Y_{\varphi_H^m}$.  There is a map $c:Y_{\varphi_H^m} \to Y_{\varphi_H^1}$, pulling back $\omega_{\varphi^1_H}$ to $ \omega_{\varphi^m_H}$, given by mapping each interval $\S^2 \times [\frac{k}{m}, \frac{k+1}{m}]$ onto $Y_{\varphi}$ via the map
\[ (x,t) \mapsto (\varphi_H^{k}(x) , m \cdot t - k).\]
Now repeat the construction from above 
to produce a relative cycle $Z'$ in $Y_{\varphi^m}$. 
Define $Z_\alpha$ to be the pushforward of $Z'$  under the map $c$.  Then 
\[ \int_{Z_\alpha} \omega_{\varphi_H^1} = \int_{Z'} \omega_{\varphi_H^m}.\] 
The map $\varphi_H^m$ is generated by the Hamiltonian $H^m$, so by the argument in the $d = 1$ case, $\int_{Z'} \omega_{\varphi_H^m} \in \Spec(H^m)$, hence the same holds for $\int_{Z_{\alpha}} \omega_{\varphi^1_H}.$  We note for future reference that as \eqref{eqn:action_Z_alpha} holds for $\mathcal{A}(\alpha,Z')$ with respect to $\varphi^m_H$, and $\varphi^m_H$ can be viewed as the time $1$-map of the Hamiltonian $F_t(x) = m H_{mt}(x)$, we have
\begin{equation}\label{eqn:action_Z_alpha_2}
\mathcal{A}(\alpha,Z_\alpha)  = \int_{D^2} u_\alpha^* \omega +  \int^m_0 H_t(\varphi^t_H(q)) dt,
\end{equation}
where $u_\alpha$ is a disc whose boundary is the Hamiltonian orbit $t \mapsto \varphi^t_H(q), t\in [0, m]$, which satisfies the analogues of properties (i) and (ii) above.

Now let $\alpha = \lbrace (\alpha_i, m_i) \rbrace$, where the $\alpha_i$ are simple closed orbits of $\partial_t$.  So, each $(\alpha_i,m_i) $ corresponds to a (not necessarily simple) orbit of a periodic point $q_i$ of $\varphi^1_H$.  By using the construction in the previous paragraph, we can associate a relative cycle to each $(\alpha_i, m_i)$; the sum, over $i$, of all of these cycles gives a relative cycle from $\alpha$ to $d \gamma_0$, where $d$ is the sum of the periods of the periodic points $q_i$.  The arguments in the previous paragraphs show that the action of this cycle $\mathcal{A}(\alpha, Z)$ is in $\Spec_d(H)$.
\end{proof}

\begin{remark}
\label{rmk:nojdependence}
In the special case where $H = K$, but the two $J_i$ are different, the Monotonicity argument above, applied first to $H \geq K$ and next to $K \geq H$, gives that the spectral invariant does not depend on $J$.
\end{remark}

\begin{remark} \label{rmk:chen} For the benefit of the reader, 
we provide a brief comparison between the Chen proof and the  
proof of the holomorphic curve axiom in the ECH context by Hutchings-Taubes; we also provide a sketch of the Chen proof.  For brevity, we assume in the remark that the reader is familiar with some of the relevant background, referring
to \cite{Hutchings-Taubes-ChordII} for relevant terminology.

In the Hutchings-Taubes context, one starts with an exact symplectic cobordism between contact manifolds, and attaches symplectization-like ends, to get a non-compact symplectic manifold $(\overline{X},\omega).$   Then, Hutchings-Taubes prove a ``holomorphic curve axiom" for any $\omega$-compatible almost complex structure $J$ that agrees with symplectization-admissible almost complex structures $J_{\pm}$ on the symplectization ends.  Their proof goes via adapting a ``Seiberg-Witten to Gromov" type degeneration that was needed by Taubes to show the equivalence of ECH and HM, to this context.  Essentially, just as Taubes' proof that ECH = HM, they write down a family of perturbations to the four-dimensional Seiberg-Witten equations, so that when there is a solution counted by the Seiberg-Witten cobordism map, it degenerates after perturbation to a holomorphic building.  Although in the Hutchings-Taubes case 
the symplectic form is assumed exact,  it was observed by Hutchings (see e.g. \cite{Hutchings-blog}) that this is not essential.

In the Chen context, one starts with a fibered symplectic cobordism between (fiberwise symplectic) mapping tori, and again attaches symplectization-like ends, analogously to the Hutchings-Taubes context, to get a non-compact symplectic manifold $(\overline{X},\omega)$; the main difference here is that the symplectic form on the ends in the Hutchings-Taubes context is the symplectization form for a contact form, whereas in the Chen context, the symplectic form is the symplectization form for the mapping torus.  Just like in Hutchings-Taubes, Chen proves a ``holomorphic curve axiom" for any $\omega$-compatible almost complex structure $J$ that agrees with symplectization-admissible almost complex structures $J_{\pm}$ on the ends; this also essentially goes via a ``Seiberg-Witten to Gromov" type degeneration 
\end{remark}

\begin{remark}
\label{rmk:twisted}
On the Seiberg-Witten side, the twisted theory corresponds to a version of the Floer homology, where, instead of taking the quotient of solutions by the full gauge group $\mathcal{G} = C^{\infty}(M,\S^1)$, one only takes the quotient by the subgroup $\mathcal{G}^0 \subset \mathcal{G}$ of gauge transformations in the connected component of the identity.   This has an $H^1(Y)$ action, induced by the action via gauge transformations, which corresponds to the $H_2(Y)$ action on twisted PFH given by adding a homology class.

As mentioned above, it was remarked by Taubes \cite{Taubes-ECH=SW-V}, Sec.\ 1, that the twisted invariant on the PFH/ECH side is canonical only up to a choice of element of $H_2(Y,\rho,\rho')$, where $\rho, \rho'$ are two reference cycles.   
Implicit in this assertion is that after a choice of reference cycle $R$, the isomorphism \eqref{eqn:chainchoice} satisfies a holomorphic curve axiom, for buildings $C$ satisfying
\begin{equation}
\label{eqn:cobordismequation}
Z + R = [C] + Z'.
\end{equation}
This is the best way to think about \eqref{eqn:importantequation}: this corresponds to the case where our reference cycle is constant over $\rho$.  

Since \eqref{eqn:cobordismequation} is only implicit in Taubes, we also should emphasize that the full force of \eqref{eqn:cobordismequation} is not needed for our purposes.  Indeed, all we need to know to prove Theorem~\ref{thm:PFHspec_initial_properties} is that the map \eqref{eqn:importantequation} satisfies the weaker axiom that if there is a nonzero coefficient of $\Psi_{H,K}(\alpha,Z)$ on $(\beta,Z')$, then there is an $I=0$ holomorphic building $C$ from $\alpha$ to $\beta$; this is proved in \cite{Chen}.   With this weaker axiom, we can no longer assume  \eqref{eqn:importantequation}.  Still, however, most of the proof of Theorem~\ref{thm:PFHspec_initial_properties} goes through unchanged, except for two exceptions, which cancel each other out.  The first difference is that  \eqref{eqn:actionequation} in Lemma~\ref{lemm:important_identity_inequality} requires that
\[ \int_C \omega = \int_{Z - Z'} \omega.\]
The above equation does not hold unless we know that $[C] = Z - Z'$; and, without \eqref{eqn:importantequation}, we only know that $[C] = Z - Z' + x [\S^2]$ for some $x$.  Thus, \eqref{eqn:actionequation} must be modified by subtracting $x$ from the right hand side, and so the identity asserted by Lemma~\ref{lemm:important_identity_inequality} must also be modified by subtracting $x$ from its right hand side.  However, this change is balanced by the fact that as $I([C]) = 0$, we must also have $I(Z) - I(Z') = -x(2d+2)$, and so the left hand side of \eqref{eqn:lower_bound_action} must also be modified to $c_{d,k - x(2d+2)}(\varphi^1_H) - c_{d,k}(\varphi^1_K)$; since $c_{d,k - x(2d+2)}(\varphi^1_H) = c_{d,k}(\varphi^1_H) - x$, this precisely balances the required modification of the statement of Lemma~\ref{lemm:important_identity_inequality}.

For more about the connection between the twisted theory and the relevant Seiberg-Witten Floer homology, we refer the reader to \cite{Taubes-ECH=SW-V} Sections 1 - 2.  In  \cite{Taubes-ECH=SW-V} Sections 1 - 2, Taubes is writing about twisted ECH; we have adapted what is written there to the PFH context, as suggested by  \cite{Lee-Taubes}, Cor. 6.1.
\end{remark}

\section{$C^0$ continuity}\label{sec:C0_continuity}

Here we prove Theorem~\ref{theo:C0_continuity}, using Theorem \ref{thm:PFHspec_initial_properties}  from Section  \ref{sec:pfhs2}.

The central objects of Theorem ~\ref{theo:C0_continuity} are the maps
$c_d:\Diff_c(\D^2, \omega)\to\R$. Remember from Section \ref{sec:PFH_spec_initial_properties} and Remark \ref{rem:PFHspec-disc} that these maps are defined from the spectral invariants $c_d:\cal S\to\R$, by identifying $\Diff_c(\D^2, \omega)$ with the group $\DiffS$ consisting of symplectic diffeomorphisms of $\S^2$, which are supported in the interior of the northern hemisphere $S^+$. In the present section, we directly work in the group $\DiffS$.

More generally, given an open subset $U\subset \S^2$, we will denote by $\DiffU$ the set of all Hamiltonian diffeomorphisms compactly supported in an open subset $U$.

Our proof is inspired by the proof of the $C^0$-continuity of barcodes (hence, of spectral invariants) arising from Hamiltonian Floer theory
presented in \cite{LSV}.
Other existing proofs of $C^0$-continuity of spectral invariants make use\footnote{The product is usually used to deduce continuity everywhere from continuity at $\id$. Without a product, we need another argument to prove continuity in the complement of the identity.}  of the product structure on Hamiltonian Floer homology.  It might be possible to define a ``quantum product" on PFH, see \cite{Hutchings-Sullivan-Dehntwist}, however, at the time of the writing of this article such structures do not exist.

Let $d$ be a positive integer. As in \cite{LSV}, we treat separately the $C^0$-continuity of $c_d$ at the identity and elsewhere. Theorem \ref{theo:C0_continuity} will be a consequence of the following two propositions. 

\begin{prop}\label{prop:C0-continuity-at-id} The map $c_d:\DiffS\to \R$ is continuous at $\id$ with respect to the $C^0$-topology on $\DiffS$.  
\end{prop}

\begin{prop}\label{prop:C0-continuity-elsewhere} Every area-preserving homeomorphism $\eta\in\HomeoS$ with $\eta\neq \id$ admits a $C^0$-neighborhood $\cal V$ such that the restriction of $c_d$ to $\cal{V}\cap\DiffS$ is uniformly continuous with respect to the $C^0$-distance. 
\end{prop}

This last proposition readily implies that any $\eta\in \HomeoS\setminus\{\id\}$ admits a $C^0$-neighborhood $\cal V$ to which $c_d$ extends continuously. In particular, it extends continuously at $\eta$. Since this holds for any such homeomorphism $\eta$, this shows together with Proposition \ref{prop:C0-continuity-at-id} that $c_d$ extends to a map $\HomeoS\to\R$ continuous with respect to $C^0$-topology, hence Theorem \ref{theo:C0_continuity}. 

Proposition \ref{prop:C0-continuity-elsewhere} can be rephrased as follows. Any homeomorphism $\eta\in\HomeoS$, $\eta\neq \id$, admits a neighborhood $\cal{V}$ in $\HomeoS$ such that for all $\eps>0$, there exists $\delta>0$ satisfying:  
\begin{align}
\forall \phi,\psi\in \cal{V} \cap{\DiffS}, \text{ if } d_{C^0}(\phi,\psi)<\delta \text{ then }
|c_d(\phi)-c_d(\psi)|<\eps. \label{eq:uniform-continuity}
\end{align}

\paragraph{The Hofer norm.}
Our proofs  will make intensive use of the Hofer norm for Hamiltonian diffeomorphisms. We now recall its definition and basic properties. We refer the reader to \cite{Polterovich2001} and the references therein for a general introduction to the material presented here.

We have seen earlier in the paper the definition of the Hofer norm of a Hamiltonian on the sphere and the disc.  On a general symplectic manifold, the {\bf Hofer norm} of a compactly supported Hamiltonian diffeomorphism $\phi$ is defined as
\[\|\phi\|=\inf\{\|H\|_{(1,\infty)}\},\]
where the infimum runs over all compactly supported Hamiltonians $H$ whose time-$1$ map is $\phi$.   It satisfies a triangle inequality 
\[ \|\phi\circ\psi\|\leq\|\phi\|+\|\psi\|,\]
for all Hamiltonian diffeomorphisms $\phi, \psi$, it is conjugation invariant and moreover, we have $\|\phi^{-1}\|=\|\phi\|$ for all Hamiltonian diffeomorphisms $\phi$. 

We remark, in connection with Section  \ref{sec:infinitedistance}, that we can also define a metric, called {\bf Hofer's metric}, by $d_{\mathrm{Hofer}}(\phi,\psi) = \| \phi \psi^{-1} \|$; this is a nondegenerate, bi-invariant metric on the group of compactly supported Hamiltonian diffeomorphisms, but we will not say more about it here because we do not need it for the results in this paper; for more details, we refer the reader to \cite{Polterovich2001}.

The displacement energy of a subset $A$ of the ambient symplectic manifold is by definition the quantity
\[e(A) :=\inf\{\|\phi\|: \phi(\overline{A})\cap \overline{A}=\emptyset\}.\]
On a surface, it is known that for a disjoint union of closed discs, with each disc having area $a$, and whose union covers less than half the area of the surface, the displacement energy is $a$.

\medskip

\noindent\textbf{Important note:} We will use the Hofer norm on the symplectic manifold  $\S^2\setminus\{p_-\}$. Thus, all the Hamiltonians considered in this section will be compactly supported in the complement of the south pole $p_-$; in particular, they belong to $\mathcal{H}$. 

\medskip
Note that the second item of Theorem \ref{thm:PFHspec_initial_properties} can be reformulated as
\begin{equation}
  \label{eq:hofer-bound}
  |c_d(\psi)-c_d(\phi)|\leq d\cdot\|\psi^{-1}\circ\phi\|,
\end{equation}
for all Hamiltonian diffeomorphisms $\phi, \psi\in\Diff_{\S^2\setminus \{p_-\}}(\S^2,\omega)$. To see this, let $K$ be a Hamiltonian such that $\psi=\varphi_K^1$. For any Hamiltonian $H$ such that $\psi^{-1}\circ\phi=\varphi_H^1$, then $\phi=\varphi_{K\# H}^1$ by (\ref{eqn:somehams}), thus by  Theorem \ref{thm:PFHspec_initial_properties} we obtain
\begin{align*}
|c_d(\psi)-c_d(\phi)|&=|c_d(\varphi_K^1)-c_d(\varphi_{K\# H}^1)|\\ &\leq d\cdot \|K-K\# H\|_{(1,\infty)}=d\cdot\|H\|_{(1,\infty)}.
\end{align*}
Inequality (\ref{eq:hofer-bound}) follows.

\subsection{Continuity at the identity}
We first prove Proposition \ref{prop:C0-continuity-at-id}. The case $d=1$ can be proved with the same proof as \cite{Sey13}, using the so-called $\eps$-shift technique. We will generalize this idea to make the proof work for all $d\geq 1$. 

Let us start our proof with a lemma.  

\begin{lemma}\label{lemma:eps-shift} Let $d\geq 1$ and let $F$ be a time-independent Hamiltonian, compactly supported in  $\S^2\setminus\{p_-\}$
  and $f=\phi_F^1$ the time-one map it generates. Assume that the next two conditions are satisfied:
  \begin{enumerate}[(a)]
  \item for all $k\in\{1,\dots, d\}$, the $k$-periodic points of $f$ are precisely the critical points of $F$;   
  \item none of the critical points of $F$ are in the closure of $S^+$.  
  \end{enumerate}
Then, there exists $\delta>0$ such that $c_d(\phi\circ f)=c_d(f)$, for any $\phi\in\DiffS$ with $d_{C^0}(\phi,\id)<\delta$.   
\end{lemma}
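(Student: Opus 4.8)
The plan is to prove the two inequalities $c_d(\phi\circ f)\leq c_d(f)$ and $c_d(\phi\circ f)\geq c_d(f)$ separately, each via the Monotonicity property of Theorem~\ref{thm:PFHspec_initial_properties}. The underlying mechanism is that a PFH spectral invariant cannot jump along a Hamiltonian path with stationary action spectrum: if $(G_s)_{s\in[0,1]}$ is a smooth path in $\mathcal H$ along which the set $\Spec_d(G_s)$ is independent of $s$, then by Hofer Continuity $s\mapsto c_d(\varphi^1_{G_s})$ is continuous, and by Spectrality it is valued in this fixed zero-measure subset of $\R$, so it is constant. Hence, for the first inequality, it suffices to exhibit a Hamiltonian $F^+\in\mathcal H$ and a Hamiltonian $G^+\in\mathcal H$ generating $\phi\circ f$ with $G^+\leq F^+$ pointwise, together with a path in $\mathcal H$ from $F$ to $F^+$ along which $\Spec_d$ stays equal to $\Spec_d(F)$: then
\[
c_d(\phi\circ f)=c_d(\varphi^1_{G^+})\leq c_d(\varphi^1_{F^+})=c_d(\varphi^1_F)=c_d(f),
\]
and the reverse inequality follows from the symmetric construction of a pair $F^-\leq G^-$ with all inequalities reversed.

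To produce $(F^+,G^+)$ one uses the $\epsilon$-shift. By hypotheses~(a) and~(b), $F$ is autonomous with no critical point in the compact set $\overline{S^+}$, which contains $\mathrm{supp}(\phi)$; so the flow of $F$ is uniformly transverse over $\overline{S^+}$, meaning $|X_F|\geq c>0$ there, and $\mathrm{dist}(\varphi^k_F(x),x)\geq\mu>0$ for every $x\in\overline{S^+}$ and every $k\in\{1,\dots,d\}$ (and, by a maximum-principle argument, $F$ has no closed orbit contained in $\overline{S^+}$). Fix an open neighborhood $N$ of $\overline{S^+}$ whose closure is disjoint from $\Crit(F)$, so that $\varphi^1_F$ has no point of period $\leq d$ in $N$. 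For $\epsilon>0$ small and $d_{C^0}(\phi,\id)$ small enough (depending on $F$, $d$, $\epsilon$), one first writes down a generator $G^+\in\mathcal H$ of $\phi\circ f$ that agrees with $F$ outside $N$; this step uses $F$-transversality on $\mathrm{supp}(\phi)$ to absorb $\phi$ into the flow of $F$ over the last $\epsilon$ of the time interval. One then builds $F^+$ from $F$ by a modification supported in $N$ that is adapted to the transverse flow of $F$ there, chosen large enough that $F^+\geq G^+$ everywhere but so that $\varphi^1_{F^+}$, and every $\varphi^1_{F_s}$ along the interpolation $F_s=(1-s)F+sF^+$, still has no periodic point of period $\leq d$ in $N$. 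Since each $F_s$ equals $F$ outside $N$, the period-$\leq d$ points of $\varphi^1_{F_s}$ are then exactly $\Crit(F)$, with unchanged Hamiltonian values and cappings, whence $\Spec_d(F_s)=\Spec_d(F)$ for all $s$, as required; the path from $F$ to $F^-$ is built the same way.

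The step I expect to be the main obstacle is the construction of $F^+$. Because $\phi$ may have arbitrarily large Hofer norm, a generator $G^+$ of $\phi\circ f$ — hence any pointwise majorant $F^+$ — necessarily has large oscillation inside $S^+$, so $F^+$ cannot be a small perturbation of $F$; the difficulty is to carry out a large modification of $F$ inside $N$ that still creates no short periodic orbits near $\overline{S^+}$, which forces the modification to be built from the transverse flow of $F$ (a ``rotational'' modification would fail) rather than chosen freely. This is also where passing from the case $d=1$, treated in \cite{Sey13}, to general $d$ is not automatic: for $d=1$ only fixed points must be avoided, while for general $d$ one must rule out periodic orbits of every period $k\in\{1,\dots,d\}$ at once, which is precisely the use of the uniform bound $\mathrm{dist}(\varphi^k_F(x),x)\geq\mu$. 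Once $F^\pm$ and $G^\pm$ are in place, Monotonicity concludes the proof.
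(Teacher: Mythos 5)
Your underlying mechanism --- that $c_d$ is constant along any Hamiltonian path in $\mathcal H$ on which $\Spec_d$ is stationary, because it is Hofer-continuous and valued in a fixed measure-zero set --- is exactly the engine of the paper's proof. But the way you deploy it contains a gap that I think is fatal, and the paper uses the mechanism in a much more direct way that avoids the problem.

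The gap is in the construction of $F^+$. You need $F^+\in\mathcal H$ with two simultaneous properties: (i) $F^+\geq G^+$ pointwise, where $G^+\in\mathcal H$ generates $\phi\circ f$; (ii) $F^+=F$ outside $N$ and $\Spec_d((1-s)F+sF^+)=\Spec_d(F)$ for every $s\in[0,1]$, which in particular requires $\varphi^1_{(1-s)F+sF^+}$ to have no period-$\leq d$ point --- so, in particular, no fixed point, i.e.\ no critical point of $(1-s)F+sF^+$ --- in $N$ whose action is not already in $\Spec_d(F)$. But, as you yourself observe, $\phi$ can have arbitrarily large Hofer norm while being $C^0$-close to $\id$, so $\max G^+$ can be arbitrarily large, forcing $\max_N F^+$ to be arbitrarily large. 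Since $F^+$ is a smooth function on $\S^2$ equal to $F$ outside $N$, once $\max_N F^+ > \max F$ the global maximum of $F^+$ is attained at an interior point of $N$; this point is a critical point of $F^+$ and hence a fixed point of $\varphi^1_{F^+}$ inside $N$, with an action value that has no reason to lie in $\Spec_d(F)$. So $F^+$ cannot satisfy both (i) and (ii), and the intended chain $c_d(\varphi^1_{G^+})\leq c_d(\varphi^1_{F^+})=c_d(\varphi^1_F)$ breaks at the equality. Appealing to ``transversality of the flow of $F$ over $\overline{S^+}$'' does not resolve this: the obstruction is purely the maximum principle for the function $F^+$, not a statement about orbits of $F$.

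The paper's proof needs neither Monotonicity nor the two-sided squeeze. It proves directly that $\Spec_d(\phi\circ f)=\Spec_d(f)$: first, by conditions (a) and (b) together with $C^0$-smallness of $\phi$, the $k$-periodic points of $\phi\circ f$ for $k\leq d$ are exactly those of $f$ (namely $\Crit(F)$, all lying outside $\overline{S^+}$); second, for such a point $y$, the Hamiltonian $H\# F$ generating $\phi\circ f$ satisfies $H\#F(t,\varphi_{H\#F}^t(y))=F(y)$ because $H$ vanishes at $y$ and $y$ stays fixed, so the action values agree. One then runs the constant-$c_d$ argument not along an interpolation of Hamiltonians $F\rightsquigarrow F^+$, but along the Alexander isotopy $\varphi_K^s$ from $\id$ to $\phi$ (all $C^0$-small), applied to $s\mapsto\varphi_K^s\circ f$. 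Every map in this path has the same $\Spec_d$ by the argument just given, and the path is Hofer-continuous in $s$, so $c_d$ is constant along it. That one re-routing --- keep the source map $f$ fixed, isotope the perturbation $\phi$ to $\id$ rather than trying to dominate it --- is what makes the argument close.
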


Postponing the proof of this lemma, we now explain how it implies Proposition \ref{prop:C0-continuity-at-id}.

\begin{proof}[Proof of Proposition \ref{prop:C0-continuity-at-id}]
  Let $\eps>0$. Let $F$ be a function on $\S^2$ satisfying the assumptions of Lemma \ref{lemma:eps-shift} and assume furthermore that 
  \[ \max F-\min F\leq \frac\eps{2d}.\]
  For instance a $C^2$-small function supported in the complement of  $p_-$
 all of whose critical points are in the southern hemisphere is appropriate. Let then $\delta$ be as provided by Lemma \ref{lemma:eps-shift}.

Let $\phi\in\DiffS$ be such that $d_{C^0}(\phi,\id)<\delta$. Then, we have
 \[c_d(\phi\circ f)=c_d(f).\]
 Using Inequality (\ref{eq:hofer-bound}) and the fact that $c_d(\id)=0$, we obtain
 \[|c_d(\phi\circ f)|=|c_d(f)-c_d(\id)|\leq d\,\|f\|.\]
We then deduce from Inequality (\ref{eq:hofer-bound}):
 \begin{align*}
   |c_d(\phi)|\leq |c_d(\phi\circ f)| + d\|f\|\leq 2d\|f\|.
 \end{align*}
 Now, by definition of the Hofer norm, $\|f\|\leq \max F-\min F$. Thus we get
\[c_d(\phi)\leq \eps.\]
This show the $C^0$-continuity of $c_d$ at $\id$.
\end{proof}

We will now prove the lemma.

\begin{proof}[Proof of Lemma \ref{lemma:eps-shift}] Let $F$ be as in the statement of the lemma and $f=\phi_F^1$. We want to prove that $c_d(f)$ remains unchanged when we $C^0$-perturb $f$ with a Hamiltonian diffeomorphism supported 
in the northern hemisphere.  To obtain this, we will first prove that the entire spectrum remains unchanged under such perturbations.

Let $k\in\{1,\dots, d\}$; we begin by showing that the set of $k$-periodic points is unchanged by these perturbations.    By assumption, there exists $c>0$ such that \[d(f^k(x),x)>c\] for all $x$ in the closure of $S^+$. Now note that the diffeomorphism $(\phi f)^k$ converges to $f^k$ uniformly when $\phi$ tends uniformly to $\id$. Thus, there exists $\delta>0$ such for $d_{C^0}(\phi, \id)<\delta$, the inequality $d((\phi f)^k(x),x)>0$ holds for all $x$ in the closure of $S^+$. In other words, $\phi\circ f$ has no $k$-periodic points in the closure of $S^+$. Since $\phi$ coincides with the identity outside $S^+$, this implies that $\phi\circ f$ and $f$ have the same $k$-periodic points, which are in turn the critical points of $F$. For the rest of the proof we pick $\delta$ such that the above holds for all $k\in\{1,\dots, d\}$, and $\phi$ such that $d_{C^0}(\phi,\id)<\delta$.

We next show that the actions of these $k$-periodic points, i.e. the critical points of $F$,  agree when computed with respect to $f$ and $\phi\circ f$.  

To compute these actions, let $H$ be a Hamiltonian supported in $S^+$ such that $\varphi_H^1=\phi$. By (\ref{eqn:somehams}), the isotopy $\varphi_H^t\varphi_F^t$ (whose time one map is $\phi\circ f$) is generated by the Hamiltonian $H\# F(t,x)=H(t,x)+F((\varphi_H^t)^{-1}(x))$. 

Let $y$ be a critical point of $F$. Then, $\varphi_F^t(y)=y$ for all $t\in[0,1]$, and since $y\notin S^+$, we also have $\varphi_H^t(y)=y$ for all $t\in[0,1]$. Thus $y$ remains fixed along the whole isotopy. A capping of such an orbit is a trivial capping to which is attached $\ell[\S^2]$ for some $\ell\in\Z$. Also note that since $H$ is supported in $S^+$,
\[H\# F(t,\varphi_H^t\varphi_F^t(y))=H(t,y)+F(y)=F(y).\]
Applying Formula (\ref{eqn:actiondefn}) we obtain
\begin{align*}
  \mathcal{A}_{H\# F}(y,\ell[\S^2])&=\int_0^1 H\# F(t,\varphi_H^t\varphi_F^t(y))dt +\ell \mathrm{Area}(\S^2).\\
                      &= F(y)+\ell\\
  &= \mathcal{A}_F(y,\ell[\S^2]).  
\end{align*}
This shows that $\spec(H\#F)=\spec(F)$. A similar argument shows that $\spec((H\#F)^k)=\spec(F^k)$ for all $k\in\{1,\dots, d\}$, hence $\spec_d(H\# F)=\spec_d(F)$.

By (\ref{eqn:spec_time1}), we have proved
\begin{equation}
\spec_d(\phi\circ f)=\spec_d(f)\label{eq:eps-shift}
\end{equation}
for all $\phi\in\DiffS$ such that $d_{C^0}(\phi, \id)<\delta$.

\medskip
There remains to deduce $c_d(\phi\circ f)=c_d(f)$ from this equality of spectrums. 

Given $\phi\in\DiffS$ such that $d_{C^0}(\phi, \id)<\delta$, one can construct, using the Alexander isotopy, a Hamiltonian isotopy $(\varphi_K^t)_{t\in[0,1]}$ in $\DiffS$, such that $d_{C^0}(\varphi_K^s,\id)<\delta$ for all $s\in[0,1]$ and $\varphi_K^1=\phi$; we refer the reader to \cite[Lemma 3.2]{Sey13} for the details.   

Equation (\ref{eq:eps-shift}) then implies $\spec_d(\varphi_K^s\circ f)=\spec_d(f)$, for all $s\in [0,1]$.
Now, by Theorem~\ref{thm:PFHspec_initial_properties} 
the function $s\mapsto c_d(\varphi_K^s\circ f)$ is continuous and takes its values in the measure $0$ subset $\spec_d(f)\subset \R$.  As a consequence, it is constant. This shows $c_d(\phi\circ f)=c_d(f)$ and concludes our proof. 
\end{proof}

\subsection{Continuity away from the identity}
We now turn our attention to Proposition \ref{prop:C0-continuity-elsewhere}. We want to prove \eqref{eq:uniform-continuity}, or equivalently, that any $\eta\in\HomeoS$, $\eta\neq \id$, admits an open neighborhood $\mathcal{V}$ such that for all $\eps>0$, there exists $\delta>0$ satisfying
\begin{align}
  \forall f\in\mathcal{V}& \cap \DiffS, \forall g\in\DiffS, \label{eq:uniform-conitnuity2}\\
  &\text{ if }d_{C^0}(g, \id)<\delta \text{ then } |c_d(gf)-c_d(f)|<\eps.\nonumber
 \end{align}

Our proof will follow from three lemmas, which we now introduce. 

To state the first, let us introduce some terminology. We will say that a diffeomorphism $f$ \textbf{$d$-displaces} a subset $U$ if the subsets $U$, $f(U)$, $\dots$, $f^d(U)$ are pairwise disjoint.  Our first lemma states that for $g$ supported in an open subset $d$-displaced by $f$, an even stronger version of \eqref{eq:uniform-conitnuity2} holds.  It is inspired by \cite[Lemma 3.2]{usher10}, which can be regarded as the analogue for the $d = 1$ case.

\begin{lemma}\label{lemme:cd-under-displacement} Let $f\in\DiffS$ and $B$ be an open topological disc %
  whose closure is included in  $\S^2\setminus\{p_-\}$
  and which is $d$-displaced by $f$. Then, for all $\phi\in\DiffB$, we have $c_d(\phi\circ f)=c_d(f)$.
\end{lemma}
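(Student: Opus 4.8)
The plan is to deduce Lemma~\ref{lemme:cd-under-displacement} by the same mechanism used at the end of the proof of Lemma~\ref{lemma:eps-shift}: connect $\phi$ to the identity inside $\DiffB$, and observe that $c_d$ must be constant along the resulting path because it is continuous along it and takes its values in a fixed measure-zero subset of $\R$.

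Concretely, first I would write $\phi=\varphi_K^1$ with $K\in C^\infty(\S^1\times\S^2)$ compactly supported in $B$, and consider the Hamiltonian path $s\mapsto\varphi_K^s$, $s\in[0,1]$. Since $X_{K_t}$ vanishes outside $B$, the disc $B$ is invariant under this flow, so $\varphi_K^s\in\DiffB$ for every $s$, with $\varphi_K^0=\id$ and $\varphi_K^1=\phi$. Because $\bar B\subset\S^2\setminus\{p_-\}$ and $f$ equals the identity near $p_-$, each composition $\varphi_K^s\circ f$ fixes $p_-$ with derivative the identity there and hence lies in $\mathcal{S}$; moreover, choosing $G\in\mathcal{H}$ supported in the interior of $S^+$ with $\varphi_G^1=f$, one has $\varphi_K^s\circ f=\varphi^1_{K^{(s)}\#G}$ with $K^{(s)}_t=sK_{st}$ supported in $B$, and one checks $K^{(s)}\#G\in\mathcal{H}$. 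By the Hofer continuity property (in the form \eqref{eq:hofer-bound}, together with the conjugation invariance of the Hofer norm), $s\mapsto c_d(\varphi_K^s\circ f)$ is continuous on $[0,1]$, and by Spectrality it takes its values in $\bigcup_s\Spec_d(K^{(s)}\#G)$.

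The heart of the matter is to show that $\Spec_d(K^{(s)}\#G)=\Spec_d(G)=\Spec_d(f)$ for every $s$; granting this, the continuous function $s\mapsto c_d(\varphi_K^s\circ f)$ takes its values in the fixed set $\Spec_d(f)$, which has measure zero (as in the proof of Lemma~\ref{lemma:eps-shift}), hence is constant, and evaluating at $s=0,1$ yields $c_d(\phi\circ f)=c_d(f)$. To prove this spectral identity I would argue in two steps. First, the $d$-displacement hypothesis forces $\varphi_K^s\circ f$ and $f$ to have exactly the same periodic points of period $\le d$, with the same orbits: if $x\in B$ had period $k\le d$ under $\varphi_K^s\circ f$, then, using that $\varphi_K^s$ is supported in $B$ and that $B\cap f^j(B)=\emptyset$ for $1\le j\le d$, an immediate induction gives $(\varphi_K^s\circ f)^j(x)=f^j(x)$ for $0\le j\le k$, whence $x=f^k(x)\in B\cap f^k(B)=\emptyset$, a contradiction; the same reasoning rules out the orbit of such a point meeting $f^{-1}(B)$, and hence $\varphi_K^s\circ f$ coincides with $f$ along all of its period-$\le d$ orbits (and conversely for those of $f$). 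Second, the capped actions of these common orbits agree: by \eqref{eqn:action_Z_alpha_2}, for an $m$-periodic point $q$ — with $q$ and its $f$-orbit $\delta(t)=\varphi_G^t(q)$, $t\in[0,m]$, missing $B$ — the corresponding orbit for $\varphi_K^s\circ f$ is $\gamma(t)=\varphi_{K^{(s)}}^t(\delta(t))$, which differs from $\delta$ only along the part of $\delta$ inside $B$; a direct computation, using that $K^{(s)}$ vanishes on $\partial B$ (so that the boundary terms at the times where $\delta$ enters and leaves $B$ vanish), shows that the extra $K^{(s)}$-integral appearing in the action of $\gamma$ is cancelled exactly by the symplectic area swept out by the homotopy $(t,u)\mapsto\varphi_{K^{(s)}}^{ut}(\delta(t))$ from $\delta$ to $\gamma$ (this homotopy being supported in $B$). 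Thus $\mathcal{A}(\alpha,Z_\alpha)$ does not change when $\alpha$ is reinterpreted as an orbit of $\varphi_K^s\circ f$, and $\Spec_d(K^{(s)}\#G)=\Spec_d(G)$ follows.

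The main obstacle is precisely this last action-invariance computation. That the period-$\le d$ orbits persist under composition with $\phi$ is an easy consequence of $d$-displacement; but verifying that their actions are unchanged requires the careful bookkeeping — in the spirit of the action computations in the proofs of Lemma~\ref{lemma:eps-shift} and Lemma~\ref{lemma:action-action} — showing that the contribution of the truncated orbit loop inside $B$ and the $K$-contribution to the action assemble into a total derivative whose boundary terms vanish because $K$ is compactly supported in $B$.
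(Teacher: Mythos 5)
Your overall strategy---write $\phi=\varphi_K^1$ with $K$ supported in $B$, connect $\phi$ to $\id$ inside $\DiffB$, show $\spec_d(\varphi_K^s\circ f)$ is independent of $s$, and conclude via continuity of $c_d$ into a fixed measure-zero set---is exactly the paper's, as is the identification of the period-$\le d$ orbits of $\varphi_K^s f$ with those of $f$ using $d$-displacement. Where you genuinely diverge from the paper is at the action comparison, and the two arguments are quite different there. The paper computes the action using a \emph{concatenated} isotopy $\psi^t$, which at each step flows first by the $f$-Hamiltonian and only then by the $\phi$-Hamiltonian; along the orbit, each $\phi$-piece is evaluated at a point of the form $f\circ(\varphi_K^s f)^j(x)\notin B$, so the $K$-integrand vanishes identically and the action equals that of $f$ with no further work. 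You instead use the composed isotopy $\varphi_{K^{(s)}}^t\circ\varphi_G^t$ directly, so the $K^{(s)}$-integral along $\gamma(t)=\varphi_{K^{(s)}}^t(\delta(t))$ need not vanish (since $\delta$ may enter $B$ between integer times), and you compensate by changing the capping via the homotopy $h(t,\sigma)=\varphi_{K^{(s)}}^{\sigma t}(\delta(t))$, claiming the swept area cancels the extra $K^{(s)}$-integral.

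That cancellation is in fact true, and it is worth recording why, since you assert it without proof and it is the whole content of the action step in your version. Because $q=\delta(0)=\delta(m)$ lies outside $B$, it is fixed by every $\varphi_{K^{(s)}}^r$; hence the homotopy $h$ fixes both endpoints, the integration by parts in $t$ used to evaluate $\int h^*\omega$ has vanishing boundary term at $t=m$ (as $K^{(s)}$ vanishes at $q$), and one computes directly that $\int h^*\omega=-\int_0^m K^{(s)}(t,\gamma(t))\,dt$, which is precisely what is needed. This is an instance of the naturality of the Hamiltonian action under composition with a flow fixing the base point. So your route works, but two points in the write-up should be corrected. First, the parenthetical ``with $q$ and its $f$-orbit $\delta(t)=\varphi_G^t(q)$, $t\in[0,m]$, missing $B$'' is wrong and contradicts the rest of the sentence: only the discrete orbit $q,f(q),\dots,f^{m-1}(q)$ misses $B$, while the continuous isotopy $\varphi_G^t(q)$ has no reason to avoid $B$---indeed that is exactly why you (rightly) go on to discuss ``the part of $\delta$ inside $B$.'' Second, the stated reason for the cancellation (``$K^{(s)}$ vanishes on $\partial B$'') is not the operative one; the relevant boundary terms live at $t=0$ and $t=m$, and what makes them vanish is that $q$ lies outside $B$ and is fixed by the $K^{(s)}$-flow. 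Overall, the paper trades the need for this action-invariance computation for the cleaner device of the concatenated isotopy; your composed-isotopy version is a valid alternative once the cancellation is actually carried out.
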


We will prove this lemma in Section \ref{sec:proof-of-lemmas}. 

To apply the previous lemma, we need there to exist an appropriate open disc $B$.  The next lemma is the key ingredient for this. %

\begin{lemma}\label{lemma:d-disjoint-iterates} Let $\eta\in \Homeo_c(\D,\omega)$ with $\eta\neq\id$. Then, there exists a point $x\in \D$ such that $x,\eta(x), \eta^2(x), \dots, \eta^d(x)$ are pairwise distinct points.  
\end{lemma}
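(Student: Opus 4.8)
The plan is to reduce the lemma to the classical rigidity of finite-order surface homeomorphisms. Set $N:=\mathrm{lcm}(1,\dots,d)$. I would first observe that it suffices to prove $\eta^N\neq\id$: indeed, if $x\in\D$ satisfies $\eta^N(x)\neq x$, then for every $m\in\{1,\dots,d\}$ one also has $\eta^m(x)\neq x$, since $m\mid N$ and the relation $\eta^m(x)=x$ would force $\eta^N(x)=x$. Consequently $\eta^i(x)\neq\eta^j(x)$ whenever $0\le i<j\le d$, since applying $\eta^{-i}$ to an equality $\eta^i(x)=\eta^j(x)$ would give $\eta^{\,j-i}(x)=x$ with $1\le j-i\le d$. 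Thus the first $d+1$ iterates of such an $x$ are pairwise distinct, which is what we want.

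It then remains to show $\eta^N\neq\id$, and this is the heart of the argument. Suppose for contradiction that $\eta^N=\id$. Since $\eta\neq\id$, the map $\eta$ is a homeomorphism of the closed disc of finite order at least $2$; being the identity near $\partial\D$, it is orientation preserving. By the Brouwer--Ker\'ekj\'art\'o theorem on periodic homeomorphisms of the disc, $\eta$ is then topologically conjugate to a Euclidean rotation $R_\theta$ of $\D$, say $\eta=h\circ R_\theta\circ h^{-1}$. As $\eta\neq\id$ we have $\theta\notin 2\pi\mathbb{Z}$, so $R_\theta$ fixes only the center of the disc; hence $\Fix(\eta)=\{h(0)\}$ is a single point. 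This contradicts the fact that $\eta$ equals the identity on a neighborhood of $\partial\D$, so that $\Fix(\eta)$ is uncountable. This contradiction shows $\eta^N\neq\id$ and completes the proof.

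I expect the only genuinely non-elementary input to be the rigidity statement invoked above, namely that a nontrivial finite-order homeomorphism of $\D$ cannot restrict to the identity on a nonempty open set; this is precisely where the planar topology enters, and it is exactly the content of the Brouwer--Ker\'ekj\'art\'o classification of periodic disc homeomorphisms (alternatively it can be deduced from Smith theory applied to the finite-order homeomorphism of $S^2$ obtained by doubling, whose fixed-point set has the $\mathbb{Z}/p$-cohomology of a sphere and hence contains no open set). It is worth noting that the area form plays no role here: the argument uses only that $\eta$ is a homeomorphism of $\D$ which is the identity near $\partial\D$ and is not globally the identity.
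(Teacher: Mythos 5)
Your reduction — choosing $x$ with $\eta^N(x)\neq x$ for $N=\mathrm{lcm}(1,\dots,d)$ and then deducing that $x,\eta(x),\dots,\eta^d(x)$ are pairwise distinct — is exactly the paper's argument (the paper uses $d!$ instead of $\mathrm{lcm}(1,\dots,d)$, which is immaterial). The one place you genuinely diverge is in establishing the key fact that $\eta^N\neq\id$: the paper simply cites Constantin--Kolev for the statement that $\eta^N\neq\id$ for every positive integer $N$ (i.e.\ that the group is torsion-free), whereas you supply a self-contained proof via the Brouwer--Ker\'ekj\'art\'o classification of finite-order disc homeomorphisms as conjugate to rotations, together with the observation that a nontrivial rotation has a single fixed point while $\eta$ fixes a neighborhood of $\partial\D$. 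Your argument is correct and has the advantage of being self-contained; it also makes explicit, as you note, that the area form is irrelevant — only the compact support and planarity matter — which the paper's citation obscures. The cost is invoking a nontrivial classification theorem (or Smith theory), but this is a classical and well-established input, so the trade-off is reasonable.
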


In particular, by the lemma, there exists an open topological disc $B$ whose closure is $d$-displaced by $\eta$. 

If we then let $\cal V$ be the $C^0$ open neighborhood of $\eta$ given by the set of all $f\in\HomeoS$
which $d$-displace the closure of the disc $B$, then by Lemma \ref{lemme:cd-under-displacement}, we have $c_d(\phi\circ f)=c_d(f)$ for all $\phi\in\DiffB$ and $f\in \cal V\cap\DiffS$.  Now it turns out that every map $g$ which is sufficiently $C^0$ close to $\id$ is close in Hofer distance to an element in $\DiffB$. This is the content of the next lemma.  

\begin{lemma}\label{lemma:small-hofer} Let $B$ be an open topological disc whose closure is included in  $\S^2\setminus\{p_-\}$. For all $\eps>0$, there exists $\delta>0$, such that for all $g\in\DiffS$ with $d_{C^0}(g,\id)<\delta$, there is $\phi\in\DiffB$ such that $\|\phi^{-1}g\|\leq\eps$.
\end{lemma}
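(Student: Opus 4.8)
The plan is to show that every $C^0$‑small element of $\DiffS$ lies within arbitrarily small Hofer distance of the subgroup $\DiffB$, by combining a fragmentation of $g$ with the energy–displacement trick and the group structure of $\DiffB$; the argument is in the spirit of \cite[Lemma 3.2]{usher10} and \cite{Sey13}, and it is exactly here that the displacement‑energy estimate for disjoint unions of equal discs recalled just above will be used.

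First I would fragment. Given $\eps>0$, choose a small parameter $a_0>0$ (to be fixed at the end). Using Fathi's fragmentation \cite{fathi} — and, crucially, using that $g$ is $C^0$‑$\delta$‑close to $\id$ so that the fragmentation can be performed efficiently along a fixed fine covering of $S^+$ whose members are moved only slightly by $g$ — write $g=g_1\circ\cdots\circ g_N$, with each $g_i$ supported in a topological disc $U_i\subset\S^2\setminus\{p_-\}$ of area less than $a_0$. The point of the $C^0$‑smallness is that $\delta$ (hence $a_0$, the number $N$ of pieces, and the distances the pieces must travel) can be chosen small enough that the total cost estimated in the next step is at most $\eps$. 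Next, for each $i$, since $U_i$ has small area it can be pushed inside $B$ by a Hamiltonian diffeomorphism $\theta_i$ supported in the complement of $p_-$; routing $U_i$ to $B$ through a thin tube and invoking the displacement‑energy estimate for (disjoint unions of) small equal discs, one may take $\|\theta_i\|$ small, and arrange $\sum_i\|\theta_i\|\le\eps/2$. Set $\phi_i:=\theta_i\,g_i\,\theta_i^{-1}$, which is supported in $\theta_i(U_i)\subset B$, so $\phi_i\in\DiffB$. Then
\[
\phi_i^{-1}g_i=\theta_i\,g_i^{-1}\,\theta_i^{-1}\,g_i=[\theta_i,g_i^{-1}],
\]
and by conjugation‑invariance of the Hofer norm and $\|\psi^{-1}\|=\|\psi\|$,
\[
\|\phi_i^{-1}g_i\|=\|[\theta_i,g_i^{-1}]\|\le\|\theta_i\|+\|g_i^{-1}\theta_i^{-1}g_i\|=2\|\theta_i\|.
\]
Thus $g_i=\phi_i\,h_i$ with $\phi_i\in\DiffB$ and $\|h_i\|\le2\|\theta_i\|$.

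It then remains to collect the pieces. Substituting, $g=\phi_1h_1\phi_2h_2\cdots\phi_Nh_N$. Since $\DiffB$ is a group, I slide each $\phi_j$ to the left, at the cost of replacing each $h_i$ that it passes by a $\phi_j$‑conjugate of itself, which has the same Hofer norm; this yields $g=\phi\,h$ with $\phi:=\phi_1\cdots\phi_N\in\DiffB$ and $\|h\|\le\sum_i\|h_i\|\le2\sum_i\|\theta_i\|\le\eps$. Hence $\|\phi^{-1}g\|=\|h\|\le\eps$, and choosing $\delta$ as produced by the fragmentation step completes the proof. The main obstacle is precisely the control of the total displacement cost $\sum_i\|\theta_i\|$: a general diffeomorphism cannot be fragmented so that this sum is small, and making it work requires genuinely exploiting the $C^0$‑closeness of $g$ to the identity (so that the pieces are small and the distances they travel into $B$ are controlled) together with the sharp displacement‑energy bound for disjoint unions of equal discs; the remaining steps are formal manipulations with the Hofer norm.
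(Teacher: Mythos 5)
The skeleton of your argument — fragment $g$, push the pieces into $B$, and reassemble with Hofer‑norm control — is indeed the right strategy, and the formal manipulations (conjugating, passing the $\phi_i$'s to the left, using bi‑invariance of the Hofer norm) are all sound. But the proof as written has a genuine gap at exactly the place you flag as "the main obstacle." You assert that one can choose a fragmentation and pushes $\theta_i$ so that $\sum_i \|\theta_i\| \le \eps/2$, and you gesture at the displacement‑energy bound for disjoint unions, but you never actually use it in a way that beats the obvious lower bound. If each $g_i$ is supported in a disc $U_i$ of area about $a_0$, pushing $U_i$ into $B$ costs about $2a_0$; and since the $U_i$'s (in any reasonable fragmentation) need to cover the support of $g$, the number of pieces is at least of order $1/a_0$, so $\sum_i \|\theta_i\| \gtrsim 2 N a_0 \gtrsim 1$. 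Shrinking $a_0$ only increases $N$ in compensation, and this product does \emph{not} go to zero no matter how $C^0$‑close $g$ is to the identity.

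What makes the paper's argument work is a regrouping step that your sketch omits, and it is not a formal detail but the heart of the matter. One fragments (via the $\square$-fragmentation lemma of Le Roux--Seyfaddini--Viterbo) into $m$ pieces $g_1, \dots, g_m$ supported in disjoint rectangles $U_i$ of area $\frac{1}{2m}$ each, then collects them into a \emph{fixed} number $N+1$ of groups $f_j$, where $N$ depends only on $\mathrm{area}(B)$ (roughly $N \sim 1/\mathrm{area}(B)$), each $f_j$ being supported in a disjoint union $V_j$ of $m/N$ of the small rectangles, of total area $\frac{1}{2N} < \mathrm{area}(B)$. The point of the displacement‑energy estimate for a disjoint union of equal small discs is then that a single Hamiltonian $h_j$ pushes the \emph{entire} union $V_j$ into $B$ at cost $\|h_j\| \le 2\cdot\frac{1}{2m} = \frac{1}{m}$ — proportional to the area of one small disc, not to $\mathrm{area}(V_j)$. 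The telescoping estimate you use then gives a total of $\frac{2(N+1)}{m}$, which tends to $0$ as $m \to \infty$ because $N$ is fixed. Without the regrouping, "displacement energy of a disjoint union" is never actually invoked — you are displacing one disc at a time and paying a cost proportional to the area of each — and that is why the sum does not become small. You should make this two‑scale structure (many pieces of size $\frac{1}{2m}$, collected into $N+1$ groups of macroscopic size $\frac{1}{2N}$) explicit; as written, the claim "$\sum_i \|\theta_i\| \le \eps/2$ can be arranged" does not follow from what precedes it.
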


We will prove Lemma \ref{lemma:small-hofer} at the end of Section \ref{sec:proof-of-lemmas}. Assuming this, we can achieve the proof of \eqref{eq:uniform-conitnuity2} and hence of Proposition \ref{prop:C0-continuity-elsewhere}, as we now explain.  

\begin{proof}[Proof of Proposition \ref{prop:C0-continuity-elsewhere}.] 
  Let $\eps>0$ and let $\delta>0$ be as provided by Lemma  \ref{lemma:small-hofer}. Let also $f\in \cal V\cap\DiffS$. Then, for all $g$ satisfying $d_{C^0}(g,\id)<\delta$, there exists $\phi\in \DiffB$ such that $\|\phi^{-1}g\|<\eps$. Thus, using Lemma~\ref{lemme:cd-under-displacement}, Hofer continuity \eqref{eq:hofer-bound} and the conjugation invariance of the Hofer norm, we have:
\[|c_d(gf)-c_d(f)|=|c_d(g f)-c_d(\phi f)|\leq d\|f^{-1}\phi^{-1}g f\|=d\|\phi^{-1}g\|<d\eps.\]
Since $\eps$ is arbitrary, this concludes the proof of \eqref{eq:uniform-conitnuity2} and of Proposition \ref{prop:C0-continuity-elsewhere},  modulo the proofs of the lemmas \ref{lemme:cd-under-displacement}, \ref{lemma:d-disjoint-iterates} and \ref{lemma:small-hofer}.
\end{proof}

\subsection{Proofs of Lemmas \ref{lemme:cd-under-displacement}, \ref{lemma:d-disjoint-iterates} and \ref{lemma:small-hofer}.}\label{sec:proof-of-lemmas}

We now provide the proofs of the lemmas stated in the preceding section. We start with the simplest one.   

\begin{proof}[Proof of Lemma \ref{lemma:d-disjoint-iterates}] Let $\eta\in \Homeo_c(\D,\omega)$ with $\eta\neq\id$ and let $d$ be a positive integer. It is known (see \cite{Constantin-Kolev}) that for any positive integer $N$, $\eta^N\neq \id$. Thus, there exists a point $x\in\D$ such that $\eta^{d!}(x)\neq x$. For such a point,  $x,\eta(x), \eta^2(x),\dots, \eta^d(x)$ are pairwise distinct. Indeed, otherwise, there would be integers $0\leq k<\ell\leq d$ such that $\eta^k(x)=\eta^\ell(x)$, and we would get $\eta^{\ell-k}(x)=x$, in contradiction with $\eta^{d!}(x)\neq x$ since $d!$ is evenly divided by $\ell - k$.  
\end{proof}

We now turn our attention to Lemma \ref{lemme:cd-under-displacement}. 

\begin{proof}[Proof of Lemma \ref{lemme:cd-under-displacement}] Let $F$ be a Hamiltonian supported in $S^+$ with  
$\varphi_F^1 = f$ and let $G$ be a Hamiltonian supported in $B$ with $\varphi_G^1=\phi$. We will prove that for all $s\in[0,1]$, $\spec_d(\varphi_G^s f)=\spec_d(f)$.  This implies, as in the proof of Lemma \ref{lemma:eps-shift}, that the map $s\mapsto c_d(\varphi_G^s f)$ is constant, hence the lemma.

Let $s\in [0,1]$.  We will first verify that the diffeomorphism $\varphi_G^s f$ admits the same $k$-periodic points as $ f$, for all $k\in\{1,\dots, d\}$.

For all $\ell\in\{1,\dots, d\}$, we have $B\cap  f^\ell(B)=\emptyset$ and $B\cap  f^{-\ell}(B)=\emptyset$. It follows that $\varphi_G^s( f^{\ell}(B)) =  f^\ell(B),$ 
 for all  $\ell\in\{-d,\dots, d\}$, hence
  \[(\varphi_G^s f)^k( f^{-\ell}(B)) =   f^{k-\ell}(B), \quad \forall k\in\{1,\dots, d\}, \forall\ell\in\{0,\dots, d\}. \] 
Since $ f^{-\ell}(B)\cap  f^{k-\ell}(B)=\emptyset$ for such $k, \ell$, this implies that $\varphi_G^s f$ has no $k$-periodic points with $1\leq k\leq d$ in $\bigcup_{\ell=0}^d f^{-\ell}(B)$.

We now fix  $k\in\{1,\dots, d\}$. If $x\notin \bigcup_{\ell=0}^d f^{-\ell}(B)$, then $(\varphi_G^s\circ f)^k(x)= f^k(x)$. As a consequence, the $k$-periodic points of $\varphi_G^s\circ f$ are exactly those of $ f$. 

We will now prove that the corresponding action values coincide as well. For that purpose it is convenient to use an isotopy generating the $(\varphi_G^s\circ f)^k$ obtained by concatenation of isotopies rather than composition. 
Namely, the map $(\varphi_G^s\circ f)^k$ is the time-1 map of the isotopy $\psi^t$ which at time $t\in [\frac{\ell}{2k},\frac{\ell+1}{2k}]$ for $\ell\in\{0,\dots, 2k-1\}$ is given by %
\[\psi^t=
  \begin{cases}
\varphi_F^{\rho(2kt-\ell)}\circ(\varphi_G^s\circ f)^{\frac{\ell}{2}},&\quadif \ell\text{ is even,}\\
\varphi_G^{s\rho(2kt-\ell)}\circ f\circ(\varphi_G^s\circ f)^{\frac{\ell-1}{2}}   ,&\quadif \ell\text{ is odd.} 
  \end{cases}
\]
Here, $\rho:[0,1]\to[0,1]$ is a non-decreasing smooth function which is equal to $0$ near $0$ and equal to $1$ near $1$. The role of the time-reparametrization $\rho$ is simply to make the isotopy smooth at the gluing times.

This isotopy is generated by the Hamiltonian $K$ given by the formula
   \[K_t(x)=
     \begin{cases}
       2k\rho'(2kt-\ell)F_{\rho(2kt-\ell)}(x),&\quadif \ell\text{ is even,}\\
       2ks\rho'(2kt-\ell)G_{s\rho(2kt-\ell)}(x),&\quadif \ell\text{ is odd,}
     \end{cases}
   \]
   for $\ell\in\{0,\dots, 2k-1\}$ and $t\in [\frac{\ell}{2k},\frac{\ell+1}{2k}]$.

   We will compute the spectrum of $\varphi_G^s\circ f$ with the help of this particular Hamiltonian. The action of a capped 1-periodic orbit $(z,u)$ of $K$, with $z=\varphi_K^t(x)$, is given by
\begin{align*}
  \cal A_K(z,u) &= \int_{\D^2}u^*\omega + \int_{0}^{1}K_{t}(\psi^t(x))\,dt \\
                & \int_{\D^2}u^*\omega +\sum_{\ell=0}^{2k-1}\int_{\frac{\ell}{2k}}^{\frac{\ell+1}{2k}}K_{t}(\psi^t(x))\,dt.
\end{align*}
After suitable change of variables, 
we obtain:
\begin{align*}
  \cal A_K(z,u) = \int_{\D^2}u^*\omega &+\sum_{j=0}^{k-1}\left( \int_{0}^{1}F_{t}(\varphi_F^{t}\circ  (\varphi_G^s\circ f)^j(x))\,dt \right.\\
                    &  +\left.\int_{0}^{1} sG_{st}(\varphi_G^{st}\circ f\circ (\varphi_G^s\circ f)^j(x))\,dt
                                     \right).
\end{align*}
Since we showed above that $\varphi^s_G \circ  f$ has no $k$-periodic points in 
$ f^{- 1}(B)$, we know that $  f \circ (\varphi_G^s \circ  f)^j(x)$ does not belong to $B$, hence to the support of $G$. It follows that the integrand for the third integral above has to vanish and the integrand for the second integral above can be simplified, so that we get
\begin{align*}
  \cal A_K(z,u)=\int_{\D^2}u^*\omega 
                 +\sum_{j=0}^{k-1}\int_{0}^{1} F_t(\varphi_F^t\circ f^j(x))\,dt.
\end{align*}
We see that this action does not depend on $s$. As a consequence, we get that $\spec_d(\varphi_G^s   f)=\spec_d( f)$ for all $s\in[0,1]$. 
\end{proof}

There remains to prove Lemma \ref{lemma:small-hofer}. Its proof will consist of two steps. First, by Lemma \ref{lemma:square-fragmentation} below, a diffeomorphism which is sufficiently $C^0$-close to identity can be appropriately fragmented into maps supported in balls of small area. Second, we observe that moving theses maps with small support into $B$ can be achieved with small Hofer norm; this is the content of Lemma \ref{lemma:moving-balls} below.

Before starting the proof of Lemma \ref{lemma:small-hofer}, let us state the first of these two lemmas.

\begin{lemma}[\cite{LSV}, Lemma 47]\label{lemma:square-fragmentation} 
Let $\omega_0$ denote the standard area form on $\R^2$. Let $m$ be a positive integer and $\rho$ a positive real number. For %
  $i=1,\dots, m$, denote by $U_i$ the open rectangle $(0,1)\times(\frac{i-1}{m},\frac {i}{m})$. Then, there exists $\delta>0$, such that for every $g\in\Diff_c((0,1)\times(0,1),\omega_0)$ with $d_{C^0}(g,\id)<\delta$, there exist $g_1\in\Diff_c(U_1,\omega_0)$, $\dots$, $g_m\in \Diff_c(U_m,\omega_0)$ and $\theta\in \Diff_c((0,1)\times(0,1),\omega_0)$ supported in a disjoint union of topological discs whose total area is less than $\rho$, such that $g=g_1\circ\dots\circ g_m\circ\theta$.
\end{lemma}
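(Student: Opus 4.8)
The statement is a quantitative fragmentation result: any diffeomorphism of the unit square that is $C^0$-small can be written as a composition of $m$ diffeomorphisms, each supported in one of the horizontal strips $U_i$, together with a small remainder $\theta$ supported in discs of tiny total area. The reference (\cite{LSV}, Lemma 47) does all the work, so the ``proof'' here is really a matter of recalling the construction and checking it gives what we want. The plan is to proceed by an inductive peeling-off argument: assuming $g$ is $C^0$-close enough to the identity, I would first produce $g_1 \in \Diff_c(U_1,\omega_0)$ so that $g_1^{-1} \circ g$ moves points very little in the vertical direction near the strip $U_1$ — more precisely, so that $g_1^{-1}\circ g$ maps the strip $\overline{U_1}$ very close to itself — then restrict attention to the remaining strips and iterate. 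After $m$ steps one is left with $\theta := g_m^{-1}\circ \cdots \circ g_1^{-1} \circ g$ which, by construction, displaces each point by a tiny amount, and hence (being area-preserving and $C^0$-small) can be taken to be supported in a disjoint union of topological discs of total area less than $\rho$.

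First I would fix notation: write $\pi_2 : (0,1)^2 \to (0,1)$ for the projection to the second coordinate, and observe that if $d_{C^0}(g,\mathrm{id})$ is small then $\pi_2 \circ g$ is $C^0$-close to $\pi_2$. The key geometric input is that for a $C^0$-small area-preserving map one can find, by a standard ``straightening'' or Moser-type argument performed strip by strip, an area-preserving $g_1$ supported in a slightly enlarged copy of $U_1$ (which one arranges to still lie inside $(0,1)^2$ and be disjoint from the other strips after shrinking constants) such that $g_1$ agrees with $g$ to high $C^0$ accuracy on the relevant region; the composition $g_1^{-1}\circ g$ is then $C^0$-small \emph{and} already equal to the identity near $\overline{U_1}$. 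Iterating this through $i = 2, \dots, m$, at each stage working inside the union of the not-yet-treated strips, produces $g_1,\dots,g_m$ with $g_i \in \Diff_c(U_i,\omega_0)$ and a leftover $\theta = g_m^{-1}\circ\cdots\circ g_1^{-1}\circ g$. One must track the constants so that the required $C^0$-closeness of $g$ at the outset (this is the $\delta$ in the statement) depends only on $m$ and $\rho$, not on $g$; this is where a uniform quantitative version of the strip-straightening step is needed, and it is exactly what \cite{LSV} supplies.

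Finally I would handle the remainder $\theta$. By construction $\theta$ is area-preserving, compactly supported in $(0,1)^2$, and $C^0$-close to the identity, with closeness controlled by $\delta$. It is a standard fact (again used in \cite{LSV}) that such a map can be written as a composition of finitely many area-preserving maps each supported in a topological disc of area less than $\rho/(\text{number of pieces})$ — or, more directly, one can absorb $\theta$ into a single map supported in a disjoint union of small discs covering the ``moved set'' of $\theta$, whose total area tends to $0$ with $\delta$ because $\theta$ is $C^0$-small and area-preserving. Choosing $\delta$ small enough that this total area is below $\rho$ completes the argument.

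\textbf{Main obstacle.} The genuinely delicate point is the \emph{uniformity} of the strip-by-strip straightening: one must produce each $g_i$ supported precisely in $U_i$ (not merely in a neighborhood) while keeping the cumulative error under control, so that the single threshold $\delta = \delta(m,\rho)$ works for \emph{all} $g$ with $d_{C^0}(g,\mathrm{id}) < \delta$. Getting the supports to be exactly the $U_i$ — rather than slightly fattened strips that would overlap — requires a careful cutoff argument combined with an area-correction (Moser) step to restore the area-preserving property after cutting off, and verifying that this correction itself stays $C^0$-small and supported where it should be. Since all of this is carried out in \cite{LSV}, the proof in this paper can legitimately be a short citation with a sentence indicating how the statement here (on the square, with horizontal strips) is the special case, or a direct quotation, rather than a reconstruction of the full argument.
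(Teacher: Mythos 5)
The paper does not prove this statement at all: it is quoted verbatim as Lemma~47 of \cite{LSV} and used as a black box. So there is no ``paper's proof'' to compare against, and your closing remark---that the correct move is a short citation---is exactly what the paper does. That part of your answer is right.

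However, the inductive sketch you give does not actually work as stated, and it is worth flagging where it breaks. You propose to find $g_1 \in \Diff_c(U_1,\omega_0)$ so that $g_1^{-1}\circ g$ is ``already equal to the identity near $\overline{U_1}$.'' This is impossible in general. Since $g_1$ is compactly supported in the open strip $U_1$, it is the identity on a neighbourhood of $\partial U_1$ (from both sides), so on that neighbourhood $g_1^{-1}\circ g = g$, which is typically not the identity. The same objection applies at every stage of the proposed iteration: a map supported strictly inside $U_i$ cannot ``absorb'' the behaviour of $g$ across $\partial U_i$, so the leftover $\theta$ you construct would not be supported in small discs---it would still be nontrivial along all the internal strip boundaries $(0,1)\times\{i/m\}$. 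The actual argument in \cite{LSV} has to deal with this boundary interaction differently (roughly, by first correcting the vertical displacement of $g$ globally so that the strips are approximately preserved, and only then fragmenting within the strips, with the residual error collected into discs around a finite grid); a naive strip-by-strip peeling of the kind you describe does not close up. Since the paper itself only cites, the honest thing to do here is either reproduce the [LSV] argument or, as the paper does, cite it---but the sketch as written should not be presented as a correct outline of that proof.
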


We can now give the promised proof.  

\begin{proof}[Proof of Lemma \ref{lemma:small-hofer}]
  Let $\eps>0$. We pick an integer $N$ satisfying $\frac1{2N}<\textrm{area}(B)$, $m$ a positive multiple of $N$ such that $2\frac{N+1}{m}<\eps$, and $\rho=\frac1{m}$.

  Let $\delta$ be as provided by Lemma \ref{lemma:square-fragmentation} and let $g\in\DiffS$ be such that $d_{C^0}(g,\id)<\delta$. The map $g$ admits a fragmentation into the form $g=g_1\circ\dots\circ g_m\circ\theta$, with all the $g_i$
supported in pairwise disjoint topological discs $U_i$ of area $\frac{1}{2m}$ and $\theta$ supported in a disjoint union of discs of total area less than $\frac1{2m}$ (here, the factor $\frac12$ comes from the fact that the northern hemisphere $S^+$ has area $\frac12$, while Lemma \ref{lemma:square-fragmentation} is stated on $(0,1)\times(0,1)$ which has area $1$).  %

For $j=1,\dots, N$, denote by $f_j$ the composition $f_j:=\prod_{i=0}^{\tfrac mN-1}g_{j+iN}$.  Also write $f_{N+1}:=\theta$, so that, noting that the $g_i$ commute, we have the following formula
\[g=\prod_{j=1}^{N+1}f_j.\]
Each $f_j$ for $j=1,\dots, N$ is supported in $V_j=\bigcup_{i=0}^{\tfrac mN-1}U_{j+iN}$ whose area is $\frac{1}{2N}<\textrm{area}(B)$. 
Note that $V_j$ is a disjoint union of discs, each 
of area $\frac1{2m}$. By assumption, the support of $f_{N+1}=\theta$, which we denote by $V_{N+1}$, is also included in a disjoint union of discs of total 
area smaller than $\frac1{2m}$.

Let us now state our second lemma, whose proof we postpone to the end of this section.

\begin{lemma}\label{lemma:moving-balls} Let $a\in(0,\frac12)$, let $B_1, \dots, B_k\subset S^+$ be pairwise disjoint open topological discs each 
of area smaller than $a$, and $B\subset S^+$ a topological disc with $\mathrm{area}(B)> ka$. Then, there exists a Hamiltonian diffeomorphism $h\in \Diff_{\S^2\setminus\{p_-\}}(\S^2,\omega)$ which maps $B_1\cup\dots\cup B_k$ into $B$ and satisfies $\|h\|\leq 2a$. 
\end{lemma}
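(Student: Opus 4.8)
The strategy is to slide each $B_i$ into $B$ by a half\hyp turn performed inside a thin disc\hyp shaped neighbourhood, and to carry out all $k$ slides at once by a single \emph{nonnegative} Hamiltonian, so that the individual energies combine through a maximum rather than a sum. For the combinatorial set\hyp up: since there are finitely many discs and $\mathrm{area}(B_i)<a$ for each $i$, we have $\sum_i\mathrm{area}(B_i)<ka<\mathrm{area}(B)$ and moreover $\mathrm{area}(B_i)<a<\mathrm{area}(B)/k$; hence inside $B$ we may choose pairwise disjoint open topological discs $B_1',\dots,B_k'$ with $\mathrm{area}(B_i')=\mathrm{area}(B_i)$. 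Using that $\S^2\setminus\{p_-\}$, as well as $\S^2\setminus(\{p_-\}\cup B)$ and $B$ itself, are connected, we choose pairwise disjoint embedded arcs $\gamma_i$ joining $B_i$ to $B_i'$ and meeting no $B_j$ or $B_j'$ with $j\ne i$, and we take sufficiently thin embedded disc neighbourhoods of $B_i\cup\gamma_i\cup B_i'$. This produces pairwise disjoint embedded closed discs $W_1,\dots,W_k\subset\S^2\setminus\{p_-\}$ such that $B_i\cup B_i'\subset W_i$, such that $B_i$ and $B_i'$ are disjoint inside $W_i$, and such that $\mathrm{area}(W_i)<2\,\mathrm{area}(B_i)+\delta_i<2a$, where $\delta_i>0$ is as small as we wish. (Discs $B_i$ already contained in, or meeting, $B$ are treated in the same way, or trivially; we suppress this routine point.)

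Fix $i$ and choose a symplectomorphism $\Theta_i$ from $W_i$ onto a round disc $D$ of area $A_i:=\mathrm{area}(W_i)$ carrying $B_i$ and $B_i'$ onto a pair of disjoint, equal\hyp area sub\hyp discs that are interchanged by a half\hyp turn rotation of $D$; such $\Theta_i$ exists by a Moser\hyp type normal form for a pair of disjoint, equal\hyp area sub\hyp discs of total area $<A_i$. A half\hyp turn of $D$, cut off near $\partial D$ so as to be supported away from the boundary while still rotating the sub\hyp disc containing $B_i\cup B_i'$ (possible since those sub\hyp discs are compactly contained in $D$), is the time\hyp $1$ map of a \emph{nonnegative}, compactly supported Hamiltonian $\widehat H_i$ on $D$ with $\min\widehat H_i=0$ and $\max\widehat H_i<\tfrac12 A_i$; this is the classical computation of the Hofer norm of a rotation of a round disc. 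Setting $H_i:=\widehat H_i\circ\Theta_i$ we obtain a smooth Hamiltonian compactly supported in $W_i$, with $H_i\ge 0$, $\min H_i=0$ and $\max H_i=\max\widehat H_i<\tfrac12 A_i<a$, whose flow is conjugate by $\Theta_i$ to that of $\widehat H_i$; hence $g_i:=\varphi^1_{H_i}$ satisfies $g_i(B_i)=B_i'\subset B$.

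Now put $H:=\sum_{i=1}^k H_i$ and $h:=\varphi^1_H$. Since the $W_i$ are pairwise disjoint the vector fields $X_{H_i}$ commute, so $h=g_1\circ\cdots\circ g_k$; hence $h(B_1\cup\cdots\cup B_k)=B_1'\cup\cdots\cup B_k'\subset B$, while $h$ is compactly supported in $W_1\cup\cdots\cup W_k\subset\S^2\setminus\{p_-\}$, so $h\in\Diff_{\S^2\setminus\{p_-\}}(\S^2,\omega)$. Finally, the $H_i$ have pairwise disjoint supports and are all $\ge 0$, so at every point $H$ equals some $H_i$ or $0$; thus $\max H=\max_i\max H_i<a$ and $\min H=0$, and therefore $\|h\|\le\|H\|_{(1,\infty)}=\max H-\min H<a\le 2a$. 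This proves the lemma.

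The main difficulty lies in the geometric construction of the discs $W_i$: one must simultaneously route the arcs $\gamma_i$ disjointly in the punctured sphere, keep their thickenings thin enough to stay under the area bound $2a$, and invoke the correct uniqueness statement for configurations of disjoint equal\hyp area sub\hyp discs of a disc. Granting this, the half\hyp turn energy estimate and the disjoint\hyp support bookkeeping are routine; in fact the argument produces the slightly stronger bound $\|h\|<a$.
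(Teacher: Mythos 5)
Your proof is essentially correct but takes a genuinely different route from the one in the paper, and the comparison is instructive. The paper's argument is soft: it first produces \emph{any} Hamiltonian diffeomorphism $\psi$ with $\psi(U)\subset B$, where $U=B_1\cup\dots\cup B_k$, then repairs its possibly large Hofer norm by passing to a commutator. Concretely, using that the displacement energy of $U$ is $<a$, one chooses $\ell$ with $\ell(\overline U)\cap\overline U=\emptyset$ and $\|\ell\|\le a$, conjugates $\ell$ by some $\chi$ fixing $U$ so that $\ell'=\chi\ell\chi^{-1}$ pushes $U$ entirely out of $\supp(\psi)$, and sets $h=\psi\circ\ell'^{-1}\circ\psi^{-1}\circ\ell'$. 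Then $\ell'^{-1}\psi^{-1}\ell'$ fixes $U$ pointwise, so $h(U)=\psi(U)\subset B$, and biinvariance of the Hofer norm gives $\|h\|\le 2\|\ell'\|\le 2a$. Your argument instead constructs $h$ by hand: disjoint thin ``dumbbell'' corridors $W_i$ of area $<2a$, a nonnegative half-turn Hamiltonian on each $W_i$ of max $<a$, and summation over disjoint supports so that the energies combine via a $\max$ rather than a sum; this buys the sharper bound $\|h\|<a$. What the paper's commutator trick buys in exchange is that it completely avoids the one real geometric issue in your approach, namely the simultaneous disjoint routing of the corridors $W_i$ in the punctured sphere while dodging all the other $B_j$ and $B_j'$. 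You flag this yourself as ``the main difficulty,'' and indeed it is: connectedness of $\S^2\setminus\{p_-\}$, $\S^2\setminus(\{p_-\}\cup B)$, and $B$ is not by itself sufficient, and a complete argument would need to say something about why disjoint arcs can be chosen in the planar complement of the $B_j$'s (e.g.\ by resolving transverse crossings and using the freedom to relabel the targets $B_i'$). Granting that routing lemma, the remaining steps of your proof (the Moser normal form for a pair of disjoint equal-area subdiscs, the nonnegative cut-off half-turn with $\max<\tfrac12 A_i$, and the disjoint-support energy bookkeeping) are correct, and the argument does establish the lemma, even with the stronger constant.
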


In our situation, this lemma implies that for any $j=1, \dots, N+1$, there exists a Hamiltonian diffeomorphism $h_j\in \Diff_{\S^2\setminus\{p_-\}}(\S^2,\omega)$, such that $h_j(V_j)\subset B$ and $\|h_j\|\leq \frac1m$. 

Consider the diffeomorphism
  \[\phi=\prod_{j=1}^{N+1}h_j\circ f_j\circ h_j^{-1}.\]
  By construction, $\phi$ is supported in $B$. We will show that $\|\phi^{-1}g\|<\eps$, which will achieve the proof of Lemma \ref{lemma:small-hofer}.

  To prove that  $\|\phi^{-1}g\|<\eps$, let us introduce 
 $\tilde{g}_k=f_1\circ f_2\circ \dots \circ f_k$ and $\phi_k=(h_1\circ f_1\circ h_1^{-1})\circ (h_2\circ f_2\circ h_2^{-1})\circ\dots\circ (h_k\circ f_k\circ h_k^{-1})$, for $k=1,\dots, N+1$. In particular, $\tilde{g}_{N+1}=g$ and $\phi_{N+1}=\phi$. Then, for all $k=1,\dots, N$, we have
  \begin{align*}
    \phi_{k+1}^{-1}\circ \tilde{g}_{k+1}&= h_{k+1}\circ f_{k+1}^{-1}\circ h_{k+1}^{-1}\circ(\phi_{k}^{-1}\circ \tilde{g}_{k})\circ f_{k+1}\\
   &= h_{k+1}\circ (f_{k+1}^{-1}\circ h_{k+1}^{-1}\circ f_{k+1})\circ (f_{k+1}^{-1}\circ(\phi_{k}^{-1}\circ \tilde{g}_{k})\circ f_{k+1})
  \end{align*}
Thus, by the triangle inequality and the conjugation invariance,
  \begin{align*}
    \|\phi_{k+1}^{-1}\circ \tilde{g}_{k+1}\|&\leq \|h_{k+1}\|+\|f_{k+1}^{-1}\circ h_{k+1}^{-1}\circ f_{k+1}\|+ \|f_{k+1}^{-1}\circ(\phi_{k}^{-1}\circ \tilde{g}_{k})\circ f_{k+1}\|\\
                                    &\leq \|h_{k+1}\|+\|h_{k+1}^{-1}\|+ \|\phi_{k}^{-1}\circ \tilde{g}_{k}\|\\
    &\leq \frac2m+ \|\phi_{k}^{-1}\circ \tilde{g}_{k}\|.
  \end{align*}
Hence, by induction, $\| \phi_i^{-1} \circ \tilde{g}_i \| \le \frac{2i}{m},$ 
and so this yields, as wished:
 \[\|\phi^{-1}\circ g\|\leq 2\frac{N+1}m<\eps.\]
\end{proof}

The last remaining proof is now the following.

\begin{proof}[Proof of Lemma \ref{lemma:moving-balls}] 
Denote $U:=B_1\cup\dots\cup B_k$. Since $U$ has smaller area than $B$, there exists a Hamiltonian diffeomorphism $\psi\in \Diff_{S^+}(\S^2,\omega)$ such that $\psi(U)\subset B$.
The Hofer norm of $\psi$ may not be small, but we will replace $\psi$ with an appropriate commutator of $\psi$ whose Hofer norm will be controlled.

Since the displacement energy of $U$ is smaller than $a<\frac12$, there exists a Hamiltonian diffeomorphism $\ell\in\Diff_{\S^2\setminus\{p_-\}}(\S^2,\omega)$ such that $\ell(\overline{U})\cap \overline{U}=\emptyset$ and $\|\ell\|\leq a$. 

Since $\ell(U)$ has area smaller than $\frac12$, there exists $\chi\in \Diff_{\S^2\setminus\{p_-\}}(\S^2,\omega)$ 
which fixes $U$ and such that  $\chi(\ell(U))\cap S^+=\emptyset$, which in particular implies that $\chi(\ell(U))\cap \supp(\psi)=\emptyset$. 
Then, $\ell'=\chi\circ\ell\circ\chi^{-1}$ satisfies the following properties:
  \begin{enumerate}[(i)]
  \item  $\ell'(U)\cap U=\emptyset$, 
  \item   $\|\ell'\|\leq a$ , 
  \item   $\ell'(U)\cap \mathrm{supp}(\psi)=\emptyset$.
\end{enumerate}
To prove Property (i), start from $\ell(U)\cap U=\emptyset$ and compose with $\chi$, to get $\chi\circ \ell (U)\cap \chi(U)=\emptyset$. Since $U=\chi^{-1}(U)$, we obtain  $\chi\circ\ell\circ\chi^{-1}(U)\cap U=\emptyset$, hence Property (i). Property (ii) follows from the conjugation invariance of the Hofer norm. Property (iii) is a consequence of Property (i), since $\chi$ fixes $U.$ 

Now, set $h:=\psi\circ\ell'^{-1}\circ\psi^{-1}\circ\ell'$.
By Property (iii), $\ell'^{-1}\circ\psi^{-1}\circ\ell'(U)=U$. 
Thus, $h(U)=\psi(U)\subset B$. Moreover,
\[\|h\|\leq \|\psi\circ\ell'^{-1}\circ\psi^{-1}\|+\|\ell'\|=2\|\ell'\|\leq 2a.\]
This concludes our proof.
\end{proof}

\section{The periodic Floer homology of monotone twists}
\label{sec:PFH_monotone_twist} 
The goal of this section and the next is to prove Theorem~\ref{theo:PFHspec_Calabi_property} which establishes the Calabi property for monotone twist maps of the disc which were introduced in  Section \ref{sec:calabi_inf_twist} .  Recall from Remark \ref{rem:PFHspec-disc} that we define PFH spectral invariants for maps of the disc by identifying $\Diff_c(\D, \omega)$ with maps of the sphere supported in the northern hemisphere $S^+$.  In particular, we will view any monotone twist as an element of the set $\mathcal{S}$ appearing in Theorem \ref{thm:PFHspec_initial_properties}.

Theorem~\ref{theo:PFHspec_Calabi_property} will follow from the following theorem for the invariants $c_{d,k}$, which, as alluded to in the introduction, was originally conjectured in greater generality by Hutchings \cite{Hutchings_unpublished}.

\begin{theo}
\label{thm:s2case}
Let $(d,k)$ be any sequence, with $k = d$ mod $2$, and $d$ tending to infinity.  Then, for any positive monotone twist $\varphi$ we have:
\begin{equation}
\label{eqn:calabiconjecture}
\Cal(\varphi) = \lim_{d \to \infty} \left( \frac{ c_{d,k}(\varphi)}{d} - \frac{k}{2(d^2 + d)} \right).
\end{equation}
\end{theo}

A first observation, concerning Equation \eqref{eqn:calabiconjecture}, 
which is also due to Hutchings, is that it suffices to establish \eqref{eqn:calabiconjecture} for a single such sequence $(d,k_d)$ with $d = 1, 2, \ldots$ ranging over the positive integers.  Indeed, %
for $d$-nondegenerate  $\varphi$, there is an automorphism of the twisted PFH chain complex given by
\[ (\alpha,Z) \mapsto (\alpha, Z + [S^2]),\]
where $[S^2]$ denotes the class of the sphere.  This increases the grading by $2d + 2$, by Formula \eqref{eqn:index_ambiguity}.  It also increases the action by $1$.  So, we have
\begin{equation}
\label{eqn:auto}
c_{d,k + 2d + 2} = c_{d,k} + 1
\end{equation}
for all $\varphi$.  Now, the right hand side of Equation \eqref{eqn:calabiconjecture} is invariant under increasing the numerator of the first fraction by one, and increasing the numerator of the  second fraction by $2d + 2$.  Moreover, as a corollary of Theorem \ref{theo:spec_computation}, we will obtain
\begin{equation}
\label{eqn:monotonicink}
c_{d,k} \leq c_{d,k'},
\end{equation}
when $k' \geq k$, with $k=k' = d$ mod $2$ and $\varphi$ a positive monotone
twist\footnote{For more general $\varphi$, \eqref{eqn:monotonicink} can still be established, by using the PFH ``$U$-map", but we will not need this in the present work.}; 
see Remark  \ref{rem:monotonicink}.  Thus, given an arbitrary sequence $(\tilde{d},\tilde{k})$, we can assume by the above analysis that $\tilde{k}$ is within $2d+2$ of $k_{\tilde{d}}$, and $| c_{\tilde{d},\tilde{k}} - c_{\tilde{d},k_{\tilde{d}}}| \leq 1$; the limit on the right hand side of \eqref{eqn:calabiconjecture} is then the same for $c_{\tilde{d},k_{\tilde{d}}}$ and  $c_{\tilde{d},\tilde{k}}$.     

\medskip

The remainder of Section  \ref{sec:PFH_monotone_twist} is dedicated to describing a combinatorial model of  $\widetilde{PFH}(\varphi^1_H, d)$ where the Hamiltonian $H$ belongs to  a certain class of Hamiltonians $\mathcal{D}$ with the following property:      The set $\mathcal{D}$ is contained in $\mathcal{H}$, which we introduced in Section  \ref{sec:PFH_spec_initial_properties}, and  any Hamiltonian in $\mathcal{H}$ whose time-1 map is a monotone twist can be approximated, in the Hofer norm $\Vert \cdot \Vert_{(1, \infty)}$, arbitrarily well by Hamiltonians in $\mathcal D$.    We will use  this combinatorial model in Section \ref{sec:calabi_for_montone_twists} to prove Theorem \ref{thm:s2case}.

The main result of Section \ref{sec:PFH_monotone_twist} is Proposition \ref{prop:model} which describes this combinatorial model.  

While the current section is the longest section of our paper, it might help the reader to note that our model is inspired by an extensive literature.  In particular, our combinatorial model is inspired by similar combinatorial models that have been developed  for computing the PFH of a Dehn twist \cite{Hutchings-Sullivan-Dehntwist},  the ECH of $T^3$ \cite{Hutchings-Sullivan-T3}, and the ECH of contact toric manifolds \cite{Choi}, and our methods in this section are inspired by the techniques used to establish these combinatorial models; we should also mention the appendix of \cite{Hutchings_beyond}, which was very influential to our thinking, and which we say more about later in this section.  The index calculations in \cite{C-CG-F-H-R} were also useful.   For the benefit of the reader, we will frequently point out the parallels to all this literature more explicitly below.

\subsection{Perturbations of rotation invariant Hamiltonian flows}
\label{sec:ham}

Here, and through the end of Section \ref{sec:PFH_monotone_twist}, we consider Hamiltonian flows on $(\S^2, \omega=\frac1{4\pi}d\theta\wedge dz)$, for an autonomous Hamiltonian
\[ H = \frac{1}{2} h(z),\]
where $h$ is some function of $z$.  We have 
\begin{equation}
\label{eqn:Hvf} 
X_H = 2 \pi h'(z) \partial_{\theta}.
\end{equation}
Hence,
\[ \varphi^1_H(\theta,z) = (\theta + 2 \pi h'(z), z) \]
We further restrict $h$ to satisfy 
\[ h' > 0, h'' > 0, h(-1) = 0.\]

Furthermore, we  demand that $h'(-1), h'(1)$ are irrational numbers satisfying $h'(-1) \leq \frac{\varepsilon_0}{d}$ and $\lceil h'(1) \rceil - h'(1) \leq \frac{\varepsilon_0}{d}$, where $\varepsilon_0$ is a small positive number and $\lceil \cdot \rceil$ denotes the ceiling function.    Let $\mathcal{D}$ denote the set of Hamiltonians $H$ that satisfy all of these conditions and observe that $\mathcal D \subset \mathcal H$ where $\mathcal H$ was defined in Section \ref{sec:PFH_spec_initial_properties}.  As a consequence of Theorem \ref{thm:PFHspec_initial_properties},  we have well-defined PFH spectral invariants $c_{d,k}(\varphi^1_H)$ for all $H \in \mathcal{D}$.

\medskip

The periodic orbits of $\varphi^1_H$ are then as follows:
\begin{enumerate}
\item There are elliptic orbits $p_+$ and $p_-$, corresponding to the north and south poles, respectively.
\item For each $p/q$ in lowest terms such that $h' = p/q$ is rational, there is a circle of periodic orbits, all of which have period $q$.
\end{enumerate}

These circles of periodic orbits are familiar from Morse-Bott theory, and are sometimes referred to as ``Morse-Bott circles".  There is also a standard $\varphi^1_H$-admissible\footnote{This means that the almost complex structure is compatible with the standard SHS on the mapping torus for $\varphi^1_H$.} %
almost complex structure $J_{std}$ respecting this symmetry; its action on $\xi = T(\S^2 \times \lbrace pt \rbrace) = T\S^2$ is given by the standard almost complex structure on $\S^2$.  

As is familiar in this context (see \cite[Section 3.1]{Hutchings-Sullivan-Dehntwist}), we can perform a $C^2$-small perturbation of  $H$, to split such a circle corresponding to the locus where $h' = p/q$ into one elliptic and one hyperbolic periodic points, such that the elliptic one $e_{p,q}$ has slightly negative monodromy angle, and the eigenvalues for the hyperbolic one $h_{p,q}$ are positive.     Furthermore, the $C^2$-small perturbation can be taken to be supported in an arbitrarily small neighborhood of the circle where $h' = p/q$.  More precisely, given a $\varphi^1_H$ such as above, for any positive $d$  and arbitrarily small $\eps > 0$, we can find another area-preserving diffeomorphism $\varphi_0$ of $\S^2$, which we call a {\bf nice perturbation} of $\varphi^1_H$, such that:

\begin{enumerate}
\item The only periodic points of $\varphi_0$ which are of degree at most $d$ are  $p_+, p_-$, and  the orbits $e_{p,q}$ and $h_{p,q}$ from above such that $q \leq d.$  Furthermore, all of these orbits are non-degenerate.
\item The eigenvalues of the linearized return map for $e_{p,q}$ are within $\eps$ of $1$.
\item   $\varphi_0(\theta,z) = \varphi^1_H(\theta,z)$ as long as $z$ is not within $\eps$ of a value such that $h' = p/q$ where  $ q \leq d$.
\end{enumerate}
  Observe that we can pick a time-dependent Hamiltonian $\tilde H $ such that $\varphi^1_{\tilde H} = \varphi_0$ and $\tilde H = H$ as long as $z$ is not within $\eps$ of a value such that $h' = p/q$ with  $q \leq d$.

It is also familiar from the work of Hutchings-Sullivan, see \cite{Hutchings-Sullivan-Dehntwist}, Lemma A.1, that we can choose our perturbation of $\varphi^1_H$ such that we can assume the following: 

\begin{enumerate}
\item[4.] $\varphi_0$ is chosen so that ``Double Rounding", defined in Section \ref{sec:comb} below, can not occur for generic $J$ close to $J_{std}$. 
\end{enumerate}
Later, it will be clear why it simplifies the analysis to rule out Double Rounding.  

\subsection{The combinatorial model}
\label{sec:comb}

We now aim to describe the promised combinatorial model of  $\widetilde{PFH}$ for the Hamiltonians described in the previous section. Fix $d \in \N$ and $\varphi^1_H$, where $H \in \mathcal{D}$, for the remainder of Section \ref{sec:PFH_monotone_twist}.   

To begin, define a {\bf concave lattice path} to be a piecewise linear, continuous path $P$, in the $xy$-plane, such that $P$ starts and ends on integer lattice points,  its starting point is on the $y$--axis, the nonsmooth points of $P$ are also at integer lattice points, and $P$ is concave, in the sense that it always lies above any of the tangent lines at its smooth points. Lastly, every edge of $P$ is labeled by either $e$ or $h$.  Below, we will associate one such lattice path to every $\widetilde{PFH}$ generator $(\alpha, Z)$.

Let $\alpha = \{(\alpha_i, m_i)\}$ be an orbit set of degree $d$, for a  
nice perturbation  $\varphi_0$ of $\varphi^1_H$.
First of all, recall that the simple Reeb orbits for $Y_{\varphi_0}$, with degree no more than $d$, are as follows :

\begin{enumerate}
\item The  Reeb orbits $\gamma_{\pm}$ corresponding to $p_{\pm}$.
\item For each $z$ such that $h'(z) = p/q$ in lowest terms, where $q \leq d$,  there are Reeb orbits of degree $q$ corresponding to the periodic points $e_{p,q}$ and $h_{p,q}$, that we will also denote by $e_{p,q}$ and $h_{p,q}$.
\end{enumerate}

 \begin{figure}[h!]
 \centering 
 \def\svgwidth{1.0\textwidth}
 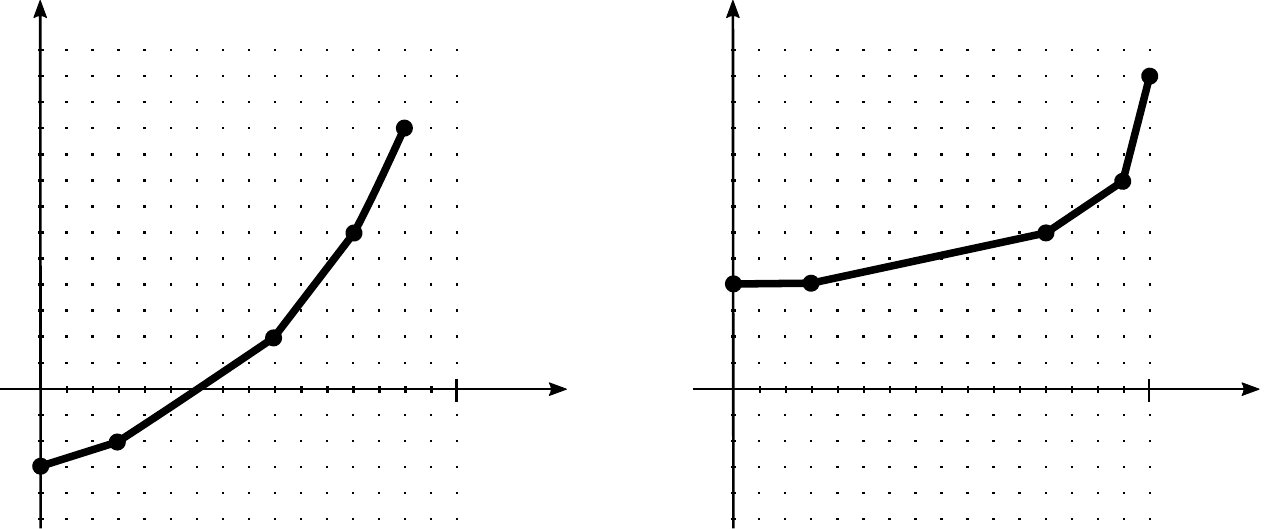
 \caption{The lattice path $P_{\alpha,Z}$ for $\alpha=\{(h_{1/3},1),(e_{2/3},1),(h_{2/3},1),(e_{4/3},1),(e_2,2)\}$, $Z=Z_\alpha-3[\S^2]$, and $P_{\alpha',Z'}$ for $\alpha'=\{(\gamma_-,3),(e_{2/9},1),(h_{2/3},1),(\gamma_+,1)\}$, $Z'=Z_\alpha+4[\S^2]$
 (assuming $\lceil h'(1)\rceil=4$).}\label{fig:lattice-path1}
\end{figure}

We will now associate to the orbit set $\alpha  = \{(\alpha_i, m_i)\}$ a concave lattice path $P_\alpha$ whose starting point we require to be $(0,0)$.  
If $(\gamma_-, m_-) \in \alpha$, we set $v_- = m_- (1,0)$ and label it by $e$.  If $(\gamma_+, m_+) \in \alpha$ we set $v_+ = m_+ (1, \lceil h'(1) \rceil )$ and label it by $e$.  
Next, consider the orbits in $\alpha$ corresponding to $z=z_{p,q}$ such that $h'(z) = p/q$; note that there are at most two such entries in $\alpha$: one corresponding to $e_{p/q}$ and another corresponding to $h_{p/q}$.  To these entries, we associate the labeled vector $v_{p,q} = m_{p,q} (q,p)$, where $m_{p,q}$ is the sum of multiplicities of $e_{p/q}$ and $h_{p/q}$; the vector is labeled $h$ if $(h_{p/q},1) \in \alpha$, and $e$ otherwise. (For motivation, note that by the conditions on the PFH chain complex, an $m_i$ corresponding to a hyperbolic orbit must equal $1$.)  To build the concave lattice path $P_{\alpha}$ from all of the data in $\alpha$, we simply concatenate the vectors $v_-, v_{p,q}, v_+$ into a concave lattice path. Note that there is a unique way to do this: the path must begin with $v_-$, it must end with $v_+$, and  the vectors $v_{p/q}$ must be concatenated in  increasing order with respect to the ratios $p/q$.

 Now, given a chain complex generator $(\alpha, Z)$ for $\widetilde{PFC}$, we define an assignation
\[ (\alpha,Z) \mapsto P_{\alpha,Z}\]
which associates a  concave lattice path $P_{\alpha,Z}$ to the generator $(\alpha,Z)$.   More specifically, when $Z = Z_\alpha$, where $Z_{\alpha}$ is the class from Lemma~\ref{lemma:action-action}, we define $P_{\alpha, Z_{\alpha}}$ to be the concave lattice path $P_\alpha$.   Since $H_2(Y_{\varphi}) = \mathbb{Z}$, generated by the class of $\S^2 \times \lbrace \text{pt} \rbrace$, for any other $(\alpha,Z)$, we have $Z = Z_{\alpha} + y [\S^2]$.  We then define $P_{\alpha,Z}$ to be $P_{\alpha}$ shifted by the vector $(0,y)$. Figure \ref{fig:lattice-path1} shows two examples of such concave lattice paths.

\bigskip

We now state some of the key properties of the above assignation  \[ (\alpha,Z) \mapsto P_{\alpha,Z}.\]

\noindent {\bf Degree: }  Note that since we are fixing the degree of $\alpha$ to be $d$, the horizontal displacement of $P_{\alpha, Z}$ must be $d$; we therefore call the horizontal displacement of a concave lattice path its {\bf degree}.  Clearly, the degree of $P_{\alpha, Z}$ agrees with the degree of the $\widetilde{PFH}$ generator $(\alpha, Z)$.

\medskip

\noindent {\bf Action:}
Define the {\bf action} $\mathcal{A}(P_{\alpha,Z})$ as follows.   We first define the actions of the edges of $P_{\alpha,Z}$ by the  formulae:
\begin{gather*}
\mathcal{A}(v_-) = 0, \; \mathcal{A}(v_+) = m_+\frac{h(1)}2, \\
\mathcal{A}(v_{p,q}) = \frac{m_{p,q}}{2} ( p (1-z_{p,q}) + q h(z_{p,q})),
\end{gather*}
where $v_-, v_+$, and $v_{p,q}$ are as above.   %
We then define the action of  $P_{\alpha,z}$ to be 
\begin{equation} \label{eq:action_formula}
\mathcal{A}(P_{\alpha,Z}) = y   + \mathcal{A}(v_+) + \sum_{v_{p,q}}   \mathcal{A}(v_{p,q}),
\end{equation}
where $y$ is such that $P(\alpha,Z)$ begins at $(0,y)$. 

We claim that by picking the nice perturbation $\varphi_0$ to be sufficiently close to $\varphi^1_H$ we can arrange for $\mathcal{A}(\alpha, Z)$ to be as close to $\mathcal{A}(P_{\alpha, Z})$ as we wish.  To show this it is sufficient to prove it when $\alpha$ is a simple Reeb orbit and $Z=Z_\alpha$, where $Z_\alpha$ is the relative class constructed in the proof of Lemma \ref{lemma:action-action}.  We have to consider the following three cases:
\begin{itemize}
\item If $\alpha = \gamma_-$, then $\mathcal{A}(\alpha, Z_\alpha) = 0$, by Equation \eqref{eqn:action_Z_alpha}, which coincides with $\mathcal{A}(1,0)$.
\item  If $\alpha = \gamma_+$, then $\mathcal{A}(\alpha, Z_\alpha) = \frac{h(1)}{2}$, by Equation \eqref{eqn:action_Z_alpha}, which coincides with $\mathcal{A}(1, \lceil h'(1) \rceil)$.  Note that, in Equation \eqref{eqn:action_Z_alpha},  the term $\int_{\D^2} u_\alpha^* \omega$ is zero. 
\item The remaining case is when  $\alpha = e_{p,q}$ or $h_{p,q}$; here, it is sufficient to show that the action of the Reeb orbits at $z_{p,q}$,  for the unperturbed diffeomorphism $\varphi^1_H,$ is exactly the quantity  $\frac{1}{2} \left( p (1-z_{p,q}) + q h(z_{p,q})\right)$.  This follows from Equation \eqref{eqn:action_Z_alpha_2}:  the term $\int_{\D^2} u_\alpha^* \omega$ is exactly $\frac{1}{2} p(1 - z_{p,q})$ and the term $\int_0^q H_t(\varphi^t_H(q))dt$ is exactly $\frac{1}{2} q h(z_{p,q})$.
\end{itemize}

\medskip

\noindent {\bf Index:} 
Next, we associate an {\bf index}  to a concave lattice path $P$ which begins at a point $(0, y)$, on the $y$--axis, and has degree $d$.

First, we form (possibly empty) regions $R_{\pm}$, where $R_-$ is the  closed region bounded by the $x$-axis, the $y$-axis, and the part of $P$ below the $x$-axis, while $R_+$ is the  closed region bounded by the $x$-axis, the line $x = d$, and the part of $P$ above the $x$-axis. 
Let $j_+$ denote the number of lattice points in the region $R_+$, not including lattice points on $P$, and let $j_-$ denote the number of lattice points in the region $R_-$, not including the lattice points on the $x$--axis; see Figure \ref{fig:combinatorial-index} below.  We now define
\begin{equation}\label{eqn:index_path_unlabelled}
 j(P) := j_+(P) - j_-(P).
\end{equation}
 This definition of $j$ is such that if one shifts $P$ vertically by $1$, then $j(P)$ increases by $d+1$ %

Given a  path $P_{\alpha,Z}$, associated to a $\widetilde{PFH}$ generator $(\alpha,Z)$, we define its index by 
\begin{equation}\label{eqn:index_path_labelled}
I(P_{\alpha,Z}) := 2 j(P_{\alpha,Z}) -d + h,
\end{equation}
where $h$ denotes the number of edges in $P_{\alpha, Z}$ labeled by $h$. See Figure \ref{fig:combinatorial-index} for an example of computation of this combinatorial index. We will show in Section \ref{sec:index} that $I(P_{\alpha,Z})$ coincides with the PFH index of $I(\alpha,Z)$ as defined in Equation \eqref{eqn:twistedgrading}.

 \begin{figure}[h!]
 \centering 
 \def\svgwidth{0.5\textwidth} 
 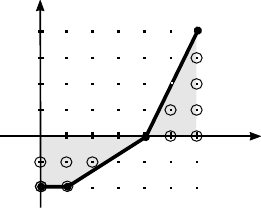
 \caption{The lattice points included in the count of $j(P)$ are circled. On this example, $j_+(P)=6$, $j_-(P)=5$, $d=6$, $h=1$, thus $j(P)=1$ and $I(P)=-3$.}\label{fig:combinatorial-index}
\end{figure}

\medskip
\noindent {\bf Corner rounding and the differential:} 
Lastly, we define a combinatorial process  which corresponds to the PFH differential.  Let $P_{\beta}$ be a concave lattice path of degree $d$ which begins on the $y$--axis.
  Then, if we attach vertical rays to the beginning and end of $P_{\beta}$, in the positive $y$ direction, we obtain a closed convex subset $R_{\beta}$ of the plane; see Figure \ref{fig:rounding-corner1}.  For any given corner of $P_{\beta}$, where we include the initial and final endpoints of $P_\beta$ as corners, we can define a {\bf corner rounding} operation by removing this corner, taking the convex hull of the remaining integer lattice points in $R_{\beta}$, and taking the lower boundary of this region, namely the part of the boundary that does not consist of vertical lines.   Note that the newly obtained path is of degree $d$.

 \begin{figure}[h!]
 \centering 
 \def\svgwidth{0.4\textwidth} 
\begingroup%
  \makeatletter%
  \providecommand\color[2][]{%
    \errmessage{(Inkscape) Color is used for the text in Inkscape, but the package 'color.sty' is not loaded}%
    \renewcommand\color[2][]{}%
  }%
  \providecommand\transparent[1]{%
    \errmessage{(Inkscape) Transparency is used (non-zero) for the text in Inkscape, but the package 'transparent.sty' is not loaded}%
    \renewcommand\transparent[1]{}%
  }%
  \providecommand\rotatebox[2]{#2}%
  \newcommand*\fsize{\dimexpr\f@size pt\relax}%
  \newcommand*\lineheight[1]{\fontsize{\fsize}{#1\fsize}\selectfont}%
  \ifx\svgwidth\undefined%
    \setlength{\unitlength}{205.00198445bp}%
    \ifx\svgscale\undefined%
      \relax%
    \else%
      \setlength{\unitlength}{\unitlength * \real{\svgscale}}%
    \fi%
  \else%
    \setlength{\unitlength}{\svgwidth}%
  \fi%
  \global\let\svgwidth\undefined%
  \global\let\svgscale\undefined%
  \makeatother%
  \begin{picture}(1,0.83884396)%
    \lineheight{1}%
    \setlength\tabcolsep{0pt}%
    \put(0,0){\includegraphics[width=\unitlength,page=1]{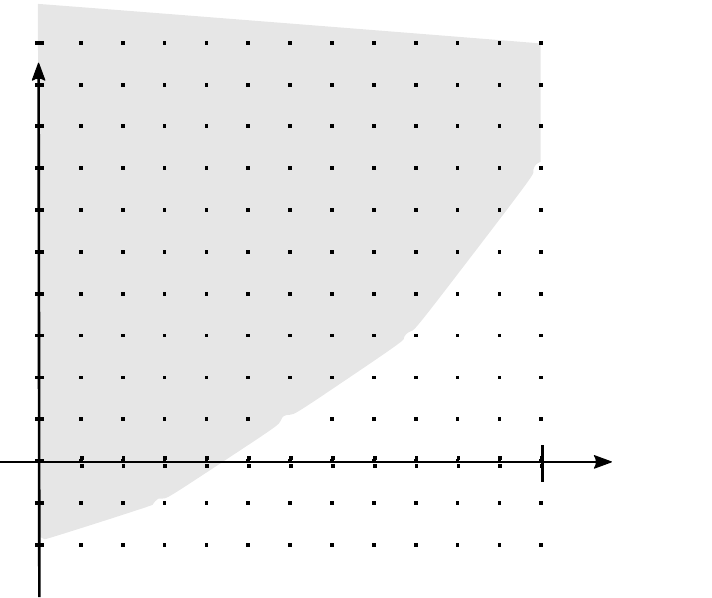}}%
    \put(0.64673144,0.39598172){\color[rgb]{0,0,0}\makebox(0,0)[lt]{\lineheight{1.25}\smash{\begin{tabular}[t]{l}$P_\beta$\end{tabular}}}}%
    \put(0.34301642,0.56397391){\color[rgb]{0,0,0}\makebox(0,0)[lt]{\lineheight{1.25}\smash{\begin{tabular}[t]{l}$R_\beta$\end{tabular}}}}%
    \put(0,0){\includegraphics[width=\unitlength,page=2]{rounding-corner1.pdf}}%
  \end{picture}%
\endgroup%

 \caption{The region $R_\beta$.}\label{fig:rounding-corner1}
\end{figure}

We now say that another concave lattice path $P_{\alpha}$ is obtained from $P_{\beta}$ by {\bf rounding a corner and locally losing one $h$}, if $P_{\alpha}$ is obtained from $P_{\beta}$ by a corner rounding such that the following conditions are satisfied; see the examples in Figure \ref{fig:rounding-corner2}:
\begin{enumerate}[(i)]
\item Let $k$ denote the number of edges in $P_\beta$,  with an endpoint at the rounded corner, which are labeled $h$.  We require that $k >0$; so $k=1$ or $k=2$. 

\item    Of the new edges in $P_\alpha$, created by the corner rounding operation, exactly $k-1$ are labelled $h$. %
\end{enumerate}

 \begin{figure}[h!]
 \centering 
 \def\svgwidth{0.8\textwidth} 
 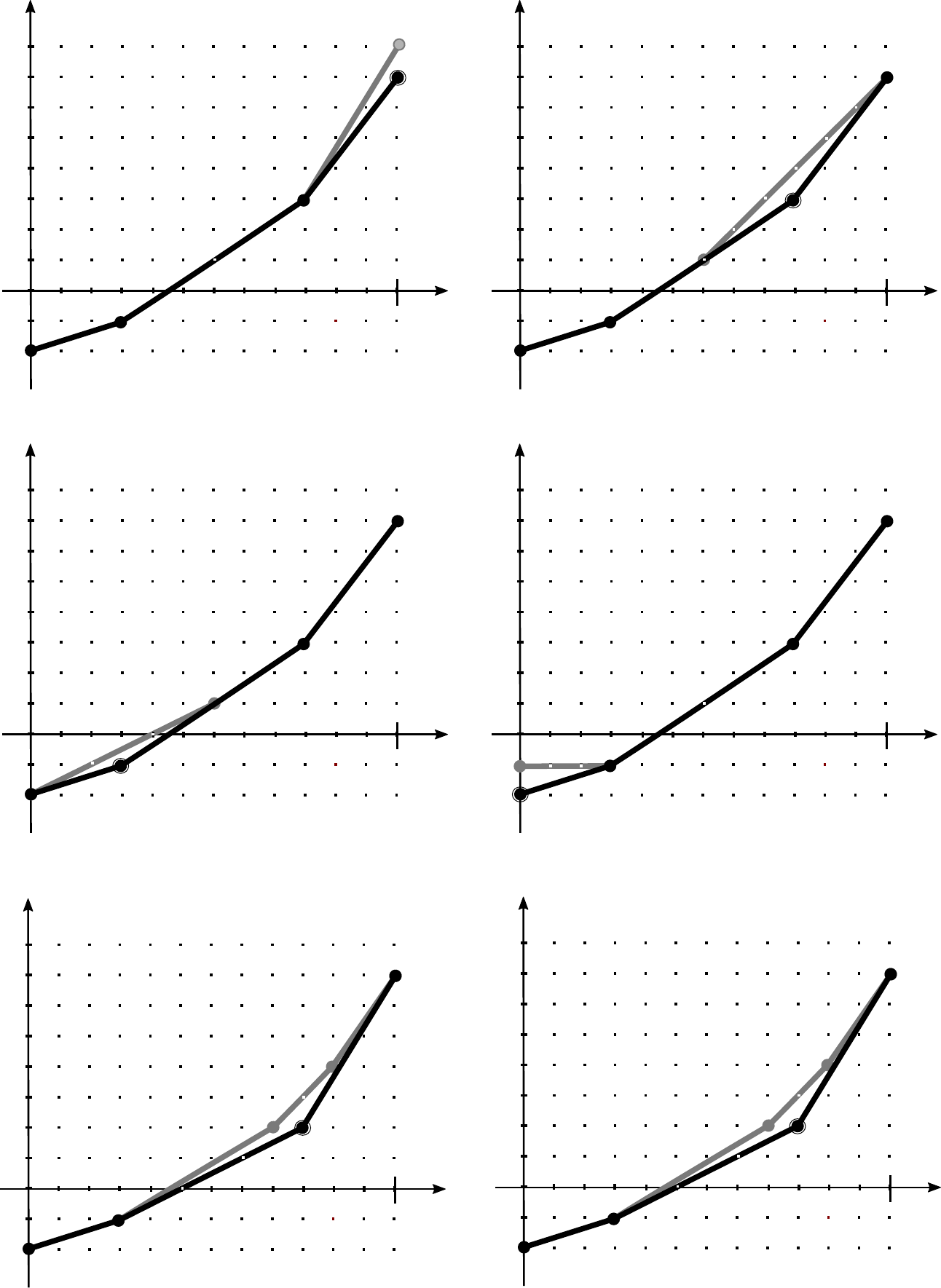
 \caption{Some examples for the ``rounding corner and locally losing one $h$'' operation. The path $P_\beta$ is in black, the new edges in $P_\alpha$ are in grey. The rounded corner is circled. We only label the relevant edges.}\label{fig:rounding-corner2}
\end{figure}
 
The notions introduced above and the proposition below give a complete combinatorial interpretation of the PFH chain complex:

\begin{prop}
\label{prop:model}
Fix $d > 0$ and let $\varphi_0$ be a nice perturbation of $\varphi^1_H$, where $H \in \mathcal{D}$.  Then, for generic $\varphi_0$-admissible almost complex structure $J$ close to $J_{std}$, there is an assignation 
\[ (\alpha,Z) \mapsto P_{\alpha,Z} \]
with the following properties:	
\begin{enumerate}
\item $\mathcal{A}(\alpha,Z) \sim \mathcal{A}(P_{\alpha,Z})$,
\item $I(\alpha,Z) = I(P_{\alpha,Z})$,
\item $\langle \partial (\alpha, Z) , (\beta, Z') \rangle \ne 0$ if and only if $P_{\alpha, Z}$ is obtained from $P_{\beta, Z'}$ by rounding a corner and locally losing one $h.$ 
\end{enumerate}

Here, by $\mathcal{A}(\alpha,Z) \sim \mathcal{A}(P_{\alpha,Z})$, we mean that by choosing our nice perturbation  $\varphi_0$ sufficiently close to $\varphi^1_H$, we can arrange for $\mathcal{A}(\alpha,Z)$ to be as close to $\mathcal{A}(P_{\alpha,Z})$ as we wish.
\end{prop}

We have already proven the first of the three listed properties in the above proposition.  The second will be proven below in Section \ref{sec:index}.  The proof of the third takes up the remainder of Section \ref{sec:index}.  We have already mentioned that the proposition is inspired by similar combinatorial models, but it might help the reader to note that, more specifically, the second property most closely corresponds to the index calculations in \cite[Section 3.2]{C-CG-F-H-R}, while the third property is inspired\footnote{In particular, the idea of attaching vertical rays in our definition of corner rounding is inspired by \cite[Definition A.2]{Hutchings_beyond}.} by \cite[Conjecture A.3]{Hutchings_beyond}, which was later proven in \cite{Choi}, and which we return to below.  %

\bigskip

Finally, we end this section by defining the {\bf Double Rounding} operation which will appear in the following sections and which has already been  introduced in Section \ref{sec:ham}.  Namely, if $P_{\beta,Z'}$ has three consecutive edges, all labeled by $h$, we say that $P_{\alpha,Z}$ is obtained from $P_{\beta,Z'}$ by {\bf double rounding}  if we remove both interior lattice points for these three edges, take the convex hull of the remaining lattice points (in the region formed by adding vertical lines, as above), and label all new edges by $e$.

\subsection{Computation of the index}
\label{sec:index}
In this section, we prove the second item in Proposition~\ref{prop:model}.  Before giving the proof, we first summarize the definitions of the various terms of the PFH grading as defined in Equation \eqref{eqn:twistedgrading}, which we recall here:
\begin{equation*}
I(\alpha,Z) = c_{\tau} (Z) + Q_{\tau}(Z) + \sum_i \sum^{m_i}_{k  = 1} CZ_{\tau}(\alpha^k_i).
\end{equation*}
  These definitions can be found explained in detail, for example, in \cite[Sec. 2]{Hutchings-index}.

To define the {\bf relative Chern class}, $c_{\tau}(Z)$, we first take a surface $S$, representing $Z$ in $[-1, 1] \times \S^2 \times \S^1$, assumed transverse to the boundary  $\{-1,1\} \times \mathbb{S}^2 \times \mathbb{S}^1$ and embedded in $(-1,1) \times \mathbb{S}^2 \times \mathbb{S}^1$. 
We then define $c_{\tau}(Z)$ to be a signed count of zeros of a generic section $\Psi$ of $\xi|_S$, where recall that $\xi$ is the vertical tangent bundle of the fibration $[-1,1] \times \S^2 \times \S^1\to [-1,1]\times\S^1$, 
such that the restriction of $\Psi$ to $\partial S$ agrees with the trivialization $\tau$.  We similarly define the {\bf relative intersection number} $Q_{\tau}(Z)$ by the formula
\begin{equation}
\label{eqn:intersection}
Q_{\tau}(Z) : = c_1(N,\tau) - w_{\tau}(S),
\end{equation}
where $c_1(N,\tau)$, the {\bf relative Chern number of the normal bundle}, is a signed count of zeros of a generic section of $N|_S$, such that the restriction of this section to $\partial S$ %
agrees with $\tau$; note that the normal bundle $N$ can be canonically identified with $\xi$ along $\partial S$.  Meanwhile, the term $w_{\tau}(S)$, the {\bf asymptotic writhe}, is defined by using the trivialization $\tau$ to identify a neighborhood of each Reeb orbit with $\S^1 \times D^2 \subset \mathbb{R}^3$, and then computing the writhe\footnote{This is defined by identifying $\S^1 \times D^2$ with the product of an annulus and interval, projecting to the annulus, and counting crossings with signs.} of a constant $s$ slice  of $S$ near the boundary using this identification.  

Finally, to define the {\bf Conley-Zehnder} index, we first clarify the definitions of elliptic and hyperbolic Reeb orbits, and define the {\bf rotation number} $\theta$ for a simple orbit, relative to the trivialization $\tau$.  Specifically, the elliptic case is characterized by the property that the linearized return map has eigenvalues on the unit circle;  in this case, one can homotope the trivialization so that the linearized flow at time $t$ with respect to the trivialization is always a rotation by angle $2 \pi \theta_t$, for a continuous function $\theta_t$, and then the rotation number is the change in $\theta_t$  as one goes around the orbit once.  In the hyperbolic case, the linearized return map has real eigenvalues, and the linearized return map rotates by angle $2\pi k$ for some half-integer $k\in\tfrac12\Z$ as one goes around the orbit; the integer $k$ is the rotation number in this case.  In either case, denoting by $\theta$ the rotation number, for any cover of $\gamma$, we have
\begin{equation}
\label{eqn:cz}
CZ_{\tau}(\gamma^n) := \lfloor n \theta \rfloor + \lceil n \theta  \rceil,
\end{equation}

\begin{proof} [Proof of the second item in Proposition \ref{prop:model}]
  By the index ambiguity formula, Equation (\ref{eqn:index_ambiguity}), we have
  \begin{equation}\label{eq:index-ambiguity-here}
I(\alpha, Z+a[\S^2])=I(\alpha, Z)+a(2d+2).
\end{equation}
 Therefore, we only have to compute the index $I(\alpha, Z'_{\alpha})$ for a given relative class $Z'_\alpha\in H_2(\S^2\times\S^1,\alpha,d\gamma_-)$. 
  Let us now define the relative class $Z'_\alpha$ we will be using. Write $\alpha=\{(\gamma_-,m_-)\}\cup\{(\alpha_i,m_i)\}\cup\{(\gamma_+, m_+)\}$, where each $(\alpha_i, m_i)$ is either an $(h_{p_i/q_i}, 1)$ or an $(e_{p_i/q_i},m_{p_i/q_i})$. We define $Z'_\alpha=m_-Z'_-+m_+ Z'_++\sum_im_iZ'_{\alpha_i}$, where
  \begin{itemize}
  \item  $Z_-'\in H_2(\S^2\times\S^1,\gamma_-,\gamma_-)$ is the trivial class,
  \item  $Z_+'\in H_2(\S^2\times\S^1,\gamma_+,\gamma_-)$ is represented by the map 
    \[S_+:[0,1]\times[0,q]\to \S^2\times\S^1, \quad (s,t)\mapsto (R_{t \lceil h'(1) \rceil}(\eta(s)),t),\]
    where $\eta$ is a meridian from the South pole $p_-$ to the north pole $p_+$,  and $R_{t \kappa}$ denotes the rotation on $\S^2$ by the angle $2\pi t \kappa$.
  \item for $\alpha_i=e_{p,q}$ or $h_{p,q}$, the relative class $Z'_{\alpha_i}\in H_2(\S^2\times\S^1,\alpha_i,q\gamma_-)$  is represented by the map \[S_{\alpha_i}:[0,1]\times[0,q]\to \S^2\times\S^1, \quad (s,t)\mapsto (R_{t \frac{p}{q}}(\eta(s)),t),\]
where $\eta$ is a portion of the great circle which begins at $p_-$ and ends at $z_{\frac{p}{q}}$.
\end{itemize}

The class $Z_\alpha$ from \ref{sec:comb} is related to the class $Z'_\alpha$ as follows. We have $Z_-=Z'_-$, $Z_+=Z'_++\lceil h'(1)\rceil\,[\S^2]$ and for $\alpha_i=e_{p,q}$ or $h_{p,q}$, then $Z_{\alpha_i}=Z'_{\alpha_i}+p[\S^2]$. If we denote by $(0,y_\alpha)$ and $(d,w_\alpha)$ the endpoints of $P_{\alpha, Z}$, we thus obtain $Z_\alpha=Z'_\alpha+ (w_\alpha-y_\alpha)[\S^2]$. Using (\ref{eq:index-ambiguity-here}), this yields
\begin{equation}
  \label{eq:Zalpha-Zalpha_prime}
  I(\alpha, Z_{\alpha})=I(\alpha, Z'_{\alpha})+(w_\alpha-y_\alpha)(2d+2).
\end{equation}

We will now compute the index  $I(\alpha, Z'_\alpha)$. For that purpose, we first need to make choices of trivializations along periodic orbits.

Along the orbit $\gamma_-$,  the trivialization is given by any frame of $T_{p_-}\S^2$ independent of $t$.

Along $\gamma_+$, we take a frame which rotates positively with rotation number $\lceil h'(1)\rceil$.
Along other orbits, 
we use as trivializing frame $(\partial_\theta, \partial_z)\in T\S^2$.

We denote by $\tau$ these choices of trivialization.

Recall that we are also assuming for simplicity that $h'(-1)$ is arbitrarily close to 0 and $h'(1)$ is arbitrarily close (but not equal) to its ceiling $\lceil h'(1)\rceil$.

In order to compute the grading, we now have to compute the Conley-Zehnder index, the relative Chern class, and the relative self-intersection; we then have to put this all together to give the stated interpretation in terms of a count of lattice points.  

\medskip
\noindent{\em Step 1: The Conley-Zehnder index.}

We begin by computing the Conley-Zehnder index of each orbit, relative to the trivialization above.
\begin{enumerate}
\item The north pole orbit $\gamma_+$ is elliptic with rotation number $h'(1)-\lceil h'(1)\rceil$.   Picking $h'(1)$ to be sufficiently close to its ceiling, we then find by \eqref{eqn:cz} that
\[CZ(\gamma_+^k)=\lfloor  k(h'(1) - \lceil h'(1) \rceil ) \rfloor + \lceil  k(h'(1) - \lceil h'(1) \rceil ) \rceil =  -1,\]
for any $k=1,\dots, d$.
\item The South pole orbit $\gamma_-$ is elliptic with rotation number $-h'(-1)$ with respect to the considered trivialization. Since $h'(-1)$ is positive but arbitrarily small, we then find by \eqref{eqn:cz} that:   
\[CZ(\gamma_-^k)=\lfloor -k\,h'(-1) \rfloor+\lceil -k\,h'(-1) \rceil=-1.\]
\item For other orbits, we are in the same settings as \cite{Hutchings-Sullivan-Dehntwist}. Namely, for hyperbolic orbits, the rotation number is $0$, so from \eqref{eqn:cz} we have
\[CZ(h_{p/q})=0,\]
and for the elliptic orbits $e_{p/q}$, the rotation number is slightly negative, so that from \eqref{eqn:cz} we have 
\[CZ(e_{p/q}^k)= -1,\]
for any $k =1, \ldots, d$.  
\end{enumerate}

It follows from the above that the contribution of the Conley-Zehnder part to the index in \eqref{eqn:twistedgrading} is given by 
\begin{equation}
\label{eqn:CZcomp}
CZ_{\tau} (\alpha) = 
 \sum_{i} \sum_{k=1}^{m_i} CZ(\alpha_i^k)  + \sum_{k=1}^{m_-} CZ(\gamma_-^k) + \sum_{k=1}^{m_+} CZ(\gamma_+^k) = 
-M + h,
 \end{equation}
where $M$ denotes the total multiplicity of all orbits, and $h$ denotes the total number of hyperbolic orbits. 

\medskip
 {\noindent \em Step 2: The relative Chern class.}
 
The relative Chern class $ c_\tau(Z'_-)$  is  obviously $0$. For $\alpha_i=e_{p,q}$ or $h_{p,q}$, we consider the representative $S_{\alpha_i}$ of $Z'_{\alpha_{i}}$ given above. We choose the section $\partial_\theta$ as a non-winding section of $\xi|_{S_i}$ along $\alpha_{i}$, and any constant non-zero vector along $q\gamma_-$. Then, the section $\partial_\theta$ over the orbit $\alpha_{p/q}$ has index $-p$ while the section over $q \gamma_-$ has index $0$. 
It follows that any extension of these sections over $S_{\alpha_{i}}$ must have $-p$ zeros.  Hence,
\[c_\tau(Z'_{\alpha_{i}})=-p.\]
For $Z_+$, an argument analogous to that of the previous paragraph  gives
\[c_\tau(Z'_+)=-\lceil h'(1)\rceil.\]
The Chern class is additive, so we conclude from the above that the $c_\tau$ term of the index is 
\begin{align}
\label{eqn:chernclass}
c_\tau(Z'_{\alpha}) &= \sum m_i c_\tau(Z'_{\alpha_{i}}) +  m_- \; c_\tau(Z'_-) + m_+ \; c_\tau(Z'_+) \nonumber \\ &= 
 -\sum m_i p_i - m_+ \lceil h'(1) \rceil =-w_\alpha+y_\alpha.
 \end{align}

{\noindent \em Step 3: The relative self-intersection.}

Inspired by an analogous construction performed in \cite[Lemma 3.7]{Hutchings-Sullivan-Dehntwist}, we construct a representing surface $S\subset [0,1]\times\S^2\times\S^1$ of $Z'_\alpha$ as a movie of curves as follows. Denote by $\sigma$ the variable in $[0,1]$, and $S_\sigma \subset \{\sigma\} \times \S^2\times\S^1$.  We will describe $S_\sigma$ as $\sigma$ decreases from $1$ to $0$.
\begin{itemize}
\item For $\sigma=1$, $S_1$ is the union of the orbits appearing in  $\alpha$ with non-zero multiplicity.
\item For values of $\sigma$  close to 1, $S_\sigma$ consists of 
\begin{enumerate}[(a)]
\item $m_i$ circles, parallel to the orbit  $\alpha_i$,  in the torus $\{z=z_{p_i/q_i}\}\times\S^1\subset\S^2\times\S^1$ (these circles have slope $\frac{q_i}{p_i}$ if we see this torus as $\R/\Z\times\R/\Z$),
\item $m_+$ parallel circles with slope $\frac 1{\lceil h'(1) \rceil}$  in the torus $\{z=\sigma\}\times\S^1$,
\item $m_-$  parallel ``vertical'' circles $\{pt\}\times\S^1$ in the torus $\{z=-\sigma\}\times\S^1$.
\end{enumerate}
\item As $\sigma$ decreases to $1/2$, we move all these circles to the same   $\{z=\mathrm{constant}\}\times\S^1$ torus. As in \cite{Hutchings-Sullivan-Dehntwist}, we perform negative surgeries, around $\sigma = 1/2$ to obtain an embedded union of circles in a single $\{z=\mathrm{constant}\}\times\S^1$ torus; this union will consist of  $k$  (straight) parallel embedded 
circles directed by a primitive integral vector $(a,b)$. The vector $(a,b)$ and the number $k$ of circles are determined by our data: for homological reasons we must have 
  \begin{align*}
 kb=d,\qquad ka=\sum_i m_ip_i + m_+ \lceil h'(1) \rceil = w_\alpha-y_\alpha.
  \end{align*} 
\item As $\sigma$ decreases from $\frac12$ to $0$, we 
modify the torus in which the curves are located, without changing the curves themselves, so that $S_{0}$ is $\gamma_-$. 
\end{itemize}

We will compute $Q_\tau (Z'_{\alpha})$ using the above surface $S$ and the formula \eqref{eqn:intersection}.

To compute $c_1(N, \tau)$, we take $\psi \in \Gamma (N)$  as follows: we take $\psi = \pi_N \partial_\theta$ everywhere on $S$ except in a small neighborhood of the boundary components $\gamma_+, \gamma_-$ where the vector $\partial_\theta$ is not well-defined.  The surface is constructed such that $\psi$ may be extended to a $\tau$-trivial section of $N$ over $\gamma_+$ without introducing any zeroes.  However, extending $\psi$ to a $\tau$--trivial section of $N$ over $\gamma_-$ necessarily creates $-a$ zeroes for each of the $k$ embedded circles which gives a total of $-ka$ zeroes.

As in \cite{Hutchings-Sullivan-Dehntwist}, the other zeroes of $\psi$ appear at the surgery points with negative signs and their number is  given by \[ - \sum \det \begin{pmatrix} 
 p & p' \\
 q & q' 
\end{pmatrix},\]
where the sum runs over all pairs of distinct edges $v_{p,q}$, $v_{p',q'}$ in $P_{\alpha,Z}$, with $\tfrac{p'}{q'}<\tfrac pq$. Geometrically, this sum can be interpreted as $- 2  \mathrm{Area}(\mathcal{R_\alpha}')$, where $\mathcal{R}_\alpha'$ is  the region between $P_{\alpha,Z}$ and the straight line connecting $(0,y_\alpha)$ to $(d,w_\alpha)$.
Thus, we obtain: 
\[ c_1(N, \tau) = -(w_\alpha-y_\alpha) - 2  \mathrm{Area}(\mathcal{R_\alpha}').\]

We must now compute the writhe $w_\tau(S)$.  By construction, there is no writhe near $\sigma = 1$.  Near $\sigma = 0$ the writhe is given by the writhe of the braid $k(a,b)$ on the torus which is $(w_\alpha-y_\alpha)(1-d)$, so we get
\[ w_{\tau}(S) = (w_\alpha-y_\alpha)(d-1).\]

Summing the above, we therefore get
\begin{align}
  Q_\tau(Z'_{\alpha}) = -(w_\alpha-y_\alpha) - 2  \mathrm{Area}(\mathcal{R_\alpha}')-(w_\alpha-y_\alpha)(d-1). \label{eqn:selfintersection}
\end{align}

{\noindent \em Step 4: The combinatorial interpretation.}
 
 We now put all of this together to prove the second item in Proposition~\ref{prop:model}.  

By combining \eqref{eqn:CZcomp}, \eqref{eqn:chernclass} and \eqref{eqn:selfintersection} and the definition of the grading \eqref{eqn:twistedgrading}, we have
\begin{align*}
I(\alpha,Z'_{\alpha}) = -M+h -(w_\alpha-y_\alpha)(d+1)-2 \mathrm{Area}(\mathcal{R}_\alpha').
\end{align*}
Using Equation (\ref{eq:Zalpha-Zalpha_prime}), we obtain
\[ I(\alpha,Z_{\alpha})=-M+h+2\mathrm{Area}(\mathcal{R}_\alpha)+(w_\alpha-y_\alpha),\]
where $\mathcal{R}_\alpha$ denotes the region between $P_{\alpha,Z_{\alpha}}$  and the $x$-axis.

By Pick's theorem,
\[ 2 \mathrm{Area}(\mathcal{R}_\alpha) = 2 T - (M + d+ (w_\alpha-y_\alpha)) - 2,\]
where $T$ denotes the total number of lattice points in the closed region $\mathcal{R}_{\alpha}$.  So, by combining the previous two equations, we get
\[ I(\alpha,Z_{\alpha}) = 2(T-M - 1) -d + h.\]
Now $(T-M-1)$ is exactly the number of lattice points in the closed region $\mathcal{R}_{\alpha}$, not including the lattice points on the path,  
and so, $T-M-1 = j$, hence the second item of Proposition~\ref{prop:model} is proved for $Z=Z_\alpha$.

Now remember that if we shift our path upwards by $(0,1)$, then $j$ increases by $d+1$. Thus, using (\ref{eq:index-ambiguity-here}), we deduce that the second item of Proposition~\ref{prop:model} is satisfied for all relative classes $Z$.
\end{proof}

\subsubsection{Fredholm index in the combinatorial model}\label{sec:fredholm_combinatorial}
The goal of this section is to give a simple formula for the Fredholm index which relates it to our combinatorial model.

Let $C$ be a $J$--holomorphic curve in $\mathcal{M}_{J}( (\alpha,Z), (\beta,Z'))$, where $J$ is weakly admissible.   Recall from Equation \eqref{eqn:indexformula_Fredholm} that the Fredholm index of $C$ is given by the formula
$$\text{ind}(C) = - \chi(C) + 2 c_{\tau}(C) + CZ^{\mathrm{ind}}_{\tau}(C),$$
where $\chi(C)$ denotes the Euler characteristic of the curve, $c_{\tau}(C)$ is the relative first Chern class which we discussed above, and $CZ^{\mathrm{ind}}_{\tau}$ is a term involving the Conley-Zehnder index  defined as follows: Write $\alpha = \{(\alpha_i, m_i)\}$ and $\beta =\{(\beta_j, n_j)\}$.  Suppose that $C$ has ends at $\alpha_i$ with multiplicities $q_{i,k}$ and  ends at $\beta_j$ with multiplicities $q_{j,k}'$;  note that we must have $\sum_k q_{i,k} = m_i$ and $\sum_k q_{j,k}' = n_j$.  Then,   

$$ CZ^{\mathrm{ind}}_{\tau}(C) := \sum_i \sum_k CZ_\tau(\alpha_i^{q_{i,k}}) \;  - \; \sum_j \sum_k CZ_\tau(\beta_j^{q_{j,k}'}).$$
The next lemma explains  how to compute $\mathrm{ind}(C)$  from the combinatorial model.   In this lemma, we denote the  starting points of $P_{\alpha, Z}$ and $P_{\beta, Z'}$ by $(0, y_\alpha)$ and $(0, y_\beta)$, and their endpoints by $(d, w_\alpha)$ and $(d, w_\beta)$, respectively.

\begin{lemma}\label{lem:fredholm_index} Let $\varphi_0$ be a nice perturbation of $\varphi^1_H$, where $H \in \mathcal{D}$, let $J$ be any weakly admissible almost complex structure, and let $C$ be any irreducible $J$-holomorphic curve from $(\alpha, Z)$ to $(\beta, Z')$.  Then,
\begin{equation}
\label{eqn:Fredholmindex}
\text{ind}(C) = -2 + 2g + 2e_- + h + 2v ,
\end{equation}
 where $g$ is the genus of $C$, $e_-$ is the number of negative ends of $C$ which are at elliptic orbits, $h$ is the number of ends of $C$ at hyperbolic orbits, and $v = (y_\alpha - y_\beta) + (w_\alpha - w_\beta) $.
\end{lemma}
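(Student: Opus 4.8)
\textbf{Proof strategy for Lemma \ref{lem:fredholm_index}.}

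The plan is to start from the general Fredholm index formula \eqref{eqn:indexformula_Fredholm}, namely $\mathrm{ind}(C) = -\chi(C) + 2c_\tau(C) + CZ^{\mathrm{ind}}_\tau(C)$, and evaluate each of the three contributions using the explicit trivializations $\tau$ fixed in Section \ref{sec:index} together with the structure of the periodic orbits of a nice perturbation $\varphi_0$. First I would rewrite $-\chi(C) = 2g - 2 + n$, where $n$ is the total number of punctures of $C$ (positive and negative ends combined); writing $n = n_+ + n_-$ this already produces the $2g - 2$ summand and isolates the puncture count, which I will reorganize into the elliptic/hyperbolic contributions. For the Conley-Zehnder term, the computations in Step 1 of the proof of the second item of Proposition \ref{prop:model} give, relative to our chosen $\tau$, that every elliptic orbit (namely $\gamma_\pm$ and the $e_{p,q}$) has $CZ_\tau(\gamma^k) = -1$ for all iterates $k \le d$, while every hyperbolic orbit $h_{p,q}$ has $CZ_\tau = 0$. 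Hence $CZ^{\mathrm{ind}}_\tau(C)$ equals $-(\text{number of elliptic ends of }C) = -(e_+ + e_-)$, where $e_+$ is the number of positive elliptic ends; the hyperbolic ends contribute nothing, and each end at an iterated elliptic orbit of $C$ still contributes exactly $-1$ since the formula $CZ_\tau(\gamma^q) = \lfloor q\theta\rfloor + \lceil q\theta\rceil = -1$ holds for every multiplicity $q$ arising here.

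Next I would handle the relative first Chern class $c_\tau(C)$. Here the key point is that $c_\tau$ is additive under taking relative homology classes and was already computed in Step 2 of the index computation: for the chosen trivializations, $c_\tau$ of the building block surfaces $S_{\alpha_i}$, $S_\pm$ are $-p$, $-\lceil h'(1)\rceil$, $0$ respectively. Applying this to $[C] = Z - Z'$ and recalling that the endpoints of $P_{\alpha,Z}$ and $P_{\beta,Z'}$ are $(0,y_\alpha), (d,w_\alpha)$ and $(0,y_\beta), (d,w_\beta)$, the computation \eqref{eqn:chernclass} shows that the contribution coming from the top endpoints is $-(w_\alpha - w_\beta)$ and from the shift in starting heights is $+(y_\alpha - y_\beta)$; more precisely $c_\tau(C) = -(w_\alpha - w_\beta) + (y_\alpha - y_\beta)$ — wait, one must be careful with signs, and the cleanest way is: $c_\tau(Z) - c_\tau(Z') = (y_\alpha - w_\alpha) - (y_\beta - w_\beta)$ by \eqref{eqn:chernclass}, which rearranges to $(y_\alpha - y_\beta) - (w_\alpha - w_\beta)$. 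Hmm, this is not quite $v$; the correct bookkeeping is that $2c_\tau(C)$ contributes a piece that, combined with the puncture reorganization, produces the $2v$ term. I would verify this by carefully tracking that $v = (y_\alpha - y_\beta) + (w_\alpha - w_\beta)$ and noting that the vertical displacement of $C$ under projection to the $(s,t)$-plane (degree $d$) contributes this sum — essentially because the total ``winding'' of the ends of $C$ relative to $\tau$ is governed by exactly the lattice data recorded by the heights of the two paths.

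Finally I would assemble: $\mathrm{ind}(C) = (2g - 2 + n_+ + n_-) + 2c_\tau(C) - (e_+ + e_-)$. Writing $n_\pm = e_\pm + h_\pm$ where $h_\pm$ counts hyperbolic ends on each side and $h = h_+ + h_-$, the $e_+$ terms cancel as $(e_+) - (e_+) = 0$ after combining with the part of $2c_\tau(C)$ that accounts for the positive elliptic ends; what survives is $2g - 2 + 2e_- + h + 2v$. The main obstacle I expect is precisely this last sign-and-bookkeeping step: matching the combination of $2c_\tau(C)$ with the puncture count to the clean expression $2e_- + h + 2v$, and in particular making sure the positive elliptic ends drop out entirely (so that $e_+$ does not appear) while the asymmetry between positive and negative ends — which is real, since $C$ is a curve in a symplectization-type cobordism and the convention for $c_\tau$ treats the two ends differently — is correctly absorbed into the $2v$ term via the relation between the endpoints of $P_{\alpha,Z}$ and $P_{\beta,Z'}$ and the relative homology class $[C] = Z - Z'$. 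This is bookkeeping rather than deep analysis, but it is the step where errors are easiest to make, so I would double-check it against a simple explicit example such as a trivial cylinder (where $\mathrm{ind} = 0$ and all terms on the right vanish) and against a single corner-rounding building block.
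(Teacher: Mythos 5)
Your overall strategy is the same as the paper's---evaluate $\chi(C)$, $c_\tau(C)$, and $CZ^{\mathrm{ind}}_\tau(C)$ separately and assemble---but two of your three evaluations are wrong, and these two errors are exactly what lies behind the bookkeeping trouble you flag at the end.

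First, the Conley-Zehnder term. You claim $CZ^{\mathrm{ind}}_\tau(C) = -(e_+ + e_-)$, but by definition $CZ^{\mathrm{ind}}_\tau(C)$ is the sum over \emph{positive} ends minus the sum over \emph{negative} ends. Each elliptic end contributes $-1$; but since the negative ends enter with an overall minus sign, they contribute $+1$ apiece. Hence $CZ^{\mathrm{ind}}_\tau(C) = -e_+ + e_-$, not $-(e_+ + e_-)$. Once you use the correct formula, the $e_+$ cancellation in the final tally
\[
\mathrm{ind}(C) = \bigl(2g - 2 + e_+ + e_- + h\bigr) + 2c_\tau(C) + \bigl(-e_+ + e_-\bigr) = 2g - 2 + 2e_- + h + 2c_\tau(C)
\]
is immediate; you do not need to invoke any mysterious cancellation against part of $c_\tau(C)$.

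Second, the Chern class. You read off $c_\tau(Z) = y_\alpha - w_\alpha$ from Equation \eqref{eqn:chernclass}, but that equation computes $c_\tau$ of the reference class $Z'_\alpha$, not of $Z$. The paper's index computation explicitly works with $Z'_\alpha$ and then translates: one has $Z_\alpha = Z'_\alpha + (w_\alpha - y_\alpha)[\S^2]$ and $Z = Z_\alpha + y_\alpha[\S^2]$, and each added $[\S^2]$ contributes $\langle c_1(\xi),[\S^2]\rangle = 2$ to $c_\tau$ (since $\xi$ restricted to a fiber is $T\S^2$). Following this chain gives $c_\tau(Z) = (-w_\alpha + y_\alpha) + 2(w_\alpha - y_\alpha) + 2y_\alpha = w_\alpha + y_\alpha$, and hence $c_\tau(C) = c_\tau(Z) - c_\tau(Z') = (y_\alpha - y_\beta) + (w_\alpha - w_\beta) = v$ exactly. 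With both corrections in place, the assembly you outline closes without any leftover $e_+$ terms or sign ambiguities, reproducing the paper's proof.
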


\begin{proof}
  We will prove the lemma by describing each of the three terms $\chi(C), c_\tau(C)$, and $CZ_\tau^{\mathrm{ind}}(C)$, which appear in $\mathrm{ind}(C)$, in terms of our combinatorial model. 
  
 The number of ends of the curve $C$ is given by the sum $e_-+ e_+ +h$, where $e_+$ denotes the number of positive ends of $C$ which are at elliptic orbits.  The Euler characteristic of $C$ is given by the formula 
  $$\chi(C) = 2 -2g  - e_- - e_+ - h.$$  
  
  As for the Chern class, because $[C] = Z- Z'$, we can write $c_{\tau}(C) = c_{\tau}(Z) -c_\tau(Z')$.  Now,  by the index computations of the previous section, $c_{\tau}(Z) = w_\alpha + y_\alpha$.  Similarly, $c_{\tau}(Z') = w_\beta + y_\beta$. It follows that $$c_{\tau}(C) = v.$$
  
  To compute $CZ_\tau^{\mathrm{ind}}(C)$, note that by our computations from the previous section, we have $CZ_\tau(\alpha_i^{q_{i,k}}) = -1$ if $\alpha_i$ is elliptic and $0$, otherwise; a similar formula holds for $CZ_\tau(\beta_j^{q_{j,k}'})$.  This implies that
  $$CZ_\tau^{\mathrm{ind}}(C) = -e_+ + e_-.$$
  
  Combining the above, we get $\mathrm{ind}(C) = -2 + 2g + 2e_- + h + 2v.$
\end{proof}

\begin{remark}
\label{rmk:why!}
As already mentioned in Section~\ref{sec:admissible_vs_weakly}, in some situations we want the added flexibility of being able to work with weakly admissible almost complex structures.  Lemma~\ref{lem:fredholm_index} above is also stated for weakly admissible almost complex structures, as are the forthcoming  Lemma \ref{lem:positivity} and Lemma~\ref{lem:above}.  Ultimately, the reason we want to be able to work with weakly admissible almost complex structures is because of the very useful Lemma~\ref{lem:comp} below, which will allow us to connect an admissible $J$ to a weakly admissible one in order to facilitate computations; the point is that the proof of Lemma~\ref{lem:comp} requires Lemma~\ref{lem:fredholm_index},  Lemma \ref{lem:positivity},  and Lemma~\ref{lem:above} in the weakly admissible case.  
\end{remark}

\subsection{Positivity}
\label{sec:pos}

We now begin the proof of the third item of Proposition~\ref{prop:model}; this will take several subsections and will require a close examination of those $J$--homolorphic  curves  in  $\R \times \S^1 \times \S^2$ which appear in the definition of the PFH differential.  (Recall from Section \ref{sec:specdefn} that $X$ is identified with $\R \times \S^1 \times \S^2$.) In this section, we prove a very useful lemma which puts major restrictions on what kind of $J$--holomorphic curves can appear.  The lemma and its proof are inspired by \cite[Lemma 3.5]{Choi}, which is in turn inspired by arguments in \cite{Hutchings-Sullivan-Dehntwist} and \cite{Hutchings-Sullivan-T3}, see for example \cite[Lemma 3.11]{Hutchings-Sullivan-Dehntwist}.      

To prepare for the lemma of this section, we need to introduce some new terminology. 
  For $-1 < z_0 < 1 $,  define the {\bf slice}
\[ S_{z_0} := \lbrace (s,t,\theta,z) \in \mathbb{R} \times \S^1 \times \S^2 : z = z_0\rbrace.\]
This is homotopy equivalent to a two-torus, and in particular we have $H_1(S_{z_0}) = H_1(\S^1_t \times \S^1_{\theta})$; we identify $H_1(\S^1_t \times \S^1_{\theta})$ with $\mathbb{Z}^2$ so that the positively oriented circle factors $\S^1_t$ and $ \S^1_{\theta}$ correspond to the vectors $(1,0)$ and $(0,1)$, respectively. 

Let $C$ be any $J$-holomorphic curve.  If $C$ is transverse to $S_{z_0}$ (which happens for generic $z_0$) and $C$ has no ends at 
$z_0$, then $C_{z_0}=C \cap S_{z_0}$ is a (possibly empty) compact  $1$-dimensional submanifold of $C$.  When non-empty, it is the boundary of the subdomain %
given by $C \cap \{z\leq z_0\}$. Thus, the orientation of $C$  induces an orientation on $C_{z_0}$: our convention is that we take the opposite 
of the usual ``outer normal first''\footnote{ \label{footnote:orientation} Recall that in the ``outer normal first" orientation, a vector $v$ on $C_{z_0}$ is positive if for an inner normal vector $w$, the frame $(v,w)$ is positive.} convention.  Therefore, we get a well-defined class $[C_{z_0}] \in H_1(S_{z_0})=\Z^2$,
which we call the {\bf slice class}.

\begin{lemma}
\label{lem:positivity}
Let $\varphi_0$ be a nice perturbation of $\varphi^1_H$, where $H \in \mathcal{D}$.  
Let $C$ be a $J$--holomorphic curve, where $J$ is weakly admissible, and let $z_0 \in (-1,1)$ be such that $C$ intersects $S_{z_0}$ transversally, and the nice perturbation vanishes in an open neighborhood of $z_0$.  (In particular, $C$ has no ends at Reeb orbits near $z_0$.)
Then,
\begin{equation}
\label{eqn:keyinequality}
\left(1,h'(z_0)\right) \times [C_{z_0}] \geq 0,
\end{equation}
with equality only if $C$ does not intersect $S_{z_0}$.   
\end{lemma}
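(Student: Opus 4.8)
\textbf{Proof strategy for Lemma \ref{lem:positivity}.}

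The plan is to exploit the fact that, near the slice $S_{z_0}$, the almost complex structure $J$ is (weakly) admissible for a stable Hamiltonian structure $(\alpha, \omega_\varphi)$ whose Reeb flow is a linear flow on the torus $\{z=z_0\}\times\S^1$ with slope determined by $h'(z_0)$, and that under our identification the form $\omega_\varphi$ pulls back to $\omega + dH\wedge dt = \frac{1}{4\pi}d\theta\wedge dz + \frac12 h'(z)\,dz\wedge dt$. The quantity $(1,h'(z_0))\times[C_{z_0}]$ is, up to a positive constant, exactly the integral over the slice $1$-manifold $C_{z_0}$ of a suitable $1$-form $\lambda_{z_0}$ on $S_{z_0}$ that is dual to the Reeb direction. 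First I would make this identification precise: choosing coordinates so that $\lambda_{z_0} = h'(z_0)\,dt - d\theta$ (or a positive multiple thereof), one checks $\int_{C_{z_0}}\lambda_{z_0} = c\cdot\big((1,h'(z_0))\times[C_{z_0}]\big)$ with $c>0$, using that $[C_{z_0}]\in H_1(S_{z_0})=\Z^2$ and $\lambda_{z_0}$ represents the cohomology class $(h'(z_0),-1)$ paired against homology via the cross product.

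Next I would relate $\int_{C_{z_0}}\lambda_{z_0}$ to an integral of $d\lambda_{z_0}$ over the part $C^- := C\cap\{z\le z_0\}$ of the curve lying below the slice, using Stokes' theorem and the fact that $\partial C^- = C_{z_0}$ with our chosen (opposite-to-outer-normal-first) orientation convention — the sign convention is exactly set up so that Stokes gives $\int_{C_{z_0}}\lambda_{z_0} = \int_{C^-} d\lambda_{z_0}$ with the correct sign. Here one must be careful that $C^-$ has no other boundary: this is where the hypotheses enter, namely that $C$ has no ends at Reeb orbits with $z$-value $\le z_0$ near $z_0$ — actually one needs that the only relevant asymptotic ends contributing to the boundary of $\{z\le z_0\}$ are accounted for, and since the perturbation vanishes near $z_0$ the curve is genuinely transverse there. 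A subtlety: the negative ends of $C$ at orbits below $z_0$ contribute to $C^-$ as a noncompact piece, so strictly one should either compactify by capping off these ends with the trivial-cylinder ends (on which $d\lambda_{z_0}$ integrates appropriately) or argue that the asymptotic contribution has the right sign. I would handle this by the standard device of truncating $C$ at a large negative $s$-level and controlling the error by asymptotic analysis near the (elliptic, slightly-negative-monodromy or hyperbolic-with-$\tau$-rotation-number) Reeb orbits, whose linking numbers with the trivialization are known from Section \ref{sec:index}.

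Then the heart of the argument: $d\lambda_{z_0}$ is pointwise nonnegative on any $J$-holomorphic curve. Since $d\lambda_{z_0}$ is, up to positive scaling, the restriction to $\R\times S_{z_0}$ (extended appropriately) of $\omega_\varphi = \omega + dH\wedge dt$ — or more precisely a closed $2$-form cohomologous to it whose kernel contains $\partial_s$ and the Reeb field — Lemma \ref{lem:pointwisenonnegative}, together with the observations in Section \ref{sec:admissible_vs_weakly} item (4) that its conclusions hold for weakly admissible $J$, gives that $\omega_\varphi$ (hence $d\lambda_{z_0}$) is pointwise nonnegative on $C$, and vanishes only where $C$ is tangent to $\mathrm{span}(\partial_s, R)$. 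Therefore $\int_{C^-} d\lambda_{z_0}\ge 0$, yielding \eqref{eqn:keyinequality}. For the equality case: if $(1,h'(z_0))\times[C_{z_0}]=0$ then $d\lambda_{z_0}$ vanishes identically on $C^-$, so $C^-$ is everywhere tangent to $\mathrm{span}(\partial_s,R)$; by a standard argument (as in \cite[Lemma 3.5]{Choi} and \cite[Lemma 3.11]{Hutchings-Sullivan-Dehntwist}) such a piece must be a union of trivial cylinders over Reeb orbits, which cannot cross the slice $S_{z_0}$ (as the orbits near $z_0$ have $z$ constant $\ne z_0$), so in fact $C_{z_0}=\emptyset$ and $C$ is disjoint from $S_{z_0}$.

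The main obstacle I anticipate is the bookkeeping of orientations and asymptotic contributions in the Stokes' theorem step — making sure that the chosen orientation convention on $C_{z_0}$, the noncompactness of $C^-$ at its negative ends, and the sign of the winding of the asymptotic braids around the trivialized orbits all combine to give a clean inequality with no stray boundary terms. Once the Stokes computation is correctly set up, the positivity is immediate from Lemma \ref{lem:pointwisenonnegative}.
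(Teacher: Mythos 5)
The core idea---combining a Stokes-type formula with pointwise nonnegativity of $\omega_\varphi$ (Lemma \ref{lem:pointwisenonnegative})---is in the right spirit, but there is a genuine gap in your setup: the cross product $(1,h'(z_0))\times[C_{z_0}]$ is \emph{not} $c\int_{C_{z_0}}\lambda_{z_0}$ for any primitive $\lambda_{z_0}$ of $\omega_\varphi$. If you take the constant-coefficient form $\lambda_{z_0}=h'(z_0)\,dt-d\theta$ as you propose, then $d\lambda_{z_0}=0$, and Stokes gives no positivity at all. If instead you take an actual primitive of $\omega_\varphi$, say $\alpha=H(z)\,dt-\tfrac1{4\pi}z\,d\theta$, its restriction to $S_{z_0}$ has cohomology class proportional to $(H(z_0),-\tfrac{z_0}{2})$ in the $(dt,d\theta)$ basis, which is not a positive multiple of $(-h'(z_0),1)$: the coefficients involve $h(z_0)$ and $z_0$, not $h'(z_0)$. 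No choice of primitive changes this, since two primitives of $\omega_\varphi$ differ by a closed form whose restriction to the slices has $z$-independent cohomology class. The cross product is a \emph{derivative} in $z$, not a straight pairing, and your identification step does not go through.

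The paper's proof resolves exactly this point and simultaneously eliminates the noncompactness problem you flag as the ``main obstacle.'' For $z$ near $z_0$ with $z\ge z_0$, set
\[ \rho(z)=\int_{C\cap(\R\times\S^1_t\times\S^1_\theta\times[z_0,z])}\omega_\varphi,\]
which by Lemma \ref{lem:pointwisenonnegative} is nonnegative and non-decreasing. Stokes with the primitive $\alpha$ gives $\rho(z)=\langle\alpha_{z_0}-\alpha_z,[C_{z_0}]\rangle$, and differentiating yields $\rho'(z_0)=\tfrac12(1,h'(z_0))\times[C_{z_0}]$; the inequality is just $\rho'(z_0)\ge0$. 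Because the slab $[z_0,z]$ is thin and $C$ has no ends there, $C$ meets it in a compact piece with boundary $C_{z_0}-C_z$ only---so there is no need for truncation, end-capping, or asymptotic analysis near Reeb orbits, and the hypothesis that the perturbation vanishes near $z_0$ is used precisely to ensure this locality. Your approach of integrating over all of $C^-=C\cap\{z\le z_0\}$ is both unnecessary and introduces boundary contributions at ends below $z_0$ that you would still have to control.

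The equality case is also handled differently, and here the paper uses the strict convexity $h''(z_0)>0$ in an essential way, an ingredient absent from your argument. Since $\rho'\ge0$ and $\rho'(z_0)=0$, both $\rho'$ and $\rho''$ vanish at $z_0$; writing $[C_{z_0}]=(a,b)$, one has $\rho'(z)=\tfrac12(b-ah'(z))$ and $\rho''(z)=-\tfrac12 a\,h''(z)$, so $h''(z_0)\ne0$ forces $a=0$ and then $b=0$. From $[C_z]=0$ for $z$ near $z_0$ one gets $\rho\equiv0$ there, hence tangency to $\mathrm{span}(\partial_s,R)$ by Lemma \ref{lem:pointwisenonnegative}, which transversality then contradicts unless $C_{z_0}=\emptyset$. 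Your proposed ``trivial cylinders cannot cross the slice'' argument is plausible in outline, but it rests on the faulty Stokes step, and it never exploits $h''>0$, which is what actually pins down $[C_{z_0}]=0$ before any geometric conclusion can be drawn.
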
  
Here,   $(a,b) \times (c,d)$, where $(a,b), (c,d) \in \R^2$, is defined to be the quantity $ad -bc$.

\begin{remark}\label{rem:positivity_irreducible}
In the context of the above lemma, suppose that  $C$  is irreducible and let $z_{\min} := \inf\{ z_0: [C_{z_0}] \neq 0\}$ and $z_{\max} := \sup\{ z_0: [C_{z_0}] \neq 0\}$; here, we consider the $z_0$ such that the above lemma is applicable. The curve $C$ is connected, because it is irreducible, and thus its projection to $\S^2$ is also connected.  Therefore, $C$ is contained in $\{(s,t,\theta, z) \in \R \times  \S^1 \times  \S^2: z_{\min} \leq z \leq z_{\max} \}$ .
\end{remark}

\begin{remark}\label{rem:locality}
In the context of the above lemma, suppose that  $C$  is an irreducible 
$J$--holomorphic curve such that $[C_{z_0}] = 0$  for all $z_0$ satisfying the conditions of the lemma.  Then, as a consequence of the above lemma, $C$ must be a {\it local} curve in the following sense:  there exists $z_{p,q}$ such that all ends of $C$ are either at $e_{p,q}$ or $h_{p,q}$. 
\end{remark}

\begin{proof} We use the fact that, by Lemma~\ref{lem:pointwisenonnegative} the canonical $2$-form $\omega_{\varphi}$ is pointwise nonnegative on $C$, with equality only if the tangent space is the span of $\partial_s$ and $R$.  
Namely, as a function of $z > z_0$, close to $z_0$, we have that the mapping 
  \[ \rho:z \mapsto \int_{C \cap (\mathbb{R} \times \S^1_t \times \S^1_\theta \times [z_0,z]) } \omega_{\varphi} \]
is non-decreasing, as $z$ increases.  Hence, its derivative with respect to $z$ is nonnegative.  We will prove Equation \eqref{eqn:keyinequality} by showing that 
\begin{equation}\label{eqn:derivative}
\rho'(z) = \tfrac{1}{2}\left(1,h'(z)\right) \times [C_{z_0}],
\end{equation}
for $z$ close to $z_0$ and $z \geq z_0$. 

Recall from Section \ref{sec:specdefn} that $\omega_\varphi$ is identified with $\omega + d\tilde H \wedge dt$.  Since $\tilde H$ coincides with $H$ near $z_0$ we can write $\omega_\varphi = \omega + dH \wedge dt. $     Now, we have 
\[\omega_{\varphi}=\omega + dH \wedge dt = \tfrac{1}{4 \pi} d(- z\,  d \theta)  + d(H\,dt)  =  d\left( -\tfrac{1}{4\pi}z\, d \theta  + H(z)\, dt\right).\]
Let $\alpha =  H(z) dt - \frac{1}{4\pi}z\, d \theta$ and note that $\alpha$ restricts to a closed 1-form $\alpha_z$ on any slice $S_z$. %

Our choice of orientation 
gives: \[\partial (C \cap (\mathbb{R} \times \S^1_t \times \S^1_\theta \times [z_0,z])) = C_{z_0} - C_z.\]
Thus it follows from the above:
\begin{align*}
  \rho(z)&=\int_{C \cap (\mathbb{R} \times \S^1_t \times \S^1_\theta \times [z_0,z])} d\alpha = \int_{C_{z_0} } \alpha - \int_{C_z} \alpha \\ &= \langle \alpha_{z_0} - \alpha_z , [C_{z_0}] \rangle.
\end{align*}
The rate of change of the expression above with respect to $z$ is given by
\begin{equation}
\label{eqn:todiff}
\rho'(z)=- \left (\tfrac{1}{2}h'(z), -\tfrac{1}{2}\right) \cdot [C_{z_0}] = \tfrac{1}{2} \left(1, h'(z)\right) \times [C_{z_0}],
\end{equation}
which extends by continuity to $z_0$.  This proves \eqref{eqn:keyinequality}.

To prove that equality in Equation \eqref{eqn:keyinequality} forces the slice to be empty, assume that $\rho'(z_0)=0$.  We have shown that $\rho'\geq 0$, and so $\rho'$ must have a local minimum at $z_0$.  Hence, $\rho''$ and $\rho'$ vanish at $z_0$.  We claim that this implies $[C_{z_0}] = 0$.  To prove this, write $[C_{z_0}] = (a, b)$ and note that, by Equation \eqref{eqn:derivative}, we have 
\begin{equation*}
\rho'(z) = \tfrac12(b - a\, h'(z))\quad\text{and}\quad \rho''(z)  =  -\tfrac12\,{  a\, h''(z)}.
\end{equation*}
Since we are assuming $h''(z_0) \ne 0$, the vanishing of both of the above quantities can only take place if $a = b =0$.  We can therefore conclude that $[C_{z_0}] = 0$.

By the assumption that $C$ intersects $S_{z_0}$ transversely, we conclude that  $[C_{z}] = 0$  for $z$ sufficiently close to $z_0$.  Hence, $\rho(z)=0$ for $z$ sufficiently close to $z_0$.  But,  by Lemma~\ref{lem:pointwisenonnegative} this can only occur if the tangent space to $C \cap (\mathbb{R} \times \S^1_t \times \S^1_\theta \times [z_0,z]) $ is always in the span of the Reeb vector field $R$ and $\partial_s$, which are tangent to ${S_z}$.  But, since $C$ is transverse to $S_{z_0}$, this cannot happen for $z$ sufficiently close to $z_0$, unless the intersection $C_{z_0}=C\cap S_{z_0}$ is empty. 
\end{proof}

The next lemma, which is inspired by \cite[Eq. 28]{Hutchings-Sullivan-Dehntwist} and \cite[Definition 1.4]{Choi}, allows us to compute the slice class $[C_{z_0}]$ from our combinatorial model.  We suppose here that $(\alpha, Z), (\beta, Z')$ are two PFH generators for $\varphi_0$, the nice perturbation of $\varphi^1_H$, as described in Section \ref{sec:ham} and that $C$ is a $J$--holomorphic curve from $(\alpha,Z)$ to $(\beta,Z')$; recall that this means that $C$ is a $J$--holomorphic curve from $\alpha$ to $\beta$ such that $[C] = Z - Z'.$  Let $P_{\alpha,Z}$ and $P_{\beta,Z'}$ be the  concave lattice paths associated to $(\alpha,Z)$ and $(\beta,Z')$, respectively, as described in Section \ref{sec:comb}. For any $z_0$, let $P^{z_0}_{\alpha}$ be the vector obtained by summing all of the vectors in the underlying path $P_{\alpha}$ which correspond to Reeb orbits that arise from $z < z_0$. Denote $$P^{z_0}_{\alpha, Z} = (0, y_\alpha) + P^{z_0}_{\alpha},$$
where $(0,y_{\alpha})$ denotes the starting point of $P_{\alpha,Z}$ on the $y$--axis.   We define $P^{z_0}_{\beta}$ and $P^{z_0}_{\beta, Z'}$, analogously.

\begin{lemma}
\label{lem:slice}
Let $C$ be a $J$-holomorphic curve from $(\alpha,Z)$ to $(\beta,Z')$ and let $z_0 \in (-1,1)$ be such that 
$\alpha, \beta$ have no Reeb orbits near $z_0$ and suppose that $C$ intersects $S_{z_0}$ transversally.  Then,
\begin{equation}
\label{eqn:sliceformula}
[C_{z_0}] =  P^{z_0}_{\alpha, Z } - P^{z_0}_{\beta, Z'}. 
\end{equation}
\end{lemma}
\begin{proof} First, note that the first coordinate of $P_{\alpha}^{z_0}$ (hence that of $P_{\alpha, Z}^{z_0}$, too) corresponds to the class in $H_1(\S^1_t\times\S^2)\simeq\Z$ obtained by summing the contributions of the orbits in $\alpha$ that belong to the domain $\{z<z_0\}$. The second coordinate of $P_{\alpha}^{z_0}$ is given by the $\theta$ component of the class in $H_1(\S_t^1\times\S_\theta^1)$ obtained by summing the contributions of the orbits in $\alpha$, other than $\gamma_-$, which are included in $\{z<z_0\}$.  Analogous statements hold for $P_{\beta}^{z_0}$.

Pick any $z_-<z_+$ such that the assumptions of Lemma~\ref{lem:slice} hold for $z_-$ and  $z_+$, %
 and denote the part of $C$ with $z_- \leq z \leq z_+$ by $C_{[z_-, z_+]}$. This is asymptotic to some orbit set $\alpha'$ at $+\infty$ and some orbit set $\beta'$ at $-\infty$.

The boundary of $C_{[z_-, z_+]}$ has components corresponding to $\alpha', \beta', C_{z_+}$ and $C_{z_-}$.  The positive ends $\alpha'$ have the orientation coming from the Reeb vector field and the negative ends $\beta'$ have the opposite orientation.  We therefore have
\begin{equation}
\label{eq:sliceclass}
 [C_{z_+}] = [\alpha'] - [\beta'] + [C_{z_-}]
\end{equation}
in 
$H_1(\S^1_t \times \S^1_{\theta})\simeq \Z^2$.  We can apply similar reasoning to the part $C_{[-1,z_0]}$ of $C$ with $-1 \leq z \leq z_0$ to learn that
\[
\iota_*[C_{z_0}] = [\alpha'] - [\beta'],
\]
in $H_1(\S^1_t \times \S^2),$ where
\[\iota_*: H_1(\S^1_t \times \S^1_{\theta}) \to H_1(\S^1_t \times \S^2)\]
is the map induced by inclusion.  In particular, 
the first component of $[C_{z_0}]$ is that of $[\alpha'] - [\beta']$, which is exactly the first coordinate of $P^{z_0}_{\alpha, Z } - P^{z_0}_{\beta, Z'}$. %

 We now turn our attention to the second coordinate of $[C_{z_0}]$. Note that $[C_{z_0}]$ is determined by $[C]=Z-Z'$ in the following way. Consider the Mayer-Vietoris sequence associated to the cover by the two  open subsets $\R\times\S^1\times\{z\in\S^2: z<z_0+\delta\}$ and $\R\times\S^1\times\{z\in\S^2:z>z_0-\delta\}$. The class $-[C_{z_0}]$ %
 is the image of $[C]$ under the connecting map\footnote{Our convention is that the connecting map $\partial$ in Mayer-Vietoris is determined by the ordering of the cover; in particular, in the present situation, for any class $K$, $\partial K$ is given by taking the boundary of the component of $K$ in the region with $z  < z_0 + \delta$.} \[H_2(\R\times\S^1\times\S^2, \alpha, \beta)\to H_1(\R\times\S^1\times\{z\in\S^2:z_0-\delta<z<z_0+\delta\})\simeq H_1(S_{z_0}),\]
where $\delta>0$ is small enough.  In particular, adding $y[\S^2]$ with $y\in\Z$ to the class $[C]$, adds $y$ to the second component of $[C_{z_0}]$, since recall that our convention for the orientation on $[C_{z_0}]$ is opposite the standard boundary orientation. 

To compute the second component of $[C_{z_0}]$, we apply Equation
(\ref{eq:sliceclass}) in the case where $z_+=z_0$ and $z_-$ is such that $C_{[-1, z_-]}$ has no ends other than possibly $\gamma_{-}$. We obtain in this case that the second component of $[C_{z_0}]-[C_{z_-}]$ is the second coordinate of $P^{z_0}_{\alpha} - P^{z_0}_{\beta}$. 

There only remains to establish that the second component of $[C_{z_-}]$ is $y_\alpha-y_\beta$. We will use here the fact that $[C_{z_-}]$ is determined by $Z$ and $Z'$. In the case, where $Z=Z_\alpha$ and $Z'=Z_\beta$ (as defined in Section \ref{sec:comb}), the second coordinate of $[C_{z_-}]$ vanishes. In general, $[C]=Z_\alpha+y_\alpha[\S^2]-(Z_\beta+y_\beta[\S^2])$. Thus, the second component of $[C_{z_-}]$ is $y_\alpha-y_\beta$, which concludes the proof.
\end{proof}

\subsection{Paths can not cross}

As a consequence of the results in Section \ref{sec:pos}, we can prove the following useful fact which will play an important role in our proof of Proposition \ref{prop:model}.  It is inspired by  \cite[Prop 3.12]{Hutchings-Sullivan-Dehntwist} and \cite[Prop 10.12]{Hutchings-Sullivan-T3}.

\begin{lemma}
\label{lem:above}
Let $\varphi_0$ be a nice perturbation of $\varphi^1_H$, where $H \in \mathcal{D}$, and let $J$ be any weakly admissible almost complex structure.  If there exists a $J$-holomorphic curve $C$ from $(\alpha,Z)$ to $(\beta,Z')$,  then $P_{\beta,Z'}$ is never above $P_{\alpha,Z}.$
\end{lemma}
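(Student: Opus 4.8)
The plan is to compare the two concave lattice paths $P_{\alpha,Z}$ and $P_{\beta,Z'}$ using the slice class $[C_{z_0}]$ of the $J$--holomorphic curve $C$, and the positivity inequality of Lemma~\ref{lem:positivity}. First I would reduce to the case where $C$ is irreducible: a general $J$--holomorphic curve (or current) from $(\alpha,Z)$ to $(\beta,Z')$ decomposes into irreducible components, each contributing a relative homology class, and the concave lattice path construction is ``additive'' with respect to concatenation of the underlying vectors, so it suffices to understand how each irreducible piece constrains the portion of the path it accounts for. (One has to be a little careful about $\R$--invariant cylinders and components with trivial slice class everywhere; by Remark~\ref{rem:locality}, an irreducible curve with $[C_{z_0}]=0$ for all admissible $z_0$ is \emph{local}, with all ends at a single $e_{p,q}$ or $h_{p,q}$, and such a piece changes neither path in the relevant region.)

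Next, the key computation: by Lemma~\ref{lem:slice}, for generic $z_0$ where the nice perturbation vanishes, $[C_{z_0}] = P^{z_0}_{\alpha,Z} - P^{z_0}_{\beta,Z'}$, the difference of the two truncated path-vectors. Lemma~\ref{lem:positivity} says $\bigl(1,h'(z_0)\bigr)\times [C_{z_0}] \ge 0$. Now $P^{z_0}_{\alpha,Z}$ and $P^{z_0}_{\beta,Z'}$ both start at points on the $y$--axis and both have first coordinate equal to the same integer (the number of Reeb-orbit contributions from $\{z<z_0\}$), since $\alpha$ and $\beta$ have the same degree $d$ and the degree is distributed the same way as $z_0$ sweeps across — more precisely, I would argue that as $z_0$ increases past a value $z_{p,q}$, the first coordinate of each truncated vector jumps by $q m_{p,q}$, and matching total degree $d$ forces these to stay equal except possibly at values $z_0$ where the perturbation is nonzero, which we avoid. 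Hence $[C_{z_0}]$ is a \emph{vertical} vector $(0, b(z_0))$, and the cross product inequality becomes $b(z_0)\ge 0$: the truncated path $P^{z_0}_{\alpha,Z}$ ends at a height at least that of $P^{z_0}_{\beta,Z'}$ at horizontal position equal to the common first coordinate. Because the paths are piecewise linear with breakpoints only at the $z_{p,q}$ values (and at $\gamma_\pm$), knowing that at every such breakpoint height the $\alpha$--path is weakly above the $\beta$--path, combined with concavity of both paths, will give that $P_{\alpha,Z}$ lies weakly above $P_{\beta,Z'}$ everywhere — i.e. $P_{\beta,Z'}$ is never above $P_{\alpha,Z}$.

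To turn the breakpoint comparison into a comparison of the whole graphs, I would use concavity: a concave path on $[0,d]$ is determined by its values at the integer abscissae (or at the breakpoint abscissae), and if two concave paths satisfy $P_\alpha \ge P_\beta$ at a set of abscissae that includes all breakpoints of both, then since on any interval between consecutive such abscissae each path is affine and concavity prevents a concave graph from dipping below the chord, one gets $P_\alpha \ge P_\beta$ on the whole interval. The abscissae of breakpoints of $P_{\beta,Z'}$ need not coincide with those of $P_{\alpha,Z}$, so I would instead evaluate the inequality $b(z_0)\ge 0$ at the horizontal coordinate corresponding to \emph{every} relevant $z_0$ — equivalently, I compare the two concave functions $x\mapsto P_{\alpha,Z}(x)$ and $x\mapsto P_{\beta,Z'}(x)$ at the finitely many $x$ of the form (first coordinate of $P^{z_0}$) for admissible $z_0$, which is a cofinite subset of the integers $\{0,1,\dots,d\}$, and then invoke the concavity/continuity argument to fill in the missing points and the non-integer points.

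The main obstacle I anticipate is the bookkeeping at the endpoints and at the orbits $\gamma_\pm$, where the nice perturbation does not vanish, so Lemma~\ref{lem:positivity} and Lemma~\ref{lem:slice} do not directly apply at the corresponding $z_0$; one must check that the comparison still holds at $x=0$ (both paths start on the $y$--axis, with heights $y_\alpha$ and $y_\beta$ — so one needs $y_\alpha\ge y_\beta$, which should come from taking $z_0$ just above $-1$ where $[C_{z_0}]=(0,y_\alpha-y_\beta)$ and the slice is, say, possibly empty — if empty one gets equality and must argue separately, perhaps using the $\gamma_-$-multiplicities and that $C$ exists only if the degrees agree, together with the precise orientation convention) and at $x=d$ near $\gamma_+$. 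A secondary subtlety is confirming that the slice class really is vertical, i.e. that the first coordinates of $P^{z_0}_{\alpha,Z}$ and $P^{z_0}_{\beta,Z'}$ agree for all admissible $z_0$; this is where I would have to use most carefully the hypothesis that the nice perturbation vanishes near $z_0$, so that no Reeb orbits of $\alpha$ or $\beta$ sit near $z_0$ and the ``amount of degree below $z_0$'' is unambiguous and equal for both. Once these endpoint and verticality points are nailed down, the rest is the elementary concavity argument sketched above.
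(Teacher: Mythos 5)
Your argument rests on the claim that the slice class $[C_{z_0}]$ is a \emph{vertical} vector, i.e.\ that the first coordinates of $P^{z_0}_{\alpha,Z}$ and $P^{z_0}_{\beta,Z'}$ agree for every admissible $z_0$.  This is false, and the ``matching total degree'' reasoning you offer for it is a non-sequitur.  Equal total degree $d$ only forces the first coordinates to agree at the two extremes $z_0 \to \pm 1$; in between, the ``amount of degree below $z_0$'' depends on how the orbit sets $\alpha$ and $\beta$ distribute their orbits across $z\in(-1,1)$, and there is no reason for these to match.  A concrete counterexample: take $\alpha = \{(\gamma_-,1),(\gamma_+,1)\}$ and $\beta = \{(e_{1/2},1)\}$ (an elliptic orbit of period $2$), both of degree $2$.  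For $z_0$ slightly above $-1$, the first coordinate of $P^{z_0}_{\alpha,Z}$ is $1$ while that of $P^{z_0}_{\beta,Z'}$ is $0$, so $[C_{z_0}]$ has nonzero horizontal component.  Indeed the paper's own proof explicitly computes $[C_{z_0}] = (m_\alpha - m_\beta,\, y_\alpha - y_\beta)$ near $z_0 = -1$, with $m_\alpha \neq m_\beta$ allowed.  Once verticality fails, the cross-product inequality $(1,h'(z_0))\times[C_{z_0}]\geq 0$ no longer reduces to a pointwise height comparison $b(z_0)\geq 0$, and the concavity argument you sketch to ``fill in'' between breakpoints has nothing to fill in \emph{from}.

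For comparison, the paper handles the non-vertical slice class in two stages.  At the two extremes it exploits that the cross product $(y_\alpha - y_\beta) - h'(z_0)(m_\alpha-m_\beta)$ is, by the conventions on $\mathcal{D}$ (namely $h'(-1)$ tiny and $\lceil h'(1)\rceil - h'(1)$ tiny), within a fraction of an integer of $y_\alpha-y_\beta$ (resp.\ $w_\alpha - w_\beta$), and integrality then forces $y_\alpha \geq y_\beta$ and $w_\alpha \geq w_\beta$.  With both endpoints controlled, if $P_{\beta,Z'}$ ever rose above $P_{\alpha,Z}$, there would be two intersection points $(a,b)$, $(c,d)$ with $P_{\beta,Z'}$ strictly above in between; choosing $z_0$ so that $h'(z_0)$ equals the slope $\frac{d-b}{c-a}$ of the chord, the cross product at $z_0+\varepsilon$ is computed to be \emph{strictly negative}, contradicting Lemma~\ref{lem:positivity}.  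This slope-matching step is precisely what replaces your verticality claim: rather than forcing the first coordinate of $[C_{z_0}]$ to vanish, it chooses $z_0$ so that the pairing with $(1,h'(z_0))$ detects the geometry of the crossing directly, with no hypothesis on the horizontal component of the slice class.
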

\begin{proof}
  Let $(0, y_\alpha)$ and $(0, y_\beta)$ denote the starting points of $P_{\alpha,Z}$ and $P_{\beta,Z'}$, respectively.  We will first show that $y_\alpha \geq y_\beta$.   Denote by $m_\alpha, m_\beta$ the multiplicities  of $\gamma_-$ in $\alpha$ and $\beta$,  respectively.  Let $z_0 = -1 + \varepsilon$ for some small $\varepsilon > 0$.   We have $P^{z_0}_{\alpha,Z} = (m_\alpha, y_\alpha)$ and $P^{z_0}_{\beta,Z'} = (m_\beta, y_\beta) $.  Hence, by Lemma \ref{lem:slice},   $[C_{z_0}] = (m_\alpha - m_\beta, y_\alpha - y_\beta)$.   Applying Lemma \ref{lem:positivity}, we obtain
$$(1, h'(z_0)) \times [C_{z_0}] = (y_\alpha - y_\beta) -h'(z_0) (m_\alpha - m_\beta) \geq 0.$$
By our conventions from  Section \ref{sec:ham}, $ h'(z_0) \approx 0$ and thus $(1, h'(z_0)) \times [C_{z_0}] \approx y_\alpha - y_\beta$.  Since  $y_\alpha - y_\beta$ is integer valued, the above inequality yields $y_\alpha \geq y_\beta$.

Next, let $(d, w_\alpha)$ and $(d, w_\beta)$ denote the  endpoints of $P_{\alpha,Z}$ and  $P_{\beta,Z'}$, respectively.  We will now show that $w_\alpha \geq w_\beta$. Denote by $n_\alpha, n_\beta$ the multiplicities  of $\gamma_+$ in $\alpha$ and $\beta$,  respectively.  Let $z_0 = 1  - \varepsilon$ for some small $\varepsilon > 0$.   We have $P^{z_0}_{\alpha,Z} = (d, w_\alpha) - n_\alpha \left(1, \lceil h'(1) \rceil \right)$ and $P^{z_0}_{\beta,Z'} = (d, w_\beta) - n_\beta \left(1, \lceil h'(1) \rceil \right)$.  Hence,   by Lemma \ref{lem:slice},   $$[C_{z_0}] = (n_\beta - n_\alpha, w_\alpha - w_\beta + (n_\beta -n_\alpha) \lceil h'(1) \rceil ).$$ 
Now, applying Lemma \ref{lem:positivity}, we obtain
$$(1, h'(z_0)) \times [C_{z_0}] = w_\alpha - w_\beta + (n_\beta - n_\alpha) \left( \lceil h'(1) \rceil  - h'(z_0) \right) \geq 0.$$
By our conventions from  Section \ref{sec:ham}, $ \lceil h'(1) \rceil  - h'(z_0) \approx 0$ and thus $(1, h'(z_0)) \times [C_{z_0}] \approx w_\alpha - w_\beta$.  Since $w_\alpha - w_\beta$ is integer valued, the above inequality yields $w_\alpha \geq w_\beta$.

  To complete the proof, suppose that the conclusion of the lemma does not hold.  We have shown that $P_{\beta,Z'}$ cannot begin or end above $P_{\alpha,Z}$. Hence, we can find two intersection points $(a,b )$ and $(c,d)$, with $a< c$, between the two paths, such that the path $P_{\beta,Z'}$ is strictly above $P_{\alpha,Z}$ in the strip $\{(x,y)\in\R^2: a<x<c\}$.  Let $U, L$ denote the parts of $P_{\beta,Z'}, P_{\alpha,Z}$, respectively, which are contained in $\{(x,y) \in \R^2: a \leq x \leq c \}$. Consider the line connecting $(a,b)$ and $(c,d)$. We can find a point $z_0$, with $-1 < z_0 < 1$, such that $h'(z_0) = \frac{d-b}{c-a}$. 
  We will compute the slice class $[C_{z_0+ \varepsilon}]$, for sufficiently small $\varepsilon >0$ and will show that

 $$(1,h'(z_0 + \varepsilon)) \times [C_{z_0 + \varepsilon}] < 0,$$ 
which contradicts Lemma \ref{lem:positivity}.  The reason for considering $[C_{z_0+ \varepsilon}]$ instead of $[C_{z_0}]$ itself is that there might be Reeb orbits in $\alpha, \beta$ corresponding to $z_0$ in which case we cannot apply Lemmas  \ref{lem:positivity} \& \ref{lem:slice}. %

To compute the slice class $[C_{z_0 + \varepsilon}]$, we will compute $P^{z_0 + \varepsilon}_{\alpha, Z}, P^{z_0 + \varepsilon}_{\beta, Z'}$ and use Lemma \ref{lem:slice}.  We  begin with $P^{z_0 + \varepsilon}_{\alpha, Z}$. Let $(p,q)$ be the corner of $P_{\alpha, Z}$, on $L$,  with the following property: the edge in $P_{\alpha, Z}$ to the left of  $(p,q)$ has slope  at most $\frac{d-b}{c-a}$, and the edge to the right of $(p,q)$ has slope strictly larger than $\frac{d-b}{c-a}$.   The corner $(p,q)$ exists because $L$ is strictly below the line passing through $(a,b)$ and $(c,d)$.  Then, $P^{z_0 + \varepsilon}_{\alpha, Z} = (p,q)$.  Now, denote $P^{z_0 + \varepsilon}_{\beta, Z'}= (p', q')$; this vector may be computed as follows:  If the line passing through $(a,b)$ and $(c,d)$ is strictly above $U$, then $(p', q')$ is computed exactly as above.  If not, the line passing through $(a,b)$ and $(c,d)$ must coincide with $U$; then $(p', q')$ is the endpoint of the edge in  $P_{\beta, Z'}$ containing $U$.

We obtain $$[C_{z_0 + \varepsilon}]= (p-p', q-q') $$
with $(p,q)$ and $(p', q')$ as described in the previous paragraph.  Now, we have $$(1,h'(z_0 + \varepsilon)) \times [C_{z_0 + \varepsilon}] = \left(1, \tfrac{d-b}{c-a}\right) \times (p-p', q-q').$$  This quantity is negative because $L$ is strictly below $U$.  Indeed, one can see this by applying a rotation,  which does not change the determinant, so that $(1 , \frac{d-b}{c-a})$ is rotated to a positive multiple of $(1,0)$, and $(p-p',q-q')$ is rotated to a vector with a negative second component.  Hence,        $(1,h'(z_0 + \varepsilon)) \times [C_{z_0 + \varepsilon}] < 0$  which contradicts Lemma \ref{lem:positivity}.
\end{proof}

\subsection{Curves correspond to corner rounding}\label{sec:curves-corner-rounding}
Using the results we have obtained thus far, we can now describe the configurations of concave paths which could give rise to a non-trivial term in the PFH differential.  More precisely, we can now prove the ``only if'' part of Proposition~\ref{prop:model}, which we state as a lemma below:

\begin{lemma}
\label{lem:nice}
 Let $\varphi_0$ be a nice perturbation of $\varphi^1_H$, where $H \in \mathcal{D}$.  Assume that $I(P_{\alpha,Z}) - I(P_{\beta,Z'}) = 1$. Then, for generic admissible $J$ close to $J_{std}$,
\[ \langle \partial(\alpha,Z), (\beta,Z') \rangle \ne 0 \]
only if $P_{\alpha,Z}$ is obtained from $P_{\beta,Z'}$ by rounding a corner and locally losing one $h$.
\end{lemma}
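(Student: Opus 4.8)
The strategy is to take a putative ECH index $1$ $J$-holomorphic current $C$ from $(\alpha, Z)$ to $(\beta, Z')$ contributing to the differential, and to extract from it a sequence of combinatorial moves on the concave lattice paths, showing that in fact only one move of the allowed type (``rounding a corner and locally losing one $h$'') can occur. By Proposition~\ref{prop:structure_Jcurves}, such a $C$ consists of a single embedded component $C'$ with $I(C') = 1$, together with a union of trivial cylinders disjoint from $C'$; the trivial cylinders contribute nothing to the slice-class bookkeeping, so we may focus on $C'$. By the index inequality \eqref{eqn:indexinequality}, $\text{ind}(C') \leq I(C') = 1$, and since $C'$ is not a union of trivial cylinders, Lemma~\ref{lem:fredholm_index} (applied to each irreducible piece) forces $\text{ind}(C') = 1$, which in turn pins down the topology: $C'$ has genus $0$, exactly one end at a hyperbolic orbit, all negative ends at elliptic orbits except possibly that one hyperbolic end, and the quantity $v = (y_\alpha - y_\beta) + (w_\alpha - w_\beta)$ is forced to be $0$. (Here the ruling out of Double Rounding from Section~\ref{sec:ham}, item 4, is what prevents degenerate configurations.)

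\textbf{Key steps, in order.} First I would invoke the structure theorem (Proposition~\ref{prop:structure_Jcurves}) to reduce to the single embedded index $1$ curve $C'$, and use Lemma~\ref{lem:fredholm_index} together with the index inequality to deduce $g = 0$, $h = 1$, $e_- $ equal to the number of negative ends minus one, and $v = 0$. Second, I would use Lemma~\ref{lem:above}: the existence of $C$ forces $P_{\beta, Z'}$ to lie never above $P_{\alpha, Z}$. Combined with $v = 0$ (so the two paths share both endpoints $(0, y)$ and $(d, w)$, since $y_\alpha \geq y_\beta$ and $w_\alpha \geq w_\beta$ with integer equalities forcing $y_\alpha = y_\beta$, $w_\alpha = w_\beta$), this says $P_{\alpha, Z}$ lies weakly above $P_{\beta, Z'}$ with matching endpoints. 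Third, I would feed this back into the index computation \eqref{eqn:index_path_labelled}: we have $I(P_{\alpha,Z}) - I(P_{\beta,Z'}) = 2(j(P_{\alpha,Z}) - j(P_{\beta,Z'})) + (h_{\alpha} - h_{\beta}) = 1$, where $h_\bullet$ counts the $h$-labeled edges; since the two paths have the same endpoints and $P_{\alpha,Z}$ is above $P_{\beta,Z'}$, the lattice-point difference $j(P_{\alpha,Z}) - j(P_{\beta,Z'})$ equals the number of lattice points strictly between the two paths, which is $\geq 0$. Parity forces $h_\alpha - h_\beta$ odd; and since $C'$ has exactly one hyperbolic end, a careful accounting of which orbits appear in $\alpha$ versus $\beta$ (using the slice formula Lemma~\ref{lem:slice} to track how edges are created/destroyed as $z_0$ sweeps) shows $h_\alpha = h_\beta - 1$, hence $j(P_{\alpha,Z}) - j(P_{\beta,Z'}) = 1$, i.e.\ exactly one lattice point lies strictly between the two paths. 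Finally, the geometry of ``one concave path weakly above another, sharing endpoints, with exactly one interior lattice point between them, and with the $h$-count dropping by exactly one'' is precisely a single corner rounding that locally loses one $h$: I would verify this last purely combinatorial implication by a short case analysis on the shape of the region between $P_{\alpha,Z}$ and $P_{\beta,Z'}$ (it must be a minimal ``lune'' around a single corner), matching the definition in Section~\ref{sec:comb}.

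\textbf{Main obstacle.} The delicate point is the bookkeeping connecting the single hyperbolic end of $C'$, the edge labels $e$ versus $h$ on the two paths, and the multiplicities of orbits in $\alpha$ and $\beta$ — in other words, showing that $h_\alpha = h_\beta - 1$ and that the sole corner being rounded is exactly the one with a positive $h$-count. This is where one must combine: (i) the slice-class computation of Lemma~\ref{lem:slice}, which identifies $[C_{z_0}]$ with the difference $P^{z_0}_{\alpha,Z} - P^{z_0}_{\beta,Z'}$ of partial paths; (ii) Lemma~\ref{lem:positivity}, which forces each slice class to have nonnegative cross product with $(1, h'(z_0))$; and (iii) the asymptotic analysis near the hyperbolic orbit, where the $h$-labeled edge of $P_{\beta,Z'}$ disappears. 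Getting the ends-to-labels dictionary exactly right — including checking that no new $h$-labeled edge can be created in $P_{\alpha,Z}$ beyond what the corner rounding prescribes, which is where the ``no Double Rounding'' hypothesis is essential — is the technical heart of the argument. I expect this to occupy the bulk of the remaining subsections, with the final combinatorial ``lune'' identification being comparatively routine.
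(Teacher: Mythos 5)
Your overall strategy — combine the structure theorem, the Fredholm index formula, Lemma~\ref{lem:above}, and the combinatorial index formula to force the region between the two paths to be a single corner-rounding — is the right kind of argument, and is in the same spirit as the paper's proof. However, there are two concrete gaps in the first step that propagate through the rest.

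The claim that $\text{ind}(C') = 1$ forces $v = 0$ and exactly one hyperbolic end is false. With $g = 0$ and all of $e_-,h,v\geq 0$, the constraint $2e_- + h + 2v = 3$ admits three solutions: $(e_-,h,v) = (1,1,0)$ (interior corner rounding), $(e_-,h,v) = (0,1,1)$ (rounding a corner at one of the two endpoints of $P_{\beta,Z'}$, so that exactly one of $y_\alpha > y_\beta$, $w_\alpha > w_\beta$ holds), and $(e_-,h,v) = (0,3,0)$ (the double-rounding configuration). Your argument assumes only the first case, so it misses the legitimate endpoint-rounding case entirely: when $v=1$ the two paths do \emph{not} share both endpoints, and the paper's definition of corner rounding explicitly allows the rounded corner to be the initial or final vertex of $P_{\beta,Z'}$. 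Your subsequent deductions ``matching endpoints'' and ``$h_\alpha = h_\beta - 1$'' do not follow in this case. Symmetrically, the $(0,3,0)$ case has $h_\alpha - h_\beta = -3$ (odd, so it passes your parity check); you need the no-Double-Rounding choice of $J$ precisely here to kill it, but your text treats this as a generic remark about ``degenerate configurations'' rather than placing it at the exact branch of the case analysis where it is used.

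For comparison, the paper proceeds differently: after ruling out double rounding by choice of $J$, it decomposes the region between $P_{\alpha,Z}$ and $P_{\beta,Z'}$ into trivial and non-trivial subregions, uses Lemma~\ref{lem:fredholm_index} to show there is exactly one non-trivial subregion and no trivial ones, and then does a clean case analysis on $r_\beta$ (the number of edges of $P_{\beta,Z'}$), via the inequality $I(\alpha,Z) - I(\beta,Z') \geq r_\beta - 2$, rather than on $(e_-,h,v)$. The cases $r_\beta = 1,2,3$ correspond respectively to endpoint rounding, interior rounding, and double rounding; the first two are genuine corner roundings losing one $h$, and the last is discarded. The advantage of the paper's framing is that it never needs to assume the paths share both endpoints. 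You should also note that there is a preliminary step, which you gloss over: when $P_{\alpha,Z}$ and $P_{\beta,Z'}$ coincide as unlabeled paths, the embedded curve must be a local cylinder (Remark~\ref{rem:locality}), and these cancel in pairs mod $2$ as in \cite[Lemma 3.14]{Hutchings-Sullivan-Dehntwist}; without this, your reduction to the case ``at least one non-trivial region'' is not complete.
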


\begin{proof}
 Assume that \[ \langle \partial(\alpha,Z), (\beta,Z') \rangle \ne 0\] for some generically chosen $J$ and some generators $(\alpha, Z)$ and $(\beta, Z')$.  We first choose $J$ generically to rule out double rounding, which we can do by the argument in \cite[Lemma A.1]{Hutchings-Sullivan-Dehntwist}.\footnote{In this argument, other than notational changes, we need to make one minor modification: the $2$-form $dt\wedge dy-ds\wedge dx$ in the proof of Lemma A.2  must be replaced with the $2$-form $dt\wedge d\theta - f(z) ds\wedge dz$, for a function $f$ determined by $h$.  (We could give an explicit formula for $f$, but it is not necessary for what we write here.)  The reason we need to add the function $f$ is because the almost complex structure $J_{std}$ does not map $\partial_z$ to $\partial_{\theta}$: in contrast, the almost complex structure $J_0$ from \cite{Hutchings-Sullivan-Dehntwist}, Lemma A.1 maps $\partial_x$ to $\partial_y$.  However, the rest of the argument there can be repeated essentially verbatim, since $\int_C f(z) ds dz = \int_C d (s f(z) dz)$.}
By Lemma \ref{lem:above}, we know that $P_{\beta,Z'}$ is never  above $P_{\alpha,Z}$.
Consider the region between $P_{\alpha,Z}$ and $P_{\beta,Z'}$.  We can take this region and decompose it into two kinds of subregions:  {\bf non-trivial} subregions where $P_{\alpha,Z}$ is {\bf above} $P_{\beta,Z'}$ --- meaning that the parts of $P_{\alpha,Z}$ and $P_{\beta,Z'}$ intersect at most at two points in these regions; and, {\bf trivial} subregions where the concave paths (without the labels) coincide. See Figure \ref{fig:rounding-corner3}.

 \begin{figure}[h!]
 \centering 
 \def\svgwidth{1.0\textwidth} 
\begingroup%
  \makeatletter%
  \providecommand\color[2][]{%
    \errmessage{(Inkscape) Color is used for the text in Inkscape, but the package 'color.sty' is not loaded}%
    \renewcommand\color[2][]{}%
  }%
  \providecommand\transparent[1]{%
    \errmessage{(Inkscape) Transparency is used (non-zero) for the text in Inkscape, but the package 'transparent.sty' is not loaded}%
    \renewcommand\transparent[1]{}%
  }%
  \providecommand\rotatebox[2]{#2}%
  \newcommand*\fsize{\dimexpr\f@size pt\relax}%
  \newcommand*\lineheight[1]{\fontsize{\fsize}{#1\fsize}\selectfont}%
  \ifx\svgwidth\undefined%
    \setlength{\unitlength}{263.77122506bp}%
    \ifx\svgscale\undefined%
      \relax%
    \else%
      \setlength{\unitlength}{\unitlength * \real{\svgscale}}%
    \fi%
  \else%
    \setlength{\unitlength}{\svgwidth}%
  \fi%
  \global\let\svgwidth\undefined%
  \global\let\svgscale\undefined%
  \makeatother%
  \begin{picture}(1,0.60692724)%
    \lineheight{1}%
    \setlength\tabcolsep{0pt}%
    \put(0,0){\includegraphics[width=\unitlength,page=1]{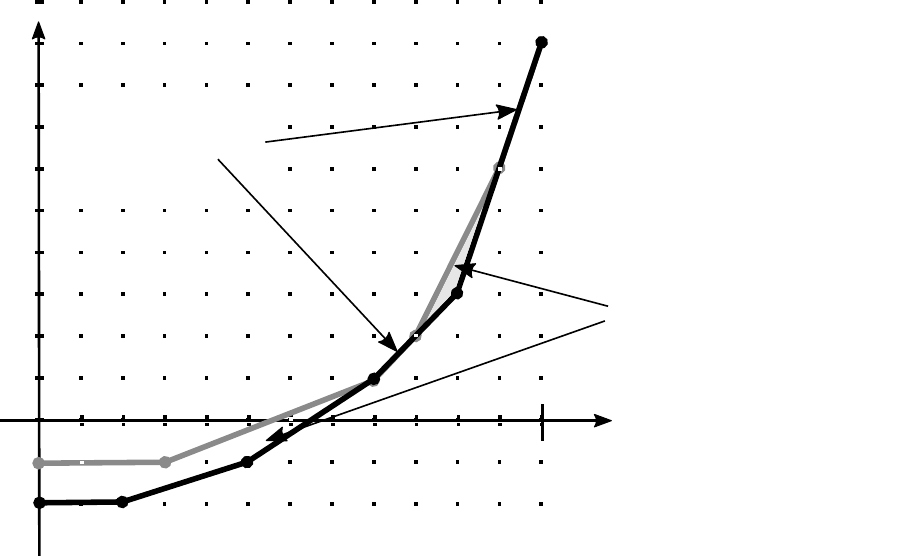}}%
    \put(0.67100986,0.253968){\color[rgb]{0,0,0}\makebox(0,0)[lt]{\lineheight{1.25}\smash{\begin{tabular}[t]{l}non-trivial regions\end{tabular}}}}%
    \put(0.07493909,0.44247374){\color[rgb]{0,0,0}\makebox(0,0)[lt]{\lineheight{1.25}\smash{\begin{tabular}[t]{l}trivial regions\end{tabular}}}}%
    \put(0,0){\includegraphics[width=\unitlength,page=2]{rounding-corner3.pdf}}%
    \put(2.88377436,-3.21655905){\color[rgb]{0,0,0}\makebox(0,0)[lt]{\begin{minipage}{2.67050492\unitlength}\raggedright \end{minipage}}}%
  \end{picture}%
\endgroup%

 \caption{Examples of trivial and non-trivial regions. The path $P_{\beta, Z'}$ is in black, the path $P_{\alpha, Z}$ is in grey were it does not coincide with $P_{\beta, Z'}$.}\label{fig:rounding-corner3}
\end{figure}

We will first show that there is at least one non-trivial region.  Arguing by contradiction, assume there is no non-trivial region, hence that $P_{\alpha,Z}$ and $P_{\beta,Z'}$ coincide as {\em unlabeled concave paths.} Let $C$ be the unique embedded component of a given $J$-holomorphic curve from  $(\alpha,Z)$ to $(\beta,Z')$; see Proposition \ref{prop:structure_Jcurves}.  We claim that $C$ must be {\em local} in the following sense:  It is a $J$--holomorphic cylinder from the hyperbolic orbit, near some $z= z_{p,q}$, that arises after the good perturbation, to the elliptic orbit near  $z = z_{p,q}$; furthermore,  it does not leave the neighborhood of $z_{p,q}$ where our good perturbation is non-trivial.    Indeed, if $C$ were not local, then we could find $z_0 \in (-1, 1)$ such that both of Lemmas \ref{lem:positivity} \& \ref{lem:slice} would be applicable at $z_0$.  Now, Lemma \ref{lem:positivity} would imply $[C_{z_0}] \neq 0$, while Lemma \ref{lem:slice} would imply $[C_{z_0}] = 0$ because the two (unlabeled) concave paths coincide.  Hence, $C$ must be local.  As explained in the proof of \cite[Lemma 3.14]{Hutchings-Sullivan-Dehntwist} local curves appear in pairs and so their mod $2$ count vanishes. 
   
 We will prove that if there exists $C \in \mathcal{M}_J( (\alpha,Z), (\beta,Z'))$, then $P_{\alpha,Z}$ is obtained from $P_{\beta,Z'}$  by rounding a corner and locally losing one $h$.   First, observe that it suffices to prove this under the assumption that $C$ is irreducible.  Indeed, if $C$ is not irreducible, consider its embedded component $C'$.  Then, as a consequence of Proposition \ref{prop:structure_Jcurves}, there exists PFH generators $(\alpha_1,Z_1)$ and $(\beta_1,Z_1')$ such that $C' \in \mathcal{M}_J( (\alpha_1,Z_1), (\beta_1,Z_1'))$ and $P_{\alpha_1,Z_1}$ is obtained from $P_{\beta_1,Z_1'}$ by rounding a corner and locally losing one $h$ if and only if $P_{\alpha,Z}$ is obtained from $P_{\beta,Z'}$ via the same operation.  We will suppose for the rest of the proof that $C$ is irreducible.
 
 \begin{claim} Under the assumption that $C$ is irreducible,  the region between $P_{\alpha, Z}$ and $P_{\beta, Z'}$ contains  no trivial regions and one non-trivial region. 
 \end{claim}
 \begin{proof}[Proof of Claim]

 Lemma \ref{lem:fredholm_index} implies that if the number of non-trivial regions  between $P_{\alpha, Z}$ and $P_{\beta, Z'}$ is at least $2$, then the Fredholm index of $C$ is also at least $2$, because a non-trivial region makes a contribution of size at least $2$ to the sum $2e_- + h + 2 v$ from Equation \eqref{eqn:Fredholmindex}.  Since  $\mathrm{ind}(C) = 1$, we conclude that the number of non-trivial regions must be one.

Next, we will prove that the number of trivial regions must be zero. First, we will show that an edge in $P_{\beta, Z'}$ cannot  have lattice points in its interior.  Indeed, if such an edge existed it would make a contribution of size at least $3$ to the term $2e_- + h$ in Equation \eqref{eqn:Fredholmindex}.  This  would then force $2e_- +h + 2v$ to be at least four which cannot happen because $\mathrm{ind}(C) =1$.  Now, suppose that there exists a trivial region between $P_{\alpha, Z}$ and $P_{\beta, Z'}$.  We will treat the case where there is a trivial region to the left of the non-trivial region, leaving the remaining case, which is very similar, to the reader. There can be at most one trivial region because each trivial region makes a contribution of size at least $1$ to $2e_- +h$, and so if there were two or more such regions  $2e_- +h + 2v$ would be at least four.  Let $v_{p, q}$ the vector/edge of the trivial region.  Then, the edge in $P_{\beta, Z'}$  immediately to the right of $v_{p,q}$ corresponds to a vector $v_{p_1, q_1}$ with $\frac{p}{q} < \frac{p_1}{q_1}$; this inequality is strict because otherwise $P_{\beta, Z'}$ would have an edge with an interior lattice point.  It follows that we can find $z_0$ such that  $z_{p,q} < z_0< z_{p_1, q_1}$ and Lemma \ref{lem:slice} is applicable at $z_0$; moreover, $P^{z_0}_{\alpha, Z} - P^{z_0}_{\beta, Z'} = 0$. Hence, $[C_{z_0}] = 0$. This contradicts Remark \ref{rem:positivity_irreducible} because    $C$ is asymptotic to orbits in both of $\{z \in \S^2: z \geq z_0\}$ and  $\{z \in \S^2: z \leq z_0\}$.  We therefore conclude that the region between $P_{\alpha, Z}$ and $P_{\beta, Z'}$ consists of a single non-trivial region.
 \end{proof}
Continuing with the proof of Lemma \ref{lem:nice}, observe that by the second item of Proposition~\ref{prop:model}, we have
\[  I(\alpha,Z)- I(\beta,Z') =  2j + h_{\alpha} - h_{\beta}, \]
where $j$  is the number of lattice points in the region between $P_{\alpha,Z}$ and $P_{\beta,Z'}$, not including lattice points on $P_{\alpha,Z}$, and $h_\alpha, h_\beta$ denote the number of edges labeled $h$ in $P_{\alpha, Z}, P_{\beta, Z'}$, respectively.   The number of edges in $P_{\beta, Z'}$, which we denote by $r_\beta$, satisfies the following inequality:  
$ h_\beta \leq r_\beta  \leq j + 1$ . Hence, we have \[ I(\alpha,Z)- I(\beta,Z') \geq 2 (r_{\beta}-1) - r_{\beta} = r_{\beta} - 2,\]
with equality if and only if $P_{\alpha,Z}$ and $P_{\beta,Z'}$ start at the same point, end at the same point, 
the region between $P_{\alpha,Z}$ and $P_{\beta,Z'}$ contains no interior lattice points, every edge of $P_{\beta,Z'}$ is labelled $h$, and no edge of $P_{\alpha,Z}$ is labeled $h$.  Since $I(\alpha,Z)- I(\beta,Z') =1$, we can rewrite the above inequality as $r_{\beta} \leq 3$.

If $r_\beta = 1$, then the equality $1 =   2j + h_{\alpha} - h_{\beta}$ can hold if and only if $j=1,  h_{\alpha} = 0, h_\beta =1$.  This implies that there are no interior lattice points in the region between $P_{\alpha, Z}$ and $P_{\beta, Z'}$; moreover,  the two paths either begin at the same lattice point or end at the same lattice point.  We see that in both cases $P_{\alpha, Z}$ is obtained from $P_{\beta, Z'}$ by rounding a corner and locally losing one $h$.  Note that the corner rounding takes place at the extremity of $P_{\beta, Z'}$ which is not on $P_{\alpha, Z}$.  

If $r_{\beta} = 2$, and if the region between $P_{\alpha,Z}$ and $P_{\beta,Z'}$ has at least one interior lattice point, then $j \geq 2$, so as $h_{\beta} \leq 2$, the index difference must be at least $2$, which can not happen.  Thus, the region between $P_{\alpha,Z}$ and $P_{\beta,Z'}$ must have no interior lattice points, and $h_\alpha = h_\beta - 1$.  Thus, in this case $P_{\alpha,Z}$ must be obtained from $P_{\beta,Z'}$ by rounding a corner and locally losing one $h$.  

If $r_{\beta} = 3$, then equality holds in the above inequality, 
so every edge of $P_{\beta,Z'}$ must be labelled $h$, no edge of $P_{\alpha,Z}$ can be, $P_{\alpha,Z}$ and $P_{\beta,Z'}$ must start and end at the same points 
and there are no interior lattice points between $P_{\alpha,Z}$ and $P_{\beta,Z'}$; thus $P_{\alpha,Z}$ must be obtained from $P_{\beta,Z'}$ by double rounding, which can not occur given our choice of $J$ from the beginning of the proof.  %
\end{proof}

\subsection{Corner rounding corresponds to curves} \label{sec:corner_rounding_curves}
In the previous section we  proved that if $\langle \partial (\alpha, Z), (\beta, Z')\rangle \neq 0$, then $P_{\alpha, Z}$ is obtained from $P_{\beta, Z'}$ by rounding a corner and locally losing one $h$. To complete the proof of Proposition \ref{prop:model}, we must show the converse:

\begin{lemma}
\label{lem:converse}
 Let $\varphi_0$ be a nice perturbation of $\varphi^1_H$, where $H \in \mathcal{D}$.  If $P_{\alpha,Z}$ is obtained from $P_{\beta,Z'}$ by rounding a corner and locally losing one $h$, then $\langle \partial (\alpha, Z), (\beta, Z')\rangle \neq 0$.  In other words, counting mod $2$ we have  $$\# \mathcal{M}_{J}( (\alpha,Z) , (\beta,Z')) =1,$$ for  generic admissible $J\in  \mathcal{J}(dr,\omega_\varphi)$.  
\end{lemma}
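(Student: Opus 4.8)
The strategy is to reduce the statement to a direct gluing computation in a neighborhood of a single Morse–Bott circle, mirroring the proof of \cite[Lemma 3.14]{Hutchings-Sullivan-Dehntwist}, which handles essentially the same local situation. First I would observe that the corner being rounded lies near a single value $z = z_{p,q}$ (or near an endpoint, where $z \to \pm 1$), so by the structure theorem Proposition~\ref{prop:structure_Jcurves} the unique embedded ECH index $1$ component $C'$ of any curve counted by $\langle \partial (\alpha,Z),(\beta,Z')\rangle$ must, by Lemma~\ref{lem:positivity}, Lemma~\ref{lem:slice} and the ``paths cannot cross'' Lemma~\ref{lem:above}, have all of its non-trivial slice classes concentrated in the $z$-interval spanned by the edges adjacent to the rounded corner. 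Indeed, outside that interval the two concave paths coincide as unlabeled paths, so the slice class vanishes there, and by Remark~\ref{rem:positivity_irreducible} the curve is confined to the corresponding sub-cylinder of $\R \times Y_{\varphi_0}$. Thus the count localizes: up to the trivial-cylinder components (which do not affect the mod $2$ count), we are reduced to counting ECH index $1$ curves in a neighborhood of one (or two consecutive) Morse–Bott tori, with prescribed asymptotics dictated by the combinatorics of ``rounding a corner and locally losing one $h$''.

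Next I would carry out the local count. There are two cases according to the number $k \in \{1,2\}$ of $h$-labeled edges adjacent to the rounded corner, exactly as in \cite{Hutchings-Sullivan-Dehntwist}. When $k = 1$, the relevant curve is (a perturbation of) a branched cover of a trivial cylinder over the Morse–Bott circle, interpolating between the hyperbolic orbit $h_{p,q}$ (or a pair of orbits) on one side and an elliptic orbit $e_{p,q}$ plus multiples on the other; here one invokes the gluing analysis for Morse–Bott families — specifically the obstruction-bundle / branched-cover gluing of Hutchings–Taubes as adapted in \cite[Section 3.3 and Lemma A.1]{Hutchings-Sullivan-Dehntwist} — to see that for generic admissible $J$ close to $J_{std}$ there is exactly one such curve mod $2$. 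When $k = 2$, the two adjacent edges each carry an $h$, and the single new edge carries one $h$; this is the ``one $h$ survives'' configuration, and again the local model is a cylinder or pair of pants whose count is $1$ mod $2$ by the same gluing input, with Double Rounding already excluded by our genericity choice of $\varphi_0$ and $J$ from Section~\ref{sec:ham}. The grading bookkeeping — that the Fredholm index of the glued curve is $1$, matching $I(P_{\alpha,Z}) - I(P_{\beta,Z'}) = 1$ — follows from Lemma~\ref{lem:fredholm_index}: a single corner-rounding contributes precisely $h + 2v = 1$ to the formula $-2 + 2g + 2e_- + h + 2v$ with $g = 0$, $e_- = 0$, and $v$ accounting for the lattice-area change, which is exactly $1$ by the index computation of Proposition~\ref{prop:model}(2).

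The main obstacle I anticipate is the gluing/transversality analysis itself: one must show that the formal count (the signed/mod-$2$ number of ECH index $1$ buildings interpolating the two local orbit configurations) is actually realized by honest $J$-holomorphic curves and equals $1$ mod $2$, not merely that it is odd ``for index reasons.'' This requires knowing that the relevant moduli space is cut out transversally for generic admissible $J$ near $J_{std}$, that no lower-stratum buildings occur (compactness), and that the Morse–Bott degeneration of $\varphi^1_H$ into $\varphi_0$ is mild enough that the curve count is stable — all of which is where \cite{Hutchings-Sullivan-Dehntwist} does the real work, and where I would need the footnote-style modification (replacing the $2$-form $dt \wedge dy - ds \wedge dx$ by $dt \wedge d\theta - f(z)\, ds \wedge dz$ for a suitable $f$ determined by $h$) to port their argument, since $J_{std}$ does not send $\partial_z$ to $\partial_\theta$. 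Everything else — the localization via slice classes, the index matching, and the exclusion of trivial cylinders and double rounding — is by now routine given the machinery assembled in Sections~\ref{sec:pos}--\ref{sec:curves-corner-rounding}.
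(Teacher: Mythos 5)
Your proposal takes a genuinely different --- and, as written, not quite viable --- route from the paper's. The paper carries out \textbf{no gluing at all} in this step. After reducing (via Proposition~\ref{prop:structure_Jcurves} and Claim~\ref{cl:every_curve_irreducible}) to the case where every curve in $\mathcal{M}_J((\alpha,Z),(\beta,Z'))$ is irreducible, it proves a deformation lemma (Lemma~\ref{lem:comp}) showing the mod~$2$ count is unchanged along a path of \emph{weakly} admissible almost complex structures, and then exploits this flexibility by constructing an explicit diffeomorphism~$\psi$ that carries a sub-cylinder of $\R\times Y_{\varphi_0}$ (with one pole removed) onto a punctured contact hypersurface $\partial X_\Omega$ bounding a concave (or, in the dual case, convex) toric domain in~$\mathbb{C}^2$. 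Under~$\psi$ the weakly admissible SHS becomes contact, Reeb orbit sets and holomorphic currents correspond, and the count of index~$1$ curves is imported verbatim from Choi's combinatorial computation of the ECH differential for such toric domains~\cite{Choi}. No Morse--Bott perturbation argument, no obstruction-bundle gluing.

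The route you sketch --- mimic the Morse--Bott gluing of Hutchings--Sullivan directly --- is the classical one and is what ultimately underlies Choi's proof, but as stated it has a genuine gap. Rounding a corner of $P_{\beta,Z'}$ does \emph{not} localize the problem to ``one (or two consecutive) Morse--Bott tori.'' The new edges of $P_{\alpha,Z}$ can have many distinct slopes $p/q$, corresponding to Reeb orbits spread across an arbitrary number of Morse--Bott circles, so the irreducible curve $C$ is in general a punctured sphere with arbitrarily many positive and negative ends. The obstruction-bundle gluing carried out in \cite{Hutchings-Sullivan-Dehntwist} directly addresses only cylinders and pairs of pants; extending it to the general corner-rounding configuration requires the inductive $\partial^2 = 0$ argument, pioneered by Hutchings--Sullivan and executed by Choi, which reduces a general rounding to compositions of elementary ones with few punctures. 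Your proposal does not mention this reduction, so the gluing step is not rigorous as stated. (A minor point as well: with $g=0$, $e_-=0$, and $h+2v=1$, Equation~\eqref{eqn:Fredholmindex} gives $\text{ind}(C)=-1$, not~$1$; and in a $k=1$ rounding at least one negative end of $C$ is at an elliptic orbit, so in fact $e_-\geq 1$.)
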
 

The proof of the above lemma takes up the rest of this section.  As we will now explain, it is sufficient to prove the lemma under the assumption that every $C \in \mathcal{M}_{J}( (\alpha,Z) , (\beta,Z') ) $ is irreducible:  We can write  $P_{\alpha, Z}$ and $P_{\beta, Z'}$ as concatenations 
\begin{align*}
P_{\alpha, Z} = P_{\mathrm{in}} P_{\alpha_1, Z_1} P_{\mathrm{fin}},\\
P_{\beta, Z'} = P_{\mathrm{in}} P_{\beta_1, Z_1'} P_{\mathrm{fin}},
\end{align*}
where $P_{\mathrm{in}}$ and $P_{\mathrm{fin}}$ correspond to the (possibly empty) trivial  subregions between  $P_{\alpha, Z}$ and $P_{\beta, Z'}$, and $P_{\alpha_1, Z_1}, P_{\beta_1, Z_1'} $ correspond to the non-trivial subregion; here we are using the terminology of Section \ref{sec:curves-corner-rounding}.  The concave path  $P_{\alpha_1, Z_1}$ is obtained from $P_{\beta_1, Z_1'} $ by rounding a corner and locally losing one $h$, and the region between   $P_{\alpha_1, Z_1}$ and  $P_{\beta_1, Z_1'} $ consists of a single non-trivial subregion. 

\begin{claim}\label{cl:every_curve_irreducible}
Every current  $C_1 \in \mathcal{M}_{J}( (\alpha_1,Z_1) , (\beta_1,Z'_1) )$ is irreducible.  
\end{claim}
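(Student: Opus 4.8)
\textbf{Proof proposal for Claim \ref{cl:every_curve_irreducible}.}

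The plan is to argue by contradiction using the structure theorem for ECH index $1$ currents (Proposition \ref{prop:structure_Jcurves}), combined with the ``paths cannot cross'' and ``slice class'' machinery developed in Lemmas \ref{lem:above}, \ref{lem:positivity}, and \ref{lem:slice}. Suppose $C_1 \in \mathcal{M}_{J}((\alpha_1,Z_1),(\beta_1,Z_1'))$ is \emph{not} irreducible. Since $I(\alpha_1,Z_1)-I(\beta_1,Z_1')=1$, by Proposition \ref{prop:structure_Jcurves} the current $C_1$ decomposes as $C_1 = C_1' \cup T$, where $C_1'$ is the unique embedded component with $I(C_1')=1$ and $T$ is a nonempty union of multiple covers of $\mathbb{R}$-invariant (trivial) cylinders disjoint from $C_1'$. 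Each trivial cylinder is asymptotic, at both ends, to the same embedded Reeb orbit; in our setting these are the orbits $\gamma_\pm$, $e_{p,q}$, or $h_{p,q}$. The key point is that the presence of such a trivial-cylinder factor forces the underlying unlabeled concave paths $P_{\alpha_1,Z_1}$ and $P_{\beta_1,Z_1'}$ to share a common edge (the edge corresponding to the orbit underlying the trivial cylinder, with the multiplicity absorbed into $T$), which produces a genuine trivial subregion between the two paths. But by construction of the decomposition $P_{\alpha,Z}=P_{\mathrm{in}}P_{\alpha_1,Z_1}P_{\mathrm{fin}}$, the region between $P_{\alpha_1,Z_1}$ and $P_{\beta_1,Z_1'}$ consists of a \emph{single non-trivial} subregion and contains no trivial subregion; this is the desired contradiction.

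Concretely, the steps I would carry out are: (1) Record that, by Proposition \ref{prop:structure_Jcurves} applied with our $d$-nondegenerate $\varphi_0$ and generic admissible $J$ close to $J_{std}$, any reducible $C_1$ has the form $C_1' \sqcup T$ with $T \ne \emptyset$ a disjoint union of covers of trivial cylinders. (2) Argue that the PFH generators $(\alpha_1',Z_1')$ and $(\beta_1',Z_1'')$ to which $C_1'$ is asymptotic are obtained from $(\alpha_1,Z_1)$ and $(\beta_1,Z_1')$ by removing, at each end, the orbit-multiplicities carried by $T$; since these removed orbits are identical on the two ends (a trivial cylinder has matching positive and negative asymptotics), the edge-contributions they make to $P_{\alpha_1,Z_1}$ and $P_{\beta_1,Z_1'}$ are the same unlabeled vectors. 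Using the additivity of the assignment $(\alpha,Z)\mapsto P_{\alpha,Z}$ under concatenation of orbit data (built into the construction in Section \ref{sec:comb}), this means $P_{\alpha_1,Z_1}$ and $P_{\beta_1,Z_1'}$ literally agree along the portion of the path corresponding to the orbits underlying $T$ — i.e. they share at least one common edge. (3) Conclude that the region between $P_{\alpha_1,Z_1}$ and $P_{\beta_1,Z_1'}$ has a trivial subregion along that shared edge, contradicting the defining property of the non-trivial subregion established when we split off $P_{\mathrm{in}}$ and $P_{\mathrm{fin}}$. One subtlety to handle in step (2): the labels. A trivial cylinder over a hyperbolic orbit $h_{p,q}$ would be counted with multiplicity one only, and the cover structure must be compatible with the constraint $m_i=1$ for hyperbolic orbits; but since the edge in question is shared and unchanged between the two paths, the label bookkeeping does not affect the conclusion that a trivial subregion is present.

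The main obstacle I expect is step (2): being careful that ``removing the trivial-cylinder factor $T$'' really does produce a shared \emph{unlabeled} edge of the two lattice paths, rather than merely a shared lattice point. The issue is that a cover of a trivial cylinder over the elliptic orbit $e_{p,q}$ contributes an edge of slope $p/q$ with some multiplicity $m$ to \emph{both} $P_{\alpha_1,Z_1}$ and $P_{\beta_1,Z_1'}$, and I must verify this multiplicity is the same on both paths — which follows because $T$ itself is $\mathbb{R}$-invariant, hence has equal positive and negative asymptotics with equal multiplicities, and because $[C_1]=Z_1-Z_1'$ together with $[C_1']=[C_1]-[T]$ fixes the relative homology class of $C_1'$. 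Once this matching is pinned down, the contradiction with the ``single non-trivial subregion'' hypothesis is immediate. An alternative, perhaps cleaner, route for step (2) avoiding explicit cover bookkeeping: apply Lemma \ref{lem:slice} to $C_1'$ directly. If $C_1'$ were local (in the sense of Remark \ref{rem:locality}) then, after stripping it off too, $C_1$ would be entirely built from local and trivial pieces, forcing $P_{\alpha_1,Z_1}=P_{\beta_1,Z_1'}$ as unlabeled paths (no non-trivial subregion at all), a contradiction; if $C_1'$ is not local, then the slice-class formula \eqref{eqn:sliceformula} applied at a generic $z_0$ where the nice perturbation vanishes shows $P^{z_0}_{\alpha_1,Z_1}-P^{z_0}_{\beta_1,Z_1'}=[C_{1,z_0}']$, and since $C_1'$ is the \emph{only} non-$\mathbb{R}$-invariant component, $[C_{1,z_0}]=[C_{1,z_0}']$, so the entire discrepancy between the two paths is accounted for by $C_1'$ alone — but then the trivial-cylinder factor $T$ contributes to both $(\alpha_1,Z_1)$ and $(\beta_1,Z_1')$ identically, re-proving the shared-edge statement. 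I would most likely write up the proof following this second route, citing Proposition \ref{prop:structure_Jcurves}, Remark \ref{rem:locality}, and Lemma \ref{lem:slice}, since it keeps the combinatorics to a minimum.
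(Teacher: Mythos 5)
Your strategy is a genuine alternative to the paper's: you invoke Proposition \ref{prop:structure_Jcurves} to decompose $C_1 = C_1' \sqcup T$ with $T$ a union of covers of trivial cylinders, and then try to derive a contradiction from $T \neq \emptyset$. This is a reasonable route, but the pivotal step — the assertion that ``$P_{\alpha_1,Z_1}$ and $P_{\beta_1,Z_1'}$ literally agree along the portion of the path corresponding to the orbits underlying $T$'' — does not hold as stated, and it is exactly the ``obstacle'' you flagged. Two concave lattice paths can contain the same edge vector $m(q,p)$ without the corresponding segments coinciding in the plane: the position of an edge of slope $p/q$ on a concave path is determined by the cumulative displacement of all lower-slope edges preceding it, and these differ between $P_{\alpha_1,Z_1}$ and $P_{\beta_1,Z_1'}$ (that is the whole content of the non-trivial region). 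So ``shared orbit'' does not imply ``shared edge'' does not imply ``trivial subregion,'' and the proof as written does not close. Your alternative route at the end runs into the same issue: it loops back to ``re-proving the shared-edge statement.''

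What \emph{is} true, and what would repair your argument, is that in the stripped-down non-trivial region the orbit sets $\alpha_1$ and $\beta_1$ have disjoint slopes, so $T$ cannot exist at all: $P_{\beta_1,Z_1'}$ consists of one or two unit-length edges of slopes $s_1 \le s_2$ abutting the rounded corner $v$ (the lattice points $u_1,u_2$ adjacent to $v$ are the corners of the non-trivial region, and the remainder of $e_1,e_2$ has been absorbed into $P_{\mathrm{in}},P_{\mathrm{fin}}$); the new edges of $P_{\alpha_1,Z_1}$ have slopes strictly between $s_1$ and $s_2$, since the first new edge out of $u_1$ cannot have slope $s_1$ (the next lattice point in that direction is the deleted $v$) and dually at $u_2$. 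Hence no orbit of $\tau$ can appear in $\beta_1$, so $T = \emptyset$. The paper instead argues differently and avoids this issue entirely: from the same observation that $\beta_1$ has at most two orbits, each of multiplicity one, it deduces that $C_1$ has at most two negative ends, hence at most two irreducible components (closed components being excluded via Proposition \ref{prop:structure_Jcurves}); if there were two components $D_0,D_1$, then Lemma \ref{lem:above} constrains the concave paths each $D_i$ connects, and the Fredholm index formula \eqref{eqn:Fredholmindex} together with concavity forces $\mathrm{ind}(D_i) \ge 1$, giving $\mathrm{ind}(C_1) \ge 2 > 1$, impossible for generic $J$ by \eqref{eqn:indexinequality}. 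That route sidesteps the trivial-cylinder bookkeeping altogether, which is why the paper adopts it.
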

\begin{proof} By the structure of the corner rounding operation, $C_1$ has at most two negative ends.  Thus, by degree considerations it must have at most two irreducible components (closed components can not exist for various reasons, for example because the curve has $I = 1$, see Proposition \ref{prop:structure_Jcurves}); and, if it has exactly two irreducible components, then it must have exactly two negative ends, with one component corresponding to each end; assume this for the sake of contradiction, and write the components as $D_0$ and $D_1$.  

By Lemma~\ref{lem:above}, each of the $D_i$ must correspond to a region between concave paths that do not go above $P_{\alpha_1,Z_1}$, and do not go below $P_{\beta_1,Z'_1}$.  In fact, there are no such concave lattice paths, other than $P_{\alpha_1,Z_1}$ and $P_{\beta_1,Z'_1}$ because $P_{\alpha_1,Z_1}$ is obtained from $P_{\beta_1,Z_1'}$ by rounding a corner. Therefore, the region corresponding to each $D_i$ has a lower edge corresponding to one of the two edges of $P_{\beta_1, Z'_1}$ and the upper edges of each region must be on the edges of $P_{\alpha_1, Z_1}$.  It follows from the concavity of the paths that each region must have $v \geq 1$, where $v$ is defined as in Lemma \ref{lem:fredholm_index}.  Hence, by the Fredholm index formula \eqref{eqn:Fredholmindex}, each $D_i$ must have index at least one and so $C_1$ must have index $2$ which is not possible for generic $J$, by \eqref{eqn:indexinequality}. 
\end{proof}

Next, note that as a consequence of Proposition \ref{prop:structure_Jcurves}, every curve  $C \in \mathcal{M}_{J}( (\alpha,Z) , (\beta,Z') )$ can be written as a disjoint union 
\begin{equation}\label{eqn:more_structure}
C = C_{\mathrm{in}}  \sqcup C_1 \sqcup C_{\mathrm{fin}},
\end{equation}
where $C_1$
is the irreducible component of $C$ and $C_{\mathrm{in}}, C_{\mathrm{fin}}$ are unions of covers of trivial cylinders. It follows from Lemma \ref{lem:slice} and the equality case of Lemma \ref{lem:positivity} that  $C_{\mathrm{in}}, C_{\mathrm{fin}}$ correspond to the orbits in $P_{\mathrm{in}}, P_{\mathrm{fin}}$ and that $C_1 \in \mathcal{M}_{J}( (\alpha_1,Z_1) , (\beta_1,Z'_1) ).$  
 
Combining Claim \ref{cl:every_curve_irreducible}, and Equation \eqref{eqn:more_structure} in view of the discussion in the previous paragraph, we obtain a canonical bijection  $$\mathcal{M}_{J}( (\alpha,Z) , (\beta,Z') ) \stackrel\sim\to \mathcal{M}_{J}( (\alpha_1,Z_1) , (\beta_1,Z'_1) )$$ given by removal of covers of trivial cylinders.  

We conclude from the above discussion that it is indeed sufficient to prove Lemma \ref{lem:converse} under the assumption that, for generic admissible $J$, every $C \in \mathcal{M}_{J}( (\alpha,Z) , (\beta,Z') ) $ is irreducible.  In terms of our combinatorial model, this assumption is equivalent to requiring that the region between $P_{\alpha,Z}$ and $P_{\beta, Z'}$ consists of one non-trivial subregion and no trivial subregions.  This will be our standing assumption for the rest of this section.

\subsubsection{Deformation of $J$}
\label{sec:var}

The next ingredient, which we will need to prove the Lemma \ref{lem:converse},  allows us to deform $J$ within the  class of weakly admissible almost complex structures, while keeping the count of curves the same.  The method of proof is inspired by the proofs of Lemmas 3.15 and 3.17 of \cite{Hutchings-Sullivan-Dehntwist}.

Recall our standing assumption that the region between $P_{\alpha,Z}$ and $P_{\beta, Z'}$ consists of one non-trivial subregion and no trivial subregions.

\begin{lemma}
\label{lem:comp}
Let $\varphi_0$ be a nice perturbation of $\varphi^1_H$, where $H \in \mathcal{D}$. Let $J_0$ be an admissible and $J_1$ a weakly admissible almost complex structure.
Assume that $P_{\alpha,Z}$ is obtained from $P_{\beta,Z'}$ by rounding a corner and locally losing one $h$.
Then, if $J_0$ and $J_1$ are generic,
\[ \# \mathcal{M}_{J_0}( (\alpha,Z) , (\beta,Z') ) = \#  \mathcal{M}_{J_1}( (\alpha,Z), (\beta,Z')).\] 
\end{lemma}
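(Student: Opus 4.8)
The plan is to prove Lemma~\ref{lem:comp} by a standard cobordism/continuation argument, connecting $J_0$ to $J_1$ through a path of almost complex structures and showing that the signed (here, mod $2$) count of index-$1$ curves is unchanged along the path. First I would choose a generic path $\{J_\lambda\}_{\lambda \in [0,1]}$ of weakly admissible almost complex structures on $X = \R \times \S^1 \times \S^2$ from $J_0$ to $J_1$; such a path exists and the space of weakly admissible almost complex structures is connected (indeed contractible) by the observations in Section~\ref{sec:admissible_vs_weakly}. For generic such a path, the parametrized moduli space $\mathcal{M} = \bigcup_{\lambda} \{\lambda\} \times \mathcal{M}_{J_\lambda}((\alpha,Z),(\beta,Z'))$ (of curves which are index $1$ for the corresponding $J_\lambda$) is a compact $1$-manifold with boundary, whose boundary consists precisely of $\{0\}\times\mathcal{M}_{J_0}$ and $\{1\}\times\mathcal{M}_{J_1}$, provided we can rule out the usual degenerations: (i) breaking into a broken building with an intermediate generator, and (ii) escape of the moduli space to a non-compact end (bubbling, or sliding off to a degenerate configuration). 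The Fredholm index formula of Lemma~\ref{lem:fredholm_index} is valid for weakly admissible $J$, so the index bookkeeping goes through uniformly along the path.

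The key steps, in order, are as follows. First, establish compactness: by SFT compactness for stable Hamiltonian structures (recall the SHSs $(\alpha_\lambda, \omega_\varphi)$ all induce the same orientation, and the relevant Gromov-type compactness holds in this setting, cf. Section~\ref{sec:prelim_Jcurves}), any sequence of $J_{\lambda_n}$-holomorphic currents with $\lambda_n \to \lambda$ converges to a $J_\lambda$-holomorphic building. Second, rule out breaking: a breaking would produce an intermediate generator $(\gamma, Z'')$ with a $J_\lambda$-holomorphic curve from $(\alpha,Z)$ to $(\gamma,Z'')$ and one from $(\gamma,Z'')$ to $(\beta,Z')$, each of nonnegative ECH index summing to $1$; here I would invoke Lemma~\ref{lem:above} (valid for weakly admissible $J$) to conclude that $P_{\gamma,Z''}$ lies between $P_{\beta,Z'}$ and $P_{\alpha,Z}$, and since $P_{\alpha,Z}$ is obtained from $P_{\beta,Z'}$ by a single corner rounding, the only concave lattice paths in between are $P_{\alpha,Z}$ and $P_{\beta,Z'}$ themselves — so one of the two pieces would have to be a collection of trivial cylinders, which carry index $0$ and contribute trivially; thus no genuine breaking with distinct intermediate generator occurs. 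Third, rule out multiple covers and bubbling: by Lemma~\ref{lem:positivity} and Lemma~\ref{lem:fredholm_index}, any curve appearing in the parametrized moduli space is forced (by the single non-trivial, no-trivial-region standing assumption, which lets us assume irreducibility as in Section~\ref{sec:corner_rounding_curves}) to be somewhere injective with controlled topology, and $\omega_\varphi$-energy is uniformly bounded in terms of $\mathcal{A}(\alpha,Z) - \mathcal{A}(\beta,Z')$, so no energy escapes. With these exclusions, $\mathcal{M}$ is a compact $1$-manifold and $\#\partial \mathcal{M} = 0$ mod $2$, giving $\#\mathcal{M}_{J_0} = \#\mathcal{M}_{J_1}$ mod $2$.

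I expect the main obstacle to be the exclusion of degenerations at the ends of the parametrized moduli space — in particular, making sure that no curve in the family breaks in a way that picks up a cover of a trivial cylinder or degenerates into a local curve near some $z_{p,q}$. The subtlety is that such local curves exist and appear in cancelling pairs (as used in Section~\ref{sec:curves-corner-rounding}), and one must argue that a limiting building consisting of a main component plus local pieces cannot occur generically in a $1$-parameter family, or, if it does at isolated parameters, that it does not change the count. This is exactly the kind of analysis carried out in \cite[Lemmas 3.15, 3.17]{Hutchings-Sullivan-Dehntwist}, and I would follow that template closely: use the slice-class obstruction of Lemma~\ref{lem:positivity} to pin down which intermediate configurations are homologically possible, then use the index inequality \eqref{eqn:indexinequality} together with Lemma~\ref{lem:fredholm_index} to show the surviving configurations have index too large to appear in a generic $1$-parameter family except as the expected boundary points. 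The one genuinely new wrinkle compared to \cite{Hutchings-Sullivan-Dehntwist} is the need to work with \emph{weakly} admissible $J$ (so that $J_{std}$-type almost complex structures adapted to the twist-map geometry are allowed), but since Lemmas~\ref{lem:fredholm_index}, \ref{lem:positivity}, and \ref{lem:above} have all been stated in that generality, the arguments transfer with only cosmetic changes, as anticipated in Remark~\ref{rmk:why!}.
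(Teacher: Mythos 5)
Your overall strategy is the same as the paper's: connect $J_0$ to $J_1$ through a path of weakly admissible almost complex structures, appeal to SFT compactness, and show the mod $2$ count is unchanged. But there is a concrete error in the middle of your argument, and the paper's proof shows precisely where it fails.

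In your second paragraph you assert that, after invoking Lemma~\ref{lem:above} to trap the intermediate path $P_{\gamma,Z''}$ between $P_{\beta,Z'}$ and $P_{\alpha,Z}$, ``one of the two pieces would have to be a collection of trivial cylinders, which carry index $0$ and contribute trivially; thus no genuine breaking with distinct intermediate generator occurs.'' This is not correct. The slice-class argument (Lemma~\ref{lem:positivity}, Remark~\ref{rem:locality}) only forces those other levels to be \emph{local}, not trivial: there exist genuinely nontrivial local cylinders from the hyperbolic orbit $h_{p,q}$ to the elliptic orbit $e_{p,q}$ that do not change the unlabeled concave lattice path but \emph{do} change the labels, hence the PFH generator. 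Such a cylinder has Fredholm index $1$ (as shown in the paper's proof using the formula \eqref{eqn:Fredholmindex}), so a building consisting of one such local cylinder plus the main curve $C_j$ of index $0$ plus trivial cylinders has total index $1$ and \emph{does} appear generically in a one-parameter family. You do flag this danger later in your ``main obstacle'' paragraph, so you are aware of it, but the two parts of your argument are in tension: your earlier step purports to have ruled this out, while your later remark correctly acknowledges that one must instead argue it does not change the count. The resolution (as in [Hutchings-Sullivan, Lemma 3.15], which the paper follows) is a gluing argument showing that these degenerations come in cancelling pairs, since there are exactly two such local cylinders, and so the mod $2$ count is preserved across the breaking. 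Without carrying this out, the proof has a genuine gap.

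A second, smaller omission: you do not address nodal degenerations or constant (ghost) components in the limiting building. The paper handles this carefully in the proof of its Claim (cl:no\_nodes\_no\_ghosts) via the relation $\mathrm{ind}(C) = \sum_i \mathrm{ind}(C_i) + N$, combined with the nonnegativity of the index of nonconstant irreducible components and the evenness of $N$, to conclude $N = 0$ and that there are no ghost bubbles. This step is needed to legitimately conclude that the compactified one-parameter moduli space is a manifold with the expected boundary, and it is not a consequence of the energy bound alone.
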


\begin{remark}\label{rem:J_defined_on_X1}
We will apply the above lemma in a setting where $J_0, J_1$ are only defined on $\R \times X_1$, where $X_1$ is a subset of $Y_\varphi$ with the following property:  There exists a compact subset $K \subset X_1$ such that for any weakly admissible almost complex structure $J$ on $\R \times Y_\varphi$, every $C \in \mathcal{M}_{J}( (\alpha,Z) , (\beta,Z') )$ is contained in $\R \times K$.  

Because all $J$--holomorphic curves are contained in $\R \times K$, with $K\subset X_1$ compact, the proof we give below for Lemma \ref{lem:comp} works verbatim in the setting of the previous paragraph as well.  However, for clarity of exposition we give the proof in the setting were $J_0, J_1$ are globally defined.
\end{remark}

To prove the above lemma, we will need the following claim which will be used in the proof below and the next section.
\begin{claim} \label{cl:curves_dont_go_over_both_poles}
Suppose that $P_{\alpha, Z}$ and $P_{\beta, Z'}$ are as in Lemma \ref{lem:comp} and recall the definitions of $P^{z_0}_{\alpha,Z},P^{z_0}_{\beta,Z'}$ from Lemma \ref{lem:slice}.   Let $z_{\min}$ and $z_{\max}$ be the minimum and maximum values of $z_0\in[-1, 1]$ such that $P^{z_0}_{\alpha,Z} - P^{z_0}_{\beta,Z'} \neq 0$.  Then,
either $-1 < z_{\min}$ or $z_{\max} < 1$.
\end{claim}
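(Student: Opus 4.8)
\textbf{Plan for the proof of Claim \ref{cl:curves_dont_go_over_both_poles}.}
The claim is a purely combinatorial statement about the two concave lattice paths $P_{\alpha,Z}$ and $P_{\beta,Z'}$, given that $P_{\alpha,Z}$ is obtained from $P_{\beta,Z'}$ by rounding a corner and locally losing one $h$. The key point to exploit is the structure of the corner rounding operation: it modifies $P_{\beta,Z'}$ only in a bounded ``window'' of slopes near the rounded corner. Concretely, $P_{\alpha,Z}$ and $P_{\beta,Z'}$ agree as unlabeled paths outside of a single non-trivial region (this is exactly the standing assumption we have reduced to in Section \ref{sec:corner_rounding_curves}), and in that non-trivial region the two paths share their left endpoint and their right endpoint — since rounding a single corner does not move the other corners. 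Therefore $P^{z_0}_{\alpha,Z} - P^{z_0}_{\beta,Z'} = 0$ for every $z_0$ for which all the edges of the non-trivial region arising from $z < z_0$ are entirely on one side; in particular it vanishes for $z_0$ below the smallest slope occurring in the non-trivial region and for $z_0$ above the largest such slope.

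First I would make precise the correspondence between $z_0\in(-1,1)$ and the slope cutoff in the path: an edge $v_{p,q}$ of the path ``arises from $z < z_0$'' exactly when $h'(z_{p,q}) = p/q < h'(z_0)$, using that $h'$ is strictly increasing (our conventions on $\mathcal D$ from Section \ref{sec:ham}). The edges $v_-$ and $v_+$ correspond to the poles $p_\mp$, i.e.\ to $z = \mp 1$, and carry slopes $0$ and $\lceil h'(1)\rceil$ respectively; by our conventions $h'(-1)$ is arbitrarily small and positive, and $\lceil h'(1)\rceil - h'(1)$ is arbitrarily small and positive, so $v_-$ is the lowest-slope edge and $v_+$ is the highest-slope edge of \emph{any} such concave path. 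Next I would observe that, since $P_{\alpha,Z}$ and $P_{\beta,Z'}$ coincide outside one bounded non-trivial region and share the two endpoints of that region, the difference $P^{z_0}_{\alpha,Z} - P^{z_0}_{\beta,Z'}$ is the difference of the two partial sums taken over edges of slope $< h'(z_0)$ lying strictly inside the non-trivial region; call the minimal and maximal slopes occurring among edges of \emph{either} path within the non-trivial region $s_{\min}$ and $s_{\max}$. Then $P^{z_0}_{\alpha,Z} - P^{z_0}_{\beta,Z'} = 0$ whenever $h'(z_0) \le s_{\min}$ (no edges of the region have been included) and whenever $h'(z_0) > s_{\max}$ (all edges of the region have been included, and both partial sums then equal the common right endpoint of the region minus the common left endpoint). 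Hence $z_{\min}$ satisfies $h'(z_{\min}) \ge s_{\min}$ and $z_{\max}$ satisfies $h'(z_{\max}) \le s_{\max}$.

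The final step is to rule out the degenerate possibility $z_{\min} = -1$ \emph{and} $z_{\max} = 1$ simultaneously. If $z_{\min} = -1$ then the non-trivial region reaches all the way down to the edge $v_-$, i.e.\ $s_{\min} = 0$ and the left endpoint of the non-trivial region is $(0,y)$ (on the $y$-axis); similarly $z_{\max} = 1$ would force $s_{\max} = \lceil h'(1)\rceil$ and the right endpoint of the region to be at $x = d$. But if both happen, the non-trivial region spans the entire horizontal extent of the path, which means $P_{\alpha,Z}$ is obtained from $P_{\beta,Z'}$ by rounding the \emph{only} corner of $P_{\beta,Z'}$ — and this is incompatible with the arithmetic constraint $I(\alpha,Z) - I(\beta,Z') = 1$ together with the index formula \eqref{eqn:index_path_labelled}: a rounding that eliminates all interior corners and loses exactly one $h$ changes the number of $h$-labels and the lattice point count in a way that cannot produce an index jump of exactly $1$ (this is the same accounting already carried out in the proof of Lemma \ref{lem:nice}, where the case $r_\beta = 3$ is double rounding and $r_\beta \le 2$ forces the rounded corner to be at an \emph{extremity} of $P_{\beta,Z'}$, so one endpoint of the non-trivial region is an honest interior endpoint not on both paths — contradicting that both endpoints of the region are shared). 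I expect the main obstacle to be pinning down this last exclusion cleanly: one must check carefully, using the explicit index bookkeeping from Lemma \ref{lem:nice} (the inequality $I(\alpha,Z)-I(\beta,Z')\ge r_\beta - 2$ with equality iff the two paths share both endpoints and all $h$-labels are as described), that the case where the non-trivial region has \emph{both} endpoints equal to the corresponding endpoints of the full paths simply does not arise for an index-$1$ rounding — equivalently, that in a genuine ``round one corner, lose one $h$'' move with $I$-jump $1$, the rounded corner is never both the first and the last corner, so the non-trivial region cannot be the whole path. Once that is established, $z_{\min} > -1$ or $z_{\max} < 1$ follows immediately.
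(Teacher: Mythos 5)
Your core assertion---that the two paths share both the left and the right endpoint of the non-trivial region because ``rounding a single corner does not move the other corners''---is false, and this invalidates the plan as stated. By the definition of corner rounding in Section~\ref{sec:comb}, one may round an \emph{extremal} corner (the initial or final endpoint of $P_{\beta,Z'}$), and that endpoint \emph{does} move: rounding the starting corner $(0,y_\beta)$ produces $y_\alpha>y_\beta$, rounding the ending corner produces $w_\alpha>w_\beta$. These are precisely the scenarios the paper's own proof has to handle. When $y_\alpha>y_\beta$, the non-trivial region is closed off on the left by a vertical segment and the partial sums already disagree at $z_0=-1$: one has $P^{-1}_{\alpha,Z}-P^{-1}_{\beta,Z'}=(0,y_\alpha-y_\beta)\neq 0$, so $z_{\min}=-1$ even though $s_{\min}$ may be strictly positive. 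Consequently neither of your asserted bounds $h'(z_{\min})\geq s_{\min}$ and $h'(z_{\max})\leq s_{\max}$ holds unconditionally, and the implication ``$z_{\min}=-1\Rightarrow s_{\min}=0$'' invoked in your last paragraph is false.

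The final paragraph does not rescue this. Under the standing assumption (no trivial subregions), the single non-trivial region \emph{always} spans the full horizontal extent of the paths, so ``the non-trivial region spans the entire horizontal extent'' is automatic, not an anomaly to exclude; ``rounding the only corner of $P_{\beta,Z'}$'' is not a well-posed scenario; and the equality case you cite from the bookkeeping in Lemma~\ref{lem:nice} characterizes $r_\beta=3$ (double rounding), which is already excluded and is not what needs to be ruled out here.

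What is actually needed, and is what the paper does, is a case split on which corner was rounded, using two facts your plan does not exploit: (i) rounding one corner moves at most one of the two extremal endpoints, so at most one of $y_\alpha>y_\beta$ and $w_\alpha>w_\beta$; and (ii) the $h$-labeled edge lost by the rounding cannot be $v_-$ or $v_+$, since $\gamma_\pm$ are elliptic, so it is some $v_{p,q}$ with $-1<z_{p/q}<1$. Suppose $z_{\min}=-1$. Then either $y_\alpha>y_\beta$ (the starting corner was rounded, and by no-trivial-regions $P_{\beta,Z'}$ has a single edge, which must be the lost $h$-edge $v_{p,q}$), or $y_\alpha=y_\beta$ and $m_-^\alpha\neq m_-^\beta$ (which by Lemma~\ref{lem:above} and no-trivial-regions forces $P_{\beta,Z'}$ to begin with $v_-=(1,0)$ labeled $e$ and have exactly one further edge, labeled $h$). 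In both cases $w_\alpha=w_\beta$, $m_+^\beta=0$, and the last edge of $P_{\beta,Z'}$ has slope $p/q<h'(1)<\lceil h'(1)\rceil$. Since $P_{\alpha,Z}$ lies above $P_{\beta,Z'}$ and meets it at $(d,w_\beta)$, its last edge must also have slope $\leq p/q$; hence $m_+^\alpha=0$ and $P^{z_0}_{\alpha,Z}-P^{z_0}_{\beta,Z'}=0$ for all $z_0>z_{p/q}$, giving $z_{\max}\leq z_{p/q}<1$. This slope comparison for the final edge of $P_{\alpha,Z}$ is the missing step in your sketch.
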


\begin{proof}[Proof of Claim]

Denote by $(0, y_\alpha)$ and $(0, y_\beta)$ the starting points of $P_{\alpha, Z}$ and $P_{\beta, Z'}$, respectively, and by $(d, w_\alpha)$ and $(d, w_\beta)$ 
their endpoints. We begin by supposing  $z_{\min} = -1$,  and we will show this entails $z_{\max} < 1$.  By Lemma \ref{lem:slice}, if  $z_{\min} = -1$ then at least one of the following two scenarios must hold: First, the path $P_{\beta, Z'}$ begins with a horizontal edge  $(1,0)$.  Second, $y_\alpha > y_\beta$.  

In the first scenario, $P_{\beta, Z'}$ must have a second edge $(q,p)$ labelled $h$; this edge corresponds to some $z_{p/q} < 1$.  Note that we must also have $w_\beta = w_\alpha$, since $P_{\alpha,Z}$ is obtained from $P_{\beta,Z'}$ by rounding a corner and locally losing one $h$.  
It then follows that  $P^{z_0}_{\alpha,Z} - P^{z_0}_{\beta,Z'} = 0$
 for $z_{p/q} < z_0$ and so $z_{\max} < 1$.
 
Similarly, in the second scenario, $P_{\beta, Z'}$ has only one $(q,p)$ and this edge is labelled $h$.  Since it is labelled by $h$,  this edge must correspond to some $z_{p/q} < 1$.  As in the first scenario, we must also have $w_\beta = w_\alpha$.  It then follows that  $P^{z_0}_{\alpha,Z} - P^{z_0}_{\beta,Z'} = 0$
 for $z_{p/q} < z_0$ and so $z_{\max} < 1$.
 
 The case where $z_{\max} =1$ is similar to above and hence, we will not provide a proof.
\end{proof}  
\begin{proof}[Proof of Lemma \ref{lem:comp}] 
Any curve $C$ in $\mathcal{M}_{J_0}$ or $\mathcal{M}_{J_1}$ has Fredholm index $1$, and so it follows from Equation \eqref{eqn:Fredholmindex} that the genus of $C$ must be zero; this is because $2e_- + h+ 2v \geq 2$ in our setting.

  We next argue as in the proof of Lemma 3.17 of \cite{Hutchings-Sullivan-Dehntwist}.  As we explained in Section \ref{sec:admissible_vs_weakly}, we can connect $J_0$ and $J_1$ with a smooth  family of weakly admissible almost complex structures $J_s, s\in [0,1]$. 
  Consider  the moduli space $\mathcal{M} := \cup_s \mathcal{M}_{J_s}((\alpha, Z), (\beta, Z'))$, for a generic choice of $J_s, s\in [0,1] $.   There exists a global bound on the energy of curves in $\mathcal{M}$:  
 indeed, any two $C, C'$ are homologous, as elements of $H_2(Y_\varphi, \alpha, \beta),$ and so have the same energy\footnote{ More precisely, this argument shows that, in the language of \cite{BEHWZ},  $C,C'$ have the same $\omega$-energy.  It then follows from Proposition 5.13 of \cite{BEHWZ} that there exists a global bound on the $\lambda$-energy of the curves in  $\mathcal{M}$, as well.} $\int_C \omega_\varphi = \int_{C'} \omega_\varphi$.  
 Moreover, these curves all have genus zero as we explained in the previous paragraph. Hence, we can appeal to the SFT compactness theorem\footnote{The SFT compactness theorem holds in our setting where we are allowing the stable Hamiltonian structures to vary smoothly; see, for example, \cite[page 170]{Wendl-Notes}.} from \cite{BEHWZ} 
to conclude that if a degeneration of the moduli space $\mathcal{M}$ occurs at some $s$, then there is convergence to a broken $J_s$-holomorphic building $(C_0,\ldots,C_k)$;    here, $C_i$ is the $i^{th}$ level of the building and it is a $J$-holomorphic curve between PFH generators $(\alpha_i, Z_i)$ and $(\alpha_{i+1}, Z_{i+1})$ with $(\alpha_0, Z_0) =  (\alpha, Z)$ and $(\alpha_{k+1}, Z_{k+1}) = (\beta, Z')$. This building is in the homology class $Z-Z'$,  has genus $0$, and the top and bottom levels must have at most one end at any Reeb orbit,  by the partition conditions provided by Theorem 1.7 and Definition 4.7 in \cite{Hutchings-index} (here we are using the fact that the monodromy angles of our elliptic periodic orbits are close to zero and negative, and that since the curve is supposed irreducible, the orbits constituting $\beta$ have multiplicity $1$); the compactness theorem \cite{BEHWZ} guarantees that it is {\bf stable} and possibly {\bf nodal}; we will define these terms and elaborate on them below; each $C_i$ is also nontrivial, in the sense that no $C_i$ is a union of trivial cylinders

 Recall that a {\bf nodal} $J$-holomorphic curve $C$ is an equivalence class of tuples $(\Sigma,j, u, \Delta)$, where $(\Sigma,j,u)$ is a possibly disconnected $J$-holomorphic curve, and $\Delta \subset \Sigma$ is a finite set of points with even cardinality, called {\bf nodal points}; the set $\Delta$ has an involution $i$ with no fixed points, and $u(z) = u(i(z))$ for any $z \in \Delta$; see \cite[Sec.\ 2.1.6]{otherwendl} for the precise defintion.   We next recall the stability conditions mentioned above.  A nodal curve $(\Sigma,j,u,\Delta)$ is called {\bf stable} if every connected componet of $\Sigma - \Delta$ on which $u$ is constant has negative Euler characteristic.  The compactness theorem of  \cite{BEHWZ} guarantees that the curves $C_i$ in our building are all stable.

 To proceed, we will need the following claim.

\begin{claim} \label{cl:no_nodes_no_ghosts}
Each of the curves $C_i$, appearing in  our building $(C_0, \ldots, C_k)$, has the following properties:

\begin{enumerate}
\item $C_i$ has no irreducible component whose domain is closed,  no irreducible component which is constant, and no nodal points.
\item Irreducible components of $C_i$ have Fredholm index $0$ or $1$.
\end{enumerate}

Moreover, we have
\begin{equation}
\label{eqn:sumind}
\sum_i \text{ind}(C_i) = 1.
\end{equation}

\end{claim}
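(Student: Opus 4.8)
The plan is to prove Claim \ref{cl:no_nodes_no_ghosts} by exploiting the genus-zero and low-energy structure of the limiting building, together with the Fredholm index formula of Lemma \ref{lem:fredholm_index} and the index inequality \eqref{eqn:indexinequality}. First I would record the bookkeeping identity: since the building $(C_0,\dots,C_k)$ arises as an SFT limit of curves of Fredholm index $1$, and the total index is preserved under such limits (gluing/additivity of the Fredholm index along the matched asymptotics), one gets $\sum_i \operatorname{ind}(C_i) = 1$. I would phrase this carefully: the ``Fredholm index'' of a possibly disconnected, possibly nodal building should be computed as the sum of the indices of its non-trivial irreducible components, and each matched pair of asymptotic ends contributes correctly so that the total is $1$; this is exactly \eqref{eqn:sumind}.

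Next, for part (1), I would rule out closed components, constant components (``ghosts''), and nodal points using a combination of three facts. First, $\omega_\varphi$ is pointwise nonnegative on $J$-holomorphic curves by Lemma \ref{lem:pointwisenonnegative} (valid for weakly admissible $J$, see Section \ref{sec:admissible_vs_weakly}), so a closed component would have $\int \omega_\varphi > 0$ unless it is contained in an $\mathbb{R}$-invariant cylinder, which is impossible for a closed surface; alternatively one uses that $H_2$ of the symplectization is generated by the sphere class and a closed component would force the wrong homology class, contradicting $[C]=Z-Z'$ and the index/degree constraints as in Proposition \ref{prop:structure_Jcurves}. Second, for a ghost component, stability (in the sense of \cite{BEHWZ}) forces it to have negative Euler characteristic, hence genus $\geq 1$ or at least three special points; but the total building has genus $0$ and I would argue via the standard genus-additivity formula that any positive genus or any node would either raise the arithmetic genus above $0$ or, combined with the index formula $\operatorname{ind}(C_i) = -2 + 2g + 2e_- + h + 2v$ from Lemma \ref{lem:fredholm_index}, push $\sum_i \operatorname{ind}(C_i)$ above $1$. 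Concretely: a nodal point joining two components (or self-gluing one component) raises the arithmetic genus, and since each non-trivial irreducible component has $2e_- + h + 2v \geq 2$ hence $\operatorname{ind} \geq -2 + 2g \geq 0$ for $g \geq 1$ (and $\geq 0$ when $g=0$ too, as $2e_-+h+2v \geq 2$), the constraint $\sum \operatorname{ind} = 1$ leaves no room for a genus-raising node. For part (2), the lower bound $\operatorname{ind}(C_i) \geq 0$ for each non-trivial component follows from Lemma \ref{lem:fredholm_index} (since $2e_- + h + 2v \geq 2$ once we know $C_i$ is non-trivial and non-constant — it must have at least one end, and the slice-class analysis of Lemma \ref{lem:slice}/Lemma \ref{lem:positivity} forces $v \geq $ the relevant positivity), and then the upper bound $\operatorname{ind}(C_i) \leq 1$ is forced by $\sum_j \operatorname{ind}(C_j) = 1$ together with non-negativity of all the other summands.

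The main obstacle I anticipate is the careful handling of the nodal/ghost components and the precise genus bookkeeping: one must be sure that ``genus $0$'' for the building — in the sense of the SFT compactness theorem — really does preclude nodes, which requires being explicit about how the arithmetic genus of a nodal building is defined and how it behaves under the limit. The subtlety is that a priori a genus-zero sequence could converge to a building with a node connecting two genus-zero components whose union still has arithmetic genus zero only if the node is ``separating''; a separating node would split the building into two sub-buildings, and I would then argue that one of these sub-buildings carries all the non-trivial topology/homology while the other must be a collection of trivial cylinders or constants — but a constant component attached at a single node violates stability, and a trivial cylinder attached at a node is not actually a node in the relevant moduli space. So the argument reduces to: separating nodes force a trivial or unstable piece (excluded), and non-separating nodes raise the genus (excluded by the genus-$0$ hypothesis). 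Once these are pinned down, combining with $\sum \operatorname{ind}(C_i) = 1$ and $\operatorname{ind}(C_i) \geq 0$ gives (2) immediately, and I would also note for later use that exactly one $C_i$ has index $1$ and all others have index $0$, which sets up the next stage of the argument where the index-$0$ levels are shown to be unions of trivial cylinders.
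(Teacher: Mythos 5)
Your proof proposal has several genuine gaps, and the most serious one is structural: you assert $\sum_i \operatorname{ind}(C_i) = 1$ at the outset, citing index additivity under SFT limits, and then use this to rule out nodes. But the correct conservation law in the presence of nodes (the paper's Equation \eqref{eqn:keyindexresult}, derived from \eqref{eqn:node}) is $\operatorname{ind}(C) = \sum_i \operatorname{ind}(C_i) + N$, where $N$ is the number of nodal points. So $\sum_i\operatorname{ind}(C_i) = 1 - N$, not $1$, and your use of the sum-equals-one identity to exclude nodes is circular. The paper's order of operations is the reverse of yours: first establish $\operatorname{ind}(C) = \sum_i\operatorname{ind}(C_i) + N = 1$, then show each component's contribution of $\operatorname{ind}(C_i)$ plus its own nodal points is nonnegative, then observe that a constant component (which by stability and the genus-zero constraint has $\geq 3$ nodal points and index $-2$) contributes $\geq 1$ on its own; the only way the total is $1$ is if exactly one component contributes $1$, and parity of $N$ then rules out that component being constant. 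This parity argument is what actually forces $N=0$, and \eqref{eqn:sumind} is a \emph{consequence} of $N=0$, not a premise.

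Your claim that $2e_- + h + 2v \geq 2$ for every nontrivial nonconstant component is false. A cylinder with one negative end at a hyperbolic orbit, one positive end at an elliptic orbit, $v = 0$, and $g = 0$ has $2e_- + h + 2v = 1$ and hence $\operatorname{ind} = -1$ by Lemma \ref{lem:fredholm_index}. The paper explicitly identifies this as the only way to achieve index $-1$ and then rules it out by a transversality argument: any such curve would cover a somewhere injective curve of Fredholm index $-1$, and these do not exist even in generic one-parameter families of $J$ (as in \cite[Lemma 3.15]{Hutchings-Sullivan-Dehntwist}, since $-1$ is the index before quotienting by translation). Your ``slice-class analysis forces $v \geq$ the relevant positivity'' is too vague to close this; $v$ can genuinely be zero here. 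Without this step you have not established $\operatorname{ind}(C_i) \geq 0$.

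Your discussion of nodes and ghosts also has a gap: you argue that ``a constant component attached at a single node violates stability,'' but a ghost can be attached at several nodes, and stability plus genus zero only forces $\geq 3$ nodal points on such a component (not a contradiction by itself). Your attempted dichotomy between separating and non-separating nodes is not the route the paper takes, and your handling of the separating case (arguing one sub-building is ``trivial'') is not developed enough to be convincing. Finally, for closed components, the essential input you omit is Claim \ref{cl:curves_dont_go_over_both_poles}: the fact that either $z_{\min} > -1$ or $z_{\max} < 1$ confines the building to a region $\mathcal{Y}$ in which any closed component would be null-homologous, which is impossible for a nonconstant closed holomorphic curve. Merely appealing to $\int_C \omega_\varphi > 0$ or to $H_2$ being generated by the sphere class is not sufficient, because a priori a closed component could represent a nonzero multiple of $[\S^2]$; it is the $z$-range constraint that kills this.
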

We postpone proving the above claim and continue with our  proof of Lemma \ref{lem:comp}. 

 By Lemma~\ref{lem:positivity}, any $C_i$ must correspond to a region between paths that do not go above $P_{\alpha,Z}$, and do not go below $P_{\beta,Z'}$.  In fact, there are no such concave lattice paths, other than $P_{\alpha,Z}$ and $P_{\beta,Z'}$ because $P_{\alpha,Z}$ is obtained from $P_{\beta,Z'}$ by rounding a corner. It follows that there exists $0 \leq j < k $ such that the paths $P_{\alpha_0, Z_0},  \ldots, P_{\alpha_j, Z_j}$ coincide with $P_{\alpha,Z}$  and the remaining paths $P_{\alpha_{j+1}, Z_{j+1}}, \ldots, P_{\alpha_k, Z_k}$ coincide with $P_{\beta,Z'}$,  as {\it unlabeled} lattice paths.  In other words, the curve $C_j$ is the unique curve in our building whose asymptotics correspond to two distinct  unlabeled lattice paths.

As a result of the last item of Claim \ref{cl:no_nodes_no_ghosts},  in combination with \eqref{eqn:sumind}, %
 we learn that one of the irreducible curves in our building must have index $1$,  and all other  irreducible curves must have index $0$.  
This conclusion about indices in turn implies that the curve $C_j$, whose asymptotics correspond to two distinct unlabeled lattice paths, must be irreducible.    Indeed, we can repeat the argument given in the proof of Claim \ref{cl:every_curve_irreducible} to conclude that if $C_j$ were not irreducible it would then have index $2$. 

Next, we claim that every irreducible curve in the building must be a cylinder, other than $C_j$.  To see this, let $P_{\alpha,Z}$ have $k$ edges and $P_{\beta,Z'}$ have $k'$ edges.  Remember that since the curve is irreducible, $\beta$ admits either only $1$ edge or $2$ distinct edges. Then, by the partition condition considerations mentioned above, the Euler characteristic of the building must be $-k - k'+2$.  Thus, the sum of the Euler characteristics of all curves in the building must also be $-k-k'+2$.  Each curve must have at least one positive end and one negative end, so the Euler characteristic of each curve must be non-positive.  And, since $C_j$ is irreducible, with ends at $k+k'$ distinct orbits, the Euler characteristic of $C_j$ must be at most $2-k-k'$. 
  It follows that every component other than $C_j$ must have Euler characteristic $0$, hence must be a cylinder; and, $C_j$ has exactly one end at each of its Reeb orbits.

For, $i\neq j$, the slice class is always zero, and therefore, $C_i$ consists of cylinders which are either trivial or local; see Remark \ref{rem:locality}.   We proved  above that each irreducible component of our building, and in particular these cylinders,  can have Fredholm index $0$ or $1$.  We will now show that the only non-trivial cylinders which could exist are local cylinders of Fredholm index $1$ with a positive end at a hyperbolic orbit and a negative end at an elliptic orbit.  First, observe that any local cylinder with both ends at the same orbit must in fact be trivial; see for example \cite[Prop 9.1]{Hutchings-index}.  Thus, a non-trivial cylinder must have one elliptic end and one hyperbolic end, and, by Formula \eqref{eqn:Fredholmindex}, for the Fredholm index to be non-negative, the positive and negative ends must be, respectively,  hyperbolic and elliptic.  The index will then be $1$.

Since the sum of the indices of the curves $C_i$ is  $1$, by \eqref{eqn:sumind}, %
we therefore learn that if the building is non-trivial, then it must have a single non-trivial index $1$ local cylinder, with a positive end at a hyperbolic orbit, and a negative end at an elliptic orbit, and the curve $C_j$ must have index $0$, and so it must have two negative ends, both at hyperbolic orbits, and all positive ends at elliptic orbits; all other curves in the building must be trivial cylinders.  Thus the building must have only two levels, and in the level structure, $C_j$ could be either on top or on bottom.   
In either case, as observed in \cite[Lemma 3.15]{Hutchings-Sullivan-Dehntwist}, there are two such canceling non-trivial local cylinders, so standard gluing arguments show that the mod $2$ count is unaffected by this degeneration.

To complete our proof of Lemma \ref{lem:comp}, it remains to prove Claim \ref{cl:no_nodes_no_ghosts}.
\begin{proof}[Proof of Claim \ref{cl:no_nodes_no_ghosts}]
We begin by showing that $C_i$ has no  non-constant irreducible component whose domain is closed.   
Let $z_{\min}$ and $z_{\max}$ be as in Claim \ref{cl:curves_dont_go_over_both_poles}.  According to Lemma \ref{lem:slice} and Remark \ref{rem:positivity_irreducible},  the curves $C_i$ are all contained in $$\mathcal{Y}= \{(s,t,\theta, z) \in \R \times \S^1 \times \S^2: z_{\min} \leq z \leq z_{\max}\}.$$
  If a non-constant closed curve was formed it would be contained in the above set  $ \mathcal{Y}$ and therefore, it would be null homologous because, by Claim \ref{cl:curves_dont_go_over_both_poles}, either $-1 < z_{\min}$ or $z_{\max} <1$.   However, closed non-trivial pseudo-holomorphic maps are never null homologous. 
  
  We next prove that every non-constant irreducible curve in our building must have  Fredholm index $0$ or $1$.  To see why, first note that Equation \eqref{eqn:Fredholmindex} applies to any such curve $C$;  since any such $C$ has both positive and negative ends, $C$ must have index at least $-1$, and the only way for $C$ to have index $-1$ is for $C$ to have exactly one negative end at a hyperbolic orbit, all positive ends at elliptic orbits, and $v = g = 0$. %
If $C$ is such a curve, then by Equation \eqref{eqn:Fredholmindex} the index of any somewhere injective curve that it covers must be $-1$: such somewhere injective curves do not exist, even in (generic) one parameter families of $J$, as observed in \cite[Lemma 3.15]{Hutchings-Sullivan-Dehntwist}, because this is the index before modding out by translation.

To finish the proof of the claim, it remains to show that the curves $C_i$ have no irreducible constant components and no nodal points, and then show that this implies \eqref{eqn:sumind}. %
To do so it will be helpful to visualize a nodal $J$-holomorphic curve by associating each nodal point $z$ with $i(z)$, and we can think of the two associated points as a single node.  Given a nodal $J$-holomorphic curve with domain $\Sigma$, we can form a new oriented topological surface $\widehat{\Sigma}$ by replacing each nodal point with a circle, and gluing these circles together; we refer the reader to \cite[Fig. 2.2]{otherwendl} for a helpful illustration, and we refer to \cite[Sec. 2]{otherwendl} for more details.  An important identity which we will use below is that
\begin{equation}
\label{eqn:node}
\chi(\widehat{\Sigma}) = \sum_i \chi(\Sigma_i) - N,
\end{equation}
where $\chi$ denotes the Euler characteristic, the sum is over all connected components $\Sigma_i$ of $\Sigma$, and $N$ denotes the number of nodal points, i.e.\ $N := \# \Delta$.  %
When we associate $\widehat{\Sigma}$ to a nodal curve $C$, which is an equivalence class, we write it as $\widehat{C}$; we should regard this as an equivalence class of topological surfaces, all of which are homeomorphic.

The significance of the construction of the surface $\widehat{\Sigma}$ for us is as follows. Let $C \in \mathcal{M}$ be some curve that is close to breaking into the building $(C_0, \ldots, C_k)$ in the sense of \cite{BEHWZ}.  The compactness result \cite{BEHWZ} then guarantees that  
\[ c_{\tau}(C) = \sum^k_{i=0} c_{\tau}(C_i), \quad \chi(C) = \sum^{k}_{i=0} \chi( \widehat{C_i} ).\]
In particular, using this equation, the additivity of the terms in \eqref{eqn:indexformula_Fredholm}, and the relation \eqref{eqn:node}, we learn from \eqref{eqn:node} that
\begin{equation}
\label{eqn:keyindexresult}
 \text{ind}(C) = \sum^k_{i=0} \text{ind}(C_i) + N.
 \end{equation}
 For later use, it will be convenient to call the term $\chi(C)$ the {\bf Euler characteristic of the building} $(C_0,\ldots,C_k);$ we similarly call $\text{ind}(C)$ the {\bf index of the building}.  These terms do not depend on the choice of $C   \in \mathcal{M}$ that is close to breaking into the building $(C_0, \ldots, C_k)$ 

We now conclude from \eqref{eqn:keyindexresult} that $N = 0$, in other words nodal degenerations can not occur; note that \eqref{eqn:sumind} will also be an immediate consequence of this in view of \eqref{eqn:keyindexresult}.  To see why $N = 0$, first observe that in the present situation, we have $\text{ind}(C) = 1$ in \eqref{eqn:keyindexresult}.  We also showed that the index is nonnegative for nonconstant curves; and, we know that $N$ is even. Hence, we will be done with the proof of the claim if we rule out the possibility of $C_i$ having  constant irreducible components.    In the present situation, the space $\mathcal{M}$ consists of $g = 0$ curves, so each $\Sigma$ must have $g = 0$, and hence any constant component must have at least three nodal points, and index $-2$ by \eqref{eqn:indexformula_Fredholm}.     Thus, any constant component must contribute at least $1$ to the right hand side of \eqref{eqn:keyindexresult}.   Since we have now shown that every component must contribute nonnegatively to the right hand side of \eqref{eqn:keyindexresult}, then it follows that there must be exactly one component that contributes $1$, and the rest must contribute $0$.  The component contributing $1$ to the sum can not be constant, since then the component would have to have exactly three nodes, and no other components could have any nodes at all; but the number of nodes is even.  It therefore follows, again using the fact that $N$ is even, that in fact we must have $N = 0$, and no constant components.  This completes the proof of the claim.
\end{proof}
\end{proof}

\subsubsection{Existence of $J$--holomorphic curves}\label{sec:curves_exist}  
In this section, we complete the proof of Lemma \ref{lem:converse} by showing that  $\# \mathcal{M}_{J_1}( (\alpha,Z) , (\beta,Z') ) =1$ for generic weakly admissible $J_1$.  

Below, we will introduce an open subset $X_1$ of $Y_\varphi$ satisfying the conditions of Remark \ref{rem:J_defined_on_X1}.  More specifically, it will satisfy the following property: There exists a compact subset $K \subset X_1$ such that for any weakly admissible almost complex structure $J$ on $\R \times Y_\varphi$, every $C \in \mathcal{M}_{J}( (\alpha,Z) , (\beta,Z') )$ is contained in $\R \times K$.   We will then prove the following lemma.

\begin{lemma}\label{lem:curves_exist}
  For generic weakly admissible almost complex structure  $J_1$ on $\R \times X_1,$   
 $$\#\mathcal{M}_{J_1}( (\alpha,Z) , (\beta,Z') ) =1.$$
\end{lemma}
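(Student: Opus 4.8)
\textbf{Proof proposal for Lemma \ref{lem:curves_exist}.}

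The plan is to exploit the flexibility we have built into the definition of weakly admissible almost complex structures by choosing $J_1$ to be as ``split'' as possible, localized near the single $z$-value where the corner rounding takes place, and then to reduce the count to a known model computation. First, I would identify the relevant region: since $P_{\alpha,Z}$ is obtained from $P_{\beta,Z'}$ by rounding a corner and locally losing one $h$, the difference $P^{z_0}_{\alpha,Z} - P^{z_0}_{\beta,Z'}$ is nonzero only for $z_0$ in a bounded interval $[z_{\min},z_{\max}]$, and by Claim \ref{cl:curves_dont_go_over_both_poles} we have $-1 < z_{\min}$ or $z_{\max} < 1$. Combining Lemma \ref{lem:slice}, Lemma \ref{lem:positivity} and Remark \ref{rem:positivity_irreducible}, every $C \in \mathcal{M}_{J}((\alpha,Z),(\beta,Z'))$ is contained in $\R \times K$ where $K = \{(t,\theta,z)\in Y_\varphi : z_{\min} \le z \le z_{\max}\}$, for \emph{any} weakly admissible $J$. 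This is exactly the compactness input required by Remark \ref{rem:J_defined_on_X1}, so I would take $X_1$ to be a slightly larger open neighborhood of $K$ in $Y_\varphi$, on which we may freely deform $J$ without affecting the count, by Lemma \ref{lem:comp}.

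Next, I would choose $J_1$ to be a weakly admissible almost complex structure adapted to the product structure on the slab: on $X_1$ one can arrange that the mapping torus looks, up to the weakly admissible Reeb reparametrization, like a neighborhood of a circle of periodic orbits in which the Hutchings--Sullivan analysis of the Dehn twist applies. Concretely, the non-trivial subregion between $P_{\alpha,Z}$ and $P_{\beta,Z'}$ involves (at most) two edges of $P_{\beta,Z'}$ and the edges of $P_{\alpha,Z}$ obtained by rounding; the corresponding Reeb orbits all lie in a narrow band of $z$-values where the nice perturbation was performed. I would then invoke the local curve count established by Hutchings--Sullivan, \cite{Hutchings-Sullivan-Dehntwist}, Section 3 (in particular the gluing/counting arguments surrounding Lemmas 3.14--3.17 there), which compute, for exactly such a corner-rounding configuration, that the mod $2$ count of index-$1$ $J$-holomorphic curves equals $1$. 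Our index formula Lemma \ref{lem:fredholm_index}, the positivity constraints of Lemma \ref{lem:positivity} and Lemma \ref{lem:above}, and the structure theorem Proposition \ref{prop:structure_Jcurves} guarantee that for generic $J_1$ the moduli space $\mathcal{M}_{J_1}((\alpha,Z),(\beta,Z'))$ is a compact zero-dimensional manifold consisting of irreducible index-$1$ curves (by our standing assumption, the region between the two paths is a single non-trivial subregion with no trivial subregions, so there are no extra trivial-cylinder components to discard), each of which is, after projecting to $\S^2$, a curve sweeping out the rounded corner exactly once.

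The main obstacle I anticipate is making the matching with the Hutchings--Sullivan model genuinely rigorous rather than merely morally correct: their setup is phrased for a Dehn twist on $T^2$ with the standard flat almost complex structure $J_0$ sending $\partial_x \mapsto \partial_y$, whereas our $J_{std}$ does not send $\partial_z \mapsto \partial_\theta$, and our base geometry is a sphere with two polar orbits rather than a torus. The resolution, which I would carry out carefully, is the same device already used in the footnote to the proof of Lemma \ref{lem:nice}: deform $J_1$ on $X_1$ through weakly admissible almost complex structures to one for which the relevant $2$-form becomes $dt\wedge d\theta - f(z)\,ds\wedge dz$ for an appropriate function $f$ determined by $h$, so that the curves in question, which are confined to the $z$-band by the compactness discussion above, are literally the curves counted by \cite{Hutchings-Sullivan-Dehntwist}; the exactness observation $\int_C f(z)\,ds\,dz = \int_C d(sf(z)\,dz)$ shows that this deformation does not change energies or the curve count, and Lemma \ref{lem:comp} (in the form of Remark \ref{rem:J_defined_on_X1}) transports the answer back to a generic admissible $J$. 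With the identification in place, the Hutchings--Sullivan count of $1$ applies directly, and combined with the bijection $\mathcal{M}_{J}((\alpha,Z),(\beta,Z')) \stackrel{\sim}{\to} \mathcal{M}_{J}((\alpha_1,Z_1),(\beta_1,Z_1'))$ from the reduction at the start of Section \ref{sec:corner_rounding_curves}, this completes the proof of Lemma \ref{lem:converse} and hence of the third item of Proposition \ref{prop:model}.
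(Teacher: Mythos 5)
Your confinement set-up is right (curves lie in $\R \times K$ for a compact slab $K = \{z_{\min}\le z\le z_{\max}\}$ with at least one endpoint strictly inside $(-1,1)$, by Claim~\ref{cl:curves_dont_go_over_both_poles} and Remark~\ref{rem:positivity_irreducible}), but the existence step contains a real gap. You assert that a weakly admissible deformation of $J_1$, via the device in the footnote to Lemma~\ref{lem:nice}, makes the curves ``literally the curves counted by \cite{Hutchings-Sullivan-Dehntwist}.'' This does not hold: that footnote's identity $\int_C f(z)\,ds\,dz = \int_C d(sf(z)\,dz)$ is an action computation used solely to rule out double rounding, not a mechanism for matching the $\S^1\times\S^2$ mapping-torus geometry with the torus mapping cylinders of \cite{Hutchings-Sullivan-Dehntwist}; and, more fundamentally, as the paper itself explains in Remark~\ref{rmk:choi}, the Hutchings--Sullivan/Taubes curve analysis only directly covers corner-rounding configurations involving twice- and thrice-punctured spheres, and the extension to configurations with arbitrarily many ends (which general corner rounding produces) requires an inductive $\partial^2 = 0$ bootstrapping argument that you neither cite precisely nor reconstruct.

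The paper's actual proof takes a different and concrete route. Using the case distinction $z_{\min}>-1$ (concave toric domain) versus $z_{\max}<1$ (convex toric domain) from Claim~\ref{cl:curves_dont_go_over_both_poles}, it constructs the explicit diffeomorphisms $\psi,\Psi$ of \eqref{eqn:parametrization_concave} and \eqref{eqn:parametrization_convex}, which carry the slab $X_1$ onto a punctured boundary $X_2$ of the toric domain, send the weakly admissible SHS to the contact one, and identify PFH orbit sets with ECH orbit sets; the mod-$2$ count of $1$ is then imported directly from Choi's combinatorial ECH computation for concave and convex toric domains (\cite{Choi}, Prop.~4.14, Thm.~4.8, Cor.~4.15), with an additional orientation-reversal/charge-conjugation bijection needed in the convex case that you also omit. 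To repair your proposal you would have to either set up this toric identification or spell out the inductive $\partial^2 = 0$ argument in full; as written, the key curve count is an unjustified assertion.
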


We now explain why the above lemma completes the proof of  Proposition \ref{prop:model}.  Let $J_0$ be a generic and admissible almost complex structure on $Y_{\varphi}$, and connect its restriction to $K$ to $J_1$ via a  generic smooth path $J_s$ of weakly admissible almost complex structures.  Then, appeal to Lemma \ref{lem:comp}, Lemma \ref{lem:curves_exist} and Remark \ref{rem:J_defined_on_X1} to conclude that, counting mod 2, we have   
 \[ \# \mathcal{M}_{J_0}( (\alpha,Z) , (\beta,Z') ) = \# \mathcal{M}_{J_1}( (\alpha,Z) , (\beta,Z') ) = 1,\] which implies that   $\langle \partial (\alpha, Z), (\beta, Z')\rangle =1.$

\medskip

It remains to prove Lemma \ref{lem:curves_exist}.  This will occupy  the remainder of this section.   We begin by recalling certain preliminaries which will be used in the course of the proof.  Let $\Omega \subset \mathbb{R}^2$ be a subset of the first quadrant, and consider the set 
\[ X_{\Omega} := \lbrace (z_1,z_2) | \pi (|z_1|^2, |z_2|^2)  \in \Omega \rbrace \subset \mathbb{C}^2 = \mathbb{R}^4.\]
When $\Omega$ is the region bounded by the axes and the graph of a function $f$ with $f'' \leq 0$, then we call $X_{\Omega}$ a {\bf convex toric domain}.  When $\Omega$ is the region bounded by the axes and the graph of a function $f$ with $f'' \geq 0$, then we call $X_{\Omega}$ a {\bf concave toric domain}.  Much work has been done about these domains, see \cite{C-CG-F-H-R, Hutchings_beyond, CG_concave_convex}. 

The boundary $\partial X_\Omega$ of a concave or convex toric domain is a contact manifold  with the contact form $\lambda$ being the restriction to $\partial \Omega$ of the standard one-form on $\R^4$ 
\[ \frac{1}{2}(x_1 dy_1 - y_1 dx_1 + x_2 dy_2 - y_2 dx_2). \]
Recall from Section \ref{sec:prelim_Jcurves} that $\mathbb{R} \times \partial X_\Omega$  is referred to as  the {\it contact symplectization} and its symplectic form  is given by $\omega = d(e^s \lambda)$, where $s$ denotes the coordinate on $\R$.   In the argument below, we will be considering $J$--holomorphic curves in $\R \times \partial X_\Omega$ where $J$ is admissible for the SHS determined by the contact structure.  Here, we will refer to such $J$ as {\bf contact admissible}.  

\medskip

We now begin the proof of Lemma \ref{lem:curves_exist}.  

\begin{proof}
Recall the definitions of $P^{z_0}_{\alpha,Z},P^{z_0}_{\beta,Z'}$ from Lemma \ref{lem:slice}.   Let $z_{\min}$ and $z_{\max}$ be the minimum and maximum value of $z_0\in[-1, 1]$ such that $P^{z_0}_{\alpha,Z} - P^{z_0}_{\beta,Z'} \neq 0$.   In view of Claim  \ref{cl:curves_dont_go_over_both_poles}, either $-1 < z_{\min}$ or $z_{\max} < 1$.  We will treat the two cases separately.

\noindent \textbf{Case 1: $z_{\min}>-1$.} 
Define $X_1:= \{(t, \theta, z) \in \S^1 \times \S^2: -1< z \}.$ Let $J$ be a weakly admissible almost complex structure on $\R \times Y_\varphi$  and denote by  $M_J(\alpha, \beta; X_1)$ the set of %
$J$-holomorphic curves from $\alpha$ to $\beta$ with image contained in $\R \times X_1$,  modulo $\R$-translation.   We claim that 
 \begin{equation}\label{eqn:curves_in_X1}
 \mathcal{M}_{J}( (\alpha,Z) , (\beta,Z')) = M_J(\alpha, \beta; X_1).
\end{equation}  
 
 To prove the above, first note that any $J$--holomorphic curve from $(\alpha, Z)$ to $(\beta, Z')$ is contained in $\R \times X_1$ because we have $[C_z] \neq 0$ only if $ z \geq z_{\min}  > -1$; see Remark \ref{rem:positivity_irreducible}.   This proves $\mathcal{M}_{J}( (\alpha,Z) , (\beta,Z')) \subset M_J(\alpha, \beta; X_1)$.  As for the other inclusion, observe that, because $P_{\alpha, Z}$ is obtained from $P_{\beta, Z'}$ by corner rounding, $Z-Z'$ is an element of  $H_2(Y_\varphi, \alpha, \beta)$ which can be represented by a chain in $X_1$.  Moreover, it is the only such element of  $H_2(Y_\varphi, \alpha, \beta)$ as other elements are of the form  $[Z-Z'] + k [\S^2]$, where $k\in \Z$ is nonzero.  This proves $M_J(\alpha, \beta; X_1) \subset  \mathcal{M}_{J}( (\alpha,Z) , (\beta,Z'))$.
 
 Observe that the previous paragraph implies, in particular, that $X_1$ has the property stated in  Remark \ref{rem:J_defined_on_X1} with $K = \{(t, \theta, z) : z \geq z_{\min}\}$.

 We will next  identify $X_1$ with a subset of the concave toric domain $X_\Omega$ defined below.  Consider the concave toric domain $X_\Omega$, where $\Omega$ is the region bounded by the axes and the 
 graph of $f(x)$, where $f(x) := \frac{h(1-2x)}{2}$ for $0 \leq x \leq 1$. 
The boundary $\partial X_\Omega$ is a contact manifold  as described above.
 Consider the subset of  $\partial X_\Omega$ given by $$ X_2 : = \lbrace (z_1,z_2)  | \pi (|z_1|^2, |z_2|^2)  \in \partial \Omega- \{(1, 0)\} \rbrace. $$ Note that this is $\partial X_\Omega$ with a Reeb orbit removed. 
Define the mappings
\begin{align}
\label{eqn:parametrization_concave}
\begin{split}
\psi:\ &X_1 \to  X_2,\quad    (t,\theta,z) \mapsto \left(\tfrac12(1-z),\theta,\tfrac12{h(z)},2 \pi t\right),\\
\Psi:\ &\R \times X_1 \to \R \times X_2,\quad    (s,t,\theta,z) \mapsto \left(s,\tfrac12(1-z),\theta, \tfrac12{h(z)}, 2 \pi t\right).
\end{split}
\end{align}
 Here, we are regarding $\partial X_{\Omega} \subset \mathbb{C}^2$, and we are equipping $\mathbb{C}^2$ with coordinates $(\rho_1 := \pi |z_1|^2, \theta_1, \rho_2 := \pi |z_2|^2, \theta_2)$.   Note that the standard-one form $\lambda$, defined above, is given in these coordinates by
\begin{equation}
\label{eqn:standardpolar}
\lambda = \frac{1}{2\pi}(\rho_1 d \theta_1+\rho_2 d \theta_2).
\end{equation}
The above diffeomorphisms have the following properties: 

       \begin{enumerate}[(i)]
\item The Reeb vector field $R$ on $X_1$ pushes forward under $\psi$ to a positive multiple of the contact Reeb vector field $\hat R$ on $X_2$.  
\item The two-form $d\lambda$ on $X_2$ pulls back under $\psi$ to $\omega_{\varphi}$ on $X_1$.   Thus, the SHS $(\lambda, d\lambda)$ on $X_2$ pulls back under $\psi$ to the SHS $(\psi^* \lambda, \omega_\varphi)$ on $X_1$.  
\end{enumerate}
 Item (i) above holds because at a point $(x, \theta_1, f(x), \theta_2)$, $R'$ is a positive multiple of 
\[ -f'(x) \partial_{\theta_1} + \partial_{\theta_2},\] 
see for example Eq. 4.14 in \cite{Hutchings-Notes}, while 
\[ R= \partial_t + 2\pi h'(z) \partial_\theta,\] 
by combining \eqref{eqn:trivializedreeb} and \eqref{eqn:Hvf}, and $f'(x) = - h'(z).$  Item (ii) holds because $d\lambda$ is the restriction of
\[ \frac{1}{2\pi}(d\rho_1\wedge d \theta_1+d\rho_2\wedge d \theta_2),\]
which pulls back to
\[ \frac{1}{4 \pi} \left( d \theta \wedge dz + 2 \pi h'(z) dz \wedge dt \right),\]
which is exactly $\omega_{\varphi}.$

\medskip

Having established the above properties of the diffeomorphisms $\psi, \Psi$, we now proceed with the proof of Lemma \ref{lem:curves_exist}.  By property (i),  $\psi$ induces a bijection between the Reeb orbit sets of $R$ in $X_1$ and the Reeb orbit sets of $\hat R$ in $X_2$.  We will denote the induced bijection by  
$$\alpha \mapsto \hat \alpha.$$

Now, suppose $P_{\alpha, Z}$ is obtained from $P_{\beta, Z'}$ via rounding a corner and locally losing one $h$.  Let $\hat J$ be a contact admissible almost complex structure on the symplectization  $\R \times \partial X_\Omega$ and consider  $ \mathcal{M}_{\hat J}( \hat \alpha , \hat \beta)$ the space of $\hat J$-holomorphic currents $C$, modulo translation in the $\mathbb{R}$ direction,
which are asymptotic to $\hat \alpha$ as $s \to + \infty$ and $\hat \beta$ as $s \to - \infty$.  For a generic choice of contact admissible $\hat J$, this  moduli space  is finite and, as alluded to in Remark \ref{rem:ECH}, its mod $2$   cardinality determines\footnote{In general when defining the ECH differential, one would also demand the additional condition that the curves have ECH index $1$.  However, it follows from the index calculations in \cite{Choi} that this is automatic given the asymptotics $\hat{\alpha}, \hat{\beta}$.}
the ECH differential in the sense that
$$\langle \partial_{ECH} \, \hat \alpha, \hat \beta \rangle = \# \mathcal{M}_{\hat J}( \hat \alpha , \hat \beta).$$ 
As we will explain below, it follows from results proven in \cite{Choi} that the following hold:
\begin{enumerate}
\item[A1.] The image of every curve in $ \mathcal{M}_{\hat J}( \hat \alpha , \hat \beta)$  is contained in $\R \times X_2$. 
\item[B1.] The mod 2 count of curves in $ \mathcal{M}_{\hat J}( \hat \alpha , \hat \beta)$ is $1$.
\end{enumerate}
We will now explain why A1 \& B1 imply Lemma \ref{lem:curves_exist}. Indeed, define $J_1$ to be the almost complex structure on $\R \times X_1$  given by the pull back under $\Psi$ of the restriction 
of a generic $\hat J$ as above.  Then,  $J_1$ is an almost complex structure on $\R \times X_1$ which is admissible for the SHS given by $(\psi^* \lambda, \psi^* d\lambda) = (\psi^* \lambda, \omega_\varphi)$; this SHS gives the same orientation as $(dr,\omega_{\varphi})$, by property (i) above, 
which means that $J_1$ is weakly admissible, see Section \ref{sec:admissible_vs_weakly}.  Next, by property A1, $\Psi$ induces a bijection between $ \mathcal{M}_{\hat J}( \hat \alpha , \hat \beta)$ and $ \mathcal{M}_{J_1}( \alpha , \beta, X_1)$.  Property B1 
then implies that $\# \mathcal{M}_{J_1}( \alpha , \beta, X_1) = 1.$  Lastly, by Equation \eqref{eqn:curves_in_X1}, we have  $$ \# \mathcal{M}_{J_1}( (\alpha,Z) , (\beta, Z')) = 1,$$ which proves Lemma \ref{lem:curves_exist} in the case $z_{\min} > -1$.

Before elaborating on A1 \& B1, we will treat Case 2.

\medskip 

\noindent \textbf{Case 2: $z_{\max}<1$.}

The proof of Case 2 is similar, and in a sense dual, to that of Case 1.

\medskip

Define $X_1:= \{(t, \theta, z) \in \S^1 \times \S^2:  z< 1 \}.$ Let $J$ be a weakly admissible almost complex structure  on $\R \times Y_\varphi$  and denote by  $M_J(\alpha, \beta; X_1)$ the set of $J$ holomorphic  curves from $\alpha$ to $\beta$ with image contained in $\R \times X_1$, modulo translation.  As in Case 1, we have  
 \begin{equation}\label{eqn:curves_in_X1_case2}
 \mathcal{M}_{J}( (\alpha,Z) , (\beta,Z')) = M_J(\alpha, \beta; X_1).
\end{equation}  
 
 As before,  $X_1$ has the property stated in Remark \ref{rem:J_defined_on_X1} with $K = \{(t, \theta, z) : z \leq z_{\max}\}$.  We will next identify $X_1$ with a subset of the convex toric domain $X_\Omega$ defined below.
 
 Consider the convex toric domain $X_\Omega$, where $\Omega$ is the region bounded by the axes and the graph of $g(x) = \frac{h(1) - h(2x-1)}{2}$
for $0 \leq x \leq 1$.
The boundary $\partial X_\Omega$ is a contact manifold  as described above.
Consider the subset of  $\partial X_\Omega$ given by $$ X_2 : = \lbrace (z_1,z_2)  | \pi (|z_1|^2, |z_2|^2)  \in \partial \Omega- \{ (1,0) \} \rbrace. $$ 
Define the mappings

\begin{align}
\label{eqn:parametrization_convex}
\begin{split}
\psi:\ &X_1 \to  X_2,\quad (t,\theta,z) \mapsto \left( \tfrac12(z+1), \theta, \tfrac12(h(1) - h(z)), 2 \pi t \right),\\
\Psi:\ &\R \times X_1 \to \R \times X_2, \quad    (s,t,\theta,z) \mapsto \left( -s, \tfrac12(z+1), \theta, \tfrac12(h(1) - h(z)), 2 \pi t\right).
\end{split}
\end{align}
 Here, we are regarding $\partial X_{\Omega} \subset \mathbb{C}^2$, and we are equipping $\mathbb{C}^2$ with coordinates $(\rho_1 := \pi |z_1|^2, \theta_1, \rho_2 := \pi |z_2|^2, \theta_2)$.   
Similarly (but dual) to Case 1, these diffeomorphisms have the following properties: 
\begin{enumerate}[(i)]
\item The Reeb vector field $R$ on $X_1$ pushes forward under $\psi$ to a positive multiple of the contact Reeb vector field $\hat R$ on $X_2$.  

\item The two-form $d\lambda$ on $X_2$ pulls back under $\psi$ to $-\omega_{\varphi}$ on $X_1$.   Thus, the SHS $(\lambda, -d\lambda)$ on $X_2$ pulls back under $\psi$ to the SHS $(\psi^* \lambda, \omega_\varphi)$ on $X_1$.   
\end{enumerate}

 By property (i),  $\psi$ induces a bijection $$\alpha \mapsto \hat \alpha,$$ 
 between the Reeb orbit sets of $R$ in $X_1$ and the Reeb orbit sets of $\hat R$ in $X_2$.  Now, suppose $P_{\alpha, Z}$ is obtained from $P_{\beta, Z'}$ via rounding a corner and locally losing one $h$.  As in Case 1, let $\hat J$ be a contact admissible almost complex structure on  $\R \times \partial X_\Omega$ and consider the moduli space  $ \mathcal{M}_{\hat J}( \hat \beta , \hat \alpha)$ which for a generic choice of contact admissible $\hat J$  is finite and its mod $2$   cardinality determines\footnote{Just as in the previous case, the condition that the curves have ECH index $1$ is actually automatic here, by the asymptotics.} the ECH differential in the sense that
 $$\langle \partial_{ECH} \, \hat \beta, \hat \alpha \rangle = \# \mathcal{M}_{\hat J}( \hat \beta , \hat \alpha).$$ 
As we will explain below, it follows from results proven in \cite{Choi} that the following hold:
\begin{enumerate}
\item[A2.] The image of every curve in $ \mathcal{M}_{\hat J}( \hat \beta , \hat \alpha)$  is contained in $\R \times X_2$. 
\item[B2.] The mod 2 count of curves in $ \mathcal{M}_{\hat J}( \hat \beta, \hat \alpha)$ is $1$.
\end{enumerate}

We will now explain why A2 and B2 imply \ref{lem:curves_exist}.  Define $J_1$ to be the almost complex structure on $\mathbb{R} \times X_1$ given by the pull back under $\Psi$ of the restriction %
of $- \hat J$, for a generic $\hat J$ as above.  Then, $J_1$ is an almost complex structure on $\mathbb{R} \times X_1$ which is admissible for the SHS given by $(\psi^* \lambda, \psi^* (- d \lambda)) = (\psi^* \lambda, \omega_{\varphi})$; this SHS has the same orientation %
as $(dr,\omega_{\varphi})$, by property (i) above, which means that $J_1$ is weakly admissible; note that $-\hat{J}$ is admissible for the SHS given by $(-\lambda, - d \lambda)$.  Next, by property A2, $\Psi$ induces a bijection 
between $\mathcal{M}_{- \hat{J} }( - \hat{\beta}, - \hat{\alpha} )$ and $\mathcal{M}_{J_1}(\alpha,\beta,X_1)$ (here, the notation $- \hat{\beta}$ means that the orientation is reversed, which we can regard as viewing $\hat{\beta}$ as an orbit set for $-\lambda$); and, there is a canonical bijection\footnote{For the convenience of the reader, we remark that this bijection is familiar from the ``charge conjugation invariance" property of ECH, see for example \cite{Hutchings-Notes}.} between $\mathcal{M}_{- \hat{J} } (- \hat \beta, - \hat \alpha)$ and $\mathcal{M}_{ \hat J} (\hat \beta, \hat \alpha)$ 
given by associating a $\hat J$-holomorphic curve 
\[ u: (\Sigma,j) \to (X_2,\hat J)\] 
to the $-\hat J$ holomorphic curve 
\[ u: (\Sigma,-j) \to (X_2, -\hat J).\]

We can now repeat the rest of the argument given in Case 1 to  see that A2 \& B2 imply Lemma \ref{lem:curves_exist}. 

As promised, we will next elaborate on  properties A1, B1, from Case 1, and A2, B2, from Case 2.

\subsubsection{Elaborations on properties A1, B1 and A2, B2}\label{sec:elaborations}
For the benefit of the reader, we now elaborate on why A1, B1, A2, and B2 hold.

 We first explain items $A1$ and $A2$.  We showed above, while proving Equations \eqref{eqn:curves_in_X1} and \eqref{eqn:curves_in_X1_case2}, that as a consequence of Lemma \ref{lem:positivity} any $J$--holomorphic curve from $(\alpha, Z)$ to $(\beta, Z')$ is contained in $\R \times X_1$.  A reasoning similar to what we have given above, proves that items $A1, A2$ are consequences of \cite[Lemma 3.5]{Choi} which is analogous to our Lemma~\ref{lem:positivity}.

We now explain items $ B1$ and $B2$, beginning with some context.  The purpose of \cite{Choi} is to give a combinatorial realization of the ECH chain complex for any toric contact three-manifold.  Here, we use Choi's results in the special case of concave and convex toric domains.  

It turns out that in these cases, the ECH chain complex has a particularly nice form, as was originally conjectured by Hutchings \cite[Conj. A.3]{Hutchings_beyond}.

We first explain the ECH combinatorial model in the case where $X_\Omega$ is a  convex toric domain, which  appears in Case 2 above.  Let $X_{\Omega}$ be such a domain, described by a smooth function $f: [0,a] \to [0,b]$.  In this case, we can represent an ECH generator $\hat \alpha$ by an {\bf ECH convex lattice path} $P_{\hat \alpha}$: this is a piecewise linear lattice path that starts on the $y$-axis, ends on the $x$-axis, stays in the first quadrant,  and is the graph of a concave function; we usually orient this convex lattice path ``to the right".    Note that the region bounded by such a path and the coordinate axes is a convex region.

Hutchings conjectured (\cite[Conj. A.3]{Hutchings_beyond}), and Choi proved  (\cite[Cor. 4.15]{Choi})  that the ECH chain complex differential in this case has a particularly nice form: it is given by rounding a corner and locally losing one $h$.  More precisely, we can augment any convex lattice path to a closed lattice path by adding edges along the axes, and then there is a nonzero differential coefficient $\langle \partial_{ECH} \hat \beta, \hat \alpha \rangle$ if and only if this augmentation of $P_{\hat \alpha}$ is obtained from the augmentation of $P_{\hat \beta}$ by rounding a corner and locally losing one $h$

The ECH combinatorial model in the case where $X_\Omega$ is concave, which appears in Case 1 above, is dual to this.  We represent an ECH generator by an {\bf ECH concave lattice path}, which is defined analogously to the ECH convex lattice paths above, except that the path is the graph of a convex function instead; we augment by adding infinite rays along the axes. Then, by \cite[Prop. 4.14, Thm. 4.8]{Choi}, there is a nonzero differential coefficient $\langle \partial_{ECH} \hat \alpha, \hat \beta \rangle$ if and only if this augmentation of $P_{\hat \alpha}$ is obtained from the augmentation of $P_{\hat \beta}$ by rounding a corner and locally losing one $h$.

We can now explain the proofs of $B1$ and $B2$.   We begin with $B1$.  In the assignation between an ECH orbit set $\hat \alpha$ and a concave lattice path $P_{\hat \alpha}$ in the combinatorial model of \cite{Choi}, an edge $(p,q)$ corresponds to a point $r$ on the graph of $f$ with slope $q/p$, or to the two points $(0,a)$ and $(b,0)$ where the graph of $f$ meets the axes.  We also handle the elliptic orbits corresponding to $(0,a)$ and $(b,0)$ as in the PFH case: in the case of an $m$-fold cover of the elliptic orbit corresponding to $(0,a)$ (which is the only case relevant to the proof), we concatenate the line segment with slope $f'(0)$ and horizontal displacement $m$ to the beginning of the rest of the lattice path.   Now, this does not yield a lattice path, because $f'(0)$ is irrational, and so  we take the lowest lattice path above this concatenated path and then translate so that the path starts on the $y$-axis and ends on the $x$-axis.

We now describe how the identification of the previous section between PFH and ECH orbit sets $\alpha \mapsto \hat \alpha,$ translates to an identification of the corresponding lattice paths $P_{\alpha, Z} \mapsto P_{\hat{\alpha}}:$
We take $P_{\alpha, Z}$ and translate it so that it starts on the $x$-axis and ends on the $y$-axis, and reflect across the $y$-axis, to get $P_{\hat \alpha}$, the ECH lattice path corresponding to $\hat{\alpha}$.  Thus, since $P_{\alpha,Z}$ is obtained from $P_{\beta,Z'}$ by rounding a corner and locally losing one $h$, the ECH lattice path $P_{\hat{\alpha}}$ is obtained from $P_{\hat{\beta}}$ by rounding a corner and locally losing one $h$, and so the above results apply to prove B1.

The proof of $B2$ is similar.  The identification $\alpha \mapsto \hat \alpha$ translates to an identification of the corresponding lattice paths $P_{\alpha, Z} \mapsto P_{\hat{\alpha}}:$    given a PFH lattice path $P_{\alpha, Z}$, we translate it so that it begins on the $y$--axis and ends on the $x$--axis, we then reflect it across the $x$-axis to get $P_{\hat{\alpha}}$. As with B1, this now implies B2.
\end{proof}
 \begin{remark}
\label{rmk:choi}
To make our paper as self-contained as possible, while maintaining some amount of brevity, we sketch those parts of Choi's argument that are relevant for what we need.

The first point to make is a historical one.  Many of our arguments in Section \ref{sec:PFH_monotone_twist} are inspired by arguments in \cite{Choi}, which are themselves inspired by arguments in Hutchings-Sullivan; in particular, 
most of the ideas needed for Prop. 4.14 and Cor. 4.15 of \cite{Choi}  have already been presented here, although we presented them in the PFH case rather than the ECH case.  In particular, Choi proves analogues of all of the results in \ref{sec:comb} through \ref{sec:var}, and indeed his argument for reducing to considering regions given by locally rounding a corner and losing one $h$ is quite similar to the argument we give.  So, the only differences really worth commenting on involve how Choi shows that such a region actually carries a nonzero count of curves --- here there are two main differences between our argument and Choi's that we should highlight.  

\begin{enumerate}

\item  Choi finds his curves by referencing a paper by Taubes \cite{Taubes-beasts}, which works out various moduli spaces of curves for a particular contact form on $\S^1 \times \S^2$.  Taubes' contact form is Morse-Bott, so Choi does a perturbation as in [\cite{Hutchings-Sullivan-Dehntwist}, Lem. 3.17], [\cite{Hutchings-Sullivan-T3}, Thm. 11.11, Step 4] to break the Morse-Bott symmetry so as to obtain a nondegenerate contact form; Choi then does a deformation argument that is very analogous to what we do.  We remark that this strategy was pioneered by Hutchings and Sullivan in the series of papers \cite{Hutchings-Sullivan-Dehntwist, Hutchings-Sullivan-T3}. 

In contrast, we find our curves by referencing Choi's paper rather than Taubes'.  This means that we do not need any Morse-Bott argument.

\item Choi uses an inductive argument to reduce to considering moduli spaces of twice and thrice punctured spheres.  The reason he does this is to be able to use the above paper by Taubes, which does not directly address all the curves needed to analyze corner rounding operations, which could lead for example to curves with an arbitrary number of ends.  This induction works by using the fact that the differential is already known to square to $0$, by \cite{Hutchings-TaubesI, Hutchings-TaubesII} --- once one shows the result for the curves one gets from Taubes, given an arbitrary region that one wishes to show corresponds to a nonzero count of curves, one proves that if it did not have a nonzero count, the differential could not possibly square to zero, essentially by concatenating with a region that can be analyzed through the Taubes curves.  As with the above item, we again remark that this strategy was previously pioneered by Hutchings and Sullivan in \cite{Hutchings-Sullivan-Dehntwist, Hutchings-Sullivan-T3}.

In contrast, we have no need to use this inductive argument, since we can reference Choi's work for our needed curves.
\end{enumerate}

As should be clear from the above, the main reason we have chosen to take a slightly different tactic from Choi concerning these two points is for brevity --- we could have also used a strategy like the above, if for some reason we had wanted to.  Another point from the above is that the ideas in the part of Choi's paper relevant here already appear in the Hutchings-Sullivan papers \cite{Hutchings-Sullivan-Dehntwist, Hutchings-Sullivan-T3}: what is especially new and impressive in Choi's paper  
is the analysis of very general toric contact forms, (for example, the contact form on a toric domain that is neither concave nor convex) for which there could be curves corresponding to regions more general than those that come from rounding a corner and locally losing one $h$ --- however, we do not need to consider these particular kinds of contact forms in this paper.   
\end{remark}

\section{The Calabi property for positive monotone twists}\label{sec:calabi_for_montone_twists}
In this section we provide a proof of Theorem~\ref{thm:s2case}.  We will first use the combinatorial model of PFH, from the previous section, to compute the PFH spectral invariants for monotone twists; this is the content of Theorem \ref{theo:spec_computation}.  We will then use this computation to prove Theorem~\ref{thm:s2case}; this will be carried out in Section \ref{sec:proof_Calabi_property}.

\subsection{Computation of the spectral invariants}\label{sec:comuting_spec_invariant}

We will need to introduce some notations and conventions before stating, and proving, the main result of this section.   Throughout this section, we fix $\varphi$ to be a (smooth) positive monotone twist map of the disc. Recall from Remark \ref{rem:PFHspec-disc} that we define PFH spectral invariants for maps of the disc by identifying $\Diff_c(\D, \omega)$ with maps of the sphere supported in the northern hemisphere $S^+ \subset \S^2$, where the sphere $\S^2$ is equipped with the symplectic form $\omega=\frac1{4\pi}d\theta\wedge dz$. 

Recall that every  monotone twist map of the disc $\varphi$ can be written as the time--$1$ map of the flow of an autonomous Hamiltonian 
  \[ H = \frac{1}{2} h(z),\]
  where $h: \S^2 \rightarrow \R$ is a function of $z$ satisfying \[ h' \geq 0, h'' \geq 0, h(-1) = 0, h'(-1) = 0.\]
 For the main result of  this section, Theorem \ref{theo:spec_computation}, we will need to impose the additional assumption that 
  \begin{equation}\label{eq:additional_assump}
  h'(1)\in \N.
  \end{equation}
The reason for imposing the above assumption is that in our combinatorial model, Proposition \ref{prop:model}, the we assume that $h'(1)$  is assumed to be sufficiently close to $\lceil h'(1) \rceil$. Observe that every Hamiltonian $H$ as above can be $C^\infty$ approximated by the Hamiltonians considered in Section \ref{sec:ham}.

Although $\varphi$ is degenerate, we can still define the notion of a {\bf concave lattice path for $\varphi$} as any lattice path obtained from a starting point $(0, y)$, with $y\in\Z$, and a finite sequence of consecutive edges $v_{p_i,q_i}$, $i=0, \dots, \ell$, such that:
  \begin{itemize}
  \item $v_{p_i,q_i}=m_{p_i, q_i}(q_i, p_i)$ with $q_i, p_i$ coprime,
  \item the slopes $p_i/q_i$ are in increasing order,
  \item we have $0\leq p_0/q_0$ and $p_\ell/q_\ell$ is 
  at most $h'(1)$.
  \end{itemize}

If $p_0=0$, we will, as in Section \ref{sec:comb}, denote $v_-=m_-(1,0)= v_{p_0, q_0}$. If 
$p_\ell/q_\ell= h'(1)$, we will denote $v_+=m_+(1, h'(1) )= v_{p_l, q_l}.$ 
We also let $z_{p,q}$ be such that $h'(z_{p,q})=p/q$.

We can also define the action of such a path just as in Section \ref{sec:comb}:  We first define
\begin{equation}
\label{eqn:degenaction}
\mathcal{A}(v_-) = 0, \quad \mathcal{A}(v_+) = m_+\frac{h(1)}2,\quad  
\mathcal{A}(v_{p,q}) = \frac{m_{p,q}}{2} ( p (1-z_{p,q}) + q h(z_{p,q})).
\end{equation}
We then define the action of a concave lattice path $P$ to be
\begin{equation} \label{eq:action_formula_unlabelled}
\mathcal{A}(P) = y   +  \mathcal{A}(v_+) %
+ \sum_{v_{p,q}}   \mathcal{A}(v_{p,q}).
\end{equation}

The definition of $j(P)$ from Section \ref{sec:comb} (see Equation \eqref{eqn:index_path_unlabelled})  is still valid here.  With this in mind, we have the following:

\begin{theo}\label{theo:spec_computation}
\label{thm:comb}
Let $\varphi \in \Diff_c(\D, \omega)$ be a monotone twist satisfying the assumption \eqref{eq:additional_assump}. 
Then, for all integers $d>0$ and $k=d\mod 2$, 
\begin{equation}
\label{eqn:spectralformula}
c_{d,k}(\varphi) = \max \lbrace \mathcal{A}(P) : 2 j(P) - d  = k \rbrace,
\end{equation}
where the max is over all concave lattice paths $P$ for $\varphi$ of horizontal displacement $d$.  
\end{theo}

\begin{proof}
We can take a $C^{\infty}$ small perturbation of $\varphi$ to a $d$--nondegenerate Hamiltonian diffeomorphism $\varphi_0$ which itself is a {\it nice perturbation} of some $\varphi_H^1$, where $H \in \mathcal{D}$ as in Section \ref{sec:ham}.

Since $c_{d,k}(\varphi)$ is the limit of $c_{d,k}(\varphi_0)$, as we take smaller and smaller perturbations, it suffices to show that the analogous formula to \eqref{eqn:spectralformula} holds for  $\varphi_0$.  In other words, we wish to show
\begin{equation}
\label{eqn:perturbedformula}
c_{d,k}(\varphi_0) = \max \lbrace \mathcal{A}(P_{\alpha,Z}) : I(P_{\alpha,Z}) = k \rbrace,
\end{equation}
where the max is over all concave lattice paths of horizontal displacement $d$.

To prove \eqref{eqn:perturbedformula}, given $(d,k)$, consider the element $\sigma$ of the PFH chain complex for $\varphi_0$ given by
\begin{equation*}
\sigma := \sum (\alpha,Z)
\end{equation*}
where the sum is over all PFH generators $(\alpha,Z)$ where $\alpha$ consists of only elliptic orbits, is of degree $d$ and $I(\alpha, Z) = k$.  Equivalently, the corresponding concave lattice path $P_{\alpha,Z}$ has edges which are all labelled $e$, it has degree $d$  and  index $k$.  

We first claim that $\sigma$  is in the kernel of the PFH differential.  Indeed, by Proposition \ref{prop:model},   the differential is the mod $2$ sum over every $(\beta, Z')$ such that $P_{\alpha,Z}$ can be obtained from $P_{\beta,Z'}$ by rounding a corner and locally losing one $h$.  Fix one such $P_{\beta,Z'}$.  It has exactly one edge labelled $h$ and so there are exactly two concave paths, say $P_{\alpha,Z}$ and  $P_{\tilde \alpha, \tilde Z}$, which are obtained from $P_{\beta,Z'}$ by rounding a corner and locally losing one $h$.  The two paths $P_{\alpha,Z}$ and  $P_{\tilde \alpha, \tilde Z}$  are different, because for example when you round the two corners for an edge, one rounding contains one of the corners, and the other contains the other corner.  Now, $(\alpha, Z)$ and $(\tilde \alpha, \tilde Z)$ both contribute to $\sigma$ and thus, $(\beta,Z')$ appears exactly twice in the differential of $\sigma$; hence, its mod $2$ contribution to the differential is zero.  Consequently, we see that  $\sigma$ is in the kernel of the PFH differential.

Now, by Proposition~\ref{prop:model}, no concave path with all edges labeled $e$ is ever in the image of the differential, because the concave path corresponding to the negative end of a holomorphic curve counted by the differential has more edges labeled $h$ than the concave path corresponding to the positive end, and in particular has at least one edge labeled $h$.  So, $[\sigma] \ne 0$ in homology.  In fact, $\sigma$ must carry the spectral invariant for similar reasons.  Specifically, if there is some other chain complex element $\sigma'$ homologous to $\sigma$, then $\sigma + \sigma'$ must be in the image of the differential.  Nothing in the image of the differential has a path with all edges labeled by $e$, so $\sigma'$ must contain all possible paths of degree $d$ and index $k$ with all edges labeled by $e$, and so its action must be at least as much as $\sigma$.  

In view of the above paragraph, to prove  Equation \eqref{eqn:perturbedformula}, we must show that the supremum in the equation is attained by a path whose edges are labelled $e$.  To see this, consider  a path of degree $d$ and index $k$  with some edges labelled $h$.   As a consequence of the combinatorial index formula in Proposition \ref{prop:model},  the number of edges labelled $h$ must be even; denote this number by $2r$.  If we  round $r$ corners
and remove all $h$ labels, we obtain a path of the same grading all of whose edges are labelled $e$.  Now, the newly obtained path has larger action because the corner rounding operation increases action. This completes the proof.
\end{proof}

\begin{remark}\label{rem:monotonicink}
As a corollary to Theorem~\ref{thm:comb}, we find that 
\[ c_{d,k} \leq c_{d,k+2},\]
as promised in Equation \eqref{eqn:monotonicink}.  
Indeed, if we take an action maximizing  path of index $k$, with all edges labeled $e$ as above, and round a corner, then the grading increases by $2$, and the action increases as well.  
\end{remark}

\subsection{Proof of Theorem \ref{thm:s2case} } \label{sec:proof_Calabi_property}
We now give a proof of Theorem~\ref{thm:s2case}.  It is sufficient to prove Theorem \ref{thm:s2case} for monotone twists $\varphi$ which can be written as $\varphi^1_H$ with the Hamiltonian $H$ satisfying \eqref{eq:additional_assump}.  This is because the left and right hand sides of Equation \ref{eqn:calabiconjecture}, i.e.\ the Calabi invariant and the PFH spectral invariants, are (Lipschitz) continuous with respect to the Hofer norm and, moreover,  every monotone twist can be approximated, in the Hofer norm, by monotone twists satisfying  \eqref{eq:additional_assump}.  Hence, we will suppose for the rest of this section that our monotone twists $\varphi$ satisfy \eqref{eq:additional_assump}.  This allows us to apply Theorem \ref{theo:spec_computation}.

Our proof relies on a version of the isoperimetric inequality for non-standard norms which we now recall; the idea of using this inequality is inspired by Hutchings' proof in \cite[Section 8]{QECH} of the ``Volume Property" for ECH spectral invariants for certain toric contact forms.

Let $\Omega \subset \R^2$ be a convex compact connected subset. Using the standard Euclidean inner product, the dual norm associated to $\Omega$, denoted $||  \cdot ||^*_{\Omega}$, is defined for any $v\in\R^2$ by
\begin{equation}
\label{eqn:normmax}
|| v ||^*_{\Omega} = \max \{ v \cdot w: w \in \partial \Omega\}.
\end{equation}
Let $\Lambda \subset  \R^2$ be an oriented, piecewise smooth curve and denote by $\ell_\Omega(\Lambda)$ its length measured with respect to $|| \cdot ||^*_{\Omega}$.  
When $\Lambda$ is closed, its length remains unchanged under translation of $\Omega$.

For our proof, we will suppose that $\Omega$ is the region bounded by the graph of $h$, the horizontal line through $(1, h(1))$, and the vertical line through $(-1, 0)$.  Denote by $\hat{\Omega}$ the region obtained by rotating $\Omega$ clockwise by ninety degrees; see Figure \ref{fig:isoperimetric1}.  We orient the boundary $\partial \hat \Omega$ counterclockwise with respect to any point in its interior.

 \begin{figure}[h!]
 \centering 
 \def\svgwidth{0.6\textwidth}
\begingroup%
  \makeatletter%
  \providecommand\color[2][]{%
    \errmessage{(Inkscape) Color is used for the text in Inkscape, but the package 'color.sty' is not loaded}%
    \renewcommand\color[2][]{}%
  }%
  \providecommand\transparent[1]{%
    \errmessage{(Inkscape) Transparency is used (non-zero) for the text in Inkscape, but the package 'transparent.sty' is not loaded}%
    \renewcommand\transparent[1]{}%
  }%
  \providecommand\rotatebox[2]{#2}%
  \newcommand*\fsize{\dimexpr\f@size pt\relax}%
  \newcommand*\lineheight[1]{\fontsize{\fsize}{#1\fsize}\selectfont}%
  \ifx\svgwidth\undefined%
    \setlength{\unitlength}{252.20818545bp}%
    \ifx\svgscale\undefined%
      \relax%
    \else%
      \setlength{\unitlength}{\unitlength * \real{\svgscale}}%
    \fi%
  \else%
    \setlength{\unitlength}{\svgwidth}%
  \fi%
  \global\let\svgwidth\undefined%
  \global\let\svgscale\undefined%
  \makeatother%
  \begin{picture}(1,0.56074227)%
    \lineheight{1}%
    \setlength\tabcolsep{0pt}%
    \put(0,0){\includegraphics[width=\unitlength,page=1]{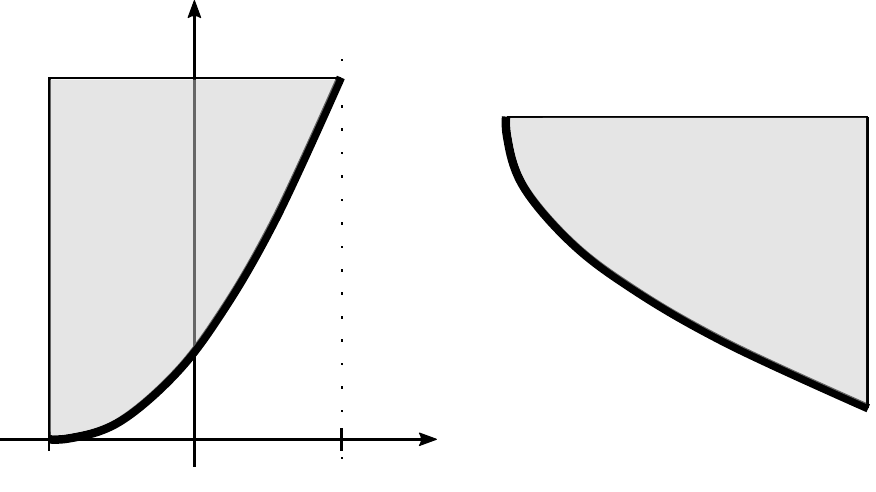}}%
    \put(0.39754516,0.00576704){\color[rgb]{0,0,0}\makebox(0,0)[lt]{\lineheight{1.25}\smash{\begin{tabular}[t]{l}$1$\end{tabular}}}}%
    \put(0.06345542,0.0068675){\color[rgb]{0,0,0}\makebox(0,0)[lt]{\lineheight{1.25}\smash{\begin{tabular}[t]{l}$-1$\end{tabular}}}}%
    \put(0.27606682,0.19868948){\color[rgb]{0,0,0}\makebox(0,0)[lt]{\lineheight{1.25}\smash{\begin{tabular}[t]{l}$\mathrm{graph}(h)$\end{tabular}}}}%
    \put(0.13291868,0.31623227){\color[rgb]{0,0,0}\makebox(0,0)[lt]{\lineheight{1.25}\smash{\begin{tabular}[t]{l}$\Omega$\end{tabular}}}}%
    \put(0.22792498,0.00433191){\color[rgb]{0,0,0}\makebox(0,0)[lt]{\lineheight{1.25}\smash{\begin{tabular}[t]{l}$0$\end{tabular}}}}%
    \put(0.22715157,0.48253377){\color[rgb]{0,0,0}\makebox(0,0)[lt]{\lineheight{1.25}\smash{\begin{tabular}[t]{l}$h(1)$\end{tabular}}}}%
    \put(0.76383531,0.29782186){\color[rgb]{0,0,0}\makebox(0,0)[lt]{\lineheight{1.25}\smash{\begin{tabular}[t]{l}$\hat{\Omega}$\end{tabular}}}}%
    \put(0,0){\includegraphics[width=\unitlength,page=2]{isoperimetric1.pdf}}%
  \end{picture}%
\endgroup%
\caption{The convex subsets $\Omega$, $\hat\Omega$.}
 \label{fig:isoperimetric1}
\end{figure}

\begin{proof}[Proof of Theorem \ref{thm:s2case}]
Let $P$ be a concave lattice path of horizontal displacement $d$ for $\varphi$.  Complete the path $P$ to a closed, convex polygon by adding a vertical edge at the beginning of $P$ and a horizontal edge at the end; orient this polygon counterclockwise, relative to any point in its interior; and, rotate it clockwise by ninety degrees.  Call the resulting path $\Lambda$; see Figure \ref{fig:isoperimetric2}. We will need the following lemma.  

 \begin{figure}[h!]
 \centering 
 \def\svgwidth{0.6\textwidth}
\begingroup%
  \makeatletter%
  \providecommand\color[2][]{%
    \errmessage{(Inkscape) Color is used for the text in Inkscape, but the package 'color.sty' is not loaded}%
    \renewcommand\color[2][]{}%
  }%
  \providecommand\transparent[1]{%
    \errmessage{(Inkscape) Transparency is used (non-zero) for the text in Inkscape, but the package 'transparent.sty' is not loaded}%
    \renewcommand\transparent[1]{}%
  }%
  \providecommand\rotatebox[2]{#2}%
  \newcommand*\fsize{\dimexpr\f@size pt\relax}%
  \newcommand*\lineheight[1]{\fontsize{\fsize}{#1\fsize}\selectfont}%
  \ifx\svgwidth\undefined%
    \setlength{\unitlength}{363.81397853bp}%
    \ifx\svgscale\undefined%
      \relax%
    \else%
      \setlength{\unitlength}{\unitlength * \real{\svgscale}}%
    \fi%
  \else%
    \setlength{\unitlength}{\svgwidth}%
  \fi%
  \global\let\svgwidth\undefined%
  \global\let\svgscale\undefined%
  \makeatother%
  \begin{picture}(1,0.47341963)%
    \lineheight{1}%
    \setlength\tabcolsep{0pt}%
    \put(0,0){\includegraphics[width=\unitlength,page=1]{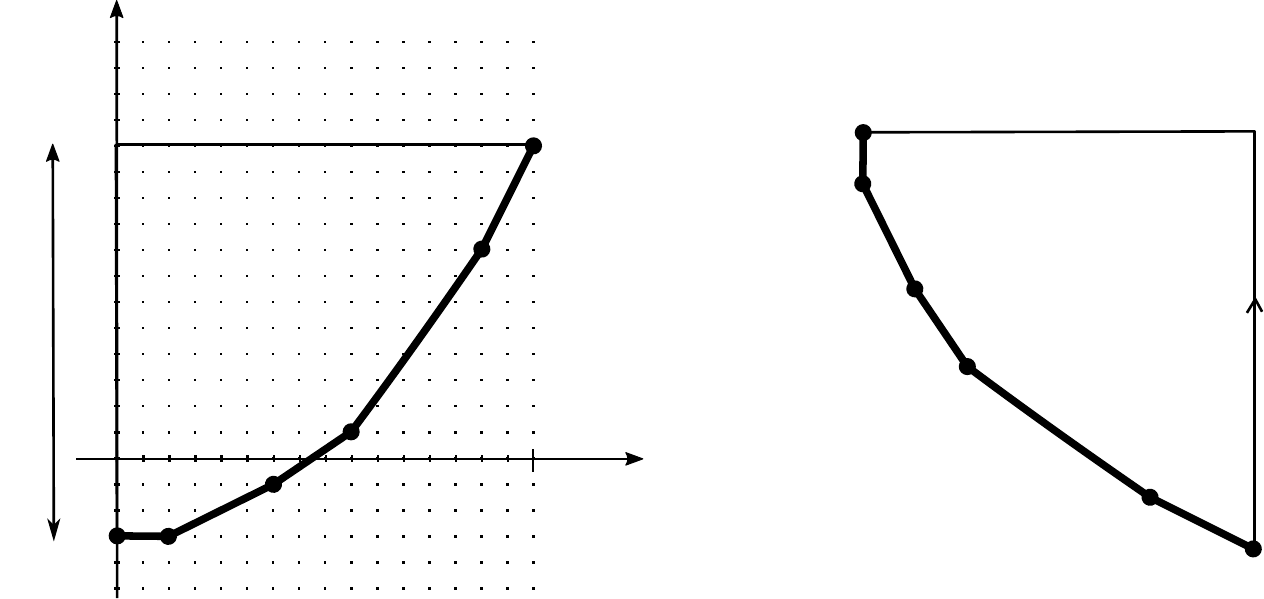}}%
    \put(0.71950065,0.28718763){\color[rgb]{0,0,0}\makebox(0,0)[lt]{\lineheight{1.25}\smash{\begin{tabular}[t]{l}$\Lambda$\end{tabular}}}}%
    \put(0.30393307,0.14071302){\color[rgb]{0,0,0}\makebox(0,0)[lt]{\lineheight{1.25}\smash{\begin{tabular}[t]{l}$P$\end{tabular}}}}%
    \put(0.42930014,0.11869246){\color[rgb]{0,0,0}\makebox(0,0)[lt]{\lineheight{1.25}\smash{\begin{tabular}[t]{l}$d$\end{tabular}}}}%
    \put(-0.00174658,0.20313579){\color[rgb]{0,0,0}\makebox(0,0)[lt]{\lineheight{1.25}\smash{\begin{tabular}[t]{l}$V$\end{tabular}}}}%
    \put(0.09771021,0.1182685){\color[rgb]{0,0,0}\makebox(0,0)[lt]{\lineheight{1.25}\smash{\begin{tabular}[t]{l}$0$\end{tabular}}}}%
  \end{picture}%
\endgroup%
\caption{The path $P$ and the closed path $\Lambda$.}
\label{fig:isoperimetric2}
\end{figure}

\begin{lemma}\label{lem:length_action}  The following identities hold:
\begin{enumerate}
\item $\ell_{\Omega}(\partial\hat \Omega) = 2 (2h(1) - I)$, where $I : = \int_{-1}^1 h(z) dz$.

\item $\ell_{\Omega}(\Lambda) = dh(1) + 2y + 2 V  - 2 \mathcal{A}(P)$,
 where $V$ denotes the vertical displacement of the path $P$. 
\end{enumerate}
\end{lemma}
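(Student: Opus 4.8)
\textbf{Proof plan for Lemma \ref{lem:length_action}.}

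The plan is to compute each of the two lengths directly from the definition \eqref{eqn:normmax} of the dual norm $\|\cdot\|^*_\Omega$, exploiting the fact that both $\partial\hat\Omega$ and $\Lambda$ are piecewise linear (or piecewise smooth), so the length decomposes as a sum of contributions of individual edges. The key computational identity is the following: for an oriented edge with direction vector $v$, its $\|\cdot\|^*_\Omega$-length equals $\max_{w\in\partial\Omega} v\cdot w$ times the Euclidean length, and since $\Omega$ is convex this maximum is attained at a vertex or along a supporting line whose outer normal is perpendicular to $v$. Because we have rotated by ninety degrees going from $\Omega$ to $\hat\Omega$ and from $P$ to $\Lambda$, edges of $\Lambda$ that came from edges $v_{p,q}=m_{p,q}(q,p)$ of $P$ become, after the rotation, edges in the direction $(p,-q)$ (up to sign/orientation); the supporting line of $\Omega$ with the matching normal direction touches $\Omega$ exactly at the point $(z_{p,q}, h(z_{p,q}))$ on $\mathrm{graph}(h)$, since $h'(z_{p,q})=p/q$. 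This is precisely the geometric content that will make the edge-length of the $v_{p,q}$-edge equal to something proportional to $p(1-z_{p,q})+qh(z_{p,q})$, i.e.\ to $\mathcal{A}(v_{p,q})$ up to the factor $m_{p,q}/2$ appearing in \eqref{eqn:degenaction}.

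First I would prove item (1). The boundary $\partial\hat\Omega$ consists of four pieces: the image under the clockwise ninety-degree rotation of the vertical segment $\{-1\}\times[0,h(1)]$, of the horizontal segment $[-1,1]\times\{h(1)\}$, of the curve $\mathrm{graph}(h)$, and — wait, $\Omega$ has only three boundary pieces (the vertical edge, the horizontal edge, and the graph), so $\hat\Omega$ does too. I would compute the $\|\cdot\|^*_\Omega$-length of each of the three rotated pieces. For the two straight edges this is an elementary computation: for a straight edge in direction $v$, the length is (Euclidean length of the edge) $\times\,\|v\|^*_\Omega$, and $\|v\|^*_\Omega$ is read off by maximizing $v\cdot w$ over the (now explicit) convex region $\Omega$, which amounts to evaluating at one of the extreme points $(-1,0)$, $(1,0)$, $(1,h(1))$. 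For the rotated graph piece, parametrize by $z\in[-1,1]$ and integrate: the contribution of an infinitesimal piece of $\mathrm{graph}(h)$ near $z$, after rotation, has direction $(h'(z),-1)$ (up to scaling), and the supporting line of $\Omega$ with the matching normal touches $\Omega$ at $(z,h(z))$, so the infinitesimal length is $|h'(z)\cdot z + (-1)\cdot h(z)|\,dz$ or the like; integrating and combining with the straight-edge contributions, together with an integration by parts $\int_{-1}^1 z h'(z)\,dz = [zh(z)]_{-1}^1 - \int_{-1}^1 h(z)\,dz = h(1)+ 0 - I$ (using $h(-1)=0$ and that $h$ is defined on $[-1,1]$), should collapse everything to $2(2h(1)-I)$. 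I expect the main care needed here is bookkeeping of signs and orientations, and making sure the additive constants from the vertices $(1,0)$, $(1,h(1))$, $(-1,0)$ are tracked correctly; using the translation-invariance of $\ell_\Omega$ on closed curves lets one center $\Omega$ conveniently to simplify the maxima.

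Then I would prove item (2) by the same method applied to the closed convex polygon $\Lambda$. The polygon $\Lambda$ is the ninety-degree clockwise rotation of the polygon obtained by closing up $P$ with one vertical edge (of length $V$, at the left) and one horizontal edge (of length $d$, at the bottom). Its $\|\cdot\|^*_\Omega$-length is the sum over its edges. The rotated vertical closing edge and the rotated horizontal closing edge contribute, respectively, terms proportional to $d$ and to $V$ (and a term involving $y$, coming from where the path $P$ starts on the $y$-axis — this is exactly the additive constant, detected by the extreme points of $\Omega$), giving the $dh(1)+2y+2V$ part. Each slanted edge $v_{p,q}$ of $P$, after rotation, contributes $m_{p,q}$ times the $\|\cdot\|^*_\Omega$-length of the primitive rotated vector $(p,-q)$ (or its negative, with orientation chosen so the polygon is traversed counterclockwise); by the supporting-line observation above this primitive length equals $p(1-z_{p,q})+qh(z_{p,q})$ up to the constant that is absorbed into the $y$- and $d$-terms, hence each slanted edge contributes $-2\mathcal{A}(v_{p,q})$ to $\ell_\Omega(\Lambda)$ after accounting for the global sign. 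Summing over all slanted edges and over the $v_+$ edge (treated the same way, giving $-2\mathcal{A}(v_+)$), and recalling \eqref{eq:action_formula_unlabelled}, yields $\ell_\Omega(\Lambda)=dh(1)+2y+2V-2\mathcal{A}(P)$. The main obstacle, as in item (1), is purely organizational: correctly normalizing the dual norm (there are factors of $2$ and $\tfrac12$ floating around from the conventions $\omega=\tfrac1{4\pi}d\theta\wedge dz$ and from \eqref{eqn:degenaction}), and keeping the orientation/sign conventions consistent between $\Omega$, $\hat\Omega$, $P$, and $\Lambda$ so that the slanted edges contribute with the correct sign. Once the normalization is pinned down on a single simple example — say $P$ a single edge $v_+$ for small $d$ — the general formula follows by additivity.
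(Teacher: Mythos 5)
Your plan is correct and follows essentially the same path as the paper's proof: for item (2) you sum $e\cdot p_e$ over the edges of $\Lambda$ after exploiting the closed-loop identity $\sum_e e\cdot m=0$ (equivalently, translation-invariance of $\ell_\Omega$), with the max point for each rotated edge $m_{p,q}(p,-q)$ located at $(z_{p,q},h(z_{p,q}))$, exactly as the paper does with $m=(1,0)$. For item (1) the paper instead invokes the equality case of the isoperimetric inequality (with $A(\Gamma)=A(\Omega)=2h(1)-I$), but it explicitly notes that the direct computation you describe works too; your integration by parts $\int_{-1}^1 zh'(z)\,dz = h(1)-I$ is the needed identity, and one should drop the absolute values in the integrand (they are incorrect unless $\Omega$ is first translated to contain the origin, which the closed-loop trick makes unnecessary).
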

\begin{proof}[Proof of Lemma \ref{lem:length_action}]
According to the Isoperimetric Theorem \cite{Brothers-Morgan}, for any  simple closed curve $\Gamma$, we have 
\begin{equation}
\ell_{\Omega}^2(\Gamma) \geq 4 A(\Omega) A (\Gamma),\label{eq:isoperimetric}
\end{equation}
 where $A(\Omega)$ and $A(\Gamma)$ denote the Euclidean areas of $\Omega$ and the region bounded by $\Gamma$, respectively.  Moreover, equality holds when $\Gamma$ is a scaling of a ninety degree clockwise rotation of $\partial \Omega$;  see \cite[Example 8.3]{QECH}.   The first item follows immediately from the equality case of the theorem applied to $\Gamma = \partial \hat \Omega$ because $A(\Gamma) = A(\Omega) = 2h(1) - I$. 
Alternatively, Item  1 could be obtained via direct computation.

We now prove the second item.  The length of the polygon $\Lambda$ is given by the sum 
$
\sum_{e \in \Lambda}  || e ||^*_{\Omega},$
where the sum is taken over the edge vectors $e$ of $\Lambda$.  It follows from the method of Lagrange multipliers that  $$|| e ||^*_{\Omega} = e \cdot p_e,$$ for some point $p_e \in \partial \Omega$ where $e$ points in the direction of the outward normal cone at $p_e$.  Hence, we can write 
 \begin{equation}
\label{eq:length}
\ell_{\Omega}(\Lambda) = \sum_{e \in \Lambda} e \cdot p_e  =   \sum_{e \in \Lambda} e \cdot ( p_e - m),
\end{equation}
where the second equality holds, for any $m \in \R^2$, because $\Lambda$ is closed.  
We will calculate $\ell_{\Omega}(\Lambda)$  using the choice $m=(1,0)$. 
  
   Let $e$ denote one of the edges of $\Lambda$ corresponding to a vector $v_{p,q} = m_{p, q} (q,p) $ in $P$.    Now, we have  $e = m_{p,q} (p, -q),$ since we are taking a ninety degree clockwise rotation; moreover, $ p_e - m  = (z_{p,q} -1, h(z_{p,q}))$.  Thus, 
\begin{equation*}
\begin{split}
e \cdot (p_e -m) =&  m_{p,q} (p, -q) \cdot (z_{p,q} -1, h(z_{p,q}))  \\
 =& m_{p,q} \left( p (z_{p,q} - 1 ) - q h(z_{p,q}) \right) \\
 =& - 2  \mathcal A ( v_{p,q} ),
 \end{split}
\end{equation*}
where the final equation follows from \eqref{eqn:degenaction}.  

If $e$ is an edge  of $\Lambda$ corresponding to either of the vectors $v = v_- = m_- (1,0)$ or $v = v_+ = m_+ (1, \lceil h'(1) \rceil)$, then a similar computation to the above yields
$e \cdot (p_e -m) = - 2  \mathcal A ( v )$.   Summing over all of the edges $e$ of $\Lambda$, corresponding to vectors in $P$, we obtain the quantity 
\begin{equation}\label{eq:action_1}
2y- 2 \mathcal{A} (P).
\end{equation}    

The remaining two edges of $\Lambda$ are the vectors $e_1 = (0, d )$ and $e_2 = (-V,0)$ for which we have 
\begin{equation}\label{eq:action_2}
\begin{split}
e_1 \cdot (p_{e_1} -m) &= (0, d) \cdot (-1, h(1)) = dh(1),  \\
e_2 \cdot (p_{e_2} -m) & =(- V, 0) \cdot (-2, 0) = 2V  .
 \end{split}
\end{equation}

We obtain from Equations \eqref{eq:length}, \eqref{eq:action_1}, and \eqref{eq:action_2} that $\ell_{\Omega}(\Lambda) = dh(1) + 2y + 2V - 2 \mathcal{A}(P)$. 
\end{proof}

\medskip 

\noindent {\em Step 1: Calabi gives the lower bound.}  Here, we will prove the lower bound needed for establishing Equation \eqref{eqn:calabiconjecture}.  In other words, we will show that for any sequence $(d,k)$, such that $d\to \infty$, we have
\begin{equation}\label{eqn:lower_bound}
\Cal(\varphi) \leq \liminf_{d \to \infty} \left( \frac{c_{d,k}(\varphi)}{d} - \frac12\frac{k}{d^2 + d} \right).
\end{equation}

 To prove the above, fix $\eps > 0$.  We will show that for all sufficiently large positive integers $d$, there exists a sequence of concave lattice paths $\{P_{\eps,d}\}$,
 such that 
\begin{equation} \label{eqn:lowerbound_for_Pd}
\left| \Cal(\varphi) - \left( \frac{ \mathcal{A}(P_{\eps,d})}{d} - \frac12\frac{k_d}{d^2 + d} \right) \right| \leq \eps,
\end{equation} %
where $k_d = 2j(P_{\eps,d}) - d $ denotes the combinatorial index of $P_{\eps,d}$.  
By Theorem \ref{theo:spec_computation}, we have $\mathcal{A}(P_{\eps,d}) \leq c_{d, k_d} (\varphi)$, and, by the argument we explained in Section \ref{sec:PFH_monotone_twist},  see the discussion after Theorem \ref{thm:s2case}, 
proving \eqref{eqn:lower_bound} for one sequence $\{(d,k)\}$ with $d$ ranging across all sufficiently large positive integers
proves it for all such sequences, and so we conclude \eqref{eqn:lower_bound} from the above, since $\eps$ was arbitrary.

\medskip

We now turn to the description of the concave paths $P_{\eps,d}$.  Let $P$ be a concave path approximating the graph of $h$ such that it begins at $(-1, 0)$, ends on the line $x = 1$, is piecewise linear, and its vertices are rationals with numerator an even integer and denominator $d$.  Let $\Lambda$ be the convex polygon  obtained as follows:   Add a vertical edge at the beginning of $P$ and a horizontal edge at the end of it; orient this polygon counterclockwise, relative to any point in its interior; and, rotate it clockwise by ninety degrees.   The convex polygon $\Lambda$ approximates $\partial \hat \Omega$. More precisely, given $\eps$, we pick, for all sufficiently large positive integers $d$, paths $P$, subject to the conditions above, and such that 
\begin{enumerate}[(A)]
\item $P$ is within $\eps$ of the graph of $h$,
\item $ \vert \ell_\Omega( \Lambda)  - \ell_\Omega(\partial \hat \Omega) \vert \leq \eps$ which by Lemma \ref{lem:length_action} is equivalent to  $$
  \vert \ell_\Omega( \Lambda)  -  2 (2h(1) - I) \vert \leq \eps,$$
\item The area of the region under the path $P$, and above the $x$--axis, is within $\eps$ of $I$.
\end{enumerate}

Let $P_{\eps,d}, \Lambda_{\eps,d}$ be the images of $P, \Lambda$, respectively, under the mapping $$(x, y) \mapsto \frac {d}{2}(x + 1, y).$$   
The path $P_{\eps,d}$ is a concave lattice path of degree $d$. Recall that  $\Cal(\varphi) = \frac{1}{4}I$.  We will prove the two inequalities below,  
which will imply Equation \eqref{eqn:lowerbound_for_Pd}:
\begin{equation}\label{eqn:toconsider} 
\begin{split}
\left|   \frac{ \mathcal{A}(P_{\eps,d} )}{d} - \frac I2 \right| \leq  \frac{3\eps}4, \\
 \left|  \frac12\frac{k_d}{d^2 + d} - \frac{I}{4} \right|\leq \frac{\eps}{4}. 
 \end{split}
\end{equation}

We first examine the term  $ \frac{ \mathcal{A}(P_{\eps,d} )}{d} $.  By Lemma \ref{lem:length_action}, and using the fact that $\ell_\Omega(\Lambda_{\eps,d}) = \frac{d}{2} \ell_\Omega(\Lambda)$, we obtain
\begin{align*}
\frac{ \mathcal{A}(P_{\eps,d} )}{d}  =&   \frac{dh(1) + 2 V - \ell_\Omega(\Lambda_{\eps,d})}{2d} \\
      =& \frac{h(1)}2 + \frac{V}{d} - \frac{\ell_\Omega(\Lambda)}{4}.
\end{align*} 

By item $(A)$ above, the term $\frac{V}{d}$ is within $\frac\eps2$ of $\frac{h(1)}2$.  And, by item $(B)$ above, the term $\ell_{\Omega}(\Lambda)$ is within $\eps$ of $2(2h(1) - I),$ hence the first inequality in \eqref{eqn:toconsider}. 

As for the second inequality, we know from the index computations of Section \ref{sec:index} that, up to an error of $O(d)$,  the index $k_d$ is twice the area between the $x$--axis and the path $P_{\eps,d}$.  Because $P_{\eps,d}$ is a scaling of $P$ by a factor of $\frac{d}{2}$,  item (C) above implies
$$ - \frac{d^2}{2} \eps + O(d) \leq k_d - \frac{d^2}{2} I \leq \frac{d^2}{2} \eps + O(d),  $$
which  for sufficiently large $d$ yields the second inequality in \eqref{eqn:toconsider}.  
\end{proof}

\noindent {\em Step 2:  Calabi gives the upper bound.}

We now complete the proof of Theorem~\ref{thm:s2case}.  We emphasize again, for the convenience of the reader, that as mentioned in Remark \ref{rem:only_need_inequality}, we do not actually need this step of the proof for the proof of our main result Theorem~\ref{thm:main}.

To complete the proof, we need to show that
\begin{equation}\label{eqn:upper_bound}
\Cal(\varphi) \geq \limsup_{d \to \infty} \left( \frac{ c_{d,k}(\varphi)}{d} - \frac12\frac{k}{d^2 + d} \right).
\end{equation}
To do this, we will show that 
\begin{equation}\label{eqn:upper_bound2}
\Cal(\varphi) \geq \limsup_{d \to \infty} \left( \frac{ \mathcal{A}(P)}{d} - \frac12\frac{k}{d^2 + d} \right)
\end{equation}
for all degree $d$  concave lattice paths $P$ of combinatorial index $k$.

Let $P$ be a concave lattice path of degree $d$ and combinatorial index $k$. Let $E$ be a real number with $E> h(1)$  and $E>\frac{2V}d$, and $\Omega_E$ be the compact convex subset of $\R^2$ bounded by the graph of $h$, the vertical segments $\{-1\}\times [0,E]$ and $\{1\}\times [h(1),E]$, and the horizontal segment $[0,d]\times\{E\}$.  For example, with the notations of Lemma \ref{lem:length_action}, we have $\Omega=\Omega_{h(1)}$. The inequality (\ref{eqn:upper_bound2}) will follow from letting $E$ tend to $\infty$ after applying the isoperimetric inequality (\ref{eq:isoperimetric}) to the domain $\Omega_E$ and to the following curve $\Lambda_E$.

To define $\Lambda_E$, consider the region delimited by our lattice path $P$, the vertical segments $\{0\}\times [y,y+\frac{dE}2]$ and $\{d\}\times [y+V,y+\frac{dE}2]$, and the horizontal segment $[0,d]\times\{y+\frac{dE}2\}$. Our curve $\Lambda_E$ is the boundary of this region, rotated clockwise by ninety degrees; see Figure \ref{fig:isoperimetric3}.

 \begin{figure}[h!]
 \centering 
 \def\svgwidth{1\textwidth}
 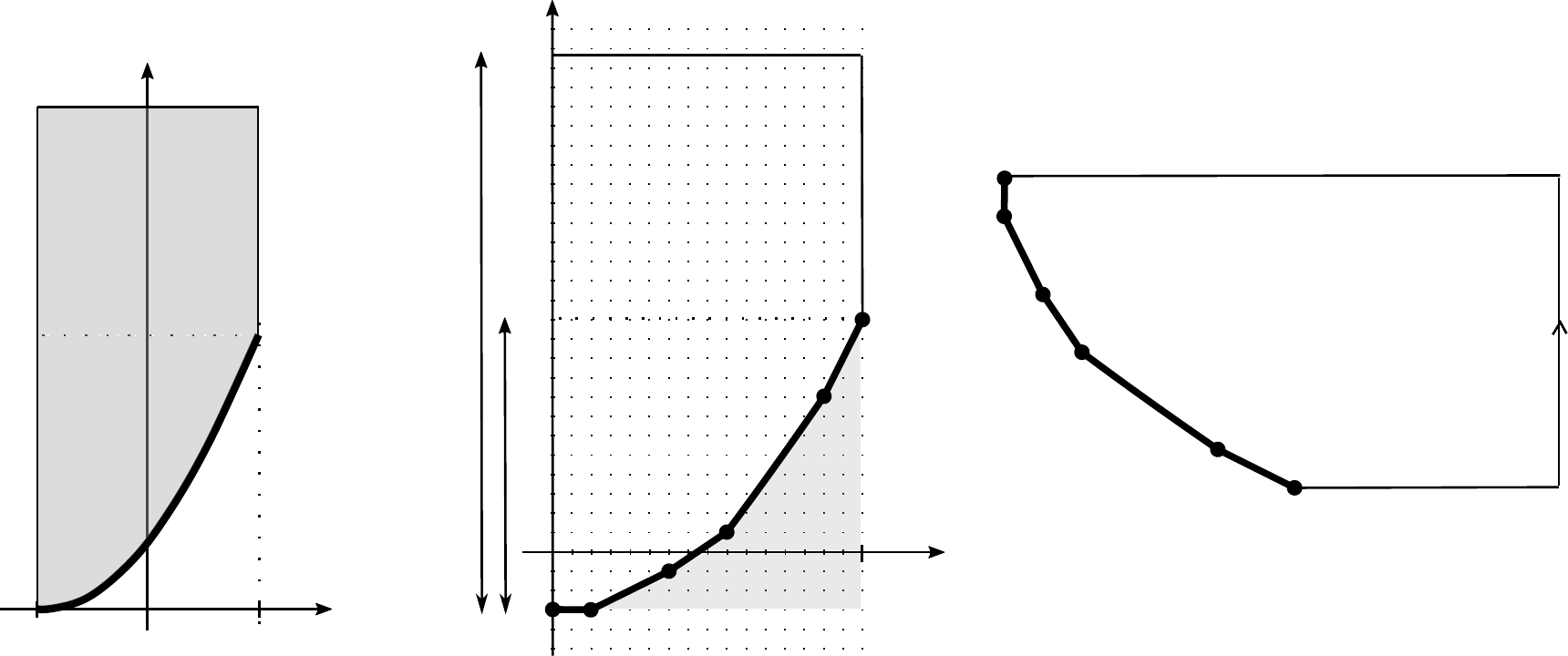\caption{The convex subset $\Omega_E$ and the path $\Lambda_E$.}\label{fig:isoperimetric3}
\end{figure}

The isoperimetric inequality (\ref{eq:isoperimetric}) gives:
\begin{equation}\label{eq:isoperimetric2}
\ell_{\Omega_E}^2(\Lambda_E)\geq 4 A(\Omega_E)A(\Lambda_E).
\end{equation}

The area factors are easily computed. We have:
\[A(\Omega_E)=2E-I,\quad A(\Lambda_E)=\tfrac12d^2E-a(P),\]
where $a(P)$ denote the area of the region between $P$, the horizontal segment $[0,d]\times \{y\}$ and the vertical segment $\{d\}\times[y,y+V]$ (in grey on Figure \ref{fig:isoperimetric3}).
Moreover, a computation similar to that of item 2 in Lemma \ref{lem:length_action}, gives 
\[\ell_{\Omega_E}(\Lambda_E)=2dE +2y-2\mathcal{A}(P).\]
Thus, (\ref{eq:isoperimetric2}) gives
\[(2dE+2y-2\mathcal{A}(P))^2\geq 4(2E-I)(\tfrac12d^2E-a(P)).\]
After simplification, we obtain
\[-2dE\mathcal{A}(P)+\mathcal{A}(P)^2+ 2dEy-2y\mathcal{A}(P) +  y^2 \geq -2a(P)E-\tfrac12d^2EI+Ia(P).\]
Dividing by $2d^2E$ and letting $E$ go to $+\infty$ then yields:
\[\frac14I\geq \frac{\mathcal{A}(P)}{d}-\frac{a(P)+dy}{d^2}.\]
Now $2a(P)+2dy$ corresponds to $k$ up to an error $O(d)$. Thus, for all sequence of paths $P$ of degree $d$ and index $k$,
\[\frac14I\geq \limsup_{d \to \infty} \left( \frac{ \mathcal{A}(P)}{d} - \frac12\frac{k}{d^2 + d} \right),\]
from which (\ref{eqn:upper_bound2}) follows.

\section{Discussion and open questions}\label{sec:questions}
We discuss here some open questions relating to the main results of our article.

\subsection{Simplicity on other surfaces}\label{sec:questions_simplicity}
Let $M$ denote a closed manifold equipped with a volume form $\omega$ and denote by $\Homeo_0(M, \omega)$ the identity component in the group of volume-preserving homeomorphisms of $M$.  In \cite{fathi}, Fathi constructs the {\bf mass-flow homomorphism}
$$\mathcal{F}: \Homeo_0(M, \omega) \rightarrow H_1(M)/ \Gamma,$$ 
mentioned above, where $H_1(M)$ denotes the first homology group of $M$ with coefficients in $\R$ and $\Gamma$ is a discrete subgroup of $H_1(M)$ whose definition we will not need here. Clearly, $\Homeo_0(M, \omega)$ is not simple when the mass-flow homomorphism is non-trivial.  This is indeed the case when $M$ is a closed surface other than the sphere.
Fathi proved that $\mathrm{ker}(\mathcal{F})$ simple if the dimension of $M$ is at least three.  
The following question is posed in \cite[Appendix A.6]{fathi}.

\begin{question}[Fathi]\label{que:massflow}
  Is  $\mathrm{ker}(\mathcal{F})$ simple in the case of surfaces?  In particular, is $\Homeo_0(\S^2, \omega)$, the group of area and orientation preserving homeomorphisms of the sphere, simple?
\end{question}

This question remains open.  We remark that the two-sphere is the only closed manifold for which the simplicity question for volume-preserving homeomorphisms is unknown.

 There exists a circle of classical ideas and techniques, generally attributed to Epstein and Thurston (sometimes referred to as ``Thurston tricks''), which, under certain conditions, allow one to  conclude that simplicity holds on a large class of manifolds once it is established for a single manifold.  According to \cite{Bounemoura}, these classical methods can be used to prove that  $\mathrm{ker}(\mathcal{F})$ is simple if and only if $\Homeo_c(\D, \omega)$ is perfect; this is the statement of \cite[Proposition 4.1.7]{Bounemoura};  %
as a consequence of Corollary \ref{corol:not_perfect}, then, $\mathrm{ker}(\mathcal{F})$ is in principle non-simple for all closed surfaces.  However,  we have been unable to independently verify \cite[Proposition 4.1.7]{Bounemoura}, and indeed the author of \cite{Bounemoura} has  withdrawn the stated proposition in a recent private communication with us. Hence, as far as we know, Question \ref{que:massflow} remains open.  

It is plausible that the aforementioned classical methods could be used to settle the above question.  Another approach would be to adapt the methods of our paper; this requires further development of the theory of PFH spectral invariants.

\subsection{$C^0$-symplectic topology and simplicity in higher dimensions}\label{sec:simplicity_high_dim}
 From a symplectic viewpoint, a natural generalization of area-preserving homeomorphisms to higher dimensions is given by {\bf symplectic homeomorphisms}.  These are, by definition, those homeomorphisms which appear as $C^0$ limits of symplectic diffeomorphisms.  By the celebrated rigidity theorem \cite{Eliashberg, Gromov} of Eliashberg and Gromov, a smooth symplectic homeomorphism is a symplectic diffeomorphism. These homeomorphisms form the cornerstone of the field of {\bf $C^0$-symplectic topology} which explores continuous analogues of smooth symplectic objects; see for example \cite{muller-oh, banyaga_homeo, HLS1, HLS2, BuOp}.

The connection between $C^0$-symplectic topology and the simplicity conjecture is formed by the fact that, in dimension two, symplectic homeomorphisms are precisely the area and orientation preserving homeomorphisms of surfaces.   Indeed, as we mentioned in Section \ref{sec:intro_main_theo}, the simplicity conjeture has been one of the driving forces behind the  development of $C^0$-symplectic topology; for example, the articles \cite{muller-oh, Oh10, Viterbo-uniqueness, buhovsky-seyfaddini, humiliere,  LeRoux-6Questions, LeRoux10, EPP, Sey13, Sey-displaced} were, at least partially, motivated by this conjecture.

The connection to $C^0$-symplectic topology motivates the following
generalization of Question \ref{que:disc}.

\begin{question}\label{que:simplicity_ball}
Is $\overline{\Symp}_c(\D^{2n}, \omega)$, the group of compactly supported symplectic  homeomorphisms of the standard ball, simple?\footnote{An argument involving the Alexander isotopy  shows that $\overline{\Symp}_c(\D^{2n}, \omega)$ is connected.} 
\end{question}

As we will now explain Question \ref{que:massflow} admits a natural generalization to higher dimensions as well. To keep our discussion simple we will suppose that $(M, \omega)$ is a closed symplectic manifold.  However, this assumption is not necessary and the question below can be reformulated for non-closed manifolds too.

On a symplectic surface $(M, \omega)$, the group
$\mathrm{ker}(\mathcal{F})$ discussed in the above section is often referred to as the group of {\bf Hamiltonian
homeomorphisms}  and is denoted by $\overline{\Ham}(M, \omega)$; see for
example \cite{lecalvez}.  The reason for this terminology is that it
can be shown that $\mathrm{ker}(\mathcal{F})$ coincides with the $C^0$
closure of Hamiltonian diffeomorphisms.  Hence, in this language, Question \ref{que:massflow} may be rephrased as the question of whether or not the group of Hamiltonian homeomorphisms is simple. 
On higher dimensional symplectic manifolds, the elements of the $C^0$
closure of $\Ham(M,\omega)$ are also called Hamiltonian
homeomorphisms and have been studied extensively in $C^0$ symplectic topology; see, for example, \cite{muller-oh, BHS1, BHS2, kawamoto}.

\begin{question}\label{question:ham-bar-simple}
  Is   $\overline{\Ham}(M, \omega)$ a simple group?
\end{question}
In comparison, Banyaga's theorem states that the group of Hamiltonian diffeomorphisms is simple for closed $M$.

\subsection{Finite energy homeomorphisms}
\label{sec:fhomeoq}

The group of finite energy homeomorphisms, $\FHomeo(M, \omega)$, can be defined on arbitrary symplectic manifolds; the construction is analogous to what is done in Section \ref{sec:prop-norm-subgr}. It forms a normal subgroup of $\overline{\Ham}(M, \omega)$.  However, we do not know if it is a proper normal subgroup.   Infinite twists can also be constructed on arbitrary symplectic manifolds: the construction of $\phi_f$, described in Section \ref{sec:calabi_inf_twist}, admits a  generalization to $\D^{2n}$.    And the analogue of Equation \eqref{eq:infinite_calabi}, the condition for having ``infinite" Calabi invariant, can also be formulated in higher dimensions.

\begin{question}\label{que:inf_twist_higher_dimension}
 Is it true that infinite twists, which satisfy the higher dimensional analogue of Equation \eqref{eq:infinite_calabi},   are not finite energy homeomorphisms?
\end{question}

Clearly, a positive answer to this question would settle all of the above simplicity questions.  In the case of surfaces, one could hope to apply the machinery of PFH to approach this question.  However, a serious obstacle arises in higher dimensions where PFH, and the related Seiberg-Witten theory, have no known generalization.

We now return to the case of the disc, where we know that $\FHomeo_c(\D, \omega)$ is a proper normal subgroup of $\Homeo_c(\D, \omega)$.   This immediately gives rise to several interesting questions about $\FHomeo_c(\D, \omega)$. 

\begin{question}  
  Is $\FHomeo_c(\D, \omega)$ simple?
\end{question}

As was mentioned in Remark \ref{rem:Hameo}, the Oh-M\"uller group $\Hameo_c(\D, \omega)$, which we introduce below,  is a subgroup of $\FHomeo_c(\D, \omega)$, and it can easily be checked that it is a normal subgroup.

\begin{question}
  Is $\Hameo_c(\D, \omega)$ a proper normal subgroup of $\FHomeo_c(\D, \omega)$?
\end{question}

Another interesting direction to explore is the algebraic structure of the quotient $\Homeo_c(\D, \omega) / \FHomeo_c(\D, \omega)$.  At present
we are not able to say much beyond the fact that this quotient is abelian; see Proposition \ref{prop:commutators}.  Here are two sample questions.

\begin{question}
Is the quotient $\Homeo_c(\D, \omega) / \FHomeo_c(\D, \omega)$ torsion-free?  Is it divisible?
\end{question}

It would also be very interesting to know if  the quotient $\Homeo_c(\D, \omega) / \FHomeo_c(\D, \omega)$ admits a geometric interpretation.

\subsection{Hutchings' conjecture and extension of the Calabi invariant}\label{sec:extending_Calabi}
Ghys \cite{Ghys_ICM}, Fathi and Oh \cite[Conjecture 6.8]{muller-oh} have asked if the Calabi invariant extends to either of $\Hameo_c(\D, \omega)$ or $\Homeo_c(\D, \omega)$.  It seems natural to add $\FHomeo_c(\D, \omega)$ to the list.

\begin{question}
Does the Calabi invariant admit an extension to any of $\Hameo_c(\D, \omega)$, $\FHomeo_c(\D, \omega)$, or $\Homeo_c(\D, \omega)$?
\end{question}

The reason for expecting that such extensions could exist is that the Calabi invariant exhibits certain topological properties:  In an unpublished manuscript  \cite{Fathi-Calabi}, Fathi shows that the Calabi invariant admits an interpretation as  an average rotation number and uses this to extend the Calabi invariant to Lipschitz homeomorphisms; see also \cite{humiliere} for a different proof.  Fathi's interpretation of the Calabi invariant enables a proof of  topological invariance for Calabi's invariant \cite{Gambaudo-Ghys}, i.e.\ it is invariant under conjugation by area-preserving homeomorphisms.

As was mentioned in Remark \ref{rem:hutchings_conjecture}, Hutchings has conjectured that the asymptotics of PFH spectral invariants recover the Calabi invariant for Hamiltonian diffeomorphisms.  This conjecture, combined with Theorem \ref{theo:C0_continuity}, implies that the Calabi invariant admits an extension to $\Hameo_c(\D, \omega)$; we verify this below.  We remark that extending the Calabi invariant to $\Hameo_c(\D, \omega)$ would imply that both $\Hameo_c(\D, \omega)$ and $\FHomeo_c(\D, \omega)$ are non-simple.  Non-simplicity of $\Hameo_c(\D, \omega)$ is clear; that of $\FHomeo_c(\D, \omega)$ follows from the fact that if $\Hameo_c(\D, \omega)$ is not a proper normal subgroup then $\FHomeo_c(\D, \omega)$ would coincide with the non-simple $\Hameo_c(\D, \omega)$.

We end our article by verifying that Hutchings' conjecture implies that the Calabi invariant extends to $\Hameo_c(\D, \omega)$.  Let us begin with the definition\footnote{The definition we have given here is a slight variation of the one in \cite{muller-oh}; it can easily be checked that if $\phi$ is a hameomorphism in the sense of  \cite{muller-oh}, then it is also a hameomorphisms in the sense described here.} of $\Hameo_c(\D, \omega)$.  We say $\phi \in \Homeo_c(\D, \omega)$ is a {\bf hameomorphism} if there exists a sequence of smooth Hamiltonians $H_i \in C^{\infty}_c(\S^1 \times \D)$ and a continuous $H \in C^{0}_c(\S^1 \times \D)$ such that 

 $$\varphi^1_{H_i} \xrightarrow{C^0} \phi, \text{ and } \| H- H_i \|_{(1, \infty)} \to 0.$$   The set of all hameomorphisms is denoted by $\Hameo_c(\D, \omega)$.\footnote{Oh and M\"uller use the terminology \emph{Hamiltonian homeomorphisms} for the elements of $\Hameo_c(\D, \omega)$.  We have chosen to avoid this terminology because in the surface dynamics literature it is commonly used for referring to homeomorphisms which arise as $C^0$ limits of Hamiltonian diffeomorphisms.}  It defines a normal subgroup of $\Homeo_c(\D, \omega)$ which is clearly contained in $\FHomeo_c(\D, \omega)$.

Take $\phi \in \Hameo_c(\D, \omega)$ and $H \in C^{0}_c(\S^1 \times \D)$ 	as in the previous paragraph;  we denote $ \varphi_H := \phi$. Define 
\begin{equation}\label{eqn:Calabi_Hameo}
\Cal(\varphi_H) := \int_{\S^1} \int_\D H \, \omega \, dt.
\end{equation}
We first show that $\Cal$ is well-defined.  First, note that because $\Cal$ is a homomorphism on $\Diff_c(\D,\omega)$, 
 to show this it suffices to show that if $\varphi_H = \id$, then 
\begin{equation}
\label{eqn:somanyequations}
\int_{\S^1} \int_\D H \, \omega \, dt = 0.
\end{equation}

Suppose that $\varphi_H = \id$  and fix a sequence $(H_1, H_2, \ldots,)$ for $\varphi_H$ as in the definition of $\Hameo_c(\D, \omega)$.  
\begin{claim}\label{cl:last_claim}
For all $i$, we have 
$\vert \frac{ c_d } { d } (\varphi^1_{H_i}) \vert \leq \| H- H_i \|_{(1, \infty)} .$
\end{claim}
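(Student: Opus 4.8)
\textbf{Proof proposal for Claim \ref{cl:last_claim}.}

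The plan is to combine the Hofer continuity property of the PFH spectral invariants (item 3 of Theorem \ref{thm:PFHspec_initial_properties}, reformulated as \eqref{eq:hofer-bound}) with the normalization property $c_d(\id) = 0$ (item 4 of the same theorem), using the fact that $\varphi_H = \id$. First I would observe that since $\varphi_H = \id$, we may compare $\varphi^1_{H_i}$ directly with the identity in the Hofer metric. Concretely, $\varphi^1_{H_i} = \varphi^1_{H_i} \circ (\varphi^1_H)^{-1}$ in the limit, but more usefully, the Hofer continuity estimate $|c_d(\varphi^1_{H_i}) - c_d(\varphi^1_{H_j})| \leq d\,\|H_i - H_j\|_{(1,\infty)}$ together with $\|H - H_i\|_{(1,\infty)} \to 0$ shows that the sequence $c_d(\varphi^1_{H_i})$ is Cauchy; its limit should be interpreted as $c_d$ of the hameomorphism, which here is $c_d(\id) = 0$ by Theorem \ref{theo:C0_continuity} (continuity under $C^0$-limits) and the normalization.

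The key steps, in order, would be: (i) note that $H_i \to H$ in the $(1,\infty)$-Hofer norm and $\varphi^1_{H_i} \xrightarrow{C^0} \varphi_H = \id$; (ii) apply Theorem \ref{theo:C0_continuity} to conclude $c_d(\varphi^1_{H_i}) \to c_d(\id) = 0$ as $i \to \infty$; (iii) for each fixed $i$, estimate $|c_d(\varphi^1_{H_i})|$ by writing it as a telescoping limit, $c_d(\varphi^1_{H_i}) = c_d(\varphi^1_{H_i}) - \lim_{j \to \infty} c_d(\varphi^1_{H_j})$, and applying Hofer continuity to each difference in a sequence interpolating between $H_i$ and $H_j$ (or, more directly, using that the extended functional $c_d$ on hameomorphisms still satisfies $|c_d(\varphi^1_{H_i}) - c_d(\varphi_H)| \leq d\,\|H_i - H\|_{(1,\infty)}$, which follows from \eqref{eq:hofer-bound} by passing to the limit); (iv) since $\varphi_H = \id$ and $c_d(\id) = 0$, this yields $|c_d(\varphi^1_{H_i})| \leq d\,\|H - H_i\|_{(1,\infty)}$, and dividing by $d$ gives the claim.

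The only mild subtlety — and the step I would be most careful about — is justifying that the Hofer continuity inequality \eqref{eq:hofer-bound} survives the passage to the $C^0$-limit: that is, that $|c_d(\psi) - c_d(\varphi_H)| \leq d\,\|K - H\|_{(1,\infty)}$ whenever $\psi = \varphi^1_K$ is a diffeomorphism and $\varphi_H$ is a hameomorphism presented by the continuous Hamiltonian $H$. This is immediate from the construction of the continuous extension of $c_d$ in Theorem \ref{theo:C0_continuity}: approximate $\varphi_H$ by $\varphi^1_{H_j}$ with $H_j \to H$ uniformly (hence in Hofer norm), apply the diffeomorphism-level estimate $|c_d(\psi) - c_d(\varphi^1_{H_j})| \leq d\,\|K - H_j\|_{(1,\infty)}$, and let $j \to \infty$ using $C^0$-continuity of $c_d$ on the left side and $\|K - H_j\|_{(1,\infty)} \to \|K - H\|_{(1,\infty)}$ on the right. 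Applying this with $K = H_i$, $\psi = \varphi^1_{H_i}$ and using $\varphi_H = \id$, $c_d(\id) = 0$ gives exactly $|c_d(\varphi^1_{H_i})| \leq d\,\|H - H_i\|_{(1,\infty)}$, and the claim follows after dividing by $d$. No serious obstacle is anticipated here; everything is a direct consequence of properties already established.
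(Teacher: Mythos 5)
Your proposal is correct and takes essentially the same route as the paper: apply the diffeomorphism-level Hofer continuity estimate to $H_i$ and $H_j$, fix $i$ and let $j\to\infty$, using the $C^0$-continuity extension of Theorem~\ref{theo:C0_continuity} together with the normalization $c_d(\id)=0$ on the left and the Hofer-norm convergence $H_j\to H$ on the right. The paper's proof is exactly your step~(iii) in its ``more direct'' form, so the two arguments coincide.
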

\begin{proof}
By Hofer continuity of PFH spectral invariants, we have 
$\vert \frac{ c_d } { d } (\varphi^1_{H_j})  -  \frac{ c_d } { d } (\varphi^1_{H_i}) \vert \leq \| H_j- H_i \|_{(1, \infty)} $, for all $i, j$.  Fixing $i$ and taking the limit of this inequality, as $j \to \infty$,  yields the claim, by %
Theorem \ref{theo:C0_continuity} and item $4$ of Theorem \ref{thm:PFHspec_initial_properties},
since $\varphi^1_{H_j} \xrightarrow{C^0} \id$.  
\end{proof}

We now establish \eqref{eqn:somanyequations}. For all $i, d \in \N$ we have 
\begin{align*}
  \left\vert \int_{\S^1} \int_\D H \, \omega \,  dt \right\vert   
&\leq  \left\vert \int_{\S^1} \int_\D  H \, \omega \, dt  \; - \Cal(\varphi^1_{H_i} ) \right\vert  \\ &\qquad\qquad+   \left\vert   \Cal(\varphi^1_{H_i}) -  \frac{ c_d } { d } (\varphi^1_{H_i})  \right\vert +  \left\vert \frac{ c_d } { d } (\varphi^1_{H_i}) \right\vert  \\
 &\leq  \| H- H_i \|_{(1, \infty)}  +  \left\vert   \Cal(\varphi^1_{H_i}) -  \frac{ c_d } { d } (\varphi^1_{H_i}) \right\vert + \left\vert \frac{ c_d } { d } (\varphi^1_{H_i}) \right\vert \\
 &\leq 2 \| H- H_i \|_{(1, \infty)}  +  \left\vert   \Cal(\varphi^1_{H_i}) - \frac{ c_d } { d }(\varphi^1_{H_i}) \right\vert,
\end{align*} 
where the last inequality follows from Claim \ref{cl:last_claim}.  Now take any $\varepsilon > 0$.   There exists $N \in \N$ such that if $i \geq N$, then $ \| H- H_i \|_{(1, \infty)} \leq \frac12 \varepsilon$.  Hence, for $i \geq N$, we have 
\begin{align*}
\left\vert \int_{\S^1} \int_\D H \, \omega \, dt \right\vert \leq \varepsilon + \left\vert   \Cal(\varphi^1_{H_i}) - \frac{ c_d } { d }(\varphi^1_{H_i}) \right\vert.
\end{align*}
Assuming Hutchings's conjecture, we can now conclude that $
\vert \int_{\S^1} \int_\D H \, \omega \, dt \vert \leq \varepsilon,$
hence \eqref{eqn:somanyequations}, hence %
the claimed extension of $\Cal$.

It remains to show that 
\[ \Cal : \Hameo_c(\D, \omega) \rightarrow \R\] 
is indeed a homomorphism.  Having shown that \eqref{eqn:Calabi_Hameo} is well-defined, this has in fact already been shown in \cite{Oh10} and so we will only provide a sketch of the argument. Take $\varphi_H, \varphi_G \in \Hameo_c(\D, \omega)$.  Without loss of generality, we may suppose that $H(t,x)$ and $G(t,x)$  vanish for $t$ near $0 \in \S^1$; this can be achieved by replacing $H$ with the reparametrization $\rho'(t) H(\rho(t), x)$, where $\rho : [0,1] \to [0,1]$ coincides with $0$ near $0$ and with $1$ near $1$; see \cite[p.\ 31]{Polterovich2001} for more details on the reparametrization argument.

It can be checked that $\varphi_H \circ \varphi_G = \varphi_{K}$ where $K$ is the concatenation of $H$ and $G$ given by the formula

   \[K(t,x) =
     \begin{cases}
       2 H(2t,x), &\quadif  t\in [0, \frac{1}{2}]\\
       2 G(2t-1,x),&\quadif t\in [\frac{1}{2}, 1]
     \end{cases}.
   \]
It follows immediately from the above formula, and Equation \eqref{eqn:Calabi_Hameo}, that $\Cal(\varphi_H \circ \varphi_G)  = \Cal(\varphi_H) + \Cal(\varphi_G)$.  Verification of the fact that $\Cal(\varphi_H^{-1} )= - \Cal(\varphi_H)$ is similar and so we omit it.

 \bibliographystyle{alpha}
 \bibliography{c0PFH}

\newcommand{\etalchar}[1]{$^{#1}$}
\def\cprime{$'$}
\begin{thebibliography}{CCGF{\etalchar{+}}14}

\bibitem[And58]{Anderson}
R.~D. Anderson.
\newblock The algebraic simplicity of certain groups of homeomorphisms.
\newblock {\em Amer. J. Math}, 80:955--963, 1958.

\bibitem[Ban78]{Banyaga}
Augustin Banyaga.
\newblock Sur la structure du groupe des diff\'{e}omorphismes qui
  pr\'{e}servent une forme symplectique.
\newblock {\em Comment. Math. Helv.}, 53(2):174--227, 1978.

\bibitem[Ban97]{Banyaga_book}
Augustin Banyaga.
\newblock {\em The structure of classical diffeomorphism groups}, volume 400 of
  {\em Mathematics and its Applications}.
\newblock Kluwer Academic Publishers Group, Dordrecht, 1997.

\bibitem[Ban08]{banyaga_homeo}
Augustin Banyaga.
\newblock On the group of symplectic homeomorphisms.
\newblock {\em C. R. Math. Acad. Sci. Paris}, 346(15-16):867--872, 2008.

\bibitem[BEH{\etalchar{+}}03]{BEHWZ}
F.~Bourgeois, Y.~Eliashberg, H.~Hofer, K.~Wysocki, and E.~Zehnder.
\newblock Compactness results in symplectic field theory.
\newblock {\em Geom. Topol.}, 7:799--888, 2003.

\bibitem[BHS18a]{BHS1}
Lev Buhovsky, Vincent Humili\`ere, and Sobhan Seyfaddini.
\newblock A {$C^0$} counterexample to the {A}rnold conjecture.
\newblock {\em Invent. Math.}, 213(2):759--809, 2018.

\bibitem[BHS18b]{BHS2}
Lev Buhovsky, Vincent Humili\`ere, and Sobhan Seyfaddini.
\newblock {The action spectrum and $C^0$ symplectic topology}.
\newblock {\em arXiv:1808.09790}, 2018.

\bibitem[BM94]{Brothers-Morgan}
John~E. Brothers and Frank Morgan.
\newblock The isoperimetric theorem for general integrands.
\newblock {\em Michigan Math. J.}, 41(3):419--431, 1994.

\bibitem[BO16]{BuOp}
Lev Buhovsky and Emmanuel Opshtein.
\newblock Some quantitative results in {$C^0$} symplectic geometry.
\newblock {\em Invent. Math.}, 205(1):1--56, 2016.

\bibitem[Bou08]{Bounemoura}
Abed Bounemoura.
\newblock {\em Simplicit\'{e} des groupes de transformations de surfaces},
  volume~14 of {\em Ensaios Matem\'{a}ticos [Mathematical Surveys]}.
\newblock Sociedade Brasileira de Matem\'{a}tica, Rio de Janeiro, 2008.

\bibitem[BS13]{buhovsky-seyfaddini}
Lev Buhovsky and Sobhan Seyfaddini.
\newblock {Uniqueness of generating Hamiltonians for topological Hamiltonian
  flows}.
\newblock {\em J. Symplectic Geom.}, 11(1):37--52, 2013.

\bibitem[Cal70]{calabi}
Eugenio Calabi.
\newblock On the group of automorphisms of a symplectic manifold.
\newblock {\em Problems in analysis ({L}ectures at the {S}ympos. in honor of
  {S}alomon {B}ochner, {P}rinceton {U}niv., {P}rinceton, {N}.{J}., 1969)},
  pages 1--26, 1970.

\bibitem[CCGF{\etalchar{+}}14]{C-CG-F-H-R}
Keon Choi, Daniel Cristofaro-Gardiner, David Frenkel, Michael Hutchings, and
  Vinicius Gripp~Barros Ramos.
\newblock Symplectic embeddings into four-dimensional concave toric domains.
\newblock {\em J. Topol.}, 7(4):1054--1076, 2014.

\bibitem[CG19]{CG_concave_convex}
Dan Cristofaro-Gardiner.
\newblock Symplectic embeddings from concave toric domains into convex ones.
\newblock {\em J. Differential Geom.}, 112(2):199--232, 2019.

\bibitem[CGHR15]{CGHR}
Daniel Cristofaro-Gardiner, Michael Hutchings, and Vinicius Gripp~Barros Ramos.
\newblock The asymptotics of {ECH} capacities.
\newblock {\em Invent. Math.}, 199(1):187--214, 2015.

\bibitem[Che]{Chen}
Guanheng Chen.
\newblock Cobordism maps on pfh induced by lefschetz fibration over higher
  genus base.

\bibitem[Cho16]{Choi}
Keon Choi.
\newblock Combinatorial embedded contact homology for toric contact manifolds.
\newblock {\em https://arxiv.org/abs/1608.07988}, 2016.

\bibitem[CK94]{Constantin-Kolev}
Adrian Constantin and Boris Kolev.
\newblock The theorem of {K}er\'{e}kj\'{a}rt\'{o} on periodic homeomorphisms of
  the disc and the sphere.
\newblock {\em Enseign. Math. (2)}, 40(3-4):193--204, 1994.

\bibitem[CM05]{CiMo}
K.~Cieliebak and K.~Mohnke.
\newblock Compactness for punctured holomorphic curves.
\newblock {\em J. Symplectic Geom.}, 3(4):589--654, 2005.
\newblock Conference on Symplectic Topology.

\bibitem[EK71]{EK}
Robert~D. Edwards and Robion~C. Kirby.
\newblock Deformations of spaces of imbeddings.
\newblock {\em Ann. Math. (2)}, 93:63--88, 1971.

\bibitem[Eli87]{Eliashberg}
Ya.~M. Eliashberg.
\newblock A theorem on the structure of wave fronts and its application in
  symplectic topology.
\newblock {\em Funktsional. Anal. i Prilozhen.}, 21(3):65--72, 96, 1987.

\bibitem[EPP12]{EPP}
Michael Entov, Leonid Polterovich, and Pierre Py.
\newblock On continuity of quasimorphisms for symplectic maps.
\newblock In {\em Perspectives in analysis, geometry, and topology}, volume 296
  of {\em Progr. Math.}, pages 169--197. Birkh\"auser/Springer, New York, 2012.
\newblock With an appendix by Michael Khanevsky.

\bibitem[Eps70]{Epstein}
D.~B.~A. Epstein.
\newblock The simplicity of certain groups of homeomorphisms.
\newblock {\em Compositio Math.}, 22:165--173, 1970.

\bibitem[Fat80a]{fathi}
A.~Fathi.
\newblock Structure of the group of homeomorphisms preserving a good measure on
  a compact manifold.
\newblock {\em Ann. Sci. \'Ecole Norm. Sup. (4)}, 13(1):45--93, 1980.

\bibitem[Fat80b]{Fathi-Calabi}
A.~Fathi.
\newblock {S}ur l'homomorphisme de {C}alabi
  $\mathrm{Diff}_c^{\infty}(\mathbb{R}^2, m) \rightarrow \mathbb{R} $.
\newblock {\em Appears in: Transformations et hom{\'e}omorphismes
  pr{\'e}servant la mesure. Syst{\`e}mes dynamiques minimaux., Th{\`e}se
  Orsay}, 1980.

\bibitem[Fis60]{Fisher}
G.M. Fisher.
\newblock On the group of all homeomorphisms of a manifold.
\newblock {\em Trans. of the Amer. Math. Soc.}, 97:193--212, 1960.

\bibitem[GG97]{Gambaudo-Ghys}
Jean-Marc Gambaudo and \'{E}tienne Ghys.
\newblock Enlacements asymptotiques.
\newblock {\em Topology}, 36(6):1355--1379, 1997.

\bibitem[Ghy07a]{Ghys_lecture}
E.~Ghys.
\newblock Lecture at the conference ``{S}ymplectic topology and
  measure-preserving dynamical systems", {S}nowbird, {U}tah, 2007.
\newblock {\em Outlined in the book by A. Bounemoura}, 2007.

\bibitem[Ghy07b]{Ghys_ICM}
\'{E}tienne Ghys.
\newblock Knots and dynamics.
\newblock In {\em International {C}ongress of {M}athematicians. {V}ol. {I}},
  pages 247--277. Eur. Math. Soc., Z\"{u}rich, 2007.

\bibitem[Gro85]{Gromov}
M.~Gromov.
\newblock Pseudo holomorphic curves in symplectic manifolds.
\newblock {\em Invent. Math.}, 82(2):307--347, 1985.

\bibitem[Her73]{Herman}
Michael~R. Herman.
\newblock Sur le groupe des diff\'{e}omorphismes du tore.
\newblock {\em Ann. Inst. Fourier (Grenoble)}, 23(2):75--86, 1973.

\bibitem[Hig54]{Higman}
Graham Higman.
\newblock On infinite simple permutation groups.
\newblock {\em Publ. Math. Debrecen}, 3:221--226 (1955), 1954.

\bibitem[HLS15]{HLS1}
Vincent Humili\`ere, R\'{e}mi Leclercq, and Sobhan Seyfaddini.
\newblock Coisotropic rigidity and {$C^0$}-symplectic geometry.
\newblock {\em Duke Math. J.}, 164(4):767--799, 2015.

\bibitem[HLS16]{HLS2}
Vincent Humili\`ere, R\'{e}mi Leclercq, and Sobhan Seyfaddini.
\newblock Reduction of symplectic homeomorphisms.
\newblock {\em Ann. Sci. \'{E}c. Norm. Sup\'{e}r. (4)}, 49(3):633--668, 2016.

\bibitem[HS05]{Hutchings-Sullivan-Dehntwist}
Michael Hutchings and Michael Sullivan.
\newblock The periodic {F}loer homology of a {D}ehn twist.
\newblock {\em Algebr. Geom. Topol.}, 5:301--354, 2005.

\bibitem[HS06]{Hutchings-Sullivan-T3}
Michael Hutchings and Michael Sullivan.
\newblock Rounding corners of polygons and the embedded contact homology of
  {$T^3$}.
\newblock {\em Geom. Topol.}, 10:169--266, 2006.

\bibitem[HT07]{Hutchings-TaubesI}
Michael Hutchings and Clifford~Henry Taubes.
\newblock Gluing pseudoholomorphic curves along branched covered cylinders.
  {I}.
\newblock {\em J. Symplectic Geom.}, 5(1):43--137, 2007.

\bibitem[HT09]{Hutchings-TaubesII}
Michael Hutchings and Clifford~Henry Taubes.
\newblock Gluing pseudoholomorphic curves along branched covered cylinders.
  {II}.
\newblock {\em J. Symplectic Geom.}, 7(1):29--133, 2009.

\bibitem[HT13]{Hutchings-Taubes-ChordII}
Michael Hutchings and Clifford~Henry Taubes.
\newblock Proof of the {A}rnold chord conjecture in three dimensions, {II}.
\newblock {\em Geom. Topol.}, 17(5):2601--2688, 2013.

\bibitem[Hum11]{humiliere}
Vincent Humili\`ere.
\newblock The {C}alabi invariant for some groups of homeomorphisms.
\newblock {\em J. Symplectic Geom.}, 9(1):107--117, 2011.

\bibitem[Hut02]{Hutchings-index}
Michael Hutchings.
\newblock An index inequality for embedded pseudoholomorphic curves in
  symplectizations.
\newblock {\em J. Eur. Math. Soc. (JEMS)}, 4(4):313--361, 2002.

\bibitem[Hut11]{QECH}
Michael Hutchings.
\newblock Quantitative embedded contact homology.
\newblock {\em J. Differential Geom.}, 88(2):231--266, 2011.

\bibitem[Hut12]{Hutchings-blog}
Michael Hutchings.
\newblock Strong cobordisms and the {ECH} contact invariant.
\newblock blog post, available at
  https://floerhomology.wordpress.com/2012/05/07/strong-cobordisms/, 2012.

\bibitem[Hut14]{Hutchings-Notes}
Michael Hutchings.
\newblock Lecture notes on embedded contact homology.
\newblock In {\em Contact and symplectic topology}, volume~26 of {\em Bolyai
  Soc. Math. Stud.}, pages 389--484. J\'{a}nos Bolyai Math. Soc., Budapest,
  2014.

\bibitem[Hut16]{Hutchings_beyond}
Michael Hutchings.
\newblock Beyond {ECH} capacities.
\newblock {\em Geom. Topol.}, 20(2):1085--1126, 2016.

\bibitem[Hut17]{Hutchings_unpublished}
Michael Hutchings.
\newblock Notes on asymptotics of {PFH} spectral numbers.
\newblock Unpublished, 2017.

\bibitem[HZ94]{hofer-zehnder}
Helmut Hofer and Eduard Zehnder.
\newblock {\em Symplectic invariants and {H}amiltonian dynamics}.
\newblock Birkh\"{a}user Advanced Texts: Basler Lehrb\"{u}cher. [Birkh\"{a}user
  Advanced Texts: Basel Textbooks]. Birkh\"{a}user Verlag, Basel, 1994.

\bibitem[Kaw19]{kawamoto}
Yusuke Kawamoto.
\newblock On {$C^0$}-continuity of the spectral norm on non-symplectically
  aspherical manifolds, 2019.

\bibitem[KH95]{katok-hasselblatt}
Anatole Katok and Boris Hasselblatt.
\newblock {\em Introduction to the modern theory of dynamical systems},
  volume~54 of {\em Encyclopedia of Mathematics and its Applications}.
\newblock Cambridge University Press, Cambridge, 1995.
\newblock With a supplementary chapter by Katok and Leonardo Mendoza.

\bibitem[KM07]{Kronheimer-Mrowka}
Peter Kronheimer and Tomasz Mrowka.
\newblock {\em Monopoles and three-manifolds}, volume~10 of {\em New
  Mathematical Monographs}.
\newblock Cambridge University Press, Cambridge, 2007.

\bibitem[LC06]{lecalvez}
Patrice Le~Calvez.
\newblock Periodic orbits of {H}amiltonian homeomorphisms of surfaces.
\newblock {\em Duke Math. J.}, 133(1):125--184, 2006.

\bibitem[LR10a]{LeRoux10}
Fr\'{e}d\'{e}ric Le~Roux.
\newblock Simplicity of {${\rm Homeo}(\Bbb D^2,\partial\Bbb D^2,{\rm Area})$}
  and fragmentation of symplectic diffeomorphisms.
\newblock {\em J. Symplectic Geom.}, 8(1):73--93, 2010.

\bibitem[LR10b]{LeRoux-6Questions}
Fr\'ed\'eric Le~Roux.
\newblock Six questions, a proposition and two pictures on {H}ofer distance for
  {H}amiltonian diffeomorphisms on surfaces.
\newblock In {\em Symplectic topology and measure preserving dynamical
  systems}, volume 512 of {\em Contemp. Math.}, pages 33--40. Amer. Math. Soc.,
  Providence, RI, 2010.

\bibitem[LRSV18]{LSV}
Fr\'ed\'eric Le~Roux, Sobhan Seyfaddini, and Claude Viterbo.
\newblock Barcodes and area-preserving homeomorphisms.
\newblock {\em arXiv:1810.03139}, 2018.

\bibitem[LT12]{Lee-Taubes}
Yi-Jen Lee and Clifford~Henry Taubes.
\newblock Periodic {F}loer homology and {S}eiberg-{W}itten-{F}loer cohomology.
\newblock {\em J. Symplectic Geom.}, 10(1):81--164, 2012.

\bibitem[Mat74a]{MatherI}
John~N. Mather.
\newblock Commutators of diffeomorphisms.
\newblock {\em Comment. Math. Helv.}, 49:512--528, 1974.

\bibitem[Mat74b]{Mather74}
John~N. Mather.
\newblock Simplicity of certain groups of diffeomorphisms.
\newblock {\em Bull. Amer. Math. Soc.}, 80:271--273, 1974.

\bibitem[Mat75]{MatherII}
John~N. Mather.
\newblock Commutators of diffeomorphisms. {II}.
\newblock {\em Comment. Math. Helv.}, 50:33--40, 1975.

\bibitem[MS17]{mcduff-salamon}
Dusa McDuff and Dietmar Salamon.
\newblock {\em Introduction to symplectic topology}.
\newblock Oxford Graduate Texts in Mathematics. Oxford University Press,
  Oxford, third edition, 2017.

\bibitem[Oh10]{Oh10}
Yong-Geun Oh.
\newblock The group of {H}amiltonian homeomorphisms and continuous
  {H}amiltonian flows.
\newblock In {\em Symplectic topology and measure preserving dynamical
  systems}, volume 512 of {\em Contemp. Math.}, pages 149--177. Amer. Math.
  Soc., Providence, RI, 2010.

\bibitem[OM07]{muller-oh}
Yong-Geun Oh and Stefan M{\"u}ller.
\newblock The group of {H}amiltonian homeomorphisms and {$C^0$}--symplectic
  topology.
\newblock {\em J. Symplectic Geom.}, 5(2):167--219, 2007.

\bibitem[Pol01]{Polterovich2001}
Leonid Polterovich.
\newblock {\em The geometry of the group of symplectic diffeomorphisms}.
\newblock Lectures in Mathematics ETH Z\"urich. Birkh\"auser Verlag, Basel,
  2001.

\bibitem[Sch00]{Schwarz}
Matthias Schwarz.
\newblock On the action spectrum for closed symplectically aspherical
  manifolds.
\newblock {\em Pacific J. Math.}, 193(2):419--461, 2000.

\bibitem[Sey13a]{Sey13}
Sobhan Seyfaddini.
\newblock {$C^0$}-limits of {H}amiltonian paths and the {O}h-{S}chwarz spectral
  invariants.
\newblock {\em Int. Math. Res. Not. IMRN}, (21):4920--4960, 2013.

\bibitem[Sey13b]{Sey-displaced}
Sobhan Seyfaddini.
\newblock The displaced disks problem via symplectic topology.
\newblock {\em C. R. Math. Acad. Sci. Paris}, 351(21-22):841--843, 2013.

\bibitem[Sie08]{Siefring-relative}
Richard Siefring.
\newblock Relative asymptotic behavior of pseudoholomorphic half-cylinders.
\newblock {\em Comm. Pure Appl. Math.}, 61(12):1631--1684, 2008.

\bibitem[Sie11]{Siefring-intersection}
Richard Siefring.
\newblock Intersection theory of punctured pseudoholomorphic curves.
\newblock {\em Geom. Topol.}, 15(4):2351--2457, 2011.

\bibitem[Tau96]{Taubes-Gr=SW}
Clifford~H. Taubes.
\newblock {${\rm SW}\Rightarrow{\rm Gr}$}: from the {S}eiberg-{W}itten
  equations to pseudo-holomorphic curves.
\newblock {\em J. Amer. Math. Soc.}, 9(3):845--918, 1996.

\bibitem[Tau02]{Taubes-beasts}
Clifford~Henry Taubes.
\newblock A compendium of pseudoholomorphic beasts in {$\Bbb R\times (S^1\times
  S^2)$}.
\newblock {\em Geom. Topol.}, 6:657--814, 2002.

\bibitem[Tau10]{Taubes-ECH=SW-V}
Clifford~Henry Taubes.
\newblock Embedded contact homology and {S}eiberg-{W}itten {F}loer cohomology
  {V}.
\newblock {\em Geom. Topol.}, 14(5):2961--3000, 2010.

\bibitem[Thu74]{thurston}
William Thurston.
\newblock Foliations and groups of diffeomorphisms.
\newblock {\em Bull. Amer. Math. Soc.}, 80:304--307, 1974.

\bibitem[Ush10]{usher10}
Michael Usher.
\newblock The sharp energy-capacity inequality.
\newblock {\em Commun. Contemp. Math.}, 12(3):457--473, 2010.

\bibitem[UvN47]{Ulam-vonNeumann}
S.~Ulam and J.~von Neumann.
\newblock On the group of homeomorphisms of the surface of the sphere.
\newblock {\em Bull. Amer. Math. Soc.}, 53:506, Abstract 283, 1947.

\bibitem[Vit06]{Viterbo-uniqueness}
Claude Viterbo.
\newblock On the uniqueness of generating {H}amiltonian for continuous limits
  of {H}amiltonians flows.
\newblock {\em Int. Math. Res. Not.}, pages Art. ID 34028, 9, 2006.

\bibitem[Wen16]{Wendl-Notes}
Chris Wendl.
\newblock Lectures on symplectic field theory.
\newblock {\em Preprint arXiv:1612.01009v2}, 2016.

\bibitem[Wen18]{otherwendl}
Chris Wendl.
\newblock {\em Holomorphic curves in low dimensions}, volume 2216 of {\em
  Lecture Notes in Mathematics}.
\newblock Springer, Cham, 2018.

\end{thebibliography}


 \newpage
{\small

\medskip
\noindent Dan Cristofaro-Gardiner\\
Mathematics Department \\ 
University of California, Santa Cruz \\
1156 High Street, Santa Cruz, California, USA\\
School of Mathematics \\
Institute for Advanced Study\\
1 Einstein Drive, Princeton, NJ, USA \\
{\it e-mail}: dcristof@ucsc.edu
\medskip

\medskip
\noindent Vincent Humili\`ere \\
\noindent CMLS, CNRS, Ecole Polytechnique, Institut Polytechnique de Paris, 91128
Palaiseau Cedex, France.\\
{\it e-mail:} vincent.humiliere@polytechnique.edu
\medskip

\medskip
 \noindent Sobhan Seyfaddini\\
\noindent Sorbonne Universit\'e, Universit\'e de Paris, CNRS, Institut de Math\'ematiques de Jussieu-Paris Rive Gauche, F-75005 Paris, France.\\
 {\it e-mail:} sobhan.seyfaddini@imj-prg.fr

}

\end{document}